\documentclass[11pt]{amsart}

\usepackage[bbgreekl]{mathbbol}
\usepackage{pb-diagram,amssymb,epic,eepic,verbatim,transparent}
\usepackage{tikz-cd,braket}
\usepackage{hyperref,graphics,epsfig,psfrag}
\hypersetup{colorlinks}
\usepackage{color}
\definecolor{darkred}{rgb}{0.5,0,0}
\definecolor{darkgreen}{rgb}{0,0.5,0}
\definecolor{darkblue}{rgb}{0,0,0.5}
\hypersetup{ colorlinks,
linkcolor=darkblue,
filecolor=darkgreen,
urlcolor=darkred,
citecolor=darkblue }
\newtheorem{theorem}{Theorem}[section]

\newtheorem{corollary}[theorem]{Corollary}

\newtheorem{proposition}[theorem]{Proposition}
\newtheorem{lemma}[theorem]{Lemma}
\newtheorem{lem}[theorem]{}
\theoremstyle{definition}
\newtheorem{definition}[theorem]{Definition}
\theoremstyle{remark}
\newtheorem{remark}[theorem]{Remark}
\newtheorem{example}[theorem]{Example}
\newcommand{\blem}{\begin{lem} \rm}
\newcommand{\elem}{\end{lem}}

\newcommand\cO{\mathcal{O}}
\newcommand\M{\mathcal{M}}

\renewcommand\M{\mathcal{M}}

\newcommand\fC{\mathfrak{C}}

\newcommand\LL{{\mathbb{L}}}
\newcommand\XX{\mathbb{X}}
\newcommand\DD{\mathbb{D}}
\newcommand\YY{\mathbb{Y}}

\newcommand{\R}{\mathbb{R}}
\renewcommand{\H}{\mathbb{H}}
\newcommand{\RR}{\mathcal{R}}

\newcommand{\C}{\mathbb{C}}

\newcommand{\Z}{\mathbb{Z}}
\newcommand{\Q}{\mathbb{Q}}

\newcommand{\ddt}{\frac{d}{dt}}

\renewcommand{\P}{\mathbb{P}}

\newcommand{\PP}{\mathcal{P}}

\newcommand\lie[1]{\mathfrak{#1}}

\renewcommand{\t}{\lie{t}}

\newcommand{\on}{\operatorname}

\newcommand{\trop}{\on{trop}}
\newcommand{\foc}{\on{foc}}
\newcommand{\Crit}{\on{Crit}}

\newcommand{\black}{\bullet} 
\newcommand{\white}{\circ} 

\newcommand{\fr}{{\on{fr}}}

\newcommand{\dual}{\vee}

\newcommand{\Edge}{\on{Edge}}

\newcommand{\Ver}{\on{Vert}}

\newcommand{\Aut}{ \on{Aut} } 
 
\newcommand{\aut}{ \on{aut} }

\newcommand{\Spec}{  \on{Spec}}

\newcommand{\codim}{\on{codim}}

\newcommand\dirac{/\kern-1.2ex\partial} 
\newcommand\qu{/\kern-.7ex/} 
\newcommand\hqu{/\kern-.7ex/\kern-.7ex/\kern-.7ex/}
\newcommand\lqu{\backslash \kern-.7ex \backslash} 

\newcommand\dr{r_+ \kern-.7ex - \kern-.7ex r_-}
 
\newcommand{\labell}\label

\renewcommand{\d}{{\on{d}}}
\newcommand{\ol}{\overline}

\newcommand\Phinv{\Phi^{-1}}

\newcommand\eps{\epsilon}

\newcommand{\f}{\frac}
\newcommand{\lan}{\langle}
\newcommand{\ran}{\rangle}
\newcommand{\hh}{{\f{1}{2}}}

\newcommand{\ti}{\tilde}

\newcommand\pt{\on{pt}}

\newcommand\cT{\mathcal{T}}

\newcommand\cP{\mathcal{P}}

\renewcommand{\ss}{{\on{ss}}}

\newcommand\ev{\on{ev}}

\newcommand\ul{\underline}
\newcommand\mO{\mathcal{O}}

\renewcommand\Im{\on{Im}}

\newcommand\bdefn{\begin{definition}}
\newcommand\edefn{\end{definition}}
\newcommand\bea{\begin{eqnarray*}}
\newcommand\eea{\end{eqnarray*}}
\newcommand\bcv{\left[ \begin{array}{r} }
\newcommand\ecv{\end{array} \right] }

\newcommand\bma{\left[ \begin{array}{l} }
\newcommand\ema{\end{array} \right]}
\newcommand\ben{\begin{enumerate}}
\newcommand\een{\end{enumerate}}
\newcommand\beq{\begin{equation}}
\newcommand\eeq{\end{equation}}
\newcommand\bex{\begin{example}}
\newcommand\bsj{\left\{ \begin{array}{rrr} }
\newcommand\esj{\end{array} \right\}}

\newcommand\Fuk{\on{Fuk}}

\newcommand\eex{\end{example}}

\newcommand\sx{*\kern-.5ex_X}

\newcommand{\Bl}{\on{Bl}}

\newcommand{\bGamma}{\mathbb{\Gamma}}
\newcommand{\bGam}{\bGamma}

\def\mathunderaccent#1{\let\theaccent#1\mathpalette\putaccentunder}
\def\putaccentunder#1#2{\oalign{$#1#2$\crcr\hidewidth \vbox
to.2ex{\hbox{$#1\theaccent{}$}\vss}\hidewidth}}

\usepackage{xr}
\begin{document}

\title[Holomorphic disks and tropical Lagrangians]{Holomorphic disks and tropical Lagrangians}

\author{Chris T. Woodward}

\thanks{This work was partially supported by NSF grant DMS 2105417.  Any
  opinions, findings, and conclusions or recommendations expressed in
  this material are those of the author(s) and do not necessarily
  reflect the views of the National Science Foundation.}

\begin{abstract}  We develop a calculus for counting pseudoholomorphic disks with boundary in tropical Lagrangians submanifolds of almost toric manifolds, using our previous work with Venugopalan \cite{vw:trop},
\cite{vw:at}.  The results are mostly in dimension four under monotonicity assumptions although in principle the same technique works in any dimension and without monotonicity.   The calculus is given as a sum over tropical graphs that interact with the tropical graph of the Lagrangian, generalizing results of 
Mikhalkin \cite{mikhalkin} and Nishinou-Siebert \cite{sn}
for holomorphic spheres in toric varieties, and our previous result with Venugopalan \cite{vw:at} which dealt with disks bounding almost toric moment fibers.   The main contribution of this paper is the calculation of 
several multiplicities of vertices corresponding to disks, such as the holomorphic pant (half of the holomorphic pair of pants) and various univalent vertices occuring at trivalent vertices of the graph of the Lagrangian; a key tool is a Lagrangian isotopy from the Lagrangian pair of pants in the del Pezzo of degree seven to the inverse image of a diagonal, which is a special case of results of Hind \cite{hind:spheres} and Evans \cite{evans:dp}.   We show that every integer eigenvalue of non-maximal modulus  for quantum multiplication by the first Chern class is realized by such a sphere. 
\end{abstract}

\maketitle

\tableofcontents

\section{Introduction}

Tropical geometry describes the limits of various geometric objects
under adiabatic limits, such as the limit of a Lagrangian torus
fibration when the volume of the fibers tends to zero.  By a {\em
  tropical Lagrangian} we mean a Lagrangian in a family that approximates some
piecewise linear subset of the base of such a fibration in the sense of Definition \ref{def:realize}.  We give in Theorem \ref{thm:realize} below a slight generalization of work  of Hicks \cite{hicks:realizability} and Mikhalkin
\cite{mikhalkin:examples} in the toric case showing  that such families can be associated to tropical graphs in the bases of almost toric fibrations.

In particular, del Pezzo surfaces equipped
with monotone symplectic forms (that is, symplectic forms $\omega$ for which $[\omega] = \lambda c_1(X)$ 
for some $\lambda > 0$) have systems of embedded Lagrangian
two-spheres whose tropical graphs have edges and nodes corresponding to the nodes and edges of the affine Coxeter-Dynkin diagrams introduced by Manin \cite{manin:cubic} at the level of cohomology. We call these {\em
  Manin configurations} of Lagrangian spheres.  Examples of tropical graphs associated to these configurations are shown
in Figure \ref{triang2}, where the relevant Lagrangians have tropical
graphs connecting two or three adjacent critical values in the almost
tori diagrams introduced by Vianna \cite{vianna:dp}.

\begin{figure}[ht]\begin{center} 
\scalebox{.7}{
\begingroup%
  \makeatletter%
  \providecommand\color[2][]{%
    \errmessage{(Inkscape) Color is used for the text in Inkscape, but the package 'color.sty' is not loaded}%
    \renewcommand\color[2][]{}%
  }%
  \providecommand\transparent[1]{%
    \errmessage{(Inkscape) Transparency is used (non-zero) for the text in Inkscape, but the package 'transparent.sty' is not loaded}%
    \renewcommand\transparent[1]{}%
  }%
  \providecommand\rotatebox[2]{#2}%
  \newcommand*\fsize{\dimexpr\f@size pt\relax}%
  \newcommand*\lineheight[1]{\fontsize{\fsize}{#1\fsize}\selectfont}%
  \ifx\svgwidth\undefined%
    \setlength{\unitlength}{378bp}%
    \ifx\svgscale\undefined%
      \relax%
    \else%
      \setlength{\unitlength}{\unitlength * \real{\svgscale}}%
    \fi%
  \else%
    \setlength{\unitlength}{\svgwidth}%
  \fi%
  \global\let\svgwidth\undefined%
  \global\let\svgscale\undefined%
  \makeatother%
  \begin{picture}(1,0.72222222)%
    \lineheight{1}%
    \setlength\tabcolsep{0pt}%
    \put(0,0){\includegraphics[width=\unitlength,page=1]{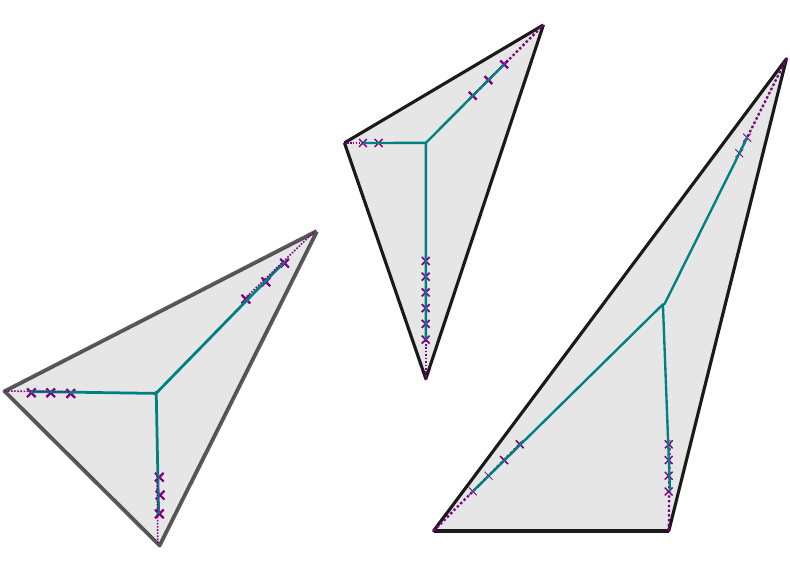}}%
    \put(0.05181848,0.36812922){\color[rgb]{0,0,0}\makebox(0,0)[lt]{\lineheight{1.25}\smash{\begin{tabular}[t]{l}$E_6$\end{tabular}}}}%
    \put(0.77123182,0.54776666){\color[rgb]{0,0,0}\makebox(0,0)[lt]{\lineheight{1.25}\smash{\begin{tabular}[t]{l}$E_7$\end{tabular}}}}%
    \put(0.47845738,0.6738583){\color[rgb]{0,0,0}\makebox(0,0)[lt]{\lineheight{1.25}\smash{\begin{tabular}[t]{l}$E_8$\end{tabular}}}}%
  \end{picture}%
\endgroup%
}
\end{center} 
\caption{Graphs of Manin configurations in del Pezzo's of exceptional type}
\label{triang2}
\end{figure}

 In this paper, we develop tropical techniques for counting disks with boundary in tropical Lagrangians, and in particular, computing the disk potentials of the Manin Lagrangians in del Pezzo surfaces.    By the disk potential, we mean the count $W_L \in \Z$ of rigid holomorphic disks passing through a generic point as in Oh \cite{oh:floer}, where the moduli spaces are oriented using the unique relative spin structure on these Lagrangian spheres as in \cite{fooo:part1}.  By Sheridan
 \cite[Corollary 2.10]{sheridan:hypersurface}, each such integer is an eigenvalue of quantum multiplication by the first Chern class.  We
 prove the following:

\begin{theorem} \label{thm:dp} Let $X$ be a del Pezzo surface equipped with a monotone symplectic form.  
Every integer eigenvalue $w \in \Z$ of quantum multiplication by the first Chern class
$c_1(X)$ on $QH(X)$ whose absolute value  $|w|$ is not maximal is  
the potential $W_L$ of a Lagrangian sphere $L \subset X$,  whose homology class
represents a simple root 
$\alpha \in S$ for some semisimple root system of $ADE$ type in a Manin configuration in $X$.
\end{theorem}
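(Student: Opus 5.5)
Since a monotone del Pezzo surface is either $\P^1 \times \P^1$ or $X_k = \Bl_k \P^2$ with $0 \le k \le 8$, I will prove the theorem by going through this list. For each surface I take from the literature the spectrum of quantum multiplication by $c_1(X)$ on $QH(X)$, and for each non-maximal integer eigenvalue I exhibit a Lagrangian sphere with that potential. The eigenvalues of $c_1 \star$ on small quantum cohomology of del Pezzos are known (Bayer--Manin, Crauder--Miranda). The eigenvalues of maximal modulus are the $|w| = \dim+1$-type values coming from the monotone torus fibers, and those are excluded. Every other integer eigenvalue has a well-known small list. For $\P^2$ and $\P^1\times\P^1$ there is nothing to prove, beyond noting for $\P^1\times\P^1$ that the eigenvalue $0$ is realized by the antidiagonal sphere $W_L=0$. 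For $X_k$ with $k\le 3$ the non-maximal integer eigenvalues are $0$ or $\pm 1$-type values attached to the $A_1$, $A_2$ and $A_1\times A_2$ root systems. For $k = 4,5,6,7,8$ one gets the root systems $A_4, D_5, E_6, E_7, E_8$, and the integer eigenvalues are small integers such as $-2,-3,-4,-6$, determined by the degree $9-k$ and the Manin/Coxeter data. So the first step is a table, indexed by $k$, of the integer eigenvalues of non-maximal modulus together with their multiplicities.

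The second step realizes each tabulated value. In Vianna's almost toric diagram for $X_k$ I choose a Manin configuration of Lagrangian spheres, as in Figure \ref{triang2}, whose tropical graphs are segments joining two adjacent nodes or tripods joining three nodes. Their classes are the simple roots of the $ADE$ system $R_k = K^\perp$, or of a subsystem of it. Their existence comes from Theorem \ref{thm:realize}, and the simple-root condition is a homological computation using vanishing cycles. I then compute $W_L$ for each sphere with the tropical disk-counting calculus developed in the body of the paper. This is a sum over tropical graphs ending on the graph of $L$, weighted by vertex multiplicities:
\begin{itemize}
\item the edge and bivalent contributions from \cite{vw:at};
\item the holomorphic pant at trivalent vertices of the Lagrangian's graph;
\item univalent vertices at the nodes.
\end{itemize}
The pant multiplicity is computed via the Lagrangian isotopy, in the degree-seven del Pezzo, from the Lagrangian pair of pants to the inverse image of a diagonal, following Hind and Evans. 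This identifies it with a count for a sphere whose potential is computed directly.

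Third, I match the two lists. Because $W_L$ depends only on the Hamiltonian isotopy class, and the Weyl group acts transitively on roots of each irreducible component, it suffices to find one sphere per component, and hence per eigenvalue. Sheridan's result guarantees that each computed $W_L$ is an eigenvalue, so the task is only to check that every value in the table occurs. When a root system has several components, for instance $A_1\times A_2$ in $X_3$, I use spheres from the different components to get the different eigenvalues.

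I expect the main obstacle to be the tropical count in the exceptional cases $E_6, E_7, E_8$. There the tropical graph of $L$ passes near many nodes, and enumerating all rigid tropical disks through a generic point, with correct signs from the relative spin structure, is delicate. I would first try to choose the Vianna triangulation so that $L$ lies in a region with as few nearby nodes as possible. That would reduce the sum to a handful of graphs: one pant contribution plus univalent node contributions, with the rest cancelling by sign.
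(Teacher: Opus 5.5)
Your outline is the paper's proof. It is a case-by-case pass through the del Pezzo surfaces that takes the spectrum of $c_1\star$ as given (the paper uses its numerically computed Table \ref{tab:eigen} and defers completeness to a sequel rather than citing the literature), disposes of $\P^1\times\P^1$ and $\Bl^1\P^2$ directly, and realizes each remaining value by a Manin-configuration sphere in a Vianna diagram whose potential is computed from the vertex multiplicities of Theorem \ref{thm:mult}.

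Several side claims need correcting, though: the values to hit are $-1$ for $k=2$, $-2$ and $-3$ for $k=3$, and $-3,-4,-6,-12,-60$ for $k=4,\dots,8$; the Hind--Evans isotopy feeds the open vertices at trivalent vertices of $\Lambda$ (items \eqref{mv:pantseam}--\eqref{mv:threeend}), not the holomorphic pant, which is a bivalent vertex on an edge of $\Lambda$ obtained from real lines and the covering Lemma \ref{lem:ramified}; the counts $-2,-4,-12,-60$ come mainly from the perpendicular-collision vertex \eqref{mv:cylhit} with $m(v)=-1$, together with node interactions; and spheres in different root classes are never Hamiltonian isotopic, so a within-component reduction should use Dehn-twist invariance or equality of potentials for spheres meeting once transversally (Example \ref{ex:b7p2}), though existence alone needs no such reduction.
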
 

The eigenvalues of quantum multiplication by the first Chern class are shown in
Table \ref{tab:eigen} (with critical values computed numerically with Mathematica).
The notation $\Bl^k \P^2$ means $\P^2$ blown-up $k$ times at generic points.
\begin{table}
\[ 
\begin{array}{|l|l|l|}X   &  \text{Manin Root System} &   \approx \Spec(c_1 \star) \\
\hline 
\P^2                 & & 3\alpha, \alpha^3 = 1 \\
\P^1 \times \P^1     & A_1 & 4, 0^{\oplus 2}, -4 \\ 
\Bl^1 \P^2           & & -0.33, 3.8, -2.23 \pm 1.94 \sqrt{-1} \\
\Bl^2 \P^2            & A_1 & (-1)^{\oplus 2}, 4.73, -2.86 \pm 0.94 \sqrt{-1} \\
\Bl^3 \P^2            & A_1 \oplus A_2  &  (-2)^{\oplus 3}, (-3)^{\oplus 2} ,6 \\
\Bl^4 \P^2             & A_4 & (-3)^{\oplus 5}, 8.09, -3.09 \\
\Bl^5 \P^2            & D_5 & (-4)^{\oplus 7}, 12 \\
\Bl^6 \P^2             & E_6 & (-6)^{\oplus 8}, 21 \\
\Bl^7 \P^2            & E_7 &  (-12)^{\oplus 9}, 52\\
\Bl^8 \P^2 & E_8 &  (-60)^{\oplus 10} , 372. 
\end{array} \]
\caption{Eigenvalues of the first Chern class on $QH(X)$}
\label{tab:eigen}
\end{table}

There is another way of phrasing Theorem \ref{thm:dp}.  Given an integer eigenvalue $w$, one can ask whether  it is the potential of some Lagrangian sphere.  Since it is quite obvious that the open-closed map on the Floer cohomology of such a sphere is dimension at least two (the inclusion map on classical homology is injective), the corresponding eigenspace must have multiplicity at least two.  Since del Pezzo surfaces satisfy conjecture $\cO$, by Hu-Ke-Li-Yang \cite{hu:gammaconj}, only the integer eigenvalues with non-maximal modulus have a chance of corresponding to Lagrangian two-spheres.   What we show is that  every such eigenvalue is in fact represented by a sphere, in the case of del Pezzo surfaces.

The proof of Theorem \ref{thm:dp} is not quite given in this paper; what we show here is
that each of the eigenvalues $w$ in Table \ref{tab:eigen} with $|w| \in \Z$ non-maximal is realized by a Lagrangian sphere. 
To see that the numbers in the last column of Table \ref{tab:eigen} are all the eigenvalues of quantum multiplication by $c_1(X)$, one either needs a presentation of the quantum cohomology with the first Chern class identified in the presentation (as is available for the cubic surface, as in Sheridan \cite[Appendix]{sheridan:hypersurface}) or know that 
the Lagrangians split-generate the Fukaya category (as we show in a sequel to this paper.)
Using extensions of these results to immersed Lagrangians with transverse self-intersection, we compute  in a sequel \cite{wo:man} the open-closed maps for Manin configurations of Lagrangians in  almost toric manifolds, and show split-generation of the corresponding Fukaya eigencategories.

We remark that tropical formulas for disk counts were already considered in Mikhalkin \cite{mikh:real}, while tropical formulas for disk counts in the case of almost toric fibers were studied in \cite{vw:at}.   In the former case, the image of the Lagrangian is dimension two (being the real locus) while in the latter case, the image of the Lagrangian is one; therefore, the results of this paper treat the intermediate case. 

\subsection{Disk counts for tropical Lagrangians}

We describe in more detail the disk counts that we wish to compute.  Let $X$ be a compact connected almost toric four-manifold.  
Thus $X$ admits a singular Lagrangian fibration
over base $B$ with projection $\Phi:X \to B$, as defined in Leung-Symington \cite{leungsym}.  
We call $\Phi$ the {\em almost toric moment map} and 
\[ \Delta := \Phi(X) \subset B \] 
the {\em moment polytope}. 
The union of nodal singularities in the fibers, called {\em focus-focus singularities}, is denoted $X^{\foc} \subset X$.    Let 
\[ B^{\on{foc}}  = \Phi(X^{\foc}) \subset B \]
denote the images of the focus-focus
singularities under the projection $\Phi$, called the {\em focus-focus values}.  We focus on the case $\dim(X) =4$, which we now assume. 
We assume that $\Phi$ is proper with connected fibers, so that $\Phi(X)$ is locally polyhedral with respect to the affine structure on the complement of the focus-focus values.   By a {\em tropical Lagrangian} we mean a Lagrangian $L$ whose image under the projection to the base
approximates some tropical graph 
\[ \Lambda = (\Ver(\Lambda),  \Edge(\Lambda)) \] 
in the moment polytope $B$ of $X$; see Definition
\ref{def:realize} for a precise definition.  In the Figures such as Figure \ref{fig:b5p2_sphere}, the tropical graph $\Lambda$ of the Lagrangian is shown in aqua, while the tropical graphs $\Gamma$ of the holomorphic disks are shown in dark blue.    We give a minor generalization of a result of Mikhalkin \cite{mikhalkin:examples} and Matessi  \cite{matessi:pants}, \cite{matessi:trop} we explain  how tropical graphs $\Gamma$ in the base $B$ give rise to Lagrangian submanifolds $L$ in the symplectic manifold $X$. 
Let
\[ \cP = \{ P \subset
B \} \] 
be a polyhedral decomposition of $B$ into simple rational polytopes; that is, 
the intersection of finitely many rational half-spaces so that at most $\dim(X)$
of the boundaries of these half-spaces meet at any point.  The quotient construction for the circle actions generated by the components of  the moment map 
normal to the facets gives a degeneration denoted $\XX$.  We 
choose particularly simple decompositions in order to compute disk potentials:

\begin{definition}  \label{def:elem} 
The decomposition $\cP$ is  {\em elementary} if
  one of the possibilities holds exclusively: Each $P \in \cP$ either
  \begin{enumerate}
    \item  contains at most one
  focus-focus value $b \in B^{\foc}$  and in which case is equivalent
  to the polytope shown after item \eqref{mv:multcov}, up to the action of
  $GL(2,\Z)$;
  \item  contains the projection $\ell = \Phi(L)$ of the Lagrangian
    $L$;
    \item intersects the boundary of
  $\Phi(X)$ and contains at most one vertex of $\Phi(X)$; in this case $P \cap \Phi(X)$
  is a trapezoid and, if it contains a vertex, $P \cap \Phi(X)$ is a parallelogram.
\end{enumerate}
\end{definition}

Any polyhedral decomposition of the type above induces a decomposition and degeneration 
of the symplectic manifold as follows.  First, the manifold $X$ is a disjoint union of the preimages
of the polyhedra in the collection $\cP$:
\[X = \bigcup_{P \in \cP} \Phinv(P)  .\] 
On the other hand, we also have a degeneration (loosely speaking) of $X$ 
of obtained by identifying the points in $X$ by the action of a torus $T_P$ over the elements of $\cP$.  We say that the polyhedral decomposition is {\em admissible} if these
groups act with only finite stabilizers, so that the quotient 
\[ X/_{\sim} = \bigcup_{P \in \cP} \Phinv(P)/T_P \]  
is a union of symplectic orbifolds, as explained in \cite{vw:trop}.  For each such polyhedral decomposition, we choose a set of {\em cutting datum}, consisting of a {\em dual polytope} $P^\dual$ each $P$ in $B$, as described in \cite{vw:trop}.    The {\em dual complex}
associated to the cutting datum is the union of dual polytopes, modulo the face equivalence relation:
\[ B^\dual = \bigcup_{P \in \cP} P^\dual / \sim .\]
By the {\em boundary} of the dual complex we mean the union 
\[ \partial B^\dual := \bigcup_{P \in \cP, P \cap \partial \Delta \neq \emptyset} P^\dual / \sim \]
of duals of polytopes meeting the boundary  $\partial \Delta$ of the moment polytope $\Delta$.
One sees easily from the definitions that $B^\dual$ is a topological manifold with boundary
$\partial B^\dual$, with an affine structure on the complement of the duals $P^\dual$
of polytopes containing the focus-focus values.   We denote by 
\begin{equation} \label{eq:lambdadual}
\Lambda^\dual = \bigcup_{P \in \cP, P \cap \Lambda \neq \emptyset} P^\dual / \sim 
\end{equation}
the part of the dual complex corresponding to pieces meeting the Lagrangian; by our assumption, $\Lambda^\dual$ is a one dimensional complex in $B^\dual$, with the same topology as $\Lambda$.
However, each edge in $\Lambda$ dualizes to a collection of edges in $\Lambda^\dual$ which may not have the same direction, with respect to the dual affine structure.

A cutting datum gives rise to a {\em neck-stretched} family of almost complex structures.  In the neck-stretching limit corresponding to the decomposition, any sequence of holomorphic maps with bounded energy  limits, after passing to a subsequence, to a broken map:
 a collection of holomorphic maps $u_v$ associated to the vertices $v$ of a 
 graph $\Gamma$ with vertices $\Ver(\Gamma)$ and edges $\Edge(\Gamma)$. The graph  $\Gamma$ is equipped with a map
\[ \Ver(\Gamma) \to \cP , \quad v \mapsto P(v) .\]
Each edge $e$ of $\Gamma$ connecting vertices $v_-$ and $v_+$ has an assigned {\em direction}
\[ \cT(e) \in \t_{P(e)} , \quad P(e) := P(v_-) \cap P(v_+) \] 
where $\t_{P(e)} $ is a certain vector space to be defined later, dual to the 
span of the polytope $P(e)$.   
If $e$ connects two vertices $v_-,v_+$, we call $e$ an {\em ordinary edge}.   If $e$ contains a single vertex $v$ we call $e$ a {\em leaf}.    A picture of some of the tropical graphs $\Gamma$ associated to holomorphic disks in a del Pezzo of degree four is shown 
on the right in Figure \ref{fig:b5p2_sphere}; in the depiction we have overlayed the dual polyhedral complex with the original polyhedral complex; otherwise the pictures become difficult to visualize.  The left picture in Figure \ref{fig:b5p2_sphere} shows the almost toric diagram and the center picture shows the tropical graph of the Lagrangian.  The right figures show the four tropical graphs of the holomorphic disks with boundary in the  Lagrangian and passing through the given constraint (shown as a burgundy dot on top of the graph of the Lagrangian.)  The potential is calculated
in Example \ref{ex:b5p2}.  
\begin{figure}[ht]
    \centering
    \scalebox{.4}{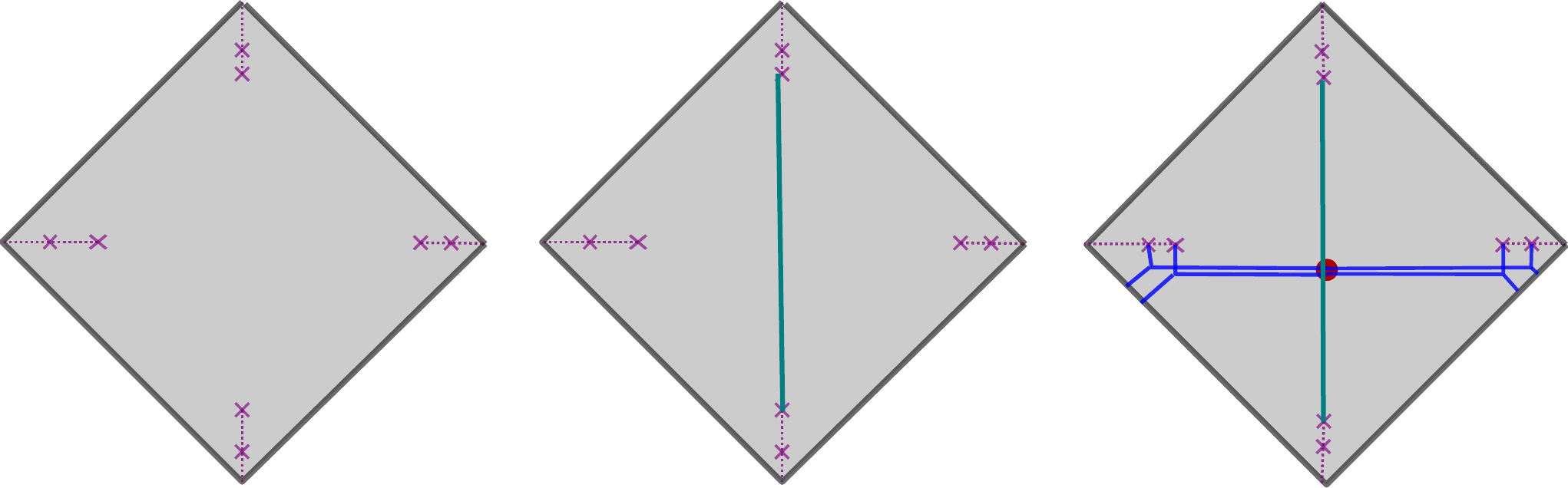}
    \caption{Left:  The base diagram of a fibration  for the degree four del Pezzo, with focus-focus values as crosses (purple).  Center: The tropical graph (green) of a Lagrangian sphere.  Right:  Cartoon diagrams (blue) of Maslov-index-two disks with boundary in the  Lagrangian sphere, showing that the potential of the Lagrangian is $-4$.}
    \label{fig:b5p2_sphere}
\end{figure}
For each vertex $v$ in $\Ver(\Gamma)$ there is a holomorphic disk or sphere 
\[ u_v: C_v \to \XX_{P(v)}  \] 
in some target $\XX_{P(v)}$ obtained by taking the thickening of the inverse image of $P(v)$ under the moment map.     We denote by $\Ver_\white(\Gamma)$ resp. $\Ver_\black(\Gamma)$ the 
{\em open resp. closed} vertices corresponding to disks and spheres respectively.  The edges correspond to long cylinders
or strips with boundary in the Lagrangian connecting the components $u_v$; we denote by $\Edge_\black(\Gamma)$
resp. $\Edge_\white(\Gamma)$ the corresponding sets of {\em closed resp. open} edges. 
For example, the map $u_v$ may be a {\em holomorphic pant} (the singular is intentional here) by which we mean a holomorphic disk with a single cylindrical end and a single strip-like end pictured on the right in Figure \ref{fig:antidiag2}, with the corresponding bivalent vertex of a tropical graph pictured on the left.   This curve is half of the holomorphic pair-of-pants considered in Mikhalkin \cite{mikhalkin} and, perhaps not surprisingly, counts in the formulas with a factor of half of the determinant that appears in \cite{mikhalkin}.

\begin{figure}[ht]\begin{center} \scalebox{1.1}{\includegraphics{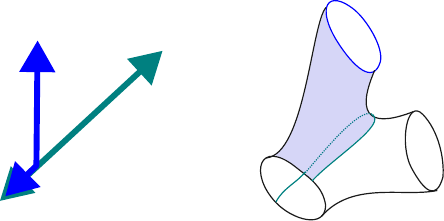}}
\end{center} \caption{The holomorphic pant (right in blue) and its tropical graph (left in blue)}  \label{antidiag3}
\end{figure}

The broken maps are required to satisfy matching conditions at the edges.  In our previous work \cite{vw:trop}, we performed an additional degeneration so that the diagonal condition at the edges was deformed to split form.  We show in Theorem \ref{thm:split} below 
that the moduli space $\M_\bGamma(\LL)$ of broken maps with type $\bGamma$ is a product 
\[ \M_\bGamma(\LL) \cong \prod_{v \in \Ver(\bGamma)} \M_{\bGamma(v)}(\LL_{P(v)},\YY_v) \]
where 
\[ \ul{\YY}_v = (\ul{\YY}_{v,\white}, \ul{\YY}_{v,\black}) \] 
is a collection of  {\em constraints}
attached to the leaves of $\Gamma(v)$ for each vertex $v$; in the current situation, these
constraints will be point constraints.  The counts of broken maps then reduce to counts of tropical graphs $\Gamma$ with multiplicities $m(\Gamma)$.   If the tropical graph $\bGamma$ of the curve is trivalent and satisfies the conditions above,  we assign to $\bGamma$ a {\em multiplicity} $m(\bGamma)$ that is, up to a combinatorial factor, a product of multiplicities assigned to vertices 
\[ m(\bGamma) = (d_\black(\bGamma))^{-1} \prod_{v \in \Ver(\bGamma)}  m(v)  \in \Q ; \]
here $d_\black(\bGamma) \in \Z_{> 0}$ is the number of interior leaves corresponding to intersections with a Donaldson hypersurface $D \subset X$ used for the regularization of pseudoholomorphic curves.  In case $\bGamma$ has vertices $v$ with valence $|v|$ greater than $3$, (or greater than two and at least two of the adjacent edges are of closed type) the multiplicity $m(\bGamma)$ is defined by a desingularization procedure explained in \cite{vw:at}, which perturbs the positions of the closed edges.    Given a collection 
$ \ul{\YY} = (\ul{\YY}_\black, \ul{\YY}_\white)$ of such constraints at the boundary leaves and interior leaves respectively, we consider the moduli space $\M(\LL,\ul{\YY})$ of broken maps bounding $\LL$ with the corresponding constraints $\ul{\YY}$ at the leaves.   We have in mind especially the disk counts that contribute to the disk potentials and open-closed map, and denote
by 
\[ m(\LL,\ul{\YY}) \in \Q \] 
the weighted count of its elements.  In particular, the disk potential of a monotone
Lagrangian is given by the count $W_\LL$ of disks with a single point constraint, independent of the choice of
point $\pt_L \in L$.

We will be particularly interested in the following graphs for which we can compute
the contributions using the main result:

\begin{definition} \label{def:ci} 
A tropical graph $\Gamma$  in $B^\dual$ has collisions {\em in the interior}
  if for any vertex of $\Gamma$ for which the polytope $P(v)$ intersects $\partial \Phi(X)$,
  \begin{enumerate}
  \item the valence $|v|$ of $v$ is one and 
  \item $P(v)$ intersects exactly a single facet $Q$ of $\Phi(X)$ and the direction $\cT(e) 
  \in \t$ of the adjacent edge $e \in \Edge(\Gamma)$ is the primitive normal to $Q$.  
  \end{enumerate}
  A tropical graph $\Gamma$  in $B^\dual$ has collisions only at {\em smooth points} if for any vertex $v \in \Ver(\Gamma)$, the cut space $X_{P(v)}$ contains a focus-focus singularity, then the valence $|v|$ of $v$ is one.  A tropical graph $\Gamma$ is {\em generic} if $\Gamma$ has only collisions in the interior and the valence $|v|$ of each vertex $v \in \Ver(\Gamma)$ is at most three, and $|v|$  is at most two if there is a single open edge adjacent to $v$.
\end{definition}

\begin{theorem} \label{thm:mult}
Let $X$ be a compact monotone almost toric four-manifold with base $B$.   Let $L$ be an embedded Lagrangian sphere that realizes a tropical graph $\Lambda \subset B$.  Let $\cP = \{ P \}$ be a polyhedral admissible decomposition of $B$  equipped with cutting data. 
The count $m(\LL,\ul{\YY})$  of broken rigid  disks in $\XX$ with boundary in the  broken Lagrangian $\LL$ with a point constraint $\ul{\YY} = (\{ \pt \}) \subset \LL$ is a count of types $\bGamma$ in the dual complex $B^\dual$:
\[  m(\LL,\ul{\YY}) = \sum_{\bGamma \in \cT(\ul{\YY})} (\# \Aut(\bGamma))^{-1}  \prod_{v \in \Ver(\bGamma)} m(\LL_v,\ul{\YY}_v) \]
where $\cT(\ul{\YY})$ is the set of rigid graphs in $B^\dual$ with constraints $\ul{\YY}$, and $m(\LL_v,\ul{\YY}_v)$ is a count of holomorphic disks in $\XX_{P(v)}$ with some collection of constraints $\ul{\YY}_v$.  Furthermore, if $\bGamma$ is generic  then each $m(v) = m(\LL_v,\ul{\YY}_v)$ is a  multiplicity given  in Definition \ref{def:mv}.
\end{theorem}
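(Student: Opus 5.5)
The plan is to derive the formula from the degeneration results of \cite{vw:trop}, \cite{vw:at}, together with the splitting result Theorem \ref{thm:split}, and then compute the local contributions vertex by vertex.

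\textbf{Step 1: pass to the broken count.} First I will use the neck-stretching compactness and gluing of \cite{vw:trop}, applied to the degeneration $\XX$ determined by the cutting datum. The point-constrained count of Maslov-index-two disks bounding $L$, for a stretched almost complex structure, then equals the count $m(\LL,\ul{\YY})$ of rigid broken disks bounding the broken Lagrangian $\LL$. Monotonicity supplies a uniform energy bound; it also rules out sphere and disk bubbling of negative index, so only rigid types of expected dimension zero survive.

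\textbf{Step 2: sort by tropical type.} The rigid broken maps are grouped according to their tropical type $\bGamma$, which is a graph in $B^\dual$ satisfying the balancing conditions. The factor $(\#\Aut(\bGamma))^{-1}$ corrects for automorphisms. Here the degeneration $\LL$ of $L$ lives over $\Lambda^\dual$.

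\textbf{Step 3: factor over vertices.} For each type I will apply the deformation to split matching conditions, using Theorem \ref{thm:split}:
\[ \M_\bGamma(\LL)\cong \prod_v \M_{\bGamma(v)}(\LL_{P(v)},\YY_v). \]
Rigidity of $\bGamma$ forces each factor to be rigid once the constraints $\YY_v$ are imposed. These constraints are point constraints at the edges, obtained by propagating the point constraint along the graph, as in the diagonal-splitting argument of \cite{vw:trop}. The count then factors as $\prod_v m(\LL_v,\ul{\YY}_v)$. The orientations must be checked to multiply correctly, using the relative spin structure on the sphere. The Donaldson-divisor factor $d_\black$ is absorbed in the usual way.

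\textbf{Step 4: compute the local multiplicities for generic $\bGamma$.} This is where the real work lies. If $\bGamma$ is generic, each vertex is one of a finite list of local models:
\begin{enumerate}
\item a trivalent closed vertex in a toric piece, which carries Mikhalkin's determinant multiplicity;
\item a univalent vertex at a facet, which contributes the standard disk or sphere with multiplicity one;
\item a univalent vertex at a focus-focus piece, handled as in \cite{vw:at};
\item vertices on $\Lambda$, namely the holomorphic pant and univalent disks at trivalent vertices of $\Lambda$.
\end{enumerate}
Each local count is independent of auxiliary choices by cobordism, so I will compute it in the simplest model. For the pant, I will compare with the pair of pants in $(\C^*)^2$: doubling along the real structure should give half of Mikhalkin's determinant. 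For vertices at trivalent points of $\Lambda$, I will use a Lagrangian isotopy of the pair-of-pants piece, inside a degree seven del Pezzo, to the preimage of a diagonal, where the disks can be counted explicitly. Invariance of these counts under the isotopy, within the monotone class, then yields the values of Definition \ref{def:mv}.

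The main obstacle is Step 4 at the vertices on $\Lambda$, specifically the signs and multiplicities at the trivalent vertices of the Lagrangian. I would first try the isotopy approach and check the result against the known potential of the Chekanov or Clifford torus, used as a consistency test.
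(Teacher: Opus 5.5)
Your architecture matches the paper's: the cobordism of Theorem \ref{thm:cob} to the broken count, the product decomposition of Theorem \ref{thm:split} (peel off univalent vertices and propagate their evaluations as point constraints), and then local model computations, including the isotopy in the degree seven del Pezzo (Lemma \ref{lem:hind3}) for disks at trivalent vertices of $\Lambda$.

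The gap is in Step 4, where you assert that every local count is independent of auxiliary choices by cobordism and so can be computed in the simplest model. That argument (Lemma \ref{lem:prim}) requires the type to be prime, and the holomorphic pant of Definition \ref{def:mv} \eqref{mv:pant} is not. When the boundary point constraint crosses the real locus $\Delta_\R$ of the diagonal cylinder, the pant breaks off a bubble carrying both the open and the closed end, and the count jumps. A pant with closed end $(1,0)$ exists in one chamber and one with closed end $(0,1)$ in the other, so for a given direction the count is nonzero on one side and zero on the other. This is why Theorem \ref{thm:pantone} is stated only for a particular almost complex structure. It is also why the paper \emph{chooses} the symmetric value $-1/2$ for each primitive direction (remark after Lemma \ref{lem:prim}; Lemmas \ref{lem:lines}, \ref{lem:pantone}). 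Definition \ref{def:mv} is declared non-canonical for this reason. Likewise, all vertex signs are relative to choices of relative spin structures on the capped boundary problems (Remarks \ref{rem:rel}, \ref{rem:choose}); these must be fixed compatibly, not merely checked.

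Your heuristic that doubling gives half of Mikhalkin's determinant is also not a derivation. Doubling identifies pants with \emph{real} pairs of pants, whose count is not a priori half the complex one. The paper uses doubling only for primitive directions. It gets the determinant and the sign $(-1)^{|\det|-1}$ from the $\sigma$-equivariant branched covers $\tau_{n_-,n_+}$ of Lemma \ref{lem:ramified}, a real version of Mikhalkin's covering argument.

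Your list of local models also omits vertex types the theorem covers:
\begin{itemize}
\item the univalent open vertex where a cylinder meets an edge of $\Lambda$ perpendicularly, item \eqref{mv:cylhit}, which supplies the $-1$ in essentially every example;
\item the two- and three-ended strips in the Lagrangian pants, items \eqref{mv:twoend}, \eqref{mv:threeend}.
\end{itemize}
The paper obtains the first from the Maslov-four count for the antidiagonal in $\P^1\times\P^1$ (Lemma \ref{lem:anti-diag}: doubling gives automorphisms of $\P^1$, and conjugation reverses one point class). It degenerates this so that the remaining sphere piece counts $+1$ (Lemma \ref{lem:mvhitsproof}), and gets $(-1)^{\ell(e)}$ for non-primitive directions from the twisted relative spin structure on multiple covers. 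The second comes from the three-point Maslov-four count of Lemma \ref{lem:maslovfour}.

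The local values must also be extracted in a definite order. A global model count such as Lemma \ref{lem:hind3} determines a new vertex multiplicity only once the multiplicities of the other vertices in its degeneration are known. Isotopy invariance alone does not ``yield the values''. Finally, the Clifford/Chekanov consistency check is beside the point, since the Lagrangians here are spheres.
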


We explain how to obtain this Theorem by modifying the arguments in 
Venugopalan-Woodward \cite{vw:trop} in Section \ref{sec:limit}. 
We remark that in contrast to the case of toric moment fibers in \cite{vw:at}, 
it is not possible to achieve the valence conditions stated in the theorem 
through perturbation.  Indeed, the positions of the open edges of the tropical graph $\bGamma$ cannot be perturbed, since they coincide with open edges of the tropical graph $\Lambda$ of the Lagrangian.  It follows that the condition that the valence of each vertex is at most three cannot be achieved
by perturbation.  As a result, the list in Theorem \ref{thm:mult} is only partial
as we do not compute the higher valence multiplicities.   Nevertheless, the case of vertices of valence at most three will suffice  to compute the disk potential for the Lagrangian spheres considered here.

\subsection{Multiplicities of tropical vertices} 

The multiplicities of certain vertices are given as follows, and will be shown to be counts of holomorphic disks bounding
the Lagrangian: (The meaning of the dots in the pictures will be explained later.)   The multiplicities are non-canonical; they depend on the choices of relative spin structures on certain extensions of the boundary value problems over the punctures, and in the case of the holomorphic pant, on a particular choice of complex structure and perturbation.

\begin{definition} \label{def:mv} Define multiplicities $m(v) \in \Q$ associated to vertices 
$v \in \Ver(\Gamma)$ with valence $|v|$ at most three
as follows:  (Each item heading describes the corresponding holomorphic curves.)
\begin{enumerate}

    \item \label{mv:spheres} {\rm (Holomorphic cylinders)} The multiplicity 
\[ m(v)= 1 \] 
for bivalent vertices $v$ with 
     adjacent edges  $e_1,e_2 \in \Edge_\black(\Gamma)$ in the same directions
    \[ e_1,e_2 \in \Edge_\black(\Gamma), \quad \cT(e_1) = \cT(e_2). \]

    \begin{figure}[ht]
    \begin{center} 
\scalebox{.9}{\includegraphics{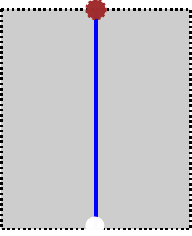}}
\end{center}
        \caption{Tropical graph for a holomorphic cylinder}
        \label{fig:mvspheres}
    \end{figure}

    \item \label{mv:pants} {\rm (Holomorphic pairs of pants)}  
    As in Mikhalkin \cite{mikhalkin}, the multiplicity 
\[ m(v)= |\det(\cT(e_1) \cT(e_2))| \] 
for trivalent vertices $v \in \Ver_\black(\Gamma)$ with adjacent edges $e_1,e_2, e_3 \in \Edge(\Gamma)$ whose edge directions (exactly two of which are incoming) satisfying the balancing condition
    \begin{equation} \label{eq:balance}
    \quad \cT(e_1) + \cT(e_2) + \cT(e_3) = 0 ; \end{equation}

    \begin{figure}[ht]
  \begin{center} 
\scalebox{.9}{
\begingroup%
  \makeatletter%
  \providecommand\color[2][]{%
    \errmessage{(Inkscape) Color is used for the text in Inkscape, but the package 'color.sty' is not loaded}%
    \renewcommand\color[2][]{}%
  }%
  \providecommand\transparent[1]{%
    \errmessage{(Inkscape) Transparency is used (non-zero) for the text in Inkscape, but the package 'transparent.sty' is not loaded}%
    \renewcommand\transparent[1]{}%
  }%
  \providecommand\rotatebox[2]{#2}%
  \newcommand*\fsize{\dimexpr\f@size pt\relax}%
  \newcommand*\lineheight[1]{\fontsize{\fsize}{#1\fsize}\selectfont}%
  \ifx\svgwidth\undefined%
    \setlength{\unitlength}{138.12511826bp}%
    \ifx\svgscale\undefined%
      \relax%
    \else%
      \setlength{\unitlength}{\unitlength * \real{\svgscale}}%
    \fi%
  \else%
    \setlength{\unitlength}{\svgwidth}%
  \fi%
  \global\let\svgwidth\undefined%
  \global\let\svgscale\undefined%
  \makeatother%
  \begin{picture}(1,0.83121261)%
    \lineheight{1}%
    \setlength\tabcolsep{0pt}%
    \put(0,0){\includegraphics[width=\unitlength,page=1]{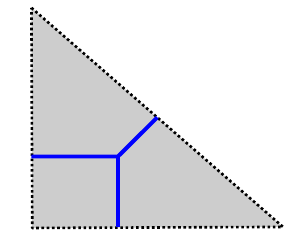}}%
     \put(0,0){\includegraphics[width=\unitlength,page=2]{mv1.pdf}}%
    \put(0.16503584,0.34118367){\color[rgb]{0,0,0}\makebox(0,0)[lt]{\lineheight{1.25}\smash{\begin{tabular}[t]{l}$e_1$\end{tabular}}}}%
    \put(0.46137506,0.16085866){\color[rgb]{0,0,0}\makebox(0,0)[lt]{\lineheight{1.25}\smash{\begin{tabular}[t]{l}$e_2$\end{tabular}}}}%
    \put(0.40937953,0.42263157){\color[rgb]{0,0,0}\makebox(0,0)[lt]{\lineheight{1.25}\smash{\begin{tabular}[t]{l}$e_3$\end{tabular}}}}%
  \end{picture}%
\endgroup%
}
\end{center}
        \centering
        \caption{Tropical graph for a holomorphic pair of pants}
        \label{fig:mvpants}
    \end{figure}

\item \label{mv:bound} {\rm (Disks meeting the toric boundary)} As in Cho-Oh \cite{chooh:fano}, the multiplicity 
\[ m(v) = 1 \] 
if $v \in \Ver_\black(\Gamma)$ is a univalent vertex at a facet $Q$ of the toric boundary $\partial \Delta$ whose adjacent edge 
$e$ has direction $\cT(e)  \in \t_{P(v),\Z}$ is the primitive generator of the normal direction to $TQ$.

    \begin{figure}[ht]
\begin{center} 
\scalebox{.5}{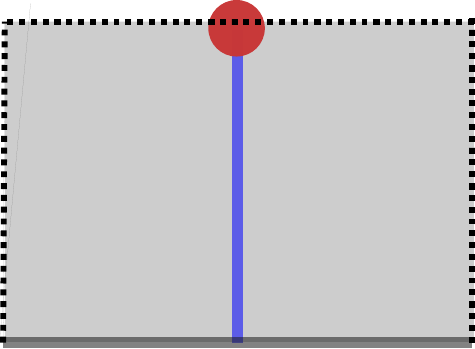}
\end{center}
        \centering
        \caption{Tropical graph for collisions with the boundary}
    \end{figure}

\item \label{mv:multcov} {\rm (Multiple covers near focus-focus singularities)} 
As in Bryan-Pandharipande \cite{bryan:lgw}, the multiplicity 
\[ m(v) = (-1)^{\ell(\mu)-1}/\ell(\mu)^2 \] 
for the univalent closed vertices
$v \in \Ver_\black(\Gamma)$ at the focus-focus singularities, where 
\[ \ell(\mu) \in \Z_{> 0} \] 
is the lattice length of the 
direction $\mu = \cT(e) \in \t_{P(v),\Z}$ of the adjacent edge $e \in \Edge(\Gamma)$;
that is, $\mu = \ell(\mu) \mu_0$ where $\mu_0$ is the primitive lattice vector in the same direction as $\mu$;

    \begin{figure}[ht]
\begin{center} 
\scalebox{1}{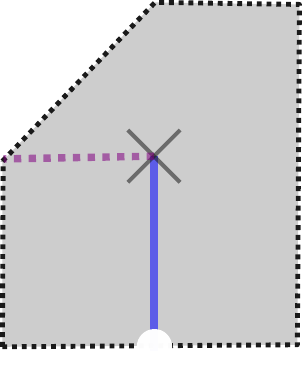}
\end{center}
        \caption{Tropical graph for collisions with the focus-focus values}
    \end{figure}
    
\item \label{mv:cylhit}{\rm (Cylinders colliding with the Lagrangian perpendicularly)}  
The multiplicity 
\[ m(v) = (-1)^{\ell(e)}  \] 
if $v \in \Ver_\white(\Gamma)$ is a univalent vertex 
whose polytope $P(v)$ meets edge $\eps \in \Edge(\Lambda)$ of the Lagrangian $L$ with
the adjacent edge $e \in \Edge(\Gamma)$ having direction $\cT(e) \in \t_{P(v),\Z}$ 
with lattice length 
\[ \ell(e) \in \Z_{> 0} \] 
perpendicular to the direction
$\cT(\eps)$ of $\eps \in \Edge(\Lambda)$.

    \begin{figure}[ht]
\begin{center} 
\scalebox{.5}{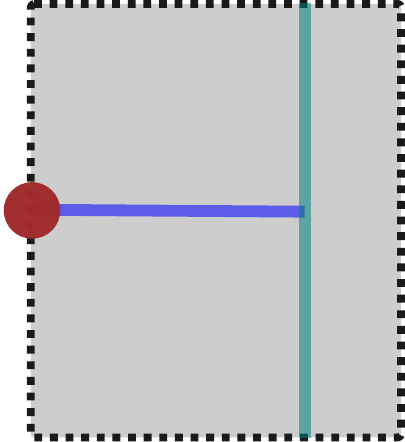}
\end{center}
        \caption{Tropical graph for collisions with Lagrangian}
    \end{figure}

\item \label{mv:pant} {\rm (Holomorphic pant, Lagrangian with direction $(\pm 1,1)$)}  
The multiplicity 
\begin{eqnarray*}   
m(v) &=& (-1)^{    | \det(\cT(e_\white) \cT(e_\black))|  - 1/2  }  | \det(\cT(e_\white) \cT(e_\black))|   \\ &=&\hh (-1)^{
| 2 \det(\cT(e_\black) r(\cT(e_\black))|  - 1 } 
| \det(\cT(e_\black) r(\cT(e_\black))|   
\end{eqnarray*}
for bivalent vertices $v \in \Ver_\white(\Gamma)$ mapping
to an  edge $\eps$ of $L$ with open edge  $e_\white  = (1,1) \subset \eps$ and closed
edges $ e_\black, e_\black'$ with directions 
\[ \cT(e_\black) = (x,y),r(\cT(e_\black)) = \cT(e_\black') = (y,x) \in \Z^2 \] 
for some integers $x,y$, with directions 
$\cT(e_\white) \in \t_{P(v)}, \cT(e_\black) \in \t_{P(v),\Z} $
satisfying the balancing condition 
\[  2 \cT(e_\white) + \cT(e_\black) + r(\cT(e_\black)) = (1 + x + y, 1 + x + y ) =  0  \]
where $r: \R^2 \to \R^2$ is the reflection over the span of $\eps$.\footnote{Note that we insist that the direction of the edge of the graph of the Lagrangian is the diagonal or antidiagonal.  Of course, any edge may be put in this position by some element $A$ of $SL(2,\Z).$   However, the same transformation acts by the transpose $A^t$ on the directions of the edges of the graph of the broken map, so there seems to be only a clean statement in the diagonal or anti-diagonal cases. } 

 \begin{figure}[ht]
\begin{center} 
\scalebox{1.1}{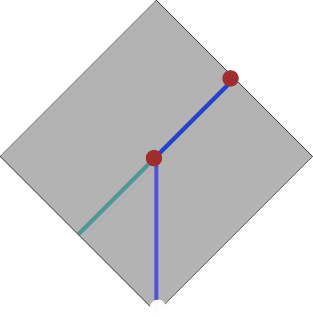}
\end{center}
        \caption{Tropical graph for a holomorphic pant}
    \end{figure}

\item {\rm (Disks dividing Lagrangian pants in half)}  \label{mv:pantseam}  The multiplicity 
\[ m(v) = 1 \]
for univalent vertices $v \in \Ver_\white(\Gamma)$ mapping 
to a trivalent vertex $\ell \in \Ver(\Lambda)$ with adjacent edge $e \in \Edge_\white(\Gamma)$ 
having direction $\cT(e)$ the same as the direction $\cT(\eps)$ of one of the adjacent edges $\eps \in \Edge(\Lambda)$ of the tropical graph $\Lambda$ of $L$.

 \begin{figure}[ht]
\begin{center} 
\scalebox{.5}{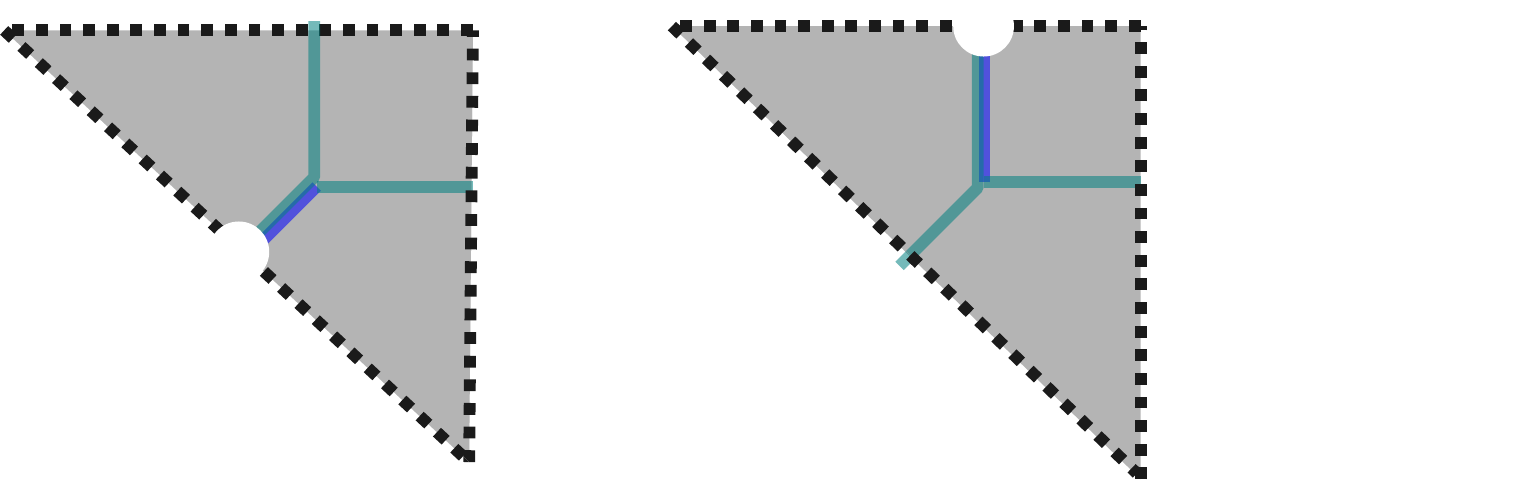}
\end{center}
        \caption{Cartoon diagrams for disks with boundary in a Lagrangian pair of pants with a single strip-like end}
    \end{figure}

\item  \label{mv:twoend}  {\rm (Two-ended strips in Lagrangian pants)}
The multiplicity 
\[ m(v) = 1 \] 
for bivalent vertices $v \in \Ver_\white(\Gamma)$ mapping 
to trivalent vertices $\ell \in \Ver(\Lambda)$ with edge directions $\cT(e_1), \cT(e_2)$ of adjacent edges $e_1 \in \Edge_\white(\Gamma),  e_2 \in \Edge_\white(\Gamma)$ with
directions 
\[ \cT(e_1) = \cT(\eps_1) ,\cT(e_2) = \cT(\eps_2) \in \t_{P(v)} \]
equal to the directions $\cT(\eps_1),\cT(\eps_2)$ of adjacent edges of the graph $\Lambda$ of $L$, and either one point boundary constraint $p \in L$ or one point constraint $p \in \RR_{\white}(P(e_i)), i \in \{ 1,2 \} $  on the Reeb orbit  at a strip-like end $e_1$ or $e_2$.  The situation is depicted in the figure at left below:

 \begin{figure}[ht]
\begin{center} 
\scalebox{.5}{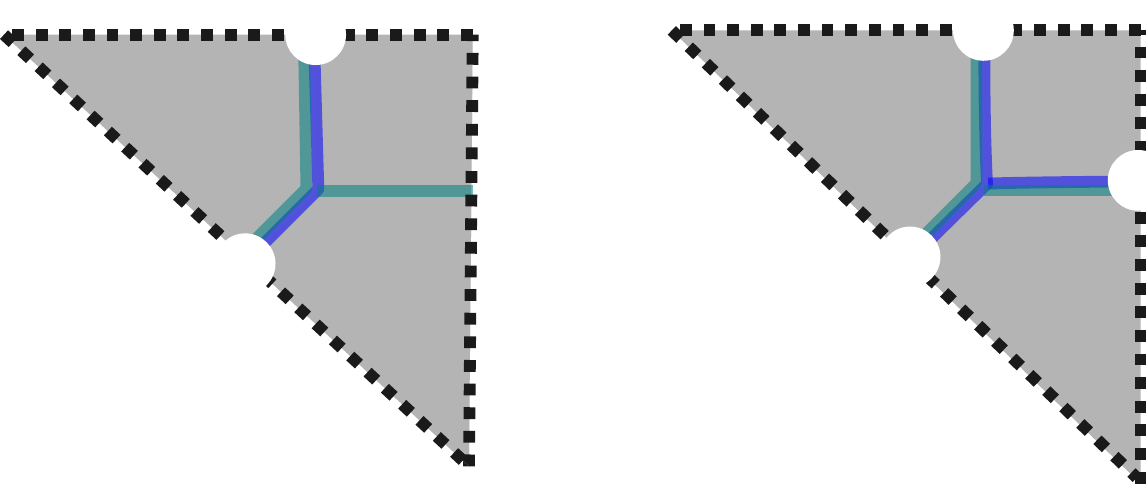}
\end{center}
        \caption{Cartoon diagrams for disks with boundary in a Lagrangian pair of pants with two or three strip-like ends}
    \end{figure}

\item \label{mv:threeend}  {\rm (Three-ended strips in Lagrangian pants)}  The multiplicity 
\[ m(v) = 1 \] 
for trivalent vertices $v \in \Ver_\white(\Gamma)$ with adjacent
edges $e_1,e_2,e_3$ having directions $\cT(e_1), \cT(e_2), \cT(e_3) \in \t_{P(v)}$
equal to the directions of the adjacent edges $\eps_1,\eps_2,\eps_3 \in \Edge(\Lambda)$ of $L$ with a point constraint
$p \in L$ or a point constraint at a Reeb chord at one of the strip-like ends. 
(Recall that the Mikhalkin multiplicity $m(v) = 1$ of $v \in \Ver(\Lambda)$ by assumption.)  The figure is included at right in the previous item.
%
%



\end{enumerate}
\end{definition}

The appearance of the holomorphic pant, i.e., half-pair-of-pants seems to be new in the literature on disks bounding tropical Lagrangians, which was somewhat surprising to us.

\subsection{Sample computations of potentials}

As examples, we compute the disk potential for some tropical Lagrangians in del Pezzo surfaces.

\begin{example}\label{ex:3p2}  We compute the disk potential for a tropical Lagrangian in the blow-up $X$ of the product of projective lines $\P^1 \times \P^1$ at $(0,0)$ and $(\infty,\infty)$.
The moment polytope may be taken to be the hexagon that is the convex hull 
\[ \Phi(X) = \on{hull} \{ (-1,0), (0,-1), (1,0), (0,1), (1,-1), (-1,1) .\} \] 
Consider the Lagrangian 
\[ L_0 = \{ ([z_1,z_2], [\ol{z}_2,\ol{z}_1] ) | (z_1,z_2) \in \C^2 - \{ 0 \} \} \subset \P^1 \times \P^1 \] 
which is the graph of the anti-symplectic involution
\[ \tau: X \to X, \quad [z_1,z_2] \mapsto [\ol{z}_2, \ol{z}_1]. \] 
The Lagrangian $L_0$ is disjoint from the cut locus in $\P^1 \times \P^1$ and lifts to a Lagrangian $L$ in the blow-up $X$. 

Since $X$ is toric, the codomain of the moment map is $B = \R^2$.  
 The moment image of $L$ is the segment $\Lambda$ between the vertices $(1,-1)$ and $(-1,1)$, as shown on the left in Figure \ref{fig:3p2wdual}.    In particular, $L$ is a realization of $\Lambda$
in the sense of Mikhalkin \cite{mikhalkin:examples}; see Theorem \ref{thm:realize}. 

Let $p \in L $ be a point mapping to $(0,0)$ under the moment map.   Let $\cP = \{ P \} $ be an elementary polyhedral decomposition of $\Phi(X)$ in the sense of Definition \ref{def:elem}, with the property that there exists an element $P_0 \in \cP$ so that $P_0$ contains the image $\Phi(p)$ but no part of the boundary $\partial \Phi(X)$.   The dual complex $ B^\dual $ shown on the right in Figure \ref{fig:3p2wdual}; the dual graph $\Lambda^\dual$ is shown in aqua.   The dual complex $B^\dual$ may be identified with a subset of the dual of $\R^2$, which we may identify with $\R^2$ via the standard Euclidean metric.    The dual complex $B^\dual$ in the middle and right parts of Figure \ref{fig:3p2wdual} is colored in peach to distinguish it from the moment polytope $\Phi(X)$ shown in 
grey on the left. 

\begin{figure}[ht]
    \centering
    \scalebox{.2}{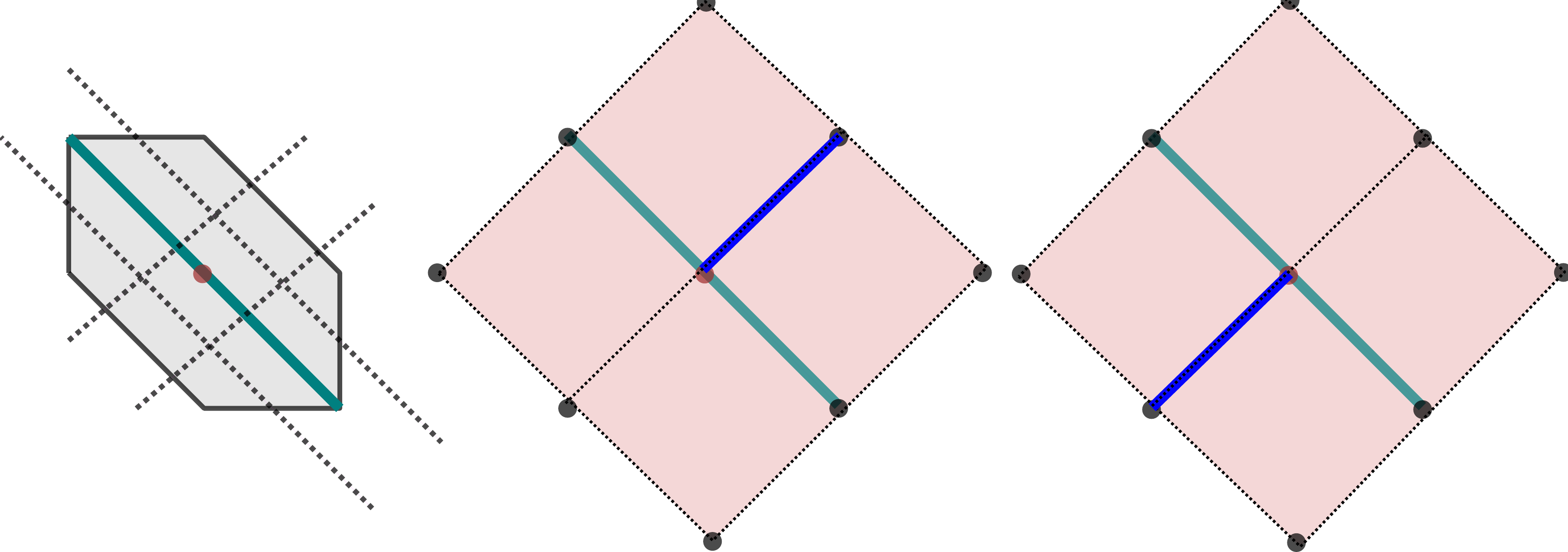}
      \caption{Cartoon diagrams of Maslov-index-two disks in the degree six del Pezzo}
    \label{fig:3p2wdual}
\end{figure}

Indeed, there are two rigid tropical graphs $\Gamma_1,\Gamma_2 \subset \Phi(X)$ representing Maslov-index-two disks shown on the right-most two figures in blue in 
Figure \ref{fig:3p2wdual}. 
\begin{itemize} 
\item The graph $\Gamma_1$ is a single segment connecting $(0,0)$ to $(1/2,1/2)$. 
\item The graph $\Gamma_2$ is a single segment connecting $(0,0)$ to $(-1/2,-1/2)$. 
\end{itemize}
Each of the graphs $\Gamma_1,\Gamma_2$ has two vertices $v_1,v_2 \in \Ver(\Gamma)$, one at the Lagrangian with $m(v_1) = -1$ as in Definition \ref{def:mv} \eqref{mv:cylhit} and one at the boundary $\partial B^\dual$ of the dual complex with $m(v_2) = 1$ as in Definition \ref{def:mv} \eqref{mv:bound}. 

There can be no tropical graphs $\Gamma$ contributing with 
a vertex $v \in \Ver(\Gamma)$ with valence$ |v| \ge  3$.  Indeed, any such graph $\Gamma$ cannot be rigid as the position $\cT(v)$ of the vertex $v$ is deformable.   Alternatively, note using Remark \ref{rem:index} below that any such graph $\Gamma$ must represent a disk $u$ having at least two intersections with the boundary divisor $Y = \Phinv(\partial \Phi(X)) $.  Thus $u$ is a disk of Maslov index at least four. 

Combining the previous two paragraphs we obtain the value of the disk potential as
\[ W_L = m(\Gamma_1) + m(\Gamma_2) = (-1 ) + (-1) = -2 . \]
\end{example}

\begin{example} \label{ex:invim} 
We compute the disk potential for the tropical Lagrangian in the blow-up of the product of projective lines at a point whose graph has a single trivalent vertex.  Let
$X$ be the blow-up of $\P^1 \times \P^1$ with moment polytope the pentagon that is
the convex hull 
\[ \Delta = \on{hull} ( (-1,1), (1,0), (0,1), (1,-1), (-1,1)) . \] 
Let $\Lambda$ denote the tropical graph with a unique trivalent vertex at $\nu_0 = (0,0)$ and univalent vertices at $\nu_1 = (1,0), \nu_2 = (0,1), \nu_3 =  (-1,-1)$, with three edges $\eps_1,\eps_2,\eps_3$ containing a univalent vertex $\nu_1,\nu_2,\nu_3$, respectively.   By, for example, Mikhalkin's realization theorem \cite{mikhalkin:examples} the graph $\Lambda$ is realized by a Lagrangian sphere denoted $L \subset X$.  

We  compute the number of disks passing through a particular point.
 Let $\cP$ be a polyhedral decomposition 
 so that $P_0$ is a polytope of top dimension that contains the vertex of $\Lambda$, $P_i$ is a polytope of top dimension that contains $\Phi(p_i)$ for $i = 1,2,3$ but no point in the boundary $\partial \Phi(X)$.   

\begin{figure}[ht]
    \centering
    \scalebox{.5}{
\begingroup%
  \makeatletter%
  \providecommand\color[2][]{%
    \errmessage{(Inkscape) Color is used for the text in Inkscape, but the package 'color.sty' is not loaded}%
    \renewcommand\color[2][]{}%
  }%
  \providecommand\transparent[1]{%
    \errmessage{(Inkscape) Transparency is used (non-zero) for the text in Inkscape, but the package 'transparent.sty' is not loaded}%
    \renewcommand\transparent[1]{}%
  }%
  \providecommand\rotatebox[2]{#2}%
  \newcommand*\fsize{\dimexpr\f@size pt\relax}%
  \newcommand*\lineheight[1]{\fontsize{\fsize}{#1\fsize}\selectfont}%
  \ifx\svgwidth\undefined%
    \setlength{\unitlength}{565.43529601bp}%
    \ifx\svgscale\undefined%
      \relax%
    \else%
      \setlength{\unitlength}{\unitlength * \real{\svgscale}}%
    \fi%
  \else%
    \setlength{\unitlength}{\svgwidth}%
  \fi%
  \global\let\svgwidth\undefined%
  \global\let\svgscale\undefined%
  \makeatother%
  \begin{picture}(1,0.25909096)%
    \lineheight{1}%
    \setlength\tabcolsep{0pt}%
    \put(0,0){\includegraphics[width=\unitlength,page=1]{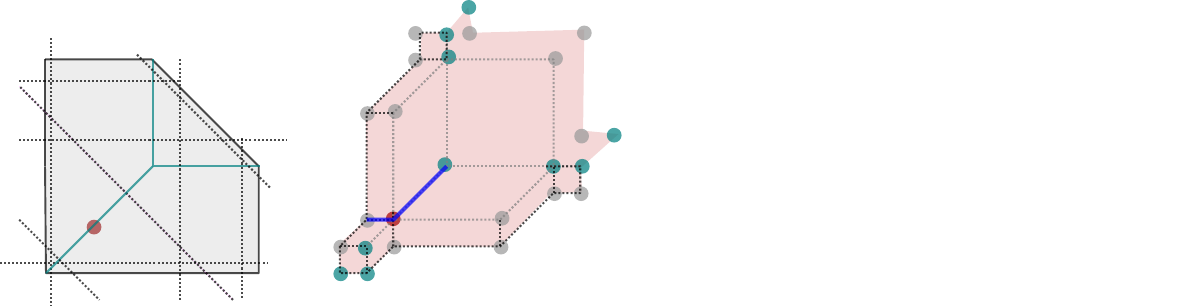}}%
    \put(0.08754422,0.04786274){\color[rgb]{0.87058824,0.52941176,0.52941176}\makebox(0,0)[lt]{\lineheight{1.25}\smash{\begin{tabular}[t]{l}$p_1$\end{tabular}}}}%
    \put(0.82723158,0.1422042){\color[rgb]{0.10196078,0.10196078,0.10196078}\makebox(0,0)[lt]{\lineheight{1.25}\smash{\begin{tabular}[t]{l}$w = (-1/2) + (-1/2) = -1 $\end{tabular}}}}%
    \put(0,0){\includegraphics[width=\unitlength,page=2]{b2p2wdual.pdf}}%
  \end{picture}%
\endgroup%
}
    \caption{Tropical graphs of Maslov-index-two disks in the degree seven del Pezzo}
    \label{fig:b2p2wdual}
\end{figure}

Let $p_3$ be a constraint with moment image $\Phi(p_3) = (0,\eps), \eps > 0 $.  There are two tropical graphs $\Gamma_1,\Gamma_2$
(shown in the upper part of Figure \ref{fig:b2p2wdual}) that have a bivalent vertex at $\Phi(p_3)$, and edges with directions $(1,1)$
and $(0,-1)$ resp. $(-1,0)$.   

We claim that there can be no graphs $\Gamma$ corresponding to Maslov-index-two disks with boundary in $L$  that have  trivalent (or higher) vertices  at $(0,0)$.  Indeed, each outgoing edge of such $\Gamma$ must connect to a leaf $e$ of $\Gamma$ that ends at a vertex $v$ with $P(v)$ meeting the boundary of the moment polytope.  

The Maslov index of such disks is at least four, by Remark \ref{rem:arearem}.     Similarly, for a bivalent vertex at $(0,0)$ the Maslov index of the corresponding disk would be at least four.  Each of the graphs $\Gamma_1,\Gamma_2$ has two vertices, with one trivalent vertex 
with multiplicity $m(v_1) = 1$, and one bivalent vertex with multiplicity 
\[ m(v_2) = (-1)^{ | \det(I)|} |\det(I)|/2 = -1/2 . \]
The total contribution of this graph to the potential is  
\[ m(\Gamma_k) = m(v_2) m(v_1) =  (-1/2)(1) = -1/2 .\]
So the total number of Maslov-index-two disks is 
\[ W_L = m(\Gamma_1) + m(\Gamma_2) =  -1/2 - 1/2 = - 1. \]  
\end{example} 

\begin{example} \label{ex:b5p2}  We compute the potential of a Lagrangian in a del Pezzo of degree four.
Let $X$ be the del Pezzo of degree four equipped with a monotone symplectic form. 
We claim that there exists a Lagrangian sphere $L$ so that the number of Maslov-index-two holomorphic disks with boundary in $L$  and passing through a generic point in $L$ is 
\[ W_L = -4 .\]
The almost toric diagram for the degree four del Pezzo is shown on the left
in Figure \ref{fig:b5p2_sphere}.  The diagram is a slight modification of the diagram $B_1$ in Vianna \cite[Figure 11]{vianna:dp}. 
A tropical graph $\Lambda$ for a Lagrangian sphere $L$ is shown at center, given by Theorem \ref{thm:realize}.
There are four rigid tropical graphs 
\[ \Gamma_1,\Gamma_2,\Gamma_3,\Gamma_4 \subset \Phi(X) \] 
representing Maslov-index-two disks passing through a generic point, shown as cartoon diagrams on the right in Figure 
\ref{fig:b5p2_sphere}.   Each tropical graph 
$\Gamma_1,\Gamma_2,\Gamma_3,\Gamma_4$ has four vertices.  For any $i = 1,2,3,4$, all multiplicities of vertices $v$ of $\Gamma_i$
are equal to $m(v) = 1$ except the vertex at the Lagrangian, for which $m(v)= -1$.  Therefore, 
the contribution of each graph $\Gamma_i$ is $m(\Gamma_i) = -1$, and the potential is 
\[ W_L = m(\Gamma_1) + m(\Gamma_2) + m(\Gamma_3) + m(\Gamma_4)  = -1 + -1 + -1 + -1 = -4 . \]
We are unaware of a simple presentation in the literature for all del Pezzos, generalizing that of Givental described in Sheridan \cite[Appendix]{sheridan:hypersurface}, that would give eigenvalues of quantum multiplication by the first Chern class, 
although presumably a Hori-Vafa-style presentation is known to experts.
\end{example}

\begin{example} \label{ex:degsix} We compute the potential of the Lagrangian 
with trivalent graph in the del Pezzo of degree six shown in Figure 
\ref{fig:b3p2all1}, for two different choices of constraint.  

\begin{figure}[ht]
    \centering
    \scalebox{.4}{
\begingroup%
  \makeatletter%
  \providecommand\color[2][]{%
    \errmessage{(Inkscape) Color is used for the text in Inkscape, but the package 'color.sty' is not loaded}%
    \renewcommand\color[2][]{}%
  }%
  \providecommand\transparent[1]{%
    \errmessage{(Inkscape) Transparency is used (non-zero) for the text in Inkscape, but the package 'transparent.sty' is not loaded}%
    \renewcommand\transparent[1]{}%
  }%
  \providecommand\rotatebox[2]{#2}%
  \newcommand*\fsize{\dimexpr\f@size pt\relax}%
  \newcommand*\lineheight[1]{\fontsize{\fsize}{#1\fsize}\selectfont}%
  \ifx\svgwidth\undefined%
    \setlength{\unitlength}{394.44044495bp}%
    \ifx\svgscale\undefined%
      \relax%
    \else%
      \setlength{\unitlength}{\unitlength * \real{\svgscale}}%
    \fi%
  \else%
    \setlength{\unitlength}{\svgwidth}%
  \fi%
  \global\let\svgwidth\undefined%
  \global\let\svgscale\undefined%
  \makeatother%
  \begin{picture}(1,1.25850643)%
    \lineheight{1}%
    \setlength\tabcolsep{0pt}%
    \put(0,0){\includegraphics[width=\unitlength,page=1]{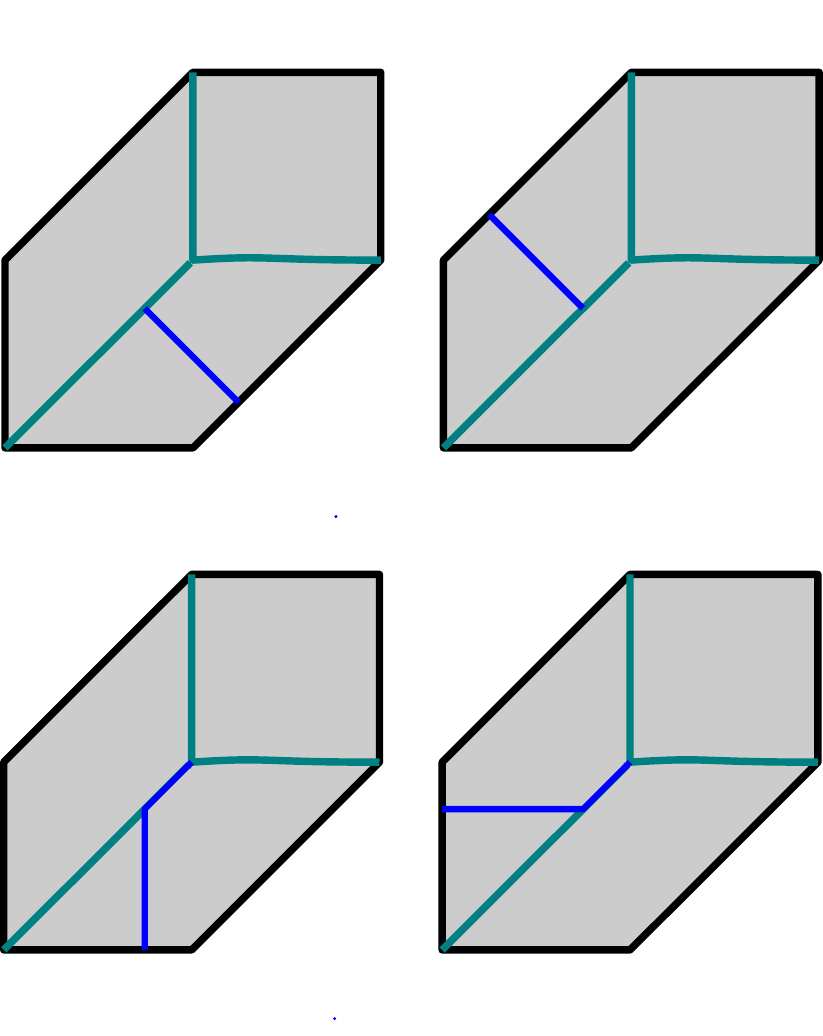}}%
    \put(0.13352924,1.2333917){\color[rgb]{0,0,0}\makebox(0,0)[lt]{\lineheight{1.25}\smash{\begin{tabular}[t]{l}$m(\Gamma) = -1$\end{tabular}}}}%
    \put(0.7271185,1.24289884){\color[rgb]{0,0,0}\makebox(0,0)[lt]{\lineheight{1.25}\smash{\begin{tabular}[t]{l}$m(\Gamma) = -1$\end{tabular}}}}%
    \put(0.05211166,0.00408009){\color[rgb]{0,0,0}\makebox(0,0)[lt]{\lineheight{1.25}\smash{\begin{tabular}[t]{l}$m(\Gamma) = -1$/2\end{tabular}}}}%
    \put(0.64383457,0.00689361){\color[rgb]{0,0,0}\makebox(0,0)[lt]{\lineheight{1.25}\smash{\begin{tabular}[t]{l}$m(\Gamma) = -1$/2\end{tabular}}}}%
    \put(0,0){\includegraphics[width=\unitlength,page=2]{b3p2all1.pdf}}%
  \end{picture}%
\endgroup%
}
    \caption{Cartoon diagrams of Maslov-index-two disks in the degree six del Pezzo}
    \label{fig:b3p2all1}
\end{figure}

Consider first the case that the point constraint is on the lower-left leg with direction $(1,1)$.  In this case, there are two tropical graphs
$\Gamma_1,\Gamma_2$ shown with two univalent  vertices, each with contribution $m(\Gamma_1) = m(\Gamma_2) = -1$, and two tropical graphs $\Gamma_3,\Gamma_4$ shown with two univalent vertices and one bivalent vertex. The direction of the open edge is $(1/2,1/2)$.  The associated determinant is 
$1/2$, and the sign is negative.  Thus the contribution of each is 
\[ m(\Gamma_3) = m(\Gamma_4) = -1/2 \]
for a total potential of $W_L  = -3$. 
\end{example}

\begin{example} \label{ex:trive6} 
We compute the potential of the tropical Lagrangian with a trivalent graph in the Manin $E_6$ system of 
Lagrangian spheres in the cubic surface, shown in Figure \ref{fig:b6p2trop1}. 

\begin{figure}[ht]
    \centering
    \scalebox{.4}{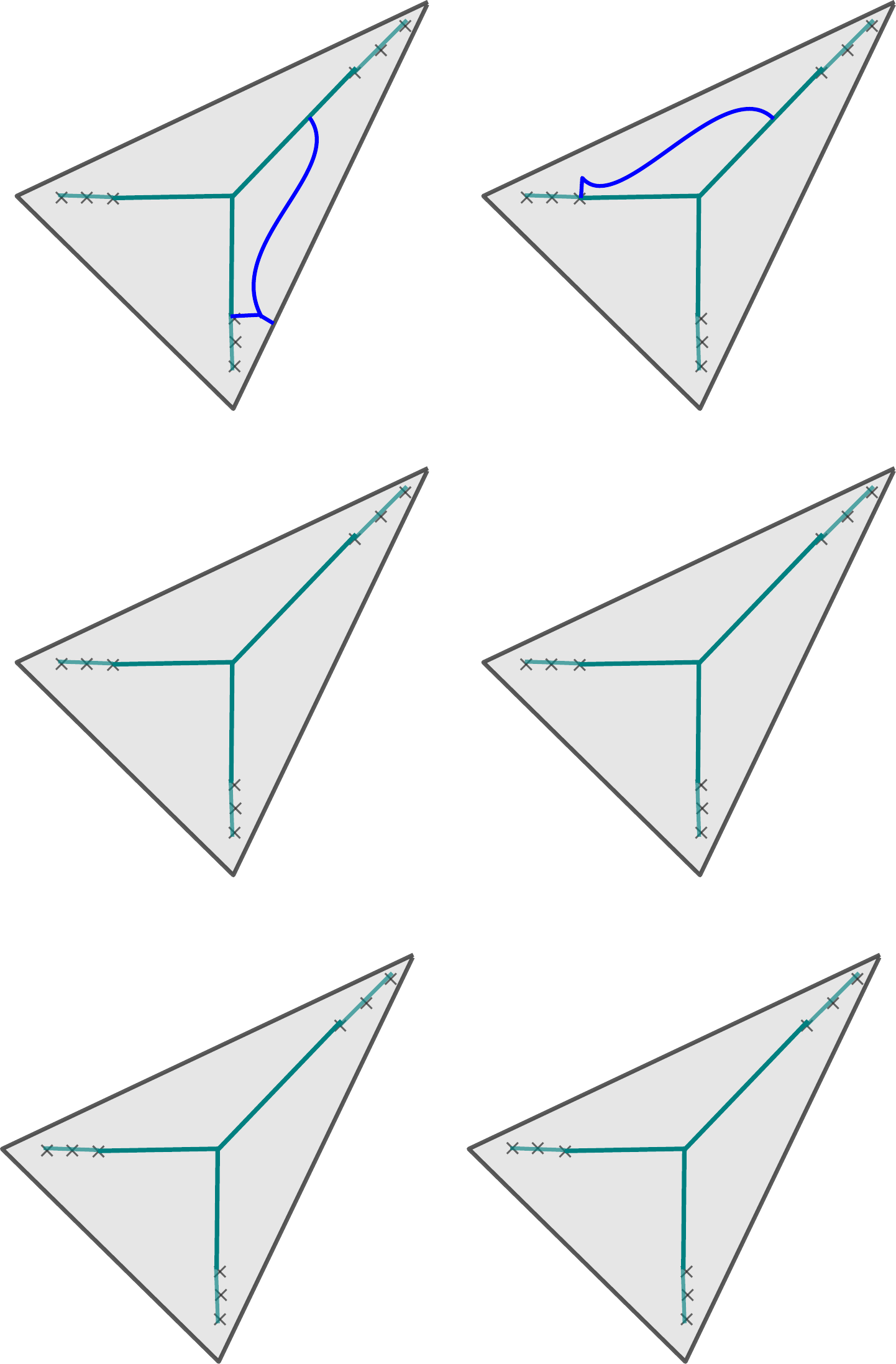}
    \caption{Cartoon diagrams of Maslov-index-two disks in the cubic surface}
    \label{fig:b6p2trop1}
\end{figure}

We assume that the point constraint is on the leg of the Lagrangian with direction $(1/2,1/2)$.   There are two tropical graphs $\Gamma_1,\Gamma_2$ with a single bivalent vertex, and determinant $3$, contributing 
\[ m(\Gamma_1) = m(\Gamma_2) = 3/2 \] 
each according to the sign rule in item 
\eqref{mv:pant} above.    There are six tropical graphs $\Gamma_3,\ldots, \Gamma_8$ with a single trivalent vertex with all edges of closed type, and a negative sign arising from the vertex at the Lagrangian, contributing %
\[ m(\Gamma_3) + \ldots m(\Gamma_8) = -6 \]
in total.  Finally, there are six tropical graphs $\Gamma_9,\ldots, \Gamma_{14}$ with a single trivalent vertex representing a holomorphic pant, each with determinant $1/2$ and negative sign, with a total contribution of 
\[ m(\Gamma_9) + \ldots \ldots m(\Gamma_{14}) = -3 . \] 
Thus the total potential is $W_L = -6$.
\end{example}

We denote references to our earlier works \cite{vw:trop} with the prefix T-and the work \cite{vw:at} with the prefix P-.

\section{Tropical Lagrangians in almost toric manifolds}
\label{troplag}

In this section is to extend the construction of 
Lagrangians associated to tropical graphs in 
Hicks \cite{hicks:realizability} and Mikhalkin
\cite{mikhalkin:examples} to the case of almost toric manifolds, 
and describe the Manin collections associated to simply-laced systems in monotone del Pezzo surfaces. 
The tropical graphs corresponding to the Lagrangians are required to have certain directions at the focus-focus singularities.  

\subsection{Almost toric manifolds}

An almost toric manifold is equipped with a singular Lagrangian fibration so that the fibers are either tori or nodal tori:

\begin{definition}  \label{locmod} An almost toric structure on a symplectic manifold  $(X,\omega)$ is a smooth map $\Phi: X \to B$ to a manifold $B$ so that the map $\Phi$ is given locally as follows:  For any $x \in X$, there exist Darboux coordinates $(q_1,\ldots, q_n,p_1,\ldots,p_n)$ on an open neighborhood $U$ of $x$ so that the restriction of $\Phi$ to $U$ is a product $\Phi_1 \times \ldots \times \Phi_k$ of maps $\Phi_j, j = 1,\dots, k$, each of which is of the following form:
\begin{enumerate}
\item {\rm (Linear)} $\Phi_j(q_j,p_j) = p_j$;
\item {\rm (Elliptic)} $\Phi_j(q_j,p_j) = \hh (q_j^2 + p_j^2) $;
\item {\rm (Focus-focus)}  $\Phi_j(q_j,p_j, q_{j+1},p_{j+1}) = (q_j p_j  + q_{j+1} p_{j+1}, q_j p_{j+1} - q_{j+1} p_j )$.
\end{enumerate}
\end{definition}

\begin{figure}[ht]\begin{center} 
\scalebox{.7}{\includegraphics{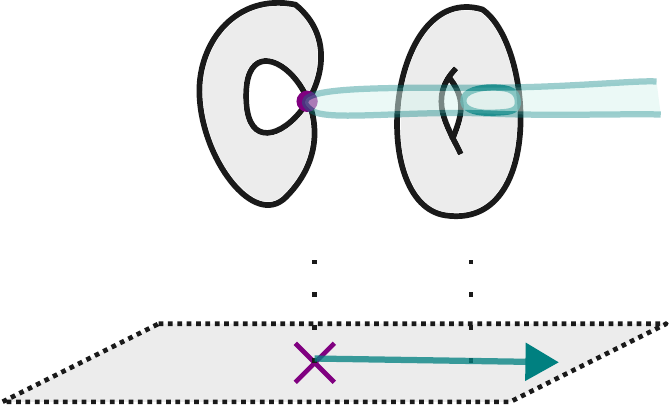}}
\end{center} 
\caption{A focus-focus singularity and one of the Lagrangian vanishing thimbles} 
\label{ffsing}
\end{figure}

We are mostly concerned with the case of almost toric four-manifolds. Near any focus-focus singularity there is a pair of holomorphic disks, given as the unstable and stable manifolds for the first componen of $\Phi_j$, as well as as a pair of Lagrangian {\em vanishing thimbles}, the anti-diagonals in the local model, whose images under the projection are shown in Figure \ref{rotate}.
\begin{figure}[ht]\begin{center} \scalebox{.5}{
\begingroup%
  \makeatletter%
  \providecommand\color[2][]{%
    \errmessage{(Inkscape) Color is used for the text in Inkscape, but the package 'color.sty' is not loaded}%
    \renewcommand\color[2][]{}%
  }%
  \providecommand\transparent[1]{%
    \errmessage{(Inkscape) Transparency is used (non-zero) for the text in Inkscape, but the package 'transparent.sty' is not loaded}%
    \renewcommand\transparent[1]{}%
  }%
  \providecommand\rotatebox[2]{#2}%
  \newcommand*\fsize{\dimexpr\f@size pt\relax}%
  \newcommand*\lineheight[1]{\fontsize{\fsize}{#1\fsize}\selectfont}%
  \ifx\svgwidth\undefined%
    \setlength{\unitlength}{448.02073081bp}%
    \ifx\svgscale\undefined%
      \relax%
    \else%
      \setlength{\unitlength}{\unitlength * \real{\svgscale}}%
    \fi%
  \else%
    \setlength{\unitlength}{\svgwidth}%
  \fi%
  \global\let\svgwidth\undefined%
  \global\let\svgscale\undefined%
  \makeatother%
  \begin{picture}(1,0.69424025)%
    \lineheight{1}%
    \setlength\tabcolsep{0pt}%
    \put(0,0){\includegraphics[width=\unitlength,page=1]{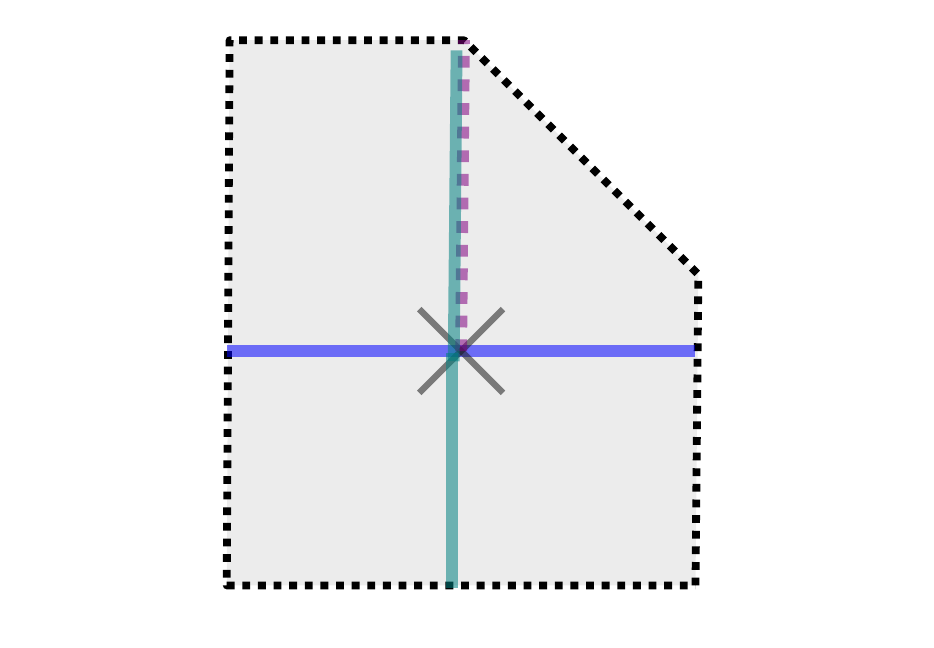}}%
    \put(0.77141664,0.30704588){\color[rgb]{0,0,1}\makebox(0,0)[lt]{\lineheight{1.25}\smash{\begin{tabular}[t]{l}holomorphic disk\end{tabular}}}}%
    \put(-0.0024195,0.3120511){\color[rgb]{0,0,1}\makebox(0,0)[lt]{\lineheight{1.25}\smash{\begin{tabular}[t]{l}holomorphic disk\end{tabular}}}}%
    \put(0.38949634,0.67389034){\color[rgb]{0,0.50196078,0.50196078}\makebox(0,0)[lt]{\lineheight{1.25}\smash{\begin{tabular}[t]{l}Lagrangian disk\end{tabular}}}}%
    \put(0.39846543,0.00553213){\color[rgb]{0,0.50196078,0.50196078}\makebox(0,0)[lt]{\lineheight{1.25}\smash{\begin{tabular}[t]{l}Lagrangian disk\end{tabular}}}}%
  \end{picture}%
\endgroup%
}
\end{center} \caption{Holomorphic and Lagrangian disks near a focus-focus singularity}  \label{rotate}
\end{figure}
The holomorphic and Lagrangian disks are related by a hyper-K\"ahler rotation.  Recall \cite{hitchin:hk} that a hyper-K\"ahler manifold is a datum $(X,g,I,J,K)$ consisting of a smooth manifold $X$, necessarily of some dimension $\dim(X)$ that is a multiple of $4$, and compatible K\"ahler structures 
\[ I,J,K: TX \to TX \] 
satisfying the relations 
\[ I^2 = J^2 = K^2 = - \on{Id}, \quad IJ = K, JK = I, KI = J .\]
 The two-forms 
\[ \omega_I = g( \cdot, I \cdot), \omega_J = g(\cdot, J \cdot), 
\omega_K = g(\cdot, K \cdot) \in \Omega^2(X) \] 
are symplectic, and the hyper-K\"ahler rotation 
\[ (X,g,I,J,K) \mapsto (X,g,J,I,-K) \] 
interchanges special Lagrangians with respect to the form $\omega_I$
with holomorphic submanifolds with respect to $I$ as in Strominger-Yau-Zaslow \cite{strominger}.  Explicitly, let
\[ X = \C^2 = \{ (q_1 + i p_1, q_2 + i p_2 ) \}  \] 
and let $I,J$ be the complex structures defined by 
\[ I(x_1,x_2) = (ix_1,i x_2), \quad J(x_1,x_2) = (-i \ol{x_2},ix_1) .\]
The almost complex manifold $(X,J)$ may be identified with $(X,I)$ by the map 
\begin{equation} \label{eq:hk} \phi: \C^2 \mapsto \C^2, \quad ( q_1 + ip_1, q_2+ i p_2) \mapsto ( q_1 - ip_2, q_2 + ip_1)  .\end{equation} 
Since $\omega(D\phi v,D \phi Jv) = 0$, 
for any vector $v \in TX $, the rotation maps any embedded holomorphic curve to a Lagrangian  submanifold.  

We now turn to the global picture for almost toric four-manifolds.    Via the Arnold-Liouville theorem as explained in Duistermaat \cite{duist:global}, the complement $B - B^{\on{foc}}$ of the focus-focus values has an affine
 structure so that $\Phi$ is locally the moment map for a Hamiltonian torus action over
 $B - B^{\on{foc}}$.     The image $\Phi(X)$ in $B$ is the {\em moment polytope} of $X$.
 The set $\Phi(X)$ is locally polyhedral on $B - B^{\on{foc}}$
 with respect to the given affine structure.   The monodromy of a small loop around each  critical value $b \in B^{\on{foc}}$ is an automorphism of the fiber given by 
\[ \alpha_b \in \Aut(\Phi^{-1}(b)) \cong \Aut(TB_\Z), \quad \alpha_b (\nu) = \nu + \lan \Sigma_b, \nu \ran 
\Pi_b
\]
for some vectors 
\begin{equation} \label{eq:dirshear}
\Sigma_b \in T^\dual_b B , \quad \Pi_b \in T_b B \end{equation}
defining the hyperplane and direction of the shear, respectively.
We choose a collection of hyperplanes 
\[ \ul{C} = (C_i \subset B, i \in I) \] 
called {\em branch cuts} (these are rays in the two-dimensional case)  associated to the components of $B^{\on{foc}}$.  Each branch cut $C_i$
connects a focus-focus value $b \in B^{\foc}$ to the boundary $\partial \Phi(X)$ of the moment polytope.   
The complement $B - B^{\on{foc}} - \cup_i C_i  $ has trivial monodromy and so, if simply connected, may be embedded in a vector space.  
%
%
%
The codomain $B$ equipped with the moment polytope $\Phi(X)$, the  focus-focus values $B^{\on{foc}}$ and branch cuts $\ul{C}$ is the {\em almost toric base diagram} of the almost toric manifold $X$. 

By a {\em del Pezzo surface} we mean a projective Fano surface.  As del Pezzo showed (see Kollar-Smith-Corti \cite{kollarsmithcorti}) any such surface is either a quadric surface or obtained by blowing up at most eight points in general position on the projective plane. By a result of McDuff \cite{mcduff:blowup}, the monotone symplectic form in the first Chern class $c_1(X)$ is unique up to symplectomorphism.  The {\em degree} of a del Pezzo surface $X = \Bl^k \P^2$ is the self-intersection number of the canonical class
\[ c_1(\Bl^k \P^2)^2 = c_1(\P^2)^2 - k  = 9 - k .\]
In the case of  monotone del Pezzo's, we may take $\Phi(X)$ to be a convex polytope in $\R^2$ so that each shear has the direction of a ray connecting the barycenter with a vertex, as in Vianna \cite{vianna:dp}.    Some of the almost toric diagrams for 
del Pezzo surfaces studied in \cite{vianna:dp} are shown in Figure \ref{triang2}.

\subsection{Realizability for tropical Lagrangians}

We introduce tropical Lagrangians in 
almost toric manifolds, following Hicks \cite[Section 4]{hicks:dp} \cite{hicks:realizability} \cite[Section 7]{hicks:thesis} and Mikhalkin \cite{mikhalkin:examples}
in the toric case;  note that Matessi \cite{matessi:trop} has generalized the constructions to tropical hypersurfaces but here we are restricting to tropical Lagrangians corresponding to graphs. 

\begin{definition}   Let $n > 0 $ be an integer.  An {\em affine manifold} is a smooth $n$-manifold $B$ equipped with a torsion-free flat connection on $TB$, so that the transition maps with respect to flat coordinates are constant maps with values in $\R^n \ltimes GL(n,\R)$.   An {\em integral affine manifold} is an affine manifold with the additional structure of a covariant-constant family of lattices $T_{b,\Z} B \subset T_b B, b \in B$ so that the transition maps may be taken to lie in $\R^n \ltimes GL(n,\Z)$.  An {\em integral affine two-manifold with singularities} is a topological two-manifold 
$B$, a finite set $B_0 \subset B$ and an integral affine structure on the complement $B - B_0$. 
\end{definition}

We have in mind the case that $B_0$ is the set of focus-focus  values for an almost toric moment map.

\begin{definition}  \label{def:tropgraph} Let $B$ be an integral affine two-manifold with singularities.  A {\em tropical graph} in $B$ is a graph 
\[ \Gamma = (\Ver(\Gamma),\Edge(\Gamma)) \] 
equipped with 
\begin{enumerate}
\item $\iota: \Ver(\Gamma) \to B$ and
\item for each $e \in \Edge_\black(\Gamma)$ joining vertices, a geodesic 
\[ \gamma_e: [0,\ell(e)] \to B \] 
joining $\iota(v_-)$ and $\iota(v_+)$ given by a path $\gamma_e$ with some covariant constant direction 
\[ \cT(e) := \ddt \gamma_e(t) \in T_{\gamma(t),\Z} B \]
in the integral lattice $T_{\gamma(t),\Z} B, t \in (0,\ell(e))$.
\end{enumerate}
\end{definition}

\begin{definition}  A tropical graph $\Gamma$ is {\em balanced} 
if for any vertex $\nu$ with at least two adjacent
edges $\eps \ni v$  the {\em balancing condition}
\[ \sum_{\eps \ni \nu} \cT(\eps) = 0 .\]
holds.
\end{definition}

We associate to certain tropical graphs Lagrangian submanifolds following Mikhalkin \cite{mikhalkin:examples}.  Let $\Lambda$ be a tropical graph in the moment polytope $\Phi(X) \subset B$ of an almost toric diagram with image of the focus-focus singularities denoted $B^{\on{foc}}$.   For a face $Q$ of the polytope $\Phi(X)$, denote by $T^\dual_{Q_i} B$ the cotangent space of $B$ at any point $q \in Q$; these are identified via the natural affine structure.
Therefore, the tangent spaces to $Q_i$ are canonically isomorphic.  Our first examples will be embedded graphs; however in the sequel \cite{wo:man} we consider immersed graphs $\Lambda$.

\begin{definition} A direction $\cT(\eps)$ of an
edge $\eps \in \Edge(\Lambda)$ at a codimension 
two face $Q$ of the moment polytope $\Phi(X)$ contained in facets $Q_i,Q_j \subset \Phi(X)$ with primitive normal vectors 
\[ \lambda_i  \in T^\dual_{q_i} B, \ q_i \in Q_i ,\quad \lambda_j \in T_{q_j}^\dual B, q_j \in Q_j \] 
is a {\em bisectrice} if the pairings 
\[ \lan \cT(\eps),\lambda_i \ran = \lan \cT(\eps),\lambda_j \ran = 1 \] 
of the direction $\cT(\eps)$ with the normal vectors
to the corresponding facets $\nu_i,\nu_j$ are equal to 1.
\end{definition}

\begin{definition} \label{def:compat} 
A tropical graph $\Lambda \subset \Phi(X)$ is {\em allowable} if 
\begin{enumerate}
\item all univalent vertices are at the boundary  $\partial \Phi(X)$ or at 
the images $B^{\on{foc}}$ of focus-focus singularities;
\item all vertices $\nu \in \Ver(\Gamma)$ have valence at most $3$; 
\item if $\nu \in \Ver(\Gamma)$ has valence $3$ then the corresponding Mikhalkin determinant $m(\nu)$ is equal to $1$;
\item the edges $\eps \in \Edge(\Lambda)$ do not contain any focus-focus values
$b \in B^{\foc}$ in their interiors $\eps^\circ \subset \eps$;
\item  if $\nu \in \Ver(\Lambda)$ is univalent then either $\Phi(\nu)$ in a codimension two boundary face $Q$ of $\partial \Phi(X)$ in which case the adjacent directions $\cT(\eps)$ are a {\em bisectrice} as defined above; and
\item if $\nu \in \Ver(\Lambda)$ is univalent and 
equal to a focus-focus value $b \in B^{\foc}$, in which 
case the direction $\pm \cT(\eps)$ of the adjacent edge $\eps \in \Edge(\Lambda)$ is equal to the direction
$\Pi_b \in TB$ of the associated shear from \eqref{eq:dirshear}; 
\footnote{We differ from Mikhalkin \cite{mikhalkin:examples} in that we do not allow univalent vertices on facets of the moment polytope; 
these require non-orientable local models.} 
\end{enumerate}
A tropical graph $\Lambda$ is {\em primitive} if
\begin{enumerate}
    \item each edge direction $\cT(\eps), \eps \in \Edge(\Lambda)$ is a primitive lattice vector in $T_\Z B$, 
    \item there are no interior vertices $v \in \Ver(\Lambda)$ of valence $|v|$ more than three, and 
    \item each boundary vertex $\nu \in \Ver(\Lambda)$ has valence 
one if  $\nu$ lies in a codimension one or two face $Q$ of $\Phi(X)$.   
\end{enumerate}
\end{definition}

\begin{figure}[ht]\begin{center} \scalebox{.5}{\includegraphics{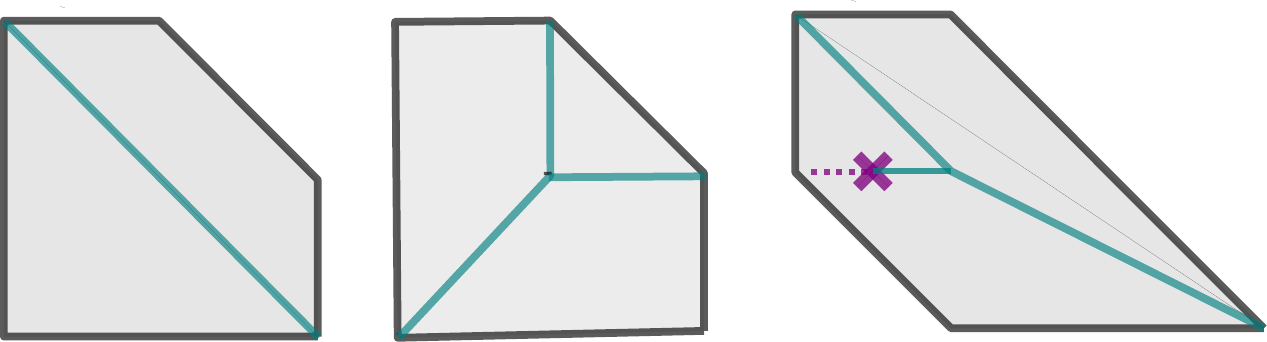}}
\end{center} \caption{Tropical graphs of tropical Lagrangian spheres in $\Bl^2 \P^2$}  \label{fig:prehind}
\end{figure}

\begin{definition} For any tropical graph $\Lambda$ with a trivalent vertex $\nu \in \Ver(\Lambda)$, define the \em{local self-intersection number}
\[ \delta(\nu) = \hh (m(\nu) - 1) \]
where $m(\nu)$ is the determinant from Definition 
\ref{def:mv} \eqref{mv:pants}.
\end{definition}

Choose a metric on $B$ and for $\delta > 0$ let 
\[ U_\delta(\Ver(\Lambda)) \subset B \] 
denote an $\delta$-neighborhood around the vertices $\Ver(\Lambda)$.

\begin{definition} \label{def:realize}
A tropical graph $\Lambda \subset \Phi(X)$ is {\em Lagrangian realizable} if there
exists a family of proper Lagrangian immersions, called {\em Lagrangian realizations}
\[ \iota_\rho:  L \to X,  \ \rho \in (0,\rho_0) \] 
smoothly dependent $\rho$ with the
following properties:   For any $\delta > 0 $ there exists $\kappa_0$ so that  for $\kappa < \kappa_0$ 
\begin{enumerate}
\item $\Phi(\iota_\kappa(L))$ is contained in the union of $\Lambda$
with $U_\delta(\Ver(\Lambda))$;
\item for each $\mu$ in $\Lambda - U_\delta(\Ver(\Lambda))$, 
the intersection of $L$ with $\Phinv(\mu) \cong T^n$ is an affine 
subtorus of the form $a(\mu) A$ for some codimension one torus $A \subset T^n$ and $a(\mu) \in A$; and 
\item for every vertex $v \in \Ver(\Lambda)$,
the inverse image $\Phinv(U_\delta(v))$ is diffeomorphic to the product of a
two-sphere $S^2$ with some number of holes removed 
\[ S^{2} - \cup_{i=1}^k B_{\kappa_i}(v_i) \] 
times a torus $T^{n-2}$, if $v$ does not map to $B^{\on{foc}}$;
or a transverse self-intersection $(\R^2 \cup i \R^2) \times (S^1)^{n-2}$, if $v$
maps to $B^{\on{foc}}$.
\end{enumerate}
We call any realization of a tropical graph a {\em tropical Lagrangian}.
\end{definition}


\begin{theorem} \label{thm:realize}
(following Mikhalkin \cite{mikhalkin:examples} in the toric case, Hicks \cite[Definition 7.1.0.1]{hicks:thesis}) Any allowable embedded primitive tropical graph  $\Lambda$ in the image $\Phi(X)$ of an almost toric fibration $X$ is tropically realizable by a Lagrangian $L$ whose genus is equal to the first Betti number of $\Lambda$, and whose self-intersection number is the sum of the local self-intersections
\[ [L].[L] = \sum_{v \in \Ver(\Lambda)} \delta(v) .\]
\end{theorem}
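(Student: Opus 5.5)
Following Mikhalkin and Hicks, I would build $L$ by gluing local Lagrangian pieces, one over each edge and vertex of $\Lambda$, and then check the topology and the self-intersection count piece by piece.

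\textbf{Edges.} Over an edge $\eps$ away from $U_\delta(\Ver(\Lambda))$, take the conormal-type torus bundle. Let $A_\eps \subset \Phinv(\mu)$ be the codimension-one subtorus annihilating $\cT(\eps)$; this is where primitivity of $\cT(\eps)$ is used, so that $A_\eps$ is connected. The piece is
\[ L_\eps = \bigcup_{\mu \in \eps} a(\mu) A_\eps .\]
In action-angle coordinates $(p,q)$ this set is Lagrangian exactly when the translation parameter $a(\mu)$ is locally constant. This gives a cylinder $S^1 \times \eps^\circ$ in dimension four, and property (2) of Definition~\ref{def:realize} holds. Since edges contain no focus-focus values in their interiors (allowability (4)), the construction takes place in a toric chart. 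Crossing a branch cut is harmless, because the monodromy $\alpha_b$ acts by an integral affine map that carries $\cT(\eps)$, and hence $A_\eps$, consistently.

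\textbf{Vertices.} The local models depend on the type of vertex.
\begin{itemize}
\item At a trivalent interior vertex $\nu$, use Mikhalkin's tropical Lagrangian pair of pants (or Matessi's). It is obtained by applying an $SL(2,\Z)$ map to the standard pants over the tropical line, which is realized for instance as the real positive locus deformed by a hyperkähler-type rotation. When $m(\nu)=1$ this map is invertible over $\Z$, so the model is embedded. In general it is a $m(\nu)$-fold quotient of the standard pants, producing $\delta(\nu)$ double points; allowability (3) forces $m(\nu)=1$, but the formula is stated in general.
\item At a univalent vertex on a codimension-two face, the bisectrice condition lets us cap the circle by a Lagrangian disk. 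Locally this is the disk $\{(z_1,\bar z_1)\}$-type antidiagonal in $\C^2$ with moment map $(|z_1|^2,|z_2|^2)$, collapsing the circle of direction orthogonal to $(1,1)$.
\item At a focus-focus value $b$, the edge direction equals $\Pi_b$. The circle in $A_\eps$ is therefore the vanishing cycle, and we cap it with the Lagrangian vanishing thimble from Figure~\ref{rotate}, obtained via the hyperkähler rotation \eqref{eq:hk}.
\end{itemize}

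\textbf{Gluing.} Each vertex model is asymptotically a cylindrical piece $a A_\eps$ over a segment in the direction $\cT(\eps)$, so it matches the edge pieces after adjusting the constants $a(\mu)$ and the vertex positions by translation. The main obstacle is the flux and translation matching. The constants $a(\mu)\in A$ along the edges must be chosen consistently around cycles of $\Lambda$, and near each pair of pants the three legs impose a linear condition: the sum of the phases must be fixed. I would handle this the way Mikhalkin does. Orient the graph, propagate the phases along a spanning tree, and use that each cycle contributes a free circle-valued parameter. I would also check, using the Lagrangian-isotopy freedom to rotate the phases, that the cycle condition imposes no obstruction. In addition, one rescales by $\rho$, shrinking the vertex neighborhoods so that (1) and (3) of Definition~\ref{def:realize} hold as $\rho\to 0$.

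\textbf{Topology and self-intersection.} Once $L$ is built, its topology follows from gluing:
\begin{itemize}
\item each trivalent vertex contributes a pair of pants (Euler characteristic $-1$);
\item each edge contributes a cylinder;
\item each univalent vertex contributes a disk (Euler characteristic $+1$).
\end{itemize}
Hence $\chi(L)=2\chi(\Lambda)$ with the convention that capped ends cancel. This gives $\chi(L) = 2 - 2b_1(\Lambda)$, so the genus is $b_1(\Lambda)$. Finally, $[L].[L]$ is computed by pushing $L$ off itself along a generic section of $T^*L$, realized by shifting the phases $a(\mu)$. Along the edges the pushed copy is disjoint from $L$, so only the vertex models contribute. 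Each pants model contributes its local count $\delta(\nu)=\frac12(m(\nu)-1)$, which is computed by comparing it with the $m(\nu)$-fold cover, while the disks and thimbles contribute $0$. Summing these local contributions gives the stated formula.
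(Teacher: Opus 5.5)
Your construction follows the paper's route: the local models of Example~\ref{ex:locmod} (Lagrangian pants, the diagonal cap at a bisectrice corner, the vanishing thimble along $\Pi_b$) are glued along edges by Mikhalkin's Lemma~\ref{lem:44}. Your genus count $\chi(L)=\#\{\text{univalent}\}-\#\{\text{trivalent}\}=2\chi(\Lambda)$ is fine.

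One correction in the edge step. Take action-angle coordinates with $\cT(\eps)=(1,0)$. The set $\{(t,c,a(t),q_2)\}$ is Lagrangian for \emph{every} smooth $a$, not only locally constant $a$, because $dp_1\wedge dq_1+dp_2\wedge dq_2$ vanishes on $\partial_{p_1}+a'(t)\partial_{q_1}$ and $\partial_{q_2}$. The only constraints are that the edge be straight and that the circle be $A_\eps$. This freedom is exactly what Lemma~\ref{lem:44} uses: you interpolate $a$ along each edge between the phases imposed by the two vertex models. So there is no phase or flux matching problem around cycles of $\Lambda$, and your spanning-tree argument is unnecessary. As written it is also backwards: if phases really had to be constant, each cycle would impose a closing condition rather than supply a free parameter.

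The genuine gap is the self-intersection step. Pushing $L$ off along a generic section of $T^*L$ computes the homological square, and by Poincar\'e--Hopf the zeros of such a section count $\chi(L)$. In particular the caps do not contribute $0$: the cap $\{(z,\bar z)\}\subset\C^2$ and its phase shift $\{(z,e^{is}\bar z)\}$ meet transversally at the origin. For a tree $\Lambda$ the realization is an embedded sphere, and your count would give $[L]\cdot[L]=0$. But any embedded Lagrangian has $[L]\cdot[L]=-\chi(L)$, which is $-2$ for a sphere.

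So $\sum_v\delta(v)$, which vanishes for allowable graphs, is not the homological square. As in Mikhalkin, and as the paper uses it in Example~\ref{ex:hind} (a Lagrangian sphere with ``self-intersection zero''), it is the number of transverse double points of the realizing immersion. That is proved by a local count, not a push-off:
\begin{itemize}
\item since $\Lambda$ is embedded, double points can only occur inside a single local model;
\item edge pieces, caps and thimbles are embedded;
\item a trivalent vertex of multiplicity $m$ is modelled on the image of the standard pants under the $m$-to-one map of tori given by the edge directions, which has $\frac12(m-1)$ double points (the interior lattice points of the dual triangle, by Pick).
\end{itemize}
Allowability forces every $m(\nu)=1$, so the realization is embedded.
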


 Mikhalkin \cite{mikhalkin:examples} proves the special case of Theorem \ref{thm:realize} for toric varieties.   The proof in both this case and in the toric case relies on \cite[Lemma 4.4]{mikhalkin:examples} which describes how to patch together the local models for the Lagrangian near the vertices:  Let $\C^\times := \C - \{ 0 \}.$

\begin{lemma} \label{lem:44} Let $\Phi:(\C^\times)^n \to \R^n$ be the standard moment map arising from the identification
$(\C^\times)^n \cong (T^*S^1)^n$.  Let $\eps_j \subset \R^n, j = 1, 2$ be two disjoint open intervals parallel to the same integer vector $\cT(\eps_j) \in  \Z^n$ and $L_j \subset (\C^\times)^n$  be two smooth connected Lagrangian submanifolds such that $\Phi(L_j ) \subset E_j$ , and $L_j$ is relatively closed in $\Phinv(E_j)$. There exists a Lagrangian submanifold $L \subset (\C^\times)^n$  diffeomorphic to $\R \times (S^1 )^{n-1}$
such that $L_1$ and $L_2$ are its submanifolds if and only if $E_1$ and $E_2$ belong to the same line $E \subset \R^n$.  Furthermore, if $E_1, E_2 \subset E $ then the Lagrangian submanifold $L$ can be chosen so that $\Phi(L) \subset E$.
\end{lemma}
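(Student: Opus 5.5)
The plan is to reduce everything to the geometry of the coisotropic submanifold $\Phinv(E) \subset (\C^\times)^n$ lying over an affine line $E$, and then to use a flux (Liouville class) argument for the ``only if'' direction. We use coordinates $(p,\theta) \in \R^n \times T^n$ with $\omega = \sum_i dp_i \wedge d\theta_i$ and $\Phi(p,\theta) = p$. Let $v = \cT(\eps_j) \in \Z^n$ be the common primitive direction, and write $E_j$ for the intervals $\eps_j$.

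\emph{Step 1: structure of each $L_j$.} For any line $E$ with direction $v$, the preimage $\Phinv(E)$ is a coisotropic submanifold of dimension $n+1$. Its characteristic foliation is spanned by the Hamiltonian vector fields of the linear functions $\lan \xi, p\ran$ with $\xi \in v^\perp$. These generate the action of the rational codimension-one subtorus $A = \exp(v^\perp) \subset T^n$; it is rational because $v$ is integral. An $n$-dimensional Lagrangian contained in a coisotropic submanifold contains the characteristic distribution. Since $L_j$ is connected and relatively closed in $\Phinv(E_j)$, it follows that $L_j$ is $A$-invariant. Therefore
\[ L_j = \{ (p,\theta) \ : \ p \in E_j, \ \theta \in a_j(p) A \} \]
for a smooth map $a_j : E_j \to T^n/A \cong S^1$. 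Conversely, a direct computation shows that for any smooth $a : E \to T^n/A$ the set $\{ (p, \theta) : p \in E, \theta \in a(p) A\}$ is Lagrangian. Indeed, its tangent vectors have the form $(v, c)$ and $(0,\xi)$ with $\xi \in v^\perp$; the form $\omega$ vanishes on pairs of the second kind trivially, and on a mixed pair it equals $\lan v, \xi\ran = 0$.

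\emph{Step 2: the ``if'' direction and the final clause.} Suppose $E_1, E_2 \subset E$. We extend $a_1 \sqcup a_2$, after shrinking the $E_j$ slightly, to a smooth map $a : E \to T^n/A$. This is always possible because the target is a circle and the gap between $E_1$ and $E_2$ is an interval. The resulting set $L$ from Step 1 is a Lagrangian diffeomorphic to $\R \times (S^1)^{n-1}$, it contains $L_1$ and $L_2$, and it satisfies $\Phi(L) \subset E$.

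\emph{Step 3: the ``only if'' direction.} This is the main step. Suppose $L \cong \R \times (S^1)^{n-1}$ contains both $L_j$, but the lines through $E_1$ and $E_2$ are distinct and parallel. Pick points $p_j \in E_j$; then $p_1 - p_2 \notin \R v$. Consider the Liouville form $\lambda = \sum p_i\, d\theta_i$. Its restriction $\lambda|_L$ is closed, since $L$ is Lagrangian.

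Since $A$ is rational, we can choose a lattice vector $\gamma \in v^\perp \cap \Z^n$ with $\lan p_1 - p_2, \gamma \ran \neq 0$. Let $\gamma_j$ be the loop in the fiber $\{p_j\} \times a_j(p_j) A \subset L_j$ traced out by $\gamma$. Then
\[ \int_{\gamma_j} \lambda = 2\pi \lan p_j, \gamma\ran . \]
Both $\gamma_1$ and $\gamma_2$ are images of the same class under $H_1$ of the fiber tori. Since $H_1(L) \cong H_1((S^1)^{n-1})$, they are homologous in $L$, which should follow from the fibers being incompressible and the product structure. This forces $\lan p_1 - p_2, \gamma\ran = 0$, a contradiction.

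I expect the one point needing care to be the homology identification. One must justify that the $A$-orbits in $L_1$ and $L_2$ both represent the fiber class of $L \cong \R \times (S^1)^{n-1}$. I would argue this by projecting to $T^n$: the orbits map to the same subtorus class $A$, and $H_1(L) \to H_1(T^n)$ is injective on the fiber class.
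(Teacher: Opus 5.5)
The paper does not prove this lemma; it quotes it from Mikhalkin's Lemma 4.4, so there is no in-paper argument to compare with, and I am judging yours on its own. Your Step 3, the ``only if'' direction and the real content, is correct. It uses only the part of Step 1 you actually justify. A Lagrangian inside the coisotropic $\Phi^{-1}(E_j)$ contains the characteristic directions, and relative closedness makes $L_j$ meet each compact connected $A$-orbit in an open and closed subset, so $L_j$ is $A$-invariant. (You should pick $p_j\in\Phi(L_j)$, not merely $p_j\in E_j$.)

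The homology step you flag is easier than you suggest, and it needs no identification of the orbits with the product fibre of $L\cong\R\times(S^1)^{n-1}$. The composite $H_1(\{p_1\}\times\theta_1A)\to H_1(L)\to H_1(T^n)$ has image the rank-$(n-1)$ lattice $v^\perp\cap\Z^n$. So $H_1(L)\to H_1(T^n)$ is a homomorphism out of $\Z^{n-1}$ with image of rank $n-1$, hence injective. Since $\gamma_1,\gamma_2$ both map to $\gamma$, they are homologous in $L$, and closedness of $\lambda|_L$ gives $\langle p_1-p_2,\gamma\rangle=0$.

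The gap is in Steps 1--2.
\begin{itemize}
\item The assertion $L_j=\{(p,\theta): p\in E_j,\ \theta\in a_j(p)A\}$ for a smooth $a_j:E_j\to T^n/A$ does not follow from $A$-invariance. All you know is that $L_j/A$ is some closed connected curve in the reduced cylinder $E_j\times T^n/A$, and the $A$-saturation of \emph{any} such curve is Lagrangian. The curve can be a circle, an arc with both ends at the same end of $E_j$, or a zigzag. It is a graph over $E_j$ exactly when $\Phi|_{L_j}$ is a submersion onto the line; then $L_j/A\to E_j$ is a proper local diffeomorphism.
\item The phrase ``after shrinking the $E_j$'' changes the conclusion. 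The $L$ you build contains $L_j\cap\Phi^{-1}(E_j')$, not $L_j$.
\end{itemize}

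Neither point can be repaired under the hypotheses as literally stated, because the ``if'' direction is then false.
\begin{itemize}
\item Take $L_1=\{p_0\}\times T^n$. It satisfies every hypothesis, but a compact $n$-dimensional submanifold of a connected $n$-manifold is the whole manifold, so no $L\cong\R\times(S^1)^{n-1}$ contains it.
\item Even for graphs the conclusion can fail. Suppose $a_1$ spirals with $|a_1'|\to\infty$ at both ends of $E_1$. At any limit point of $L_1$ over $\partial E_1$, an embedded $L\supset L_1$ would have tangent plane equal to that of the torus fibre, so it would be locally a graph over the $\theta$-coordinates. But $L_1$ contains distinct points with the same $\theta$ accumulating there. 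Hence $L_1$ is closed in $L$, forcing $L=L_1\not\supset L_2$.
\end{itemize}

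So the missing ingredient is a hypothesis, not an argument. Assume each $L_j$ is the $A$-saturation of the graph of some $a_j$ that extends smoothly to a neighbourhood of $\overline{E}_j$, and that $\overline{E}_1\cap\overline{E}_2=\emptyset$. That is all that is needed when gluing local models along an edge, where one is free to shorten the pieces. Under that hypothesis your Step 2 works verbatim, with no shrinking.
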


 \begin{example}  \label{ex:locmod} The Lemma is proved using various local models. 
 \begin{enumerate}
     \item {\rm (Lagrangian pair of pants)}  \label{item:lpants}
 The model for trivalent vertices
is the Lagrangian pair of pants from \cite[Lemma 4.2, 4.3]{mikhalkin:examples}.    In coordinates $z_j = {q_j + i p_j} \in \C^\times$, $j = 1,2$  the {\em holomorphic pair of pants} given by the hyperplane  
\[ H = \{ z_1 + z_2 - 1 = 0 \}  \subset (\C^\times)^2 \] 
has hyper-K\"ahler rotation in the coordinates $q_1,p_1,q_2,p_2$ (see \eqref{eq:hk}) the {\em Lagrangian pair of pants} 
\[ L = \{ 1 - (q_1- ip_2) + (q_2 + ip_1) = 0  \}  \subset \C^{\times,2} .\]
See Hicks \cite[Section 3]{hicks:realizability}, Matessi \cite{matessi:pants}, and also the definition of the {\em surgery trace} considered in Biran-Cornea \cite{biran:lcob}.   The Lagrangian $L$ has three cylindrical ends, corresponding to the intersections with $z_1  = 0, z_2 = 0$, and $z_1 = z_2 = \infty$.
  \item {\rm (Diagonals)}   The diagonal embedding of $L = \R^2$ in $X = \R^4$
  is a Lagrangian with a single cylindrical end, corresponding to a univalent vertex with a bisectrice edge.
  \item {\rm (Vanishing thimbles)}   Consider the diagonal 
\[ \{ p_1 = q_1, p_2 = q_2 \} \subset \R^4 .
\]
The image under the projection to the base
\[ \{  \Phi(q_1,p_1, q_{2},p_{2}) = (q_1 p_1  + q_{2} p_{2}, q_1 p_2 - q_2 p_1 )  \} \subset \R^2 \] 
is a ray emanating out of the focus-focus value in the shear direction $(1,0)$.  In the case of univalent or bivalent
vertices mapping to the singular set, one can take the Lagrangian to be the vanishing thimble 
$\R^2 \times (S^1)^{n-2}$ resp.  $(\R^2 \cup i \R^2) \times (S^1)^{n-2}$ in the local model 
for the focus-focus singularities.      
 \end{enumerate}
\end{example}

\begin{proof}[Proof of Theorem \ref{thm:realize}]   Given the local models the statement of the first part of the Theorem now follows from Lemma \ref{lem:44}.
The proof of the self-intersection formula is the same as in Mikhalkin \cite{mikhalkin}.
\end{proof}

\begin{remark}  By the construction in the proof of 
Lemma \ref{lem:44}, the Lagrangian $L$ in Theorem \ref{thm:realize} realizing a graph $\Lambda$ is unique up to isotopy. If $L$ is a sphere then $L$ is unique up to Hamiltonian isotopy since the first homology vanishes. 
\end{remark}

\begin{remark}
There are other tropical graphs $\Gamma$ besides those described in Theorem \ref{thm:realize} that are  realizable in a similar sense:  as pointed out to us by Annie Wei, the balancing condition $\pm \cT(e_1) \pm \cT(e_2) \pm \cT(e_3) = 0 $  for any triple of edges $e_1,e_2,e_3$ meeting at a trivalent vertex $v$ of $\Gamma$ is sufficient.  See also the work of Hicks \cite{hicks:realizability}.
\end{remark}

The following seems to be a folklore theorem known among experts:

\begin{proposition} \label{prop:contains} Each monotone del Pezzo surface $X$ contains a Manin configuration of tropical Lagrangian spheres $L_\alpha, \alpha \in S$ of the corresponding simple root system $S$.
\end{proposition}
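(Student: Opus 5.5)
The plan is to work case by case through the list of monotone del Pezzo surfaces: $\P^2$, $\P^1\times\P^1$, and $\Bl^k\P^2$ for $1\le k\le 8$. In each case we produce an almost toric base diagram, taken from Vianna \cite{vianna:dp} or obtained from one of his by nodal slides and mutations, together with a collection of allowable, embedded, primitive tropical graphs $\Lambda_\alpha$ in that diagram, one for each simple root $\alpha \in S$ of the Manin root system listed in Table \ref{tab:eigen}. Theorem \ref{thm:realize} then supplies Lagrangian realizations $L_\alpha$. Each $L_\alpha$ will be a sphere because $\Lambda_\alpha$ is a tree, so its first Betti number vanishes. The self-intersection formula gives $[L_\alpha]\cdot[L_\alpha] = \sum_\nu \delta(\nu) + (\text{contributions of focus-focus ends})$, and this should come out to $-2$.

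The graphs $\Lambda_\alpha$ are built from a small number of patterns.
\begin{itemize}
\item A segment joining two adjacent focus-focus values whose shear directions $\Pi_b$ are parallel. This is the standard visible matching sphere, a union of two vanishing thimbles glued along a tube, as in Example \ref{ex:locmod}.
\item A segment joining a focus-focus value to a codimension-two boundary face along a bisectrice, or joining two bisectrice corners, as in Example \ref{ex:3p2}.
\item A tripod with a trivalent vertex of Mikhalkin multiplicity $1$ whose three legs end at focus-focus values or corners, as in Example \ref{ex:invim} and Figure \ref{triang2}.
\end{itemize}
For a chosen configuration, we first check the allowability conditions of Definition \ref{def:compat}: edge directions match the shear directions at the nodes, edges are bisectrices at corners, and no edge passes through a node in its interior. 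We then check that distinct $\Lambda_\alpha$ meet only where the Dynkin diagram has an edge. The cohomology classes are identified by computing the intersection of $L_\alpha$ with the toric boundary divisors and with the exceptional curves visible in the diagram. This should show $[L_\alpha]\cdot c_1 = 0$ and $[L_\alpha]\cdot[L_\beta] = \pm 1$ or $0$ according to the Cartan matrix. It follows that the classes $[L_\alpha]$ form a system of simple roots of type $S$ inside $c_1^\perp \subset H_2(X;\Z)$, which is the configuration identified by Manin \cite{manin:cubic}.

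For the small cases $\P^1\times\P^1$, $\Bl^2\P^2$, $\Bl^3\P^2$ and $\Bl^4\P^2$ the configuration can be written down directly in a toric or near-toric diagram. Examples of the types needed are the antidiagonal sphere of Example \ref{ex:3p2} and the spheres in Figure \ref{fig:prehind}. For $D_5$, $E_6$, $E_7$ and $E_8$ we use the triangular diagrams of Figure \ref{triang2}. There the nodes cluster along the three rays from the barycenter to the vertices, and segments between consecutive nodes on a ray give the three $A$-chains. A single tripod near the barycenter, with legs along the three shear directions, gives the branching node of the $T_{p,q,r}$ diagram. The lengths of the chains are read off from the number of nodes on each ray.

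I expect the main obstacle to be making the branching tripod compatible with everything else. Its legs must be primitive, must satisfy balancing with determinant $1$, and must end at nodes whose shear directions agree with the leg directions. At the same time the tripod must avoid the chain segments except at the prescribed single intersections. If the shears in Vianna's diagrams do not balance directly, the first thing I would try is to slide nodes along their eigenrays and apply $SL(2,\Z)$ mutations until the three relevant shear directions form a unimodular balanced triple. The fallback is to allow one leg to end at a corner along a bisectrice instead of at a node. Once this choice is made, the remaining verifications are finite intersection-number computations.
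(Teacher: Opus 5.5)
Your route is the paper's: read the configurations off Vianna's diagrams, realize segments between adjacent nodes (or bisectrice corners) and multiplicity-one tripods by Theorem~\ref{thm:realize}, and arrange that the spheres meet only at shared nodes. The paper's proof asserts no more than that the ``obvious'' graphs work. Two of your concrete expectations are off, however.

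\textbf{The triangles give the affine diagrams.} In a triangular diagram, all $\chi(X)=n+3$ nodes of $\Bl^n\P^2$ lie in three clusters of sizes $d_0,d_1,d_2$. Chains plus a tripod therefore give $1+\sum_i(d_i-1)=n+1$ spheres with star-shaped graph $T_{d_0,d_1,d_2}$. That is one more than $\on{rank} c_1^\perp=n$. Since $c_1^\perp$ is negative definite, an allowable configuration of this kind must have graph of affine type, i.e.\ $(d_i)=(3,3,3),(2,4,4),(2,3,6)$ for $n=6,7,8$. So you get Manin's $\tilde E_6,\tilde E_7,\tilde E_8$ (as the paper says), not $E_n$, and your classes $[L_\alpha]$ are linearly dependent. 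You must discard one end sphere to get a system of simple roots. On the other hand, the obstacle you feared does not arise in these triangles. For the cubic, $\on{hull}\{(1,0),(0,1),(-1,-1)\}$ with three $A_2$ clusters, the eigendirections $(1,0),(0,1),(-1,-1)$ are already balanced and unimodular, and the same holds for the degree two and degree one triangles.

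\textbf{$D_5$ cannot come from a triangle.} The same count rules out $D_5$ in any fully nodal triangle. If the tripod were allowable, you would get six spheres whose graph $T_{d_0,d_1,d_2}$, with $\sum d_i=8$, is of finite type. That would give six independent classes in the rank-five lattice $c_1^\perp$, which is impossible. So the tripod is never allowable there. Concretely, in $\on{hull}\{(0,-1),(1,1),(-1,1)\}$ (clusters $4,2,2$), the eigendirections balance only as $2(0,-1)+(1,1)+(-1,1)=0$.

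Your mutation fallback is what is needed here. Mutating one node of the $4$-cluster across the barycentre gives $\on{hull}\{(0,-1),(1,1),(0,1),(-1,0)\}$, with clusters $3,2,2,1$. In it, $(0,-1),(1,1),(-1,0)$ is balanced and unimodular. The tripod with these legs, together with the chains on those three rays, is exactly $T_{3,2,2}=D_5$.

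\textbf{Self-intersection.} Do not try to extract $[L_\alpha]^2=-2$ from the formula in Theorem~\ref{thm:realize}. Its right-hand side vanishes for allowable graphs; it really counts self-intersection points of the realization. Instead, $[L]^2=-2$ and $c_1\cdot[L]=0$ hold automatically for any embedded Lagrangian sphere. So once each $L_\alpha$ is an embedded sphere, only the intersection pattern needs to be checked.
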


\begin{proof}  The configurations may be read off  from the almost toric diagrams constructed in Vianna \cite{vianna:dp}, 
as in Figure \ref{dpall2}.   For example, in the case of $X = \Bl^3(\P^2)$
\begin{itemize} 
\item the graphs $\Gamma_1,\Gamma_2,\Gamma_3$ with no trivalent vertices, each consisting of a single segment, correspond to Lagrangian spheres $L_1,L_2,L_3$ with intersection numbers 
\[ L_i \cdot L_{i+1} = 1 , \quad \forall i \in \Z/3\Z \]
and give an affine $A_2$ root system, while;
\item a graph $\Gamma_0$ with a single trivalent vertex gives an $A_1$ root system.  
\end{itemize}
In the case of $X = \Bl^4(\P^2)$, the obvious graphs $\Gamma_1,\ldots, \Gamma_4$ connecting the focus-focus values and two of the vertices give a collection of Lagrangian sphere $L_1,\ldots, L_4$ forming $A_4$ root system; adding the surgery $L_5$ at each intersection point would give Lagrangians
$L_1,\ldots,L_5$ forming an affine $A_4$ root system.  In the remaining cases, the ``obvious'' tropical Lagrangians give an affine root system of the required type.  
\end{proof}

\begin{figure}[ht]\begin{center} 
\scalebox{.4}{\includegraphics{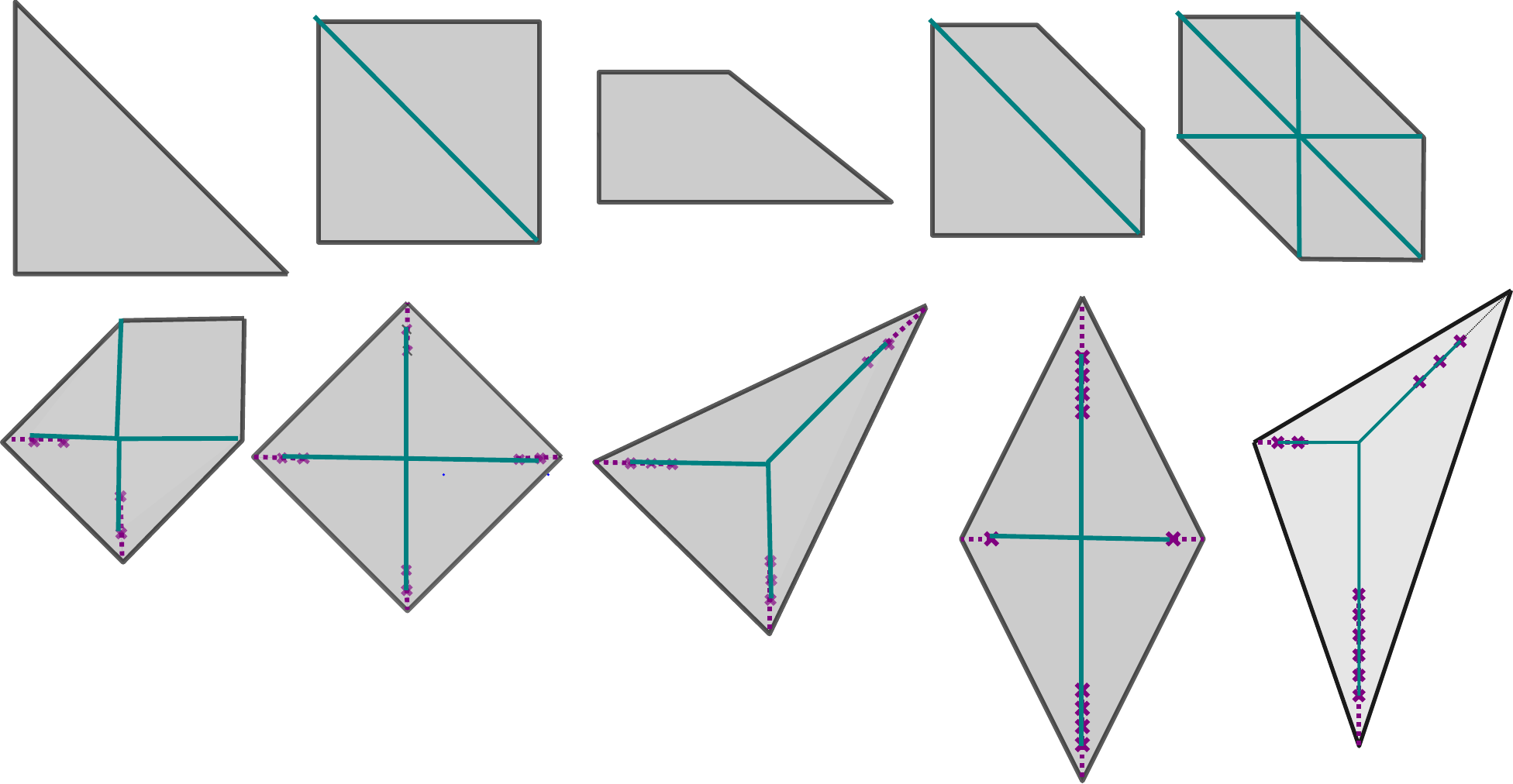}}
\end{center} 
\caption{Graphs of Manin configurations of tropical Lagrangians in del Pezzo surfaces}
\label{dpall2}
\end{figure}

\begin{example}  \label{ex:hind}
Let $X$ be the blow-up of $\P^1 \times \P^1$ at a torus-fixed point corresponding to a vertex of the moment polytope $\Phi(X) = [0,1]^2$.   There
is a Lagrangian sphere $L \subset X$ whose tropical graph $\Lambda \subset \R^2$
is the trivalent graph shown on the right in Figure \ref{fig:prehind}.   In fact,
the Mikhalkin multiplicity at the unique vertex is equal to one, and so the self-intersection computed in Mikhalkin \cite{mikhalkin:examples} is equal to zero. 
\end{example}




\subsection{Isotopies of tropical Lagrangians}
\label{mutatesec}

For the purposes of counting disks in the next section, we wish to study
Hamiltonian isotopies of Lagrangians associated to tropical graphs that change the vertices in the tropical graph:

\begin{theorem} \label{thm:mut} Suppose an almost toric base diagram $B$ is modified by a nodal slide in which a critical value $b \in B^{\on{foc}}$ passes through an edge $\eps$ of the graph $\Lambda$ of a tropical Lagrangian $L$ in $B$, resulting in a new tropical base diagram $B'$.  Then there exists a tropical graph $\Lambda'$ in $B'$ realized by a Lagrangian $L'$  isotopic to $L$, with $\Lambda'$ obtained by adding an edge in the direction of the shear at $b$.\end{theorem}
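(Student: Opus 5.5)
A nodal slide is a one-parameter family of almost toric fibrations $\Phi_t: X \to B_t$, $t \in [0,1]$, on the same symplectic manifold. In this family the focus-focus value $b_t$ moves along a line in the eigendirection $\Pi_b$ of the monodromy. The fibration changes only in a small neighborhood $U$ of the segment swept out by $b_t$. The strategy is to localize everything to $U$: away from $U$ the Lagrangian $L$ stays untouched, and inside $U$ we replace the single segment $\eps \cap U$ by a new local piece of tropical graph, namely the two halves of $\eps$ joined through a vertex that connects to an edge in direction $\Pi_b$. Concretely, the plan runs in three steps.

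\emph{Step 1 (reduction to a local model).} By the standard description of nodal slides (Symington, Leung-Symington), $\Phinv(U)$ is symplectomorphic to a neighborhood in the local focus-focus model of Definition \ref{locmod}, glued to a toric piece $T^*T^2$ over the complement of the branch cut. We choose the branch cut at $b$ to point away from $\eps$. Before the slide, $\eps \cap U$ lies in the toric region, so $L \cap \Phinv(U)$ is the piece of the tropical Lagrangian over a segment, of the form $a(\mu) A$ as in Definition \ref{def:realize}. We then compute how the affine coordinates over $U - \{b_t\}$ change as $b_t$ crosses $\eps$. The crossing shears one side of $\eps$ by $\alpha_b$. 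Hence, in the new diagram $B'$, the two halves of $\eps$ have directions $\cT(\eps)$ and $\alpha_b(\cT(\eps)) = \cT(\eps) + \lan \Sigma_b,\cT(\eps)\ran \Pi_b$. These no longer line up, and balancing forces an extra edge in direction $\lan \Sigma_b,\cT(\eps)\ran\Pi_b$, running from the kink to the new position of $b$. This is the graph $\Lambda'$ in the statement.

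\emph{Step 2 (realizing $\Lambda'$).} We check that $\Lambda'$ is allowable in the sense of Definition \ref{def:compat}. The new edge ends at a focus-focus value in the shear direction, which is exactly condition (6). The new vertex is trivalent, with Mikhalkin determinant $|\det(\cT(\eps), \Pi_b)|$. We need this to be $1$, or else we must allow the generalized balancing of the subsequent remark. We then apply Theorem \ref{thm:realize} to get a realization $L'$. The new edge ending at $b$ is realized by a vanishing thimble (Example \ref{ex:locmod}), and the trivalent vertex by a Lagrangian pair of pants. Topologically, we glue a disk (the thimble) onto an annulus along a pair of pants, so $L'$ has the same topology as $L$.

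\emph{Step 3 (the isotopy).} This is the main obstacle. We must show that $L'$ is isotopic to $L$ inside $X$, rather than merely diffeomorphic to it. I would argue this directly in the focus-focus local model $\C^2$ with $\Phi = (q_1p_1+q_2p_2,\, q_1p_2-q_2p_1)$, using the hyper-K\"ahler rotation \eqref{eq:hk}. After the rotation, the fibration becomes $(z_1,z_2) \mapsto$ roughly $(|z_1|^2-|z_2|^2, \text{Im}(z_1z_2))$. In these coordinates:
\begin{itemize}
\item the Lagrangian cylinder over a segment passing near $b$ corresponds to a holomorphic curve such as $\{z_1 z_2 = c\}$;
\item the thimble-plus-pants configuration corresponds to a degenerate or smoothed conic.
\end{itemize}
Moving $c$ through the family of smooth conics, or equivalently sliding the node relative to a fixed Lagrangian, then gives an explicit Lagrangian isotopy supported in $\Phinv(U)$. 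Outside a compact set this isotopy agrees with the product cylinders, so it patches to the rest of $L$ by Lemma \ref{lem:44}. Alternatively, one can avoid explicit models: the nodal slide itself is realized by a symplectic isotopy $\psi_t$ of $X$ supported near $\Phinv(U)$, with $\Phi_1 = \Phi_0\circ\psi_1^{-1}$ up to the affine change. Then $\psi_1(L)$ is a Lagrangian whose image under $\Phi_1$ approximates $\Lambda'$, and uniqueness up to isotopy of realizations identifies it with $L'$. Of the two routes, I would try this second one first, since it avoids computation; the remaining difficulty is then justifying that uniqueness locally near the new trivalent vertex.
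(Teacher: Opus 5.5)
Your Steps 1 and 2 are sound in substance. One correction to the reasoning: the two halves of $\eps$ fail to line up in $B'$ not because a cut now crosses $\eps$, which is only a coordinate artifact. The real reason is that parallel transport along $\eps$ now differs by $\alpha_b$ from transport around the path of the node. That discrepancy forces the new edge in direction $\lan\Sigma_b,\cT(\eps)\ran\Pi_b$, and pants plus thimble is again a cylinder. The paper's local model, like your caveat, only covers $|\lan\Sigma_b,\cT(\eps)\ran|=1$.

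The genuine gap is Step 3, which is the whole content of the theorem, and neither of your routes closes it.

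Your preferred second route is circular. The symplectomorphism $\psi_1$ furnished by a nodal slide intertwines $\Phi_0$ and $\Phi_1$ only outside a neighbourhood of the path of the node. If it intertwined them everywhere, up to an affine map that is the identity far away, the node could not have crossed $\eps$. So $\psi_1(L)$ is uncontrolled exactly where the new vertex and thimble must appear. Saying it ``approximates $\Lambda'$'' is the statement to be proved. The uniqueness remark after Theorem \ref{thm:realize} compares two realizations of the same graph; it says nothing about a Lagrangian that merely agrees with a realization outside a compact set.

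Your first route uses the wrong curves. The conics $\{z_1z_2=c\}$ are invariant under the circle action generated by $|z_1|^2-|z_2|^2$, so they are swept out by vanishing cycles $\delta$. The cylinder over $\eps$ is instead swept out by circles $c_\eps$ with $c_\eps\cdot\delta=\pm\lan\Sigma_b,\cT(\eps)\ran\neq 0$. Hence no member of the family can agree with $L$ near $\partial\Phinv(U)$; indeed $\delta$ is null-homologous in $\Phinv(U)$ while $c_\eps$ is not. Moreover, the degenerate member $\{z_1z_2=0\}$ is two planes crossing at the node, not pants plus thimble.

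What is missing is a family that genuinely degenerates to a realization of $\Lambda'$. The paper supplies it in three steps.
\begin{enumerate}
\item It uses Morse theory for $\Phi_2$, for which the node is an index-two critical point. This shows that moving the cylinder across the node amounts to replacing a small disk in $L$ by the vanishing thimble.
\item It produces the Lagrangian isotopy as the hyper-K\"ahler rotation of the sections $u_t(z)=(z,t)$ of $\Bl(\P^1\times\P^1)\to\P^1$, restricted to $\XX=X-Y$. For large $t$ these rotate to the cylinder. As $t\to 0$ the SFT limit has two levels: one in the total space of $\mO(-1)$ with trivalent tropical graph of directions $(-1,-1),(0,1),(1,0)$, which rotates to the Lagrangian pants, and one in the exceptional curve, which rotates to the thimble.
\item Exactness of the isotopy follows because disk classes can be pushed off the modified region, so their areas stay constant.
\end{enumerate}
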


\begin{proof}   We describe how to obtain the new Lagrangian by a handle attachment.  Let $x \in X$ be the focus-focus singularity corresponding to $b$, and let
 $\phi:U \to \R^2$ be a coordinate chart containing $b$.  By assumption, the projection to the base 
$\Phi |_U  $ in the coordinates $\phi$ is given by a pair of functions $  (\Phi_1, \Phi_2)$ so that  the second component $\Phi_2$ is Morse. 
 Let $ \{ \kappa \} \times I \subset U$ be disjoint from the critical value.  By assumption, $L$ is a realization of $\Lambda$, so we may assume $L$ 
is a circle bundle over an interval $ \{ \kappa \} \times I$.  Let $L'$ denote the tropical Lagrangian corresponding to $\{ - \kappa \} \times I$.  The function $\Phi_2$ is Morse and the focus-focus point represents a singularity of $\Phi_2$ of index $2$; as in Milnor \cite{milnor:morse}, gradient flow of $\Phi_2$ defines a diffeomorphism of the level sets of $\Phi_2$ away from the unstable and unstable manifolds.  
Since $L$ intersects the stable manifold in a single point, $L'$ is isotopic to the manifold obtained from $L$ by adding a handle, that is, removing a small disk from $L$ and gluing in an isotropic disk (the vanishing thimble) passing through the critical point.

In order to construct an isotopy of Lagrangians, we construct an isotopy of holomorphic curves and then perform a hyper-K\"ahler rotation.  Let  $X$ be the blow-up $\P^1 \times \P^1$ at $(z,0)$ for some point $z \neq 0, \infty$.    We view $X$ as a Lefschetz fibration over $\P^1$ with a single nodal fiber.  The union $Y$ of
the proper transforms of the divisors 
\[ P^1 \times \{0 \}, \quad \P^1 \times \{ \infty \}, \quad \{ 0 \} \times \P^1, \quad \{ \infty \} \times \{ 0 \}  \]
is naturally identified  $\C^2 - \{ z_1 z_2 = 1 \}$.   Indeed, we may identify $X$ the the blow-up of $\P^1 \times \P^1$
at $(0,0)$.  Removing three divisors gives the torus-invariant chart $\C^2$ containing the nodal point, and then $X - Y$
is identified with the complement of a single fiber of the map $\C^2 \to \C, (z_1,z_2) \mapsto z_1 z_2$. 
In particular, 
\[ \XX := X - Y \] 
is a cylindrical-end manifold with a hyper-K\"ahler structure.  After hyper-K\"ahler rotation we obtain a singular Lagrangian fibration with a single nodal fiber with a single node, giving a local model for a focus-focus singularity.

\begin{figure}[ht]\begin{center} 
\scalebox{.4}{\includegraphics{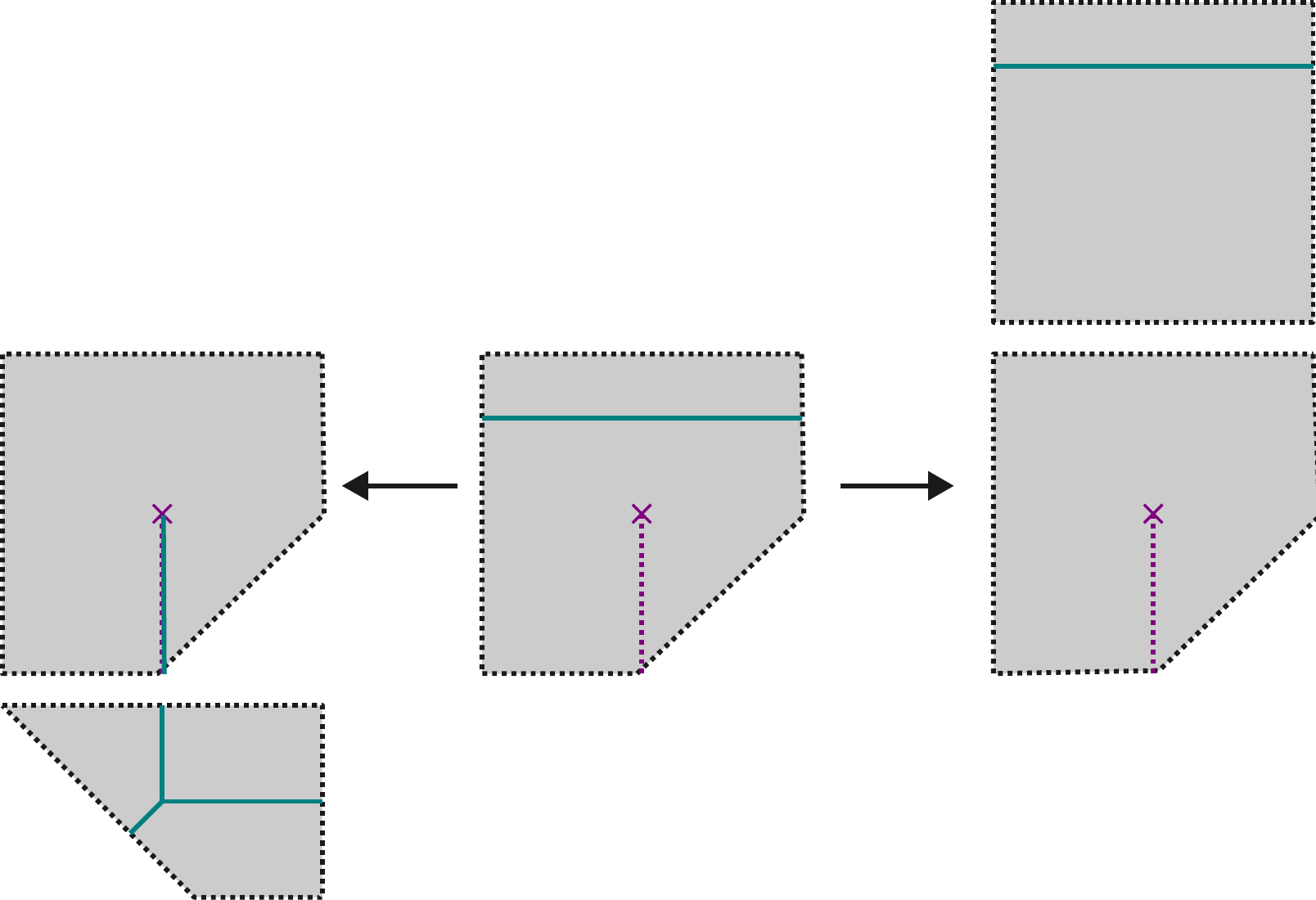}}
\end{center} 
\caption{Behavior of tropical Lagrangians under nodal slides}
\label{fig:mutate}
\end{figure}

We produce the needed family of Lagrangians as a rotation of a family of holomorphic curves.  Let 
\[ u_t: \P^1 \to \XX, \quad z \mapsto (z,t) \] 
be the family of sections taking non-zero values.   The Gromov limit $u_0$ of the family of stable maps $u_t$
is described as follows, and shown in Figure \ref{fig:mutate}.    The image of $u_0$ is the union of the proper transform of the zero section with the exceptional divisor $E$.  Indeed,
 the homology class $[u_0]$ of $u_0$ must equal that of $[u_t]$ by preservation of homology class and the limit must project to $\P^1 \times \{ 0 \}$
in $\P^1 \times \P^1$.  

We now compute the limit in the sense of symplectic field theory, treating the target
as a manifold with cylindrical end.  The symplectic field theory limit of $u_t$ has two levels $(u_+,u_-)$ described as follows.  
\begin{itemize} 
\item The map $u_+$ is the (unique up to isomorphism) relative stable map in the total space of 
the degree-$-1$ line bundle $\mO(-1) \to \P^1$, so that $u_+$ has graph the trivalent graph 
with directions $(-1,-1), (0,1), (1,0)$, and has cylindrical puncture at the fiber 
corresponding to the exceptional curve. 
\item The map  $u_-$ is the map to $\XX$ whose 
image is contained in exceptional curve $E$. 
\end{itemize} 
Indeed, the broken map $(u_+,u_-)$ must map to the stable map $u_0$ of the previous paragraph, under the canonical functor from broken maps to stable maps.  This functor assigns to any neck piece the composition  with the projection $\mO(-1) \to \P^1$,  and assigns to any finite energy map to the complement of the zero section, the map to $X$ obtained by removal of singularities.  In particular, the sft limit of $u_t$ must have at least two components. 
one of which is the map $u_-$.   The map $u_+$ must satisfy a matching condition with $u_-$,
and so meet infinity at the the fiber corresponding the exceptional curve.  By conservation of homology class 
$u_+$ must meet the fibers at $0$ and $\infty$, and have zero intersection with the zero section.   It follows that $u_-$ is the section with a single pole, as claimed.

By hyper-K\"ahler rotation \eqref{eq:hk} in $\XX$, one obtains a family $L_t$ of Lagrangians limiting as $t \to 0$ (in the Gromov sense, pre-rotation) to a gluing $L_\infty$ (in the sense of gluing of pseudoholomorphic curves) of a curve $L_{\infty,-}$ corresponding to a trivalent graph and a punctured disk $L_{\infty,+}$, and as $t \to \infty$ to a cylinder.    That is, as the parameter $t \to 0$,
the curve $L_\infty$ Gromov-converges to the union of $L_{\infty,-}$ and $L_{\infty,+}$.

To check that the Lagrangians are exact isotopic,  it suffices to show that the areas of homotopy classes of disks are unchanged.  Let $L_t$ be an isotopy from $L$ to $L'$.  It suffices to show that for each isotopy class of disks $u:S \to X$ with boundary in $L$ , there exists an isotopy $u_t$ of $u$ bounding $L_t$ so that the areas $A(u_t)$ are constant in $t$. We may assume, by deforming the boundary $u(\partial S) $
away from the attaching sphere, that the disk boundary $u(\partial S)$ lies in the region $V \subset L$ so that
the intersection $V \cap L_t$ is independent of $t$. Indeed the region $V$ is the complement of a small ball in $L$.  Then the constant disk $u_t = u$ extends over the isotopy. 
\end{proof}

\begin{remark} Applying mutations to an Manin configuration $L_\alpha, \alpha \in S$ for a root system $S$ for some base diagram $B$ produces new Manin configurations $L'_\alpha, \alpha \in S$ whose tropical graphs $\Lambda$ lie in the mutated toric base diagram $B'$.  An example in the case of the del Pezzo of degree one shown in Figure \ref{e8mut}. 
\end{remark}

\begin{figure}[ht]\begin{center} 
\scalebox{.8}{\includegraphics{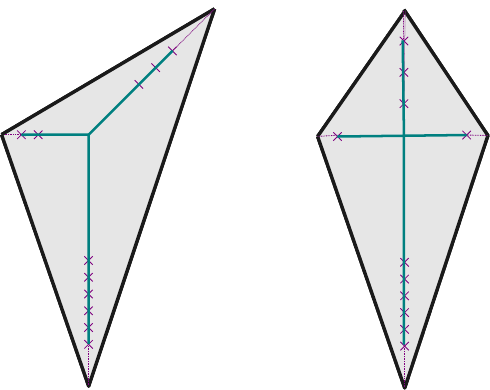}}
\end{center} 
\caption{Mutation of a Manin configuration in the del Pezzo of degree one}
\label{e8mut}
\end{figure}

\begin{proposition} \label{hindprop}  Let $X = \Bl^2 \P^2 = \Bl^1 (\P^1 \times \P^1)$
equipped with a monotone symplectic form.  
There is a unique Hamiltonian isotopy class of embedded Lagrangian spheres in $X$ 
containing the inverse image $L$ of the diagonal in $\P^1 \times \P^1$.  In particular, the Lagrangian $L'$ whose tropical graph $\Lambda$ is the trivalent graph
shown in the last diagram in Figure \ref{hind} (discussed in Example \ref{ex:hind})
is Hamiltonian isotopic to $L$, by a Hamiltonian isotopy $(L_t)_{t \in [0,1]}$ so that every $L_t$ is a tropical realization of some graph in a diagram for $X$, except for a single value of $t$.
\end{proposition}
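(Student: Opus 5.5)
The proof has two parts. The first is a uniqueness statement for Lagrangian spheres in the monotone $\Bl^1(\P^1\times\P^1)$; for it I would rely on the classification results cited in the introduction. The second is an explicit isotopy built from the tropical moves of Theorem \ref{thm:mut}, which yields the ``tropical except at one time'' property.

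\emph{Uniqueness.} I would appeal to Hind \cite{hind:spheres} and Evans \cite{evans:dp}: in a monotone del Pezzo surface of degree seven, any two embedded Lagrangian spheres in the same homology class are Hamiltonian isotopic. Up to sign, the only classes of square $-2$ orthogonal to $c_1$ are $\pm(E_1-E_2)$ in $\Bl^2\P^2$, equivalently $\pm(F_1-F_2)$ in $\Bl^1(\P^1\times\P^1)$. Reversing orientation does not change the unoriented submanifold, so there is a single Hamiltonian isotopy class. The inverse image $L$ of the (anti)diagonal $\{([z_1,z_2],[\ol z_2,\ol z_1])\}$, taken away from the blown-up point, lies in this class. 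So does $L'$: by Theorem \ref{thm:realize} with $\delta(\nu)=0$ as in Example \ref{ex:hind}, $L'$ is an embedded sphere of self-intersection $-2$ (the formula counts the Euler-class contribution), and by monotonicity it pairs to zero with $c_1$. The abstract isotopy therefore follows immediately.

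\emph{Explicit isotopy.} To get the refined statement I would construct the isotopy by hand. I would start from the toric pentagon diagram, where $L$ has the segment graph joining the two opposite vertices. Performing a nodal trade at the blown-up corner produces a focus-focus value whose shear direction is the bisectrice. I would then slide this node along the shear ray until it crosses the segment $\Lambda$. By Theorem \ref{thm:mut}, the Lagrangian after the slide is isotopic to $L$ and realizes the graph obtained by adding an edge in the shear direction. That new graph has a trivalent vertex with edges $(1,0),(0,1),(-1,-1)$. Sliding the node back out to the corner then recovers the toric diagram with the trivalent graph $\Lambda$ of Figure \ref{hind}. At every time except the crossing time, the Lagrangian is a tropical realization as in Definition \ref{def:realize}. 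At the crossing time the node lies in the interior of an edge, which is exactly what item (4) of Definition \ref{def:compat} excludes. This is the single exceptional parameter value.

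\emph{Main obstacle.} The hard step is upgrading the smooth or Lagrangian isotopy from Theorem \ref{thm:mut} to a Hamiltonian one. Because $L$ is a sphere, $H^1(L)=0$, so any Lagrangian isotopy of spheres is automatically Hamiltonian: the flux lies in $H^1(L_t;\R)=0$. I would also check that the hyper-K\"ahler-rotation family in the proof of Theorem \ref{thm:mut} varies smoothly through the crossing time, where the Gromov limit has two components. If that smoothness fails, I would fall back on the Hind--Evans uniqueness, which already gives the Hamiltonian isotopy, and use the tropical path only to exhibit the intermediate $L_t$.
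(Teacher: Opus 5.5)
Your proposal follows essentially the paper's argument in Example \ref{ex:hind2}. The Hamiltonian isotopy itself comes from the Hind--Evans uniqueness of Lagrangian spheres in a homology class, and the ``tropical except at one time'' refinement comes from a nodal trade, the handle attachment of Theorem \ref{thm:mut} as the node crosses $\Lambda$, a reverse nodal trade, and a change of basis.

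One wording correction: after crossing, the node cannot go ``back out to the corner'' it came from, since that corner lies on the other side of the new graph and the node would have to recross $\Lambda$. Instead it must continue to the far side of $\Lambda$. There it is traded back into a new corner, created by swinging its branch cut (a mutation of the pentagon), and a change of basis then gives the trivalent picture.
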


\begin{example} \label{ex:hind2} The existence of the Hamiltonian isotopy 
is a special case of a generalization of a result of Hind \cite{hind:spheres} due to Evans \cite{evans:dp}: For the del Pezzo surfaces 
$\Bl^k \P^2$ for $k \leq 4$ and any given Lagrangian homology class $[L] \in H_2(X)$, 
the Lagrangian spheres $L$ in $[L]$ are related by Hamiltonian isotopy.  
The isotopy of the Proposition can be constructed explicitly using the remarks in 
Section \ref{mutatesec}.  The second diagram in Figure \ref{hind} is obtained from the first by a nodal trade (see \cite[Section 2]{vw:at}); the third  by the handle attachment in Theorem \ref{thm:mut}; the fourth by a reverse nodal trade; the fifth and sixth by change of basis. 
\end{example}

\begin{figure}[ht]\begin{center} \scalebox{.4}{\includegraphics{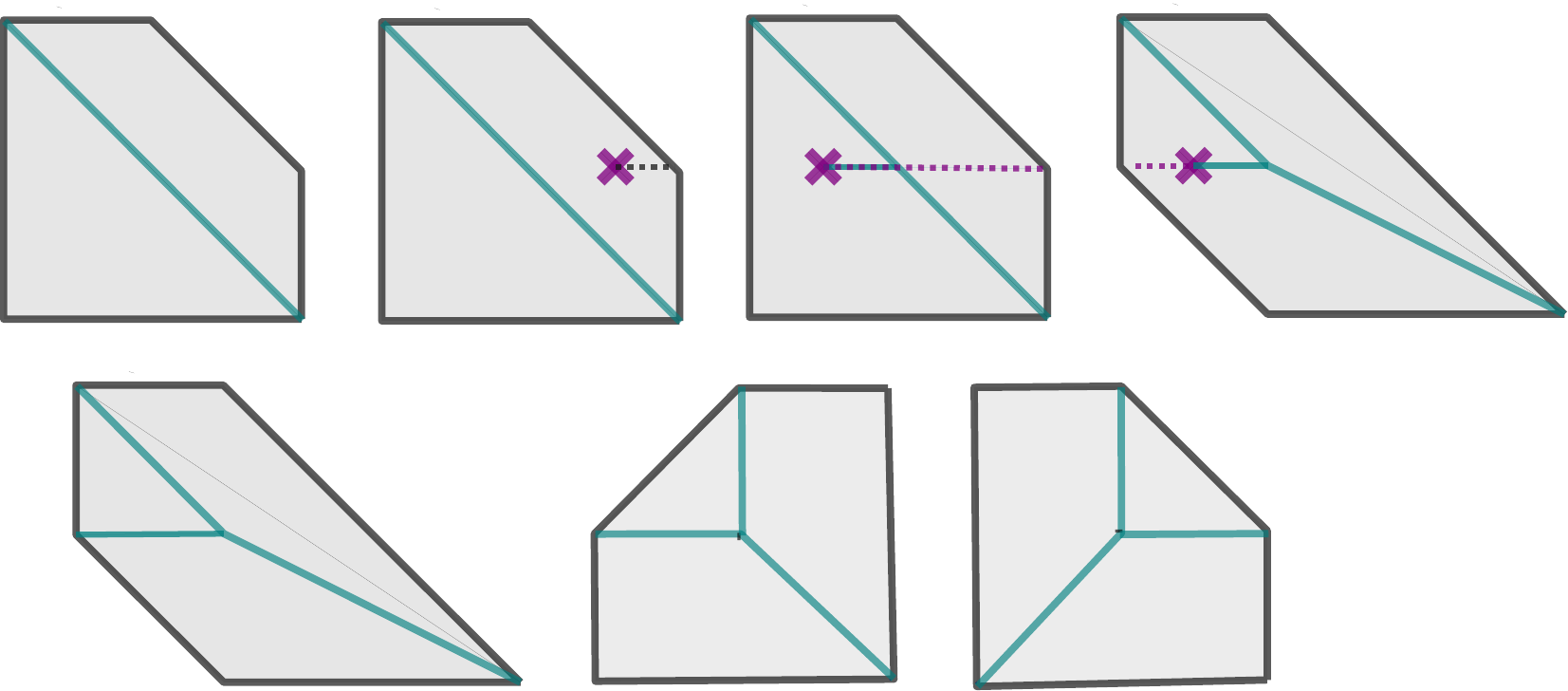}}
\end{center} \caption{An isotopy of tropical Lagrangian spheres in $\Bl^2 \P^2$}  \label{hind}
\end{figure}

\section{Tropical limits of holomorphic disks}

We describe the degeneration techniques from Venugopalan-Woodward \cite{vw:trop}.
The moduli space of disks is cobordant to a moduli space of broken disks, each of which has an associated tropical graph.  The cobordism implies tropical formulas for the disks potential and open-closed map. 

\subsection{Broken Maps} 

Broken maps arise as a limit of neck stretching associated to some polyhedral decomposition of an almost toric fibration. Suppose that  $B$ is the base of an almost toric fibration, so that
$B - B^{\foc}$ has an affine structure.

\begin{definition} A polyhedral decomposition is a collection $\cP = \{ P \} $
of subsets $P \subset B$ so 
\begin{enumerate}
    \item each $P$ is  locally  a simple rational convex polyhedra with respect
to the affine structure on $B - B^{\foc}$
\item each face of $P \in \cP$ is also in $\cP$,
and 
\item  the intersection of any two elements of $\cP$ is either empty or a face of each.
\end{enumerate}
\end{definition}

\begin{definition} \label{def:admissible}
    A polyhedral decomposition $\cP$ is {\em admissible} for $X$ if each element $P \in \cP$ is a simple polytope that is transverse to the moment map  $\Phi: X \to B$, and so the inverse image $\Phinv(P)$ of each $P \in \cP$ is a smooth submanifold of codimension $\codim(P)$. 
\end{definition}

For an admissible decomposition, if $\dim(B) =2 $ then each component of $B^{\foc}$ is contained in the interior of some polyhedron. In higher dimension, the functions cutting out the polytopes are smooth components of the map $\Phi: X \to B$.       Let $\t_P \subset T_b B$ denote the vector space that is the annihilator of the tangent space to $P$, independent up to isomorphism of the choice of $b \in P$. Let $T_{P}$ denote the torus with Lie algebra $\t_P$,  and $T_{P(v),\C}$ its complexification.   By assumption, $T_P$ 
acts with finite stabilizers on $\Phinv(P)$, for each $P \in \cP$.

A polyhedral decomposition  leads to a degeneration of the given symplectic manifold.  By definition, the composition of $\Phi: X \to B$  with the projection near $\Phi^{-1}(P)$ to $\t_P^\dual$  is a moment map   for a $T_P$ action near $\Phi^{-1}(P)$.  The {\em degeneration associated to $\cP$} is the union 
\[ X_\cP = \cup_{P \in \cP} \on{int}(X_P) , \quad \on{int}(X_P) = \Phi^{-1}(\on{int} (P))/T_P .\] 
We denote by 
\[ X_P := \bigcup_{Q \subset P} \on{int} X_Q  \] 
the union over faces $Q$ of $P$.  Because of the simplicity assumption on the elements $P \in \cP$, for admissible decompositions each $X_P$ is a symplectic orbifold (although a precise definition of orbifold is not necessary for this paper.)    Existence of a cutting data for the cuts considered in this paper is proved in \cite[Proposition 3.28, Example 3.30]{vw:trop}.    Associated to each face $P$ we have a {\em thickened piece} 
\begin{equation} \label{eq:xxp} \XX_P = \Phi^{-1}(P) \times \t_P^\dual \end{equation}
which is equipped with an almost complex
structure invariant under translation by the $\t_P$-action.   Denote by 
\[ \XX = \bigcup_{P \in \cP} \XX_P \]
the disjoint union.

The neck-stretching limit for requires the choice of {\em cutting data} describing the rate of neck stretching in each neck: for each polytope 
$P \in \PP$, a {\em dual polyhedron} 
\[ P^\dual \subset \t_{P}^\dual  \] 
of top dimension so that whenever $Q$ is a face of $P$,  the inclusion of $\t_P$ in $\t_Q$ induces an isomorphism of $P^\dual$ with a face of $Q^\dual$; see \cite{vw:trop} for a more detailed description. 
An example is given in Figure \ref{fig:b8p2-dual2} for the degree one del Pezzo, from \cite{vw:at}.

\begin{figure}[ht]\begin{center} \scalebox{.6}{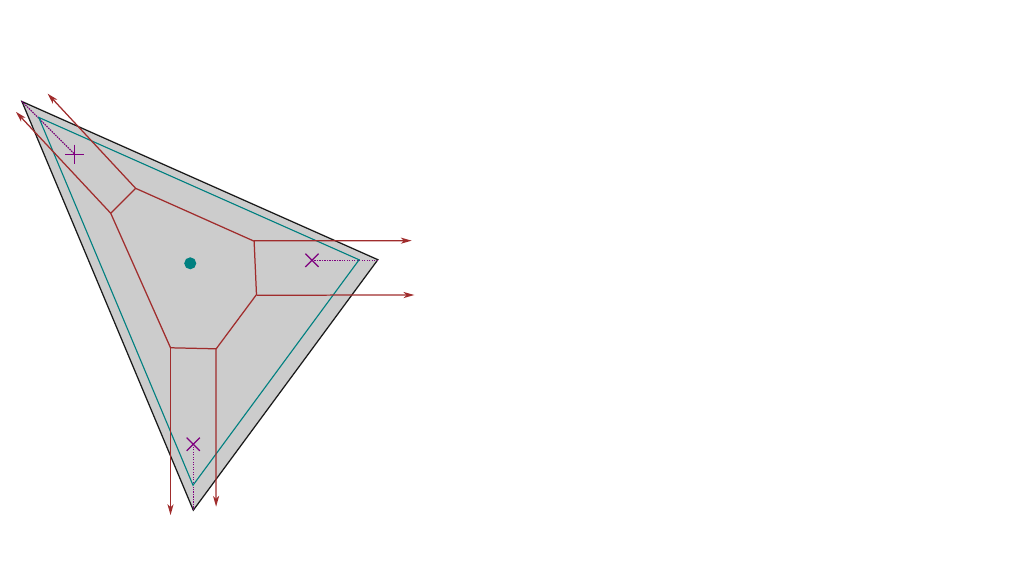}
\end{center} \caption{Dual complex for the degree one del Pezzo}  \label{fig:b8p2-dual2}
\end{figure}

The results in Venugopalan-Woodward \cite{vw:trop} on the limits of holomorphic disks under neck-stretching extend to Lagrangian boundary conditions 
under translation-invariance conditions on the Lagrangians near the facets of the polytope. 

\begin{definition} {\rm (Partial translation actions on the necks)} \label{def:partrans} For each $P \in \cP$, identify a neighborhood of $\Phinv(P) \times \{ 0 \} $ in $\XX_P$  with a neighborhood of $\Phinv(P)$ in $X$; this neighborhood is then equipped
with a partial action of the group $\t_P^\dual$ by translation in the second factor 
in \eqref{eq:xxp} under the identification with 
an open subset of $\Phinv(P) \times \t_P^\dual$, meaning an open subset 
of $\{ 0 \ \times \XX_P$ in $\t_P^\dual \times \XX_P$ so that the {\em partial translation map}
\[ U \to \XX_P, (t,(t_0,l)) \mapsto ( t + t_0,l) \]
is defined and satisfies the usual axioms for an action.   Similarly, for any face $Q$ of $P$, 
the inverse image of an open neighborhood of $Q$ in the thickening $\XX_P$ has a partial action of 
$\t_Q$.
\end{definition}

\begin{definition}  \label{def:trans}
\begin{enumerate}
\item  A subset $L \subset X$ is {\em translation-invariant}
for $P \in \cP$ if $tl \in L$ for all such pairs $(t,l) \ni \t_P^\dual \times L$
for which the action is defined.   
\item  A {\em breakable Lagrangian} with respect to a polyhedral decomposition $\cP$ for $X$ is a Lagrangian submanifold 
$L \subset X$ so that for any $P \in \cP$, the Lagrangian $L$ is translation-invariant in a neighborhood of   $\Phinv(P) \cap L$.
\end{enumerate}
\end{definition} 

\begin{remark}   Breakable Lagrangians give rise to totally-real submanifolds in cylinders as follows.  The product
\[ \LL_{P} := (X_P \cap L) \times \t_P^\dual \]
is a totally real submanifold in the product $\XX_P = \Phinv(P) \times \t_P^\dual$.
Note that while $\XX_P$ has an (at least orbifold) compactification $\ol{\XX}_P$ whose image under
the moment map is the product $P \times P^\dual$, the Lagrangian $\LL_P$ typically does not have a smooth compactification but rather is a closed subset of an immersed Lagrangian submanifold with clean self-intersection; see \cite[Section 4.1]{bcsw:leg1}.
\end{remark}

For our regularization scheme, we choose a collection of Donaldson hypersurfaces as follows. 
A {\em broken Donaldson hypersurface} for $(\XX,\LL)$ is a collection of symplectic submanifolds 
\begin{equation} \label{eq:ddp} \ul{\DD} = (D_P)_{P \in \cP} \end{equation}
satisfying the compatibility condition \cite[Section 5.3]{vw:trop}, so that the projection of $D_P$ to $X_P = \XX_P/T_\C$ is a Donaldson hypersurface, and  the image of $\LL_P$ in $X_P $ is exact in the complement of $D_P$. 

\begin{lemma} \label{lem:donald}  
Let $\LL = (\LL_P)$ be a broken Lagrangian with the property that $\LL_P$ is empty whenever $P \in \cP$ is a vertex. Then a Donaldson hypersurface exists for $(\XX,\LL)$.
\end{lemma}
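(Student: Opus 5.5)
We construct the broken Donaldson hypersurface piece by piece over the face poset of $\cP$, by induction on the dimension of the faces and starting from the vertices; the compatibility condition of \cite[Section 5.3]{vw:trop} is imposed at each step. The input is the existence theorem for Donaldson hypersurfaces in the presence of a Lagrangian (Auroux--Gayet--Mohsen, Charest--Woodward). For a compact symplectic orbifold $X_P$ with rational symplectic class and a Lagrangian $L_P \subset X_P$ rational in the appropriate sense, it produces, for $k$ large, approximately holomorphic sections of $\mathcal{L}^{\otimes k}$ that are concentrated along $L_P$ in the sense of being bounded below there. Their zero sets are symplectic hypersurfaces $D_P \subset X_P - L_P$ for which $L_P$ is exact in the complement. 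Here $L_P$ denotes the image of $\LL_P$ in $X_P = \XX_P/T_{P,\C}$.

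\textbf{Step 1: reduce to a relative existence statement.} The compatibility condition requires two things. For a face $Q \subset P$, the hypersurface $D_P$ must restrict near the divisor $X_Q \subset X_P$ to the pullback of $D_Q$ under the local $T_Q/T_P$-quotient. It must also be invariant under the partial translation actions of Definition \ref{def:partrans}. So it suffices to show the following: given compatible Donaldson hypersurfaces $D_Q$ on all proper faces $Q$ of $P$, we can extend them to $D_P$ on $X_P$ so that $L_P$ is exact in the complement.

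\textbf{Step 2: the base case, which is where the hypothesis enters.} At a vertex $P$ of $\cP$ we have $\LL_P = \emptyset$, so there is no exactness condition to satisfy. We may therefore take $D_P$ to be any Donaldson hypersurface in the orbifold $X_P$ built from a high power of the prequantum line bundle, for example by the orbifold version of Donaldson's construction. Alternatively, since in dimension four the relevant $X_P$ are points or toric pieces, $D_P$ can be taken to be a generic torus-invariant-compatible divisor.

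\textbf{Step 3: the inductive step.} Suppose compatible $D_Q$ are given for all proper faces $Q$ of $P$, and let $s_Q$ be the corresponding sections of $\mathcal{L}_Q^{\otimes k}$. On a neighborhood of $\partial X_P = \bigcup_Q X_Q$ we pull these back by the $T$-equivariant tubular projections; the partial translation invariance makes the pullbacks agree on overlaps. We then patch them with an AGM-type section concentrated on $L_P$ using a cutoff. Because $L_P$ is translation-invariant near $\Phinv(Q)$ by breakability, the pulled-back section is already concentrated along $L_P$ there. The cutoff therefore preserves the lower bound on $L_P$ and the $\eps$-holomorphicity, and the zero set stays transverse and misses $L_P$. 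Exactness of $L_P$ in $X_P - D_P$ then follows as usual: the section gives a trivialization of $\mathcal{L}^{\otimes k}$ on the complement, and its connection form restricts to an exact form on $L_P$ once the section is chosen real-positive along a rational multiple of $L_P$.

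\textbf{Main obstacle.} The hardest step is the patching in Step 3. One has to check that the uniform estimates of Auroux--Gayet--Mohsen, namely transversality and concentration along $L_P$, survive gluing along the necks in an orbifold setting, with $k$ taken large but uniform over the finitely many pieces. My first attempt would be to use \cite[Section 5]{vw:trop}, which handles the closed case, and add the Lagrangian concentration through the Charest--Woodward argument. Since $L_P$ is a product near each neck, the required estimates there reduce to those on lower-dimensional faces, which are already established by induction.
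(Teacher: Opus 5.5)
The paper does not construct the hypersurface itself. Its proof cites \cite[Section 5.3]{vw:trop} for Lagrangians that stay out of the necks and \cite[Proposition 4.7]{bcsw:leg1} for Lagrangians that extend over them. Your induction over the face poset is a reasonable skeleton, and you correctly use breakability to see that near $X_Q$ the Lagrangian $L_P$ lies over $L_Q$.

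The gap is Step 3, which carries all the content. There you treat $L_P$ as a Lagrangian in the compact orbifold $X_P$ to which the Auroux--Gayet--Mohsen/Charest--Woodward theorem applies. When $L$ crosses a neck this is false. $L_P$ runs into the divisors $X_Q$, and its closure is in general only a closed subset of a cleanly self-intersecting immersed Lagrangian (see the remark after Definition \ref{def:trans}). The theorem you quote concerns compact embedded Lagrangians and says nothing about prescribing the divisor along $X_Q$. What is needed is a hypersurface that avoids such an $L_P$, restricts to the given $D_Q$ on each $X_Q$, and leaves $L_P$ exact in the complement. That is exactly what \cite[Proposition 4.7]{bcsw:leg1} supplies, and your sketch offers no replacement for it.

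The patching mechanism also fails as stated. A section pulled back along the tubular projection is not approximately holomorphic in the normal direction on a fixed neighbourhood: for the prequantum connection, its normal $\bar\partial$ has $g_k$-norm of order $\sqrt{k}\,|w|\,|s|$. The correct local model is $\pi^* s_Q \cdot e^{-k|w|^2/4}$, which is negligible beyond distance of order $k^{-1/2}$ from $X_Q$. So two of your claims are unjustified: that the pulled-back section is ``already concentrated along $L_P$'' on the neck, and that the cutoff ``preserves the lower bound on $L_P$ and the $\eps$-holomorphicity''. Further out, several things need new estimates rather than the inductive hypothesis:
\begin{itemize}
\item concentration along $L_P$, which there is a ray in the normal direction over $L_Q$;
\item phase compatibility with the AGM part;
\item uniform transversality.
\end{itemize}

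Finally, Step 2 does not use the hypothesis at all. At a vertex $X_P$ is a point, so $D_P=\emptyset$ and exactness is vacuous whether or not $\LL_P$ is empty. The hypothesis matters in the higher-dimensional pieces, where it keeps $L_P$ away from the corner strata of $X_P$ (points at which two divisors meet). Any proof must therefore use it in the inductive step.
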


\begin{proof}  The construction of such hypersurfaces for rational $\LL$ used in this paper given in  \cite[Section 5.3]{vw:trop} (for the case where the Lagrangians do not extend over the neck) and \cite[Proposition 4.7]{bcsw:leg1} (for the case where the Lagrangians do extend over the neck). 
\end{proof}

Broken maps are collections of holomorphic maps corresponding to vertices
of {\em tropical graphs} defined as follows.

\begin{definition} \label{def:reeborbit}
For each $P \in \cP$ denote the space of {\em Reeb orbits} 
\[ \RR_\black(P) = \{ \gamma: S^1 \to \Phinv(P), \  t \mapsto t^\mu x \} \]
where $\mu \in \t_{P,\Z}$ is some integral direction and $x \in \Phinv(P)$.
Similarly, for $L \subset X$ a breakable Lagrangian denote the space of {\em Reeb chords}
\begin{equation} \label{eq:rc} \RR_\white(P) = \{ \gamma: [0,1] \to \Phinv(P) , \quad t \mapsto t^\mu x , \quad  \gamma(\{ 0, 1 \}) \subset L \} . \end{equation}
The {\em set of directions of Reeb chords} is
\[ \t_{P,\white} = \{ \mu \in \t_P \ | \ t^\mu x \ \in \RR_\white(P) \}  .\] 
Analogously the set of directions of Reeb orbits is $\t_{P(e), \black} := \t_{P(e),\Z}$.
\end{definition}

\begin{lemma} \label{lem:discrete}  Let $\cP$ be an admissible polyhedral decomposition for $X$.  For each polytope $P \in \cP$, the set  $\t_{P,\white}$ of directions of Reeb chords is discrete. 
\end{lemma}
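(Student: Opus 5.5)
We show that the set of directions $\mu \in \t_P$ for which some Reeb chord $t \mapsto t^\mu x$ starts and ends on $L$ lies in a finitely generated lattice-like subset of $\t_P$. The idea is to use the local structure of $L$ near $\Phinv(P)$ provided by the breakability and realization assumptions. By Definition \ref{def:trans}, $L$ is translation-invariant near $\Phinv(P)\cap L$. By Definition \ref{def:realize}(2), away from the vertices of $\Lambda$ the intersection of $L$ with a fiber $\Phinv(b)\cong T^n$ is an affine subtorus $a(b)A$, with $A$ a codimension one subtorus. Near vertices, $L$ is given by the local models of Example \ref{ex:locmod}; for an elementary decomposition as in Definition \ref{def:elem}, these lie in the interior of a single top-dimensional polytope, where $\t_P=0$ and there is nothing to prove. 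So the reduction is: it suffices to treat polytopes $P$ of positive codimension meeting $\Lambda$ only along edges $\eps$, where the fiberwise description holds.

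Fix such a $P$ and a point $b \in P \cap \Lambda$. A Reeb chord in direction $\mu$ is a path $t\mapsto \exp(t\mu)x$, $t\in[0,1]$, with $x \in L\cap \Phinv(b)=a(b)A$. Since the $T_P$-action preserves fibers of $\Phi$, the endpoint also lies in $a(b)A$. The chord condition therefore says $\exp(\mu)\in A$, viewing $T_P \subset T^n$ via the affine structure (or via the torus acting near $\Phinv(P)$ in the almost toric case, using admissibility from Definition \ref{def:admissible} to avoid focus-focus points in $\Phinv(P)$ when $\dim B = 2$). Thus
\[ \t_{P,\white} \cap (\text{directions at } b) = \{\mu \in \t_P : \exp(\mu)\in A\} = \t_P \cap \exp^{-1}(A). \]
Here $\exp^{-1}(A) = \mathfrak a + \Z^n$, with $\mathfrak a = \on{Lie}(A)$ a rational hyperplane. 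Intersecting with $\t_P$ gives $(\t_P\cap\mathfrak a) + (\t_P \cap (\mathfrak a + \Z^n))$.

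The main obstacle is showing this set is discrete rather than containing the continuous part $\t_P\cap \mathfrak a$. The point is that a chord with $\mu\in \t_P\cap\mathfrak a$ nonzero would move $L$ into itself, so $L$ would be invariant along $\mu$ in the direction of $P$. However, $\Phi(L)$ is one-dimensional and transverse to $P$: the edge $\eps$ crosses $P$ transversally, as in the elementary decompositions of Definition \ref{def:elem} and by transversality in Definition \ref{def:admissible}. Now $\t_P$ is the annihilator of $TP$, and $\mathfrak a$ is the annihilator of $\cT(\eps)$ (the subtorus $A$ is the conormal torus of the edge, which is how the Lagrangian condition is met). So $\t_P\cap \mathfrak a$ consists of covectors killing both $TP$ and $\cT(\eps)$. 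Transversality gives $TP + \R\cT(\eps) = TB$, hence $\t_P\cap\mathfrak a = 0$.

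Therefore $\t_P \cap (\mathfrak a+\Z^n)$ injects into the discrete group $(\mathfrak a + \Z^n)/\mathfrak a \cong \Z$, and is discrete. Since $P\cap\Lambda$ meets only finitely many edges, and $\mathfrak a$ is constant along each edge, $\t_{P,\white}$ is a finite union of discrete sets, hence discrete. I would also remark that in the degenerate case where $P$ contains a whole edge, the translation invariance forces $P^\perp \subset \mathfrak a$ only along directions normal to $\eps$. The same computation then still yields a lattice, because $A$ is a closed subtorus with $\mathfrak a$ rational.
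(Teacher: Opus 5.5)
Your fiberwise computation takes a different route from the paper's, but as written it has a gap at exactly the step that carries the content of the lemma. Everything rests on $\t_P\cap\mathfrak a=0$, i.e.\ on $TP+\R\cT(\eps)=TB$ at the points $b\in P\cap\Phi(L)$.

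Neither of the definitions you cite gives this. Definition~\ref{def:admissible} only asks $P$ to be transverse to $\Phi$, so that $\Phinv(P)$ is smooth. Definition~\ref{def:elem} says nothing about how $\Lambda$ meets lower-dimensional cells. The condition genuinely fails in two configurations:
\begin{itemize}
\item $P$ is a vertex of $\cP$. Then $\t_P=T^*_bB\supset\mathfrak a$.
\item An edge of $\cP$ contains a segment of $\eps$. Then $\t_P=\cT(\eps)^\perp=\mathfrak a$, so \emph{every} $\mu\in\t_P$ is a chord direction.
\end{itemize}
In particular, your closing remark that the degenerate case ``still yields a lattice'' is false.

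What excludes these configurations is breakability, which you cite but never use. $L$ is invariant under translation by $\t_P^\dual$ near $\Phinv(P)\cap L$. Hence $\Phi(L)$ contains, through each point of $\Phi(L)\cap P$, a germ of an affine subspace complementary to $TP$. Since $\Phi(L)$ is the one-dimensional edge $\eps$ there, $P$ cannot be a vertex of $\cP$, and $\eps$ must cross $P$ transversally.

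With that step inserted, your argument is correct in the setting you restrict to. It also has a nice feature: the chord directions come out independent of the starting point $x$, namely $\{\xi\in\t_P:\lan\xi,\cT(\eps)\ran\in\Z\}$. This explains rational directions such as $(1/2,1/2)$. The paper's one-line proof leaves this uniformity in $x$ implicit.

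The paper's route is more economical and more general. Because $L$ is $\t_P^\dual$-invariant, $TL$ contains the translation directions. These pair nondegenerately under $\omega$ with the $T_P$-orbit directions, so the Lagrangian $L$ meets each compact $T_P$-orbit in a finite set. The directions $\mu$ with $\exp(\mu)x\in L$ are then the preimage of a finite set under a covering $\t_P\to T_P x$, hence discrete. This argument needs neither the affine-subtorus description from Definition~\ref{def:realize} nor the elementary hypothesis, which the lemma does not assume. It therefore also covers cells containing vertices of $\Lambda$, a case your reduction simply assumes away.
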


\begin{proof}  The statement of the Lemma follows immediately from the fact that  the intersection of $L$ with each $T_P$-orbit in $\Phinv(P)$ is finite. Indeed, the Lagrangian $L$ is $\t_P^\dual$ invariant
by assumption near $\Phinv(P)$.
\end{proof}

\begin{definition} \label{def:openclosed} An {\em open-closed tropical graph} for a polyhedral decomposition $\cP$ is a tropical graph $\Gamma$ in $B^\dual$, as in Definition \ref{def:tropgraph},  for which 
\begin{enumerate} 
\item the edge set $\Edge(\Gamma)$ consists of a collection of edges of {\em finite type}
$\Edge_{< \infty}(\bGamma)$ each containing two vertices, and {\em leaves} $\Edge_\rightarrow(\Gamma)$ each containing a single edge; 
\item the edge and vertex sets  admit partitions
into subsets of {\em closed} resp. {\em open} subsets
\[ \Edge(\Gamma) = \Edge_\black(\Gamma) \cup \Edge_\white(\Gamma), 
\quad  \Ver(\Gamma) = \Ver_\black(\Gamma) \cup \Ver_\white(\Gamma)  \] 
corresponding to  cylinders and strips, for edges of finite type,
and  holomorphic sphere and holomorphic disk components, for vertices; the leaves $\Edge_\rightarrow(\Gamma)$ are all of interior type and correspond to intersections with the Donaldson hypersurface $\DD$;  the open vertices $\Ver_\white(\Gamma)$
and open edges $\Edge_\white(\Gamma)$ are required to map to the dual complex
$\Lambda^\dual \subset B^\dual$ of \eqref{eq:lambdadual};
\item for each closed edge $e \in \Edge_\black(\Gamma)$ a direction $\cT(e)$  in the integral
lattice 
\[ \t_{P(e), \black} := \t_{P(e),\Z} \] 
and
\item for each open edge $e \in \Edge_\white(\Gamma)$ the corresponding direction $\cT(e)$ lies in the set $\t_{P(e),\white}$
of Definition \ref{def:reeborbit}.
\end{enumerate}
\end{definition} 

\begin{definition}  Let $\Gamma$ be an open-closed tropical graph in $B^\dual$.    A {\em broken map}
modelled on $\Gamma$ consists of the following data:
\begin{enumerate}
\item for each closed vertex $v \in \Ver_\black(\Gamma)$, a holomorphic sphere
\[ u_v: S_v \to \XX_{P(v)} \] 
with finite Hofer energy as defined in \cite[Section 3]{vw:trop} (so that the map $u_v$ extends to a map $\ol{u}_v$ to the compactification $\ol{\XX}_{P(v)}$ described in \cite[Section 3]{vw:trop});
\item for each open vertex $v \in 
\Ver_\white(\Gamma), $ a holomorphic disk $u_v$ in $\XX_{P(v)}$ with Lagrangian boundary condition 
in $\LL_{P(v)}$ with finite Hofer energy (so that $u_v$ extends to a
disk in $\ol{\XX}_{P(v)}$ with boundary condition $\ol{\LL}_{P(v)}$);  

\item at each node $w_e = (w^-_e, w^+_e)$ corresponding to a closed edge $e$ of the tropical graph, a {\em framing} given by an isomorphism
\begin{equation} \label{eq:framingeq} \fr_e : T_{w^+_e} S_{v_+}
  \otimes T_{w^-_e} S_{v_-} \to \C;
\end{equation}
\item at each node $w_e = (w^-_e, w^+_e)$ corresponding to an open edge $e$ of the tropical graph, a {\em framing} given by an isomorphism
\begin{equation} \label{eq:openframingeq} \fr_e : T_{w^+_e} S_{v_+}
  \otimes T_{w^-_e} S_{v_-} \to \H := \{ z \ge 0 | z \in \C \}
\end{equation}
 mapping the tangent space to the boundary to $\R$;  and
 \item for each leaf $e \in \Edge_\rightarrow(\Gamma)$, a  marking $z_e \in C_v$ mapping to the corresponding Donaldson hypersurface $\DD_P$ from \eqref{eq:ddp};
\end{enumerate}
satisfying the following matching conditions:  For any holomorphic
  coordinate
  \begin{equation}
    \label{eq:holcoord}
    z_\pm : (U_{w_e^\pm}, w_e^\pm) \to (\C,0)  
  \end{equation}
  in the neighborhood of nodal lifts $w_e^\pm$ that respect the framing, that is,
  \begin{equation}
    \label{eq:framingcoord}
    \d z_+(w^+_e) \otimes \d z_-(w^-_e)=\fr_e,
  \end{equation}
  the following matching condition is satisfied:
  \begin{equation}
    \label{eq:evalu}  
    \lim_{z_- \to 0} z_-^{-\cT(e)} 
    u_{v_-}(z_-) = 
    \lim_{z_+ \to 0} z_+^{\cT(e)} 
    u_{v_+}(z_+).
  \end{equation}
Here $z_\pm^{\pm \cT(e)} := \exp(\pm  \ln(z_\pm) \cT(e))$ using the exponential map $\t_{P(e)} \to T_{P(e)}$ and the partial translation action was defined in Definition \ref{def:partrans}.
\end{definition}

The possibilities for the directions meeting vertices  of rigid graphs corresponding to toric surfaces are particularly easy  to describe: They correspond to triples  satisfying the balancing condition described in  \eqref{eq:balance}; in general the description of possible vertices is more complicated.   A sample polyhedral degeneration  of the almost toric diagram of the degree four del Pezzo surface is shown in Figure  \ref{poly}.

\begin{figure}[ht]\begin{center} 
\scalebox{.5}{\includegraphics{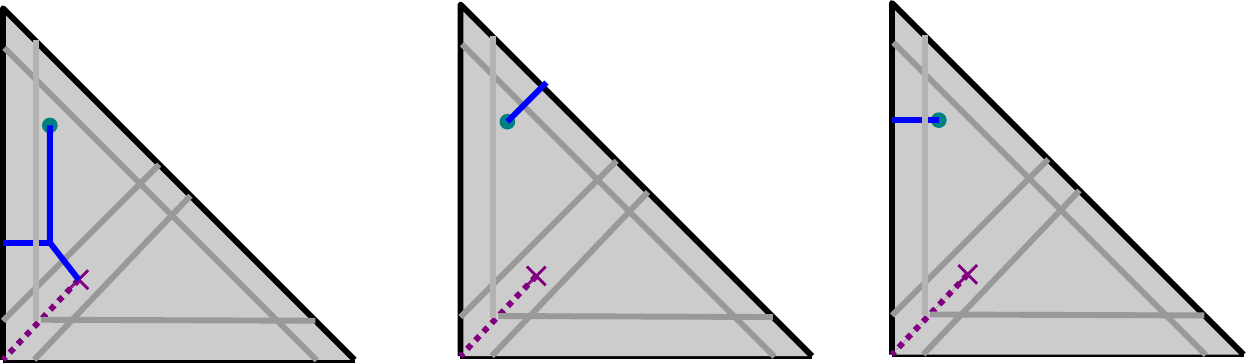}}
\end{center} 
\caption{A polyhedral decomposition of an almost toric diagram
and cartoon diagrams of three Maslov-two broken disks bounding a moment fiber}
\label{poly}
\end{figure}
\begin{definition}
An {\em open-closed map type} $\bGamma$ is an open-closed tropical graph $\Gamma$
with vertices $v \in \Ver(\Gamma)$ labelled by homotopy classes $[u_v]$ of disks $u_v$ (for open vertices $v$) or spheres  $[u_v]$ (for closed vertices $v$).   Denote by $\M_\bGamma(\XX,\LL)$ the moduli space of maps of type $\bGamma$.  
\end{definition}

    \begin{definition} \label{def:rigid} (c.f. \cite[Definition 8.48]{vw:trop})   A broken map of type $\bGamma$ is {\em rigid} if the tropical graph $\Gamma$ is rigid, the moduli space $\M_\bGamma(\XX,\LL)$ is expected dimension zero,  the stratum of the source moduli space $\M_\Gamma$ is of codimension zero in the moduli space of treed disks.  Let  $\M(\XX,\LL)$ denote the collection of rigid broken maps,  written as a union 
over rigid  types $\bGamma$
\[ \M(\XX,\LL) = \bigcup_{ \bGamma} \M_\bGamma(\XX,\LL) . \]
    \end{definition}

\begin{definition}
The area of a broken map $u = (u_v)$ of type $\bGamma$ is the sum
    \begin{equation} \label{eq:au} A(u) = \sum_{v \in \Ver(\bGamma)} A(u_v), \quad A(u_v) := A(\pi_{P(v)} \circ u_v) \end{equation}
    of the areas of the maps $u_v \circ \pi_{P(v)} $ to the symplectic manifolds $X_{P(v)}$ obtained
    by composing $u_v$ with the projection 
    \[ \pi_{P(v)}: {\XX}_{P(v)} \to {X}_{P(v)} \]
whose fibers are (either empty or) the complexified tori $T_{P(v),\C}$.
\end{definition}

\begin{definition} \label{def:index}
The {\em Maslov index} of a broken map $u = (u_v)$ of type $\bGamma$ is the sum
 of Maslov indices
    \begin{equation} \label{eq:iu}  I(u) = \sum_{v \in \Ver(\bGamma)} I(u_v), \end{equation} 
    where $I(u_v)$ is the Maslov index of the bundle pair defined as follows:
    Consider the compactification $\ol{S}_v$ obtained by taking the real blow-up of $S_v$ at the interior and boundary punctures; this has the effect of adding a circle at each interior puncture, and an interval at each boundary puncture.     The bundles $u_v^* T \XX_{P(v)}$ and
    $(\partial u_v)^* T \LL_{P(v)}$ have natural extensions to the compactification, given by adding the tangent space to $\XX_{P(v)}$ resp. $\LL_{P(v)}$ along corresponding Reeb orbit resp.  chord at infinity along the cylindrical or strip-like end.  The bundle $\XX_{P(v)}$ along any Reeb orbit $\gamma: S^1 \to \XX_{P(v)}$ is trivializable, with a trivialization given by choose a trivialization at any point any translating along the Reeb orbit.  Choose any Lagrangian subspace of $T_{\gamma(0)} \XX_{P(v)}$; by translation one obtains a Lagrangian sub-bundle along $\gamma$. 
    Denote by $(T \XX_{P(v)})_{\infty,\R}$ the Lagrangian sub-bundle over the circles at the limits of the cylindrical ends obtained in this way.
    Then the Maslov index of $u_v$ is defined by 
    \[ I(u_v) := I( u_v^* T \XX_{P(v)}, (\partial u_v)^* T \LL_{P(v)} \cup (T \XX_{P(v)})_{\infty,\R}) . \]
\end{definition}

\begin{remark} Associated to any broken map $u = (u_v)_{v \in \Ver(\Gamma)}$ is a homology class 
in $[u] \in H_2(X,\Z)$, defined by $[u] = [\ti{u}]$ where $\ti{u}: C \to X$ 
is the result of the pre-gluing procedure in Venugopalan-Woodward
\cite[Section 9.1]{vw:trop} to $(u_v)$, using any collection of gluing parameters to glue along cylindrical and strip-like ends.    Then $A(u) = A(\ti{u})$ and $I(u) = I(\ti{u})$, by standard gluing properties of area and index.  
\end{remark}

Associated to the leaves of the graph are various evaluation maps. 
 Denote by $\Edge_{\rightarrow}(\Gamma)$ the leaves of $\Gamma$.   
 Associated to each edge $e \in \Edge_{\rightarrow}(\Gamma) $ with zero direction $\cT(e) = 0$ is an evaluation map 
 \[ \ev_e: \M_\Gamma(\XX,\LL) \to \LL_{P(e)}  .\] 

\begin{definition}
    A {\em system of constraints} for a map type $\bGamma$ is a collection 
of oriented compact embedded submanifolds 
\[ \ul{\YY} = (\YY_e \subset \LL_{P(e)})_{e \in \Edge_\rightarrow(\bGamma)}  \]
  as follows:
  \begin{enumerate}
      \item If $e \in \Edge_\black(\Gamma)$ is a closed leaf then  $\YY_e$ is a compact, oriented, submanifold of the space of Reeb orbits $\RR_{\black}(P(e)) = \XX_{P(e)}$.
\item If $e \in \Edge_\white(\Gamma)$ is an open leaf, representing a strip-like end, then $\YY_e$ is a compact oriented submanifold of the space of
Reeb chords $\RR_{\white}(P(e))$ as in \eqref{eq:rc}.  
\end{enumerate} 
%
Denote by  $\M(\XX,\LL,\ul{\YY}) \subset \M(\XX,\LL)$ the moduli space with the given constraints.   
\end{definition}

\begin{definition} \label{def:cartoon} A {\em cartoon diagram} of a broken map $u = (u_v)$
is a collection of subsets 
\[ \fC = (\fC_v \subset P(v)) \] 
so that 
\begin{enumerate}
    \item each $\fC_v$ is the moment image $\Phi_{P(v)} \circ u'_v(C_v)$ for some smooth map $u_v'$ in the homotopy class of $u_v$, and 
    \item near any intersection of $\Phi_v$ with the polytope $P(e)$ corresponding to an adjacent edge $e \ni v$, $\Phi(v)$ is given by a path which meets $P(e)$ at a point with tangent given by the direction $\cT(e)$.
    \end{enumerate}
\end{definition}

\begin{remark}  The tropical graph can be constructed from the cartoon diagram, by reading off the directions of the edges at the vertices.
\end{remark}

\subsection{Tropical limit theorems}
\label{sec:limit}

The results of \cite{vw:trop} on the broken limit of holomorphic maps require minor modifications
in the definitions and results for Lagrangians extending over the neck.

\begin{theorem}\label{thm:cob} ( Venugopalan-Woodward
\cite{vw:trop} for moment fibers, Chanda \cite{chanda} for Lagrangians passing through the neck) 
Let $L \subset X$ be a breakable monotone Lagrangian submanifold for a polyhedral decomposition $\cP$.  There exists a compact one-dimensional cobordism from 
the moduli space $\M(X,L,\ul{Y})$  of rigid disks of Maslov-index-two passing through a generic point $\ul{Y} = ( \{ p \} \subset L)$ to the moduli space of rigid broken disks of Maslov-index-two passing through a generic point $\ul{Y} = \{ p \} \subset \LL $
\[ \M(X,L,\ul{Y})  \sim \bigcup_{\Gamma} \M_\Gamma(\XX,\LL,\ul{Y}) .\] 
\end{theorem}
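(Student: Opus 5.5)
We will build the cobordism from a one-parameter neck-stretching family, following \cite{vw:trop} and making only the modifications forced by a Lagrangian $L$ that passes through the necks. Choose cutting data $(P^\dual)_{P \in \cP}$ and a broken Donaldson hypersurface $\ul{\DD}$; by Lemma \ref{lem:donald} the latter exists because $\LL_P$ is empty over vertices of $\cP$. Choose a family of almost complex structures $J_\tau$, $\tau \in [0,\infty)$, stretching the neck $\Phinv(P) \times [-\tau \cdot P^\dual]$ around each $P$. Since $L$ is breakable in the sense of Definition \ref{def:trans}, we may take $J_\tau$ invariant under the partial translation action of Definition \ref{def:partrans} near each $\Phinv(P)$. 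Then $L$ is unchanged in the neck region and, for every $\tau$, is identified with the pieces $\LL_P = (X_P \cap L) \times \t_P^\dual$. Let $\M^\tau$ denote the moduli space of Maslov-index-two $J_\tau$-disks with boundary in $L$, passing through the fixed generic point $p$, with domain perturbations adapted to $\ul{\DD}$. The parametrized space $\bigcup_\tau \M^\tau$, compactified by adding the broken moduli space at $\tau = \infty$, is the cobordism we want.

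The first step is compactness. A sequence of $J_{\tau_\nu}$-disks with $\tau_\nu \to \infty$ has bounded energy by monotonicity, since the Maslov index fixes the area. We run the SFT/neck-stretching compactness of \cite[Section 6]{vw:trop} for the closed part. For the boundary we add one ingredient: where the boundary of the disk runs through a neck, translation invariance of $L$ means the limiting boundary components are strips in $\XX_{P}$ with boundary in $\LL_P$. Their ends are asymptotic to Reeb chords in $\RR_\white(P)$, and by Lemma \ref{lem:discrete} the chord directions form the discrete set $\t_{P,\white}$. Exponential convergence at strip-like ends, using the standard Bott--Morse estimates for the clean family of chords, then gives matching conditions \eqref{eq:evalu} at open edges, exactly as for cylindrical ends. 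The limit is therefore a broken map whose type $\bGamma$ is an open-closed tropical graph in $B^\dual$ as in Definition \ref{def:openclosed}. Its open vertices and edges lie in $\Lambda^\dual$, and area and index add as in \eqref{eq:au} and \eqref{eq:iu}.

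The second step is transversality and the exclusion of non-rigid limits. Monotonicity gives $I \ge 2$ for every nonconstant disk in each piece. Domain-dependent perturbations stabilized by $\ul{\DD}$ make every stratum regular. A dimension count as in \cite[Section 8]{vw:trop}, now including the open edges in the tropical deformation space, shows the following: for generic $p$ and generic parameter, only rigid types in the sense of Definition \ref{def:rigid} arise at $\tau = \infty$. Types with extra edges, or lower-dimensional source strata, appear only in codimension at least one for the parametrized problem.

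The third step, which is the main obstacle, is gluing. We need each rigid broken disk to glue to a unique family of $J_\tau$-disks for $\tau \gg 0$, so that a neighborhood of $\tau = \infty$ in the compactified parametrized space is a collar. The gluing of \cite[Section 9]{vw:trop} handles cylinders. For strips crossing the neck we must build a pregluing along the open edges that respects the matching of Reeb chords, together with a uniformly bounded right inverse for the linearized operator. The cokernel contributions from the $\t_P$-directions must cancel against the deformations of the tropical graph. Here we would follow Chanda \cite{chanda}: the key point is that the linearized operator at a strip-like end, with translation-invariant boundary condition $\LL_P$, is Fredholm on weighted spaces and has an asymptotic kernel that exactly matches the directions in $\t_{P,\white}$. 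With gluing in hand, orientations via the relative spin structure are compatible with the collar. Bubbling of disks of index $\le 0$ is excluded by monotonicity, and sphere bubbling is excluded by the Donaldson perturbation. The parametrized moduli space is therefore a compact oriented one-manifold whose boundary is $\M(X,L,\ul{Y})$ at $\tau=0$ together with $\bigcup_\Gamma \M_\Gamma(\XX,\LL,\ul{Y})$ at $\tau=\infty$, as claimed.
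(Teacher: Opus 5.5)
Your outline follows essentially the same route as the paper, which also gets the cobordism by rerunning the neck-stretching compactness, Fredholm and gluing theory of \cite{vw:trop} with strips through the neck, crediting that extension to \cite{chanda}. The paper's sketch only itemizes the changes, for example:
\begin{itemize}
\item exponential decay at strip-like ends, obtained by projecting to $X_{P(v)}$ (where the branches of $\LL_{P(v)}$ become one Lagrangian) and applying the annulus lemma there; you invoke Morse--Bott estimates instead;
\item the monotonicity lemma with Lagrangian boundary;
\item removal of singularities at boundary punctures, using the clean self-intersection of $\ol{\LL}_P$.
\end{itemize}

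One claim in your second step must be dropped: monotonicity does not give $I \ge 2$ for nonconstant pieces of a broken map. By Remark \ref{rem:arearem}, bivalent and open univalent pieces have $I(u_v)=0$, and trivalent pieces have $I(u_v)=-2$. So excluding non-rigid limits at $\tau=\infty$ must rest on two things: regularity of each relative moduli space (non-negative dimension), and the global dimension count with matching conditions. Monotonicity enters only through the energy bound and the absence of bubbling for unbroken disks at finite $\tau$.

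Also, the asymptotic kernel at a strip-like end is the tangent space to the family of Reeb chords, not the discrete set $\t_{P,\white}$.
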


\begin{proof}[Sketch of Proof] 
We describe
the other modifications of the results of Venugopalan-Woodward \cite{vw:trop} necessary to apply the degenerations in cases where the Lagrangians extend over the neck; unfortunately, the notation in \cite{vw:trop} was already so involved that we felt that this additional layer of complication would make the exposition there even longer.  

\begin{enumerate} 
\item  {\rm (Exponential Convergence)}  In the proof of  \cite[Theorem 8.2]{vw:trop}, to obtain exponential convergence along the ends we used the fact that the composition of a holomorphic map to $\XX_{P(v)}$ with 
projection to the base $X_{P(v)}$ satisfies removal of singularities.  For each $v$, consider the projection $\XX_{P(v)} \to X_{P(v)}$ with fibers
$T_{P(v),\C}$.  The various branches of the Lagrangian $\LL_{P(v)}$
project to a fixed Lagrangian $L_{P(v)}$ in $X_{P(v)}$.  Each fiber is a collection 
of copies of the Lie algebra $\t_{P(v)}.$  The necessary exponential convergence
results follow by considering the projection to the base.  There one has the usual annulus lemma.   Applying the arguments in the proof of \cite[Proposition 8.18]{vw:trop} to obtain the necessary exponential decay results for the difference 
between the given map and a constant cylinder.   
\item  {\rm (Directions of open edges)}  In \cite[Definition 4.25]{vw:trop}, there is no notion of primitive direction for edges corresponding to boundary nodes.  Instead, the boundary edges have only rational directions.
\item  {\rm (Monotonicity)}   The monotonicity \cite[Lemma 7.21]{vw:trop} for pseudoholomorphic maps  requires the version for Lagrangian boundary conditions, 
as in Cieliebak-Ekholm-Latschev \cite[Lemma 3.6]{cel:switch}.
\item {\rm (Removal of singularities)}  The argument in removal of singularities
\cite[Section 7.5]{vw:trop} requires modification to include the case of boundary punctures.  In this case, one uses the fact that the closure of the Lagrangian 
in the compactification of each piece has clean self-intersection, and removal of singularities for clean intersection is well-known; see for example Schm\"aschke \cite{schmaschke:clean}. 
\item {\rm (Gromov convergence)}  The definition of Gromov convergence of \cite[Section 8.1]{vw:trop} requires inclusion
of the case of strips passing through the neck;  cylinders in the definition are replaced by cylinders or strips in the definition.  
\item {\rm (Fredholm theory)}  In \cite[Section 6.4]{vw:trop} on Fredholm theory, one needs that the linearized Cauchy-Riemann operator on the given  weighted spaces with totally real boundary conditions is  Fredholm.  In the case that the almost complex structure is compatible, the tangential part of the linearized operator to the boundary at infinity is self-adjoint and the Fredholm property is standard as in \cite[Lemma 3.5]{ww:quilt}. The case that the almost complex structure is only tamed seems not to be treated in the literature, but the Fredholm property for the linearized Cauchy-Riemann operator seems to follow from Coriasco-Schrohe-Seiler \cite[Theorem 2.22]{schrohe}.  However, we only need the case that the almost complex structure is compatible. 
\end{enumerate}
The remaining changes in the proof are straightforward (replacing cylinders by cylinders or strips in the proofs, depending on the type of edges.)
\end{proof}

\begin{remark} In particular, holomorphic disks in the limit always correspond to tropical graphs, rather than tropical subvarieties of higher dimension as in \cite[Figure 3]{hicks:obs}.   
\end{remark}

\subsection{Relative maps}

We introduce the following terminology of {\em relative maps} similar to that of levels in symplectic field theory, so that a broken map is a collection of relative maps satisfying matching conditions. 

\begin{definition}  Given a tropical graph $\Gamma$ denote by $\Edge_\times(\Gamma) \subset \Edge(\Gamma)$ the set of edges $e$ with zero direction $\cT(e) = 0$.   Collapsing 
the zero-direction edges $\Edge_\times(\Gamma)$ in $\Gamma$ gives a graph $\Gamma_0$ equipped with a tropical structure so that every edge $e \in \Gamma_0$ has non-zero direction $\cT(e) \neq 0 $.For each vertex $v_0 \in \Ver(\Gamma_0)$, let $\Gamma_{v_0}$ denote its inverse image; the corresponding collection of components
$u_{v_0} := (u_{v}, v \in \Gamma_{v_0})$ will be called a {\em level} of the broken map $u$, and the edges collapsed under the morphism $\Gamma \to \Gamma_0$ correspond to nodes of $ u_{v_0}$.
\end{definition}

We view the levels of a broken map as {\em relative maps} to a cylindrical end manifold as follows.  
We give a utilitarian definition, since all of our cylindrical end manifolds arise from symplectic cutting and removing divisors. 

\begin{definition}
\begin{enumerate} 
\item A {\em cylindrical end symplectic manifold} is a component $X = \XX_P$ of a broken manifold  $\XX$ as defined in \cite{vw:at}, with $P$ some polytope in a polyhedral decomposition $\cP$.  
\item A {\em cylindrical end Lagrangian} is a component $L = \LL_P$  of a broken Lagrangian $\LL = (\LL_P)$ contained in $\XX_P$.    
\end{enumerate}
\end{definition}

\begin{remark}  In particular, any cylindrical end symplectic manifold $X$ has a map $\Phi$ to the interior $\on{int}(P)$ of a polytope $P \subset B$, with the property  that for any face $Q$ of $P$, there is an open neighborhood $U$ of $Q$ in $P$ and a symplectomorphism from $\Phinv(U)$ to  a manifold $\XX_Q = Z_Q \times \t_Q^\dual$ as in 
\eqref{eq:xxp}, where $Z_Q$ is a smooth manifold equipped with an action of $T_Q$ with only finite stabilizers.
The two-form on $\XX_Q$ is given by 
\[ \omega = \pi_1^* \omega_Q + \d \lan \alpha_Q, \pi_2^*  \ran  \] 
where  $\omega_Q$ is a closed two-form on $Z_Q$, $\alpha_Q$ is a connection form for the $T_Q$ action on $Z_Q$,
and $\pi_1,\pi_2$ are projections on the factors.

Any cylindrical end manifold $X$ has a natural orbifold compactification, obtained by adding back in the divisors $X_Q$ corresponding to faces of $P \times P^\dual$, as in \cite[Section 3]{vw:trop}.  The closure $\ol{L}$ of $L$ in $\ol{X}$ is a subset of a cleanly-self-intersecting immersed Lagrangian submanifold, as in \cite[Proposition 2.22]{bcsw:leg1}.  
\end{remark}

Our previous work \cite{vw:at} defines relative maps in $X$ with boundary in $L$  as components of broken maps.  
Moduli spaces of relative maps $\M_{\bGamma}(X,L)$ have additional evaluation maps 
at the strip-like and cylindrical ends given framings.   

\begin{definition}Given a subset $\Edge^-(\bGam)$
of the set of edges $\Edge(\bGam)$, a {\em relative map with partial framings} is a relative map with the additional data of, for each edge $e \in \Edge^-(\bGamma)$, a trivialization of the tangent space at the corresponding marked point $w_e$
\begin{equation} \label{eq:partialframingeq} \fr_e : T_{w_e} S_{w(e)}  \to \C.
\end{equation}
A local coordinate $z$ near the puncture $z_e$ is {\em compatible}
with the framing if the differential $\d z$
 is equal to $\fr_e$. 
 \end{definition}

\begin{remark} For any 
leaf $e \in \Edge^-(\bGamma)$ representing a boundary puncture we have an evaluation map 
\[ \ev_e: \ \M_\bGamma(X,L) \to \RR_\white(P(e)), \quad u \mapsto  (t \mapsto \lim_{s \to \infty}
\exp(-\cT(e)(s + it)) u_{v_-}(s + it)) \]
where $s + it = - \ln(z)$ and $z$ is compatible with the framing.
There are similar evaluation maps for cylindrical ends mapping to $ \XX_{P(e)}$. 
\end{remark}

\begin{definition}
    A {\em system of constraints} for a type $\bGamma$ of relative map is a collection 
of submanifolds for a subset of {\em constrained edges $\Edge^-(\bGamma) $}
\begin{equation} \label{eq:con} 
\ul{\YY} = (\YY_e)_{e \in \Edge^-(\bGamma)} \end{equation} 
where 
each $Y_e$ is the target of the evaluation map $\ev_e$, that is, either the space of Reeb orbits, chords, or the Lagrangian itself.  
\end{definition}

\begin{definition}
Given a system of constraints $\ul{\YY}$, a {\em constrained relative map}
is a relative map with framings at $\Edge^-(\bGam)$ satisfying the following matching conditions:  For any holomorphic
  coordinate
  \begin{equation}
    \label{eq:onesideholcoord}
    z : (U_{w_e}, w_e) \to (\C,0)  
  \end{equation}
that respect the framing, that is,
  \begin{equation}
    \label{eq:onesideframingcoord}
    \d z(w_e) =\fr_e.
  \end{equation}
  the following matching condition is satisfied:
  \begin{equation}
    \label{eq:onesideevalu}  
   \ev_e(u) \in \YY_e .
  \end{equation}
\end{definition}

In general, the counts of maps with constraints depend on the choice of almost complex structure, in the same way that counts of holomorphic disks depend on this choice in the closed case as well.  However, in the prime case these counts are invariant. 

\begin{definition} \label{def:prim} A stable map type $\bGamma$ is {\em prime} if $\bGamma$ does not admit a degeneration to different stable type $\bGamma'$.  That is, with respect to the natural partial order $\preceq$ on types corresponding to degeneration so that the boundary of $\M_\bGamma(\XX,\LL)$ is contained in the union of strata $\M_{\bGamma'}(\XX,\LL)$ for which $ \bGamma' \prec \bGamma$ there is no type $\bGamma'$ so that $\bGamma' \prec \bGamma$. 
\end{definition}

\begin{example} \label{ex:prime} Let $\bGamma$ be map type so $P(v)$ is top-dimensional and $A(u_v)$ is non-zero and minimal for each $v \in \Ver(\Gamma)$, and the edges $e \in \Edge(\bGamma)$ have the property that no two directions $\cT(e_1), \cT(e_2)$ lie in the same dual cone to a face of $P$.  Then $\bGamma$ is prime.
Indeed, only possible degeneration corresponds to the formation of zero-area components containing at least two such edges $e_1,e_2$.  Such a degeneration is impossible since $\cT(e_1), \cT(e_2)$ must lie in the dual cone to some face $Q$ of $P$; 
see \cite[(1.7)]{vw:trop}.
\end{example}

\begin{lemma} \label{lem:elem} \label{lem:prim}
    Let $X$ be a cylindrical end manifold and $L \subset X$ a cylindrical end Lagrangian,
    and let $\bGamma$ be a prime type of broken map to $X$ with boundary in $L$  with a single vertex. 
     Let  $ \ul{Y} = (Y_e)_{e \in \bGamma}$ be a collection of constraints so that the moduli space $\M(L,\ul{Y})$ of maps of type $\bGamma$ with constraints $\ul{Y}$ is expected dimension zero.    Then the count 
    $m(\bGamma)$ of maps of type $\bGamma$ is independent of the choice of domain-dependent almost complex structure $J_\Gamma$ and depends only on the isotopy class of the constraints
    $\ul{Y}$. 
    \end{lemma}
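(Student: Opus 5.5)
We prove invariance by the standard cobordism argument for zero-dimensional moduli spaces. Suppose we are given two regular choices of domain-dependent almost complex structures $J_\Gamma^0, J_\Gamma^1$ (cylindrical and translation-invariant near the ends, and adapted to the Donaldson hypersurface as in Lemma \ref{lem:donald}). We also fix two choices of constraints $\ul{Y}^0,\ul{Y}^1$ related by an isotopy $\ul{Y}^t$. Choose a path $J_\Gamma^t$, $t\in[0,1]$, of such structures connecting them. We then form the parametrized moduli space
\[ \M^{[0,1]}(L,\ul{Y}) = \bigcup_{t\in[0,1]} \{t\}\times \M_{\bGamma}(L,\ul{Y}^t;J^t_\Gamma). \]
For a generic path, transversality for the parametrized problem holds. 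This uses the Fredholm theory on weighted spaces described in the sketch of Theorem \ref{thm:cob}, item (6), together with the usual Sard–Smale argument applied to the universal moduli space over paths. Transversality of the evaluation maps $\ev_e$ at the constrained ends to the family $\ul{Y}^t$ is also needed. With these in hand, $\M^{[0,1]}$ is a smooth one-manifold with boundary $\M_{\bGamma}(L,\ul{Y}^0;J^0)\sqcup \M_{\bGamma}(L,\ul{Y}^1;J^1)$. Orientations come from the chosen relative spin structures on the extensions over the punctures, and the isotopy of the oriented constraints transports these orientations.

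The key step is compactness of $\M^{[0,1]}$. We first establish an energy bound: the area $A(u)$ is determined by the homotopy class labelling the single vertex together with the directions of the ends, so energy is uniformly bounded along the path. Exponential convergence at the ends, monotonicity, and removal of singularities then give Gromov compactness, exactly as in the items of Theorem \ref{thm:cob}. Any non-compact end of $\M^{[0,1]}$ would therefore converge to a broken or nodal configuration of some type $\bGamma'$ with $\bGamma'\prec\bGamma$. There are three possibilities:
\begin{itemize}
\item disk or sphere bubbling;
\item breaking along cylindrical or strip-like ends into several levels;
\item the formation of zero-area components collecting several ends.
\end{itemize}
Since $\bGamma$ is prime (Definition \ref{def:prim}), no such $\bGamma'$ exists, so none of these degenerations can occur. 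Boundary bubbling of Maslov index zero disks is also excluded, by monotonicity of $L$ and the Donaldson hypersurface stabilization. Hence $\M^{[0,1]}$ is compact.

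It follows that the signed count of boundary points of a compact oriented one-manifold is zero, so $m(\bGamma)$ computed with $(J^0,\ul{Y}^0)$ equals that computed with $(J^1,\ul{Y}^1)$. Constraints that are isotopic but not through a family transverse to evaluation are handled by perturbing the isotopy within its class.

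The main obstacle is the compactness step. One must check that primality really rules out \emph{every} codimension-one degeneration of the parametrized family. This includes configurations where a strip-like end slides into the neck region of the Lagrangian. Such a configuration would produce a two-level building with an intermediate trivial strip. We would argue that these correspond to types $\bGamma'\prec\bGamma$ in the partial order, so they are again excluded. As in Example \ref{ex:prime}, this uses that the extra edges would have to share a dual cone to a face.
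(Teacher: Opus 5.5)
Your proposal is correct and follows the paper's proof. The paper takes a generic homotopy $(J_t,\ul{Y}_t)$, forms the parametrized moduli space, and gets compactness directly from primality, which by Definition \ref{def:prim} excludes every degeneration $\bGamma'\prec\bGamma$. That makes your closing worry about checking all codimension-one degenerations, and your separate appeal to monotonicity against disk bubbling, unnecessary: the hypothesis already covers both. The only other difference is that the paper varies $J_\Gamma$ and $\ul{Y}$ in two separate steps rather than simultaneously, which changes nothing.
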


\begin{proof} The proof of the statement of the Lemma is a standard cobordism argument:  Given two such data $(J_b,  \ul{P}_b)$
for $b \in \{ 0,1 \}$ let $(J_t,  \ul{P}_t), t \in [0,1]$ be a generic homotopy between them.  Let 
$\ti{\M}_{\bGamma}(L,Y)$ be the parametrized moduli space for the family.   By assumption, degeneration of the domain is impossible in the parametrized moduli space.  The parameterized moduli space is therefore a compact oriented cobordism 
between the moduli spaces for $(J_b, \ul{P}_b)$
for $b \in \{ 0,1 \}$, and the signed count of the moduli spaces at the ends is equal.  The argument for variation of constraints $\ul{Y}$ is similar.  
\end{proof}

\begin{remark}   All of the combinatorial types described
in Definition  \ref{def:mv} are easily seen to be prime with the following exception:   Let $\LL_v \subset (\C^\times)^2$ be the diagonal, 
and $u_v$ a punctured holomorphic disk with boundary in $\LL_v$. 
Thus the tropical graph $\Lambda$ of the Lagrangian
$L$ has a single edge $e$ with direction $\cT(e) = (1,1)$, and the open edge $\eps$
of the tropical graph $\Gamma$ of $u_v$ has direction $\cT(\eps) = (1/2,1/2)$.
Suppose the point constraint $p \in \LL_v$ approaches the real part $\Delta_\R$ of the diagonal (thinking of the Lagrangian as the anti-holomorphic diagonal).  In the limit, the boundary of $u_v$
lies on $\Delta_\R$, and the Gromov limit of $u_v$ consists
of this map an a bubble containing both the open and closed ends.    The real diagonal $\Delta_\R \cong \R_+ \sqcup \R_-$ separates $L \cong \R \times S^1 \cong \C^\times$ into two parts, each isomorphic to $\R \times (-1,1)$. 
\begin{enumerate} 
\item If the point constraint $p$ lies in the first part, there is a holomorphic pant  $u_v$
with a closed end in the direction $(1,0)$.
\item If the point constraint $p$ lies in the second part, there is a holomorphic pant $u_v$ with a closed end with direction $(0,1)$. 
\end{enumerate}
The count therefore depends on which half of the Lagrangian $\LL_{P(v)}$ the point constraint is chosen to lie in, or equivalently, on the choice of almost complex structure, since there exists a symplectomorphism which maps a point in one half of $\LL_{P(v)}$ to the other. 
We choose the count to be $-1/2$ for each of these possibilities, 
so that the count is ``maximally symmetric''.
\end{remark}

\begin{remark} \label{rem:rel}
Given a choice of relative spin structure on $L$ and orientations on the constraints $\ul{Y}$, the moduli spaces $\M_{\bGamma}(X,L,Y)$
are {\em oriented} non-canonically, by choosing relative spin structures
on the extensions described in Definition \ref{def:index} and applying the constructions of Fukaya-Oh-Ohta-Ono \cite{fooo:part1}, see also 
Wehrheim-Woodward \cite{orient}.
\end{remark}

\subsection{Distribution of constraints}

 In this section we show that the moduli space of rigid broken maps to a broken symplectic four-manifold is, up to cobordism, a product of moduli spaces associated to the vertices, 
 after adding an appropriate set of constraints in the sense of \eqref{eq:con}.
 
 \begin{theorem}  \label{thm:split}  Let $\LL \subset \XX$ be a broken Lagrangian as above.  Let
 $\bGamma$ be a rigid type of broken map bounding $\LL$ so that the homotopy types of graphs $\bGamma(v) \subset \bGamma$ associated to the vertices $v$  are all prime.    Then there exists a collection of constraints
 \[ \ul{\YY} = (\ul{\YY}_v), \quad v \in \Ver(\bGamma) , \quad \ul{\YY}_v = (\YY_e \subset \XX_{P(e)})_{e \ni v} \] 
 where each $\YY_e$, if non-trivial, is a collection of point constraints so that the moduli space $\M_\Gamma(\LL)$ of unframed broken maps with underlying graph $\Gamma$ admits an oriented bijection with the product of partially framed moduli spaces with constraints $\ul{\YY}_v$:
\[ \M_\Gamma(\LL) \cong \prod_{v \in \Ver(\Gamma)} \M_{\bGamma(v)}(\LL_{P(v)},\ul{\YY}_v) .\]
      \end{theorem}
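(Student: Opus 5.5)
The plan is to imitate the splitting argument of \cite{vw:trop} (the T-paper), where the diagonal matching conditions \eqref{eq:evalu} at the edges are deformed to split form. The key fact to exploit is rigidity. Since $\bGamma$ is rigid and $\Gamma$ is a tree (the broken maps are disks with a single boundary constraint), the matching conditions at the edges are transverse and the tropical positions of the vertices are fixed. Each closed edge $e$ of finite type carries a matching condition in the space of Reeb orbits $\RR_\black(P(e))$, which after quotienting by the translation action is a torus orbit $T_{P(e),\C}$-quotient. Each open edge carries a matching condition in the Reeb chords $\RR_\white(P(e))$, and by Lemma \ref{lem:discrete} this is a discrete set of directions times a real one-dimensional family. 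The first step is to orient the tree $\Gamma$ away from the vertex carrying the point constraint $\pt \in \LL$. This is the root.

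The second step is an induction on the vertices in order of distance from the root, with a dimension count at each vertex. For the root vertex $v_0$, the index of $\M_{\bGamma(v_0)}(\LL_{P(v_0)})$ with the point constraint and partial framings is computed from Definition \ref{def:index}. Rigidity of $\bGamma$ together with additivity of the index \eqref{eq:iu} should show that the evaluation maps at the outgoing edges have image of dimension exactly matching the constraint needed downstream. So I would choose generic points $\YY_e$ in the image of the evaluation at each outgoing edge $e$, and I would impose the same point as a constraint on the child vertex. In dimension four each such Reeb orbit space is two-dimensional for closed edges of codimension-one cuts. The constraint is then a point in the complex one-dimensional quotient, or a point in the one-dimensional chord space for open edges.

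The third step is the deformation. I would replace the diagonal condition $\ev_{e}^-(u_{v_-}) = \ev_e^+(u_{v_+})$ by the split condition $\ev_e^- \in \YY_e^-$, $\ev_e^+ \in \YY_e^+$. This uses a homotopy of the diagonal in $\RR(P(e)) \times \RR(P(e))$ to a product of points, which is possible since these spaces are products of tori and vector spaces and the relevant cycles are homologous. Compactness of the parametrized family uses Theorem \ref{thm:cob}-type compactness. Primality of each $\bGamma(v)$ (Definition \ref{def:prim}) rules out boundary strata, exactly as in Lemma \ref{lem:prim}. Hence the parametrized moduli space is a cobordism, and at the split end the moduli space is literally the product $\prod_v \M_{\bGamma(v)}(\LL_{P(v)},\ul{\YY}_v)$. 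To upgrade the cobordism to a bijection, I would use that the torus $T_{P(e),\C}$ acts transitively on the fibers of the evaluation at closed edges. Translating $u_{v_+}$ by this action, and using the translation-invariance of $\LL$ near the neck from Definition \ref{def:trans}, identifies solutions of the diagonal condition with solutions of the split one. For open edges I would use the real translation along the chord instead.

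I expect the main obstacle to be the orientation comparison, together with the framing bookkeeping. Passing from unframed broken maps to partially framed moduli spaces introduces $S^1$-factors at closed edges, which cancel against the torus action. At open edges there is no rotation freedom, only a sign. The identification must be checked to be orientation preserving with respect to the relative spin structures of Remark \ref{rem:rel}. I would handle this by fixing orientations of the constraints $\YY_e^\pm$ so that $\YY_e^- \times \YY_e^+$ is cooriented like the diagonal, and then applying the standard gluing sign formula for fibered products.
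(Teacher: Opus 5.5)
There is a genuine gap. You never determine which side of an edge is rigid, and that is what drives the argument. In real dimension four, a closed matching condition has codimension two and an open one has codimension one. Every vertex moduli space is regular of nonnegative dimension, and the total is rigid. So when the tree is cut at an edge $e$, one side is rigid and the other has dimension equal to the codimension of the matching. The paper inducts by removing a univalent (closed or open) vertex. It then takes the full finite image under $\ev_e$ of whichever side is rigid as the point constraint on the other side. The inverse of the forgetful map is just attaching the corresponding rigid piece.

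Orienting constraints away from the vertex carrying $\pt$ does not respect this. In Example \ref{ex:distrib}, the root $v_1$ (the pant through $p$) is not rigid on its own. It becomes rigid only after receiving a chord constraint from the rigid univalent vertex $v_0$, so the flow is $v_0 \to v_1 \to v_2$. Your scheme would instead pick a point in the positive-dimensional image of $\ev_e$ on $\M_{\bGamma(v_1)}$ and impose it on the already rigid $v_0$. That problem is overdetermined and generically empty. Your claim that index additivity makes the image at outgoing edges match ``the constraint needed downstream'' fails in exactly this case.

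The deformation step is also not available. The diagonal in $\RR(P(e)) \times \RR(P(e))$ has dimension $\dim \RR(P(e))$, while a product of points $\YY_e^- \times \YY_e^+$ has dimension zero. Imposing both $\ev_e^- \in \YY_e^-$ and $\ev_e^+ \in \YY_e^+$ therefore cuts down by twice the codimension of the matching condition, and no cobordism relates the two problems. A split replacement must put the point on one side only, or in general use a K\"unneth decomposition of the diagonal. Once you know one side is rigid, no deformation, compactness argument, or torus-translation step is needed: the fiber product with a finite set is already the disjoint union of constrained moduli spaces.

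Finally, choosing a single generic point in the image of a rigid side discards the elements that evaluate elsewhere. You must either take the whole finite image, as the paper does, or use the invariance of Lemma \ref{lem:prim} for prime types to move the constraint. The latter gives an equality of counts rather than a bijection.
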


\begin{proof} The proof of the statement of the Theorem is an induction on the number of vertices in the graph.  For each vertex $v \in \Ver(\bGamma)$, the moduli space  $\M_{\bGamma(v)}( \LL_{P(v)})$ is non-negative dimension,  by regularity.

\vskip .1in \noindent 
{\em Case 1:  There exists a univalent closed vertex
$v \in \Ver_\black(\bGamma)$}.  The vertex $v$ is connected to a unique 
distinct vertex $v' \in \Ver_\black(\bGamma)$ by some edge $e$.    Cutting at the edge $e$ divides $\bGamma$ into two graphs $\bGamma(v)$ and $\bGamma'$,  and the moduli space $\M_{\bGamma'}(\LL_{P(v)})$ must be rigid. 
Since the matching condition 
cuts down the dimension by at most two,  $\M_{\bGamma(v)}(\LL_{P(v)})$ must 
be either dimension zero or two.   

\vskip .1in \noindent {\em Case 1a: The dimension of $\M_{\bGamma(v)}(\LL_{P(v)})$ is two.}  By the inductive hypothesis, there exist constraints $\ul{\YY}'$ so that 
$\M_{\bGamma'}(\LL')$ is a product of 
$\M_{\bGamma'(v')}(\LL_{P(v')},\ul{\YY}_{v'})$ over its vertices $v'.$
For each element of $\M_{\bGamma'}(\LL')$, choose a trivialization of the tangent space  $T_{w^+_e} S_{v_+}$, that is, a framing at the node.   Let 
$\ti{\YY}_v$  be the collection of points  that is the image of 
$\M_{\bGamma'}(\LL', \ul{\YY}')$ under the evaluation map at $w^+_e$.   The  forgetful map 
\[ \M_{\bGamma}(\LL) \to \M_{\bGamma(v)}(\LL_{P(v)},\ti{\YY}_{v}) , \quad (u_v) \mapsto u_v \] 
is a bijection.  Indeed, the inverse to the bijection is obtained by gluing on the corresponding element of 
$\M_{\bGamma'}(\LL', \ul{\YY}')$.    The claim follows from the inductive hypothesis.

\vskip .1in \noindent {\em Case 1b:  The dimension of $\M_{\bGamma(v)}(\LL_{P(v)})$ is zero.}  
  Let 
  \[ \ti{\YY}_e \subset \XX_{P(e)} \]
  denote the image of the moduli space $\M_{\bGamma(v)}$ 
under the evaluation map $\ev_e$, and $\ul{\YY}'$ the point constraint at the edge $e$, and the given constraints at the other edges.  Then 
$\M_{\bGamma'}(\LL',\ul{\YY}')$ is rigid.  The claim follows again by the inductive
hypothesis.

\vskip .1in \noindent {\em Case 2: There is a univalent open vertex
$v \in \Ver_\white(\Gamma)$}.  Let $\bGamma'$ denote the graph obtained from $\Gamma$ by removing $v$.  Since the matching condition 
cuts down the dimension by one, the sum of the dimensions 
$\M_{\bGamma(v)}(\LL_{P(v)})$ and $\M_{\bGamma'}(\LL')$ is equal to one. 
If $\M_{\bGamma(v)}(\LL_{P(v)})$ is dimension one, 
then $\M_{\bGamma'}(\LL')$ is rigid and is a product of moduli spaces 
associated to its vertices, up to cobordism, by the inductive hypothesis. 
Otherwise, $\M_{\bGamma(v)}(\LL_{P(v)})$ is rigid and 
$\M_{\bGamma'}(\LL',\ul{\YY}')$ is rigid
for some constraint $\ul{\YY}'$ given at $e$ by the output of $\M_{\bGamma(v)}(\LL_{P(v)})$. The discussion proceeds as in the closed case. 
\end{proof}

\begin{example} \label{ex:distrib} We give an example of distribution of constraints for a  Maslov-index-two disk with point constraint contributing to the potential for the degree seven del Pezzo, with a fixed point constraint $p \in \LL$. The broken disk $u = (u_v)$ has three components 
corresponding to vertices $v \in \{ v_0,v_1,v_2 \} = \Ver(\Gamma)$ as shown in the 
cartoon diagram in Figure \ref{fig:distribconstraints}, with point constraint in the piece in between the boundary and the trivalent vertex of the Lagrangian.   The component $u_{v_0}$ whose corresponding to the univalent vertex $v_0$ mapping to zero is rigid, and it's evaluation at the end corresponding to the only edge is then considered a constraint for the component $u_{v_1}$ whose vertex $v_1$ is a bivalent vertex with adjacent open and closed edges.  This component is rigid taking into account the point constraint at $p$, so that the number of point constraints on this component is two. Its evaluation at the end is considered as a point constraint for the final piece $u_{v_2}$ which is a punctured holomorphic sphere meeting the toric boundary, which is then rigid.
\end{example}

\begin{figure}[ht]\begin{center} 
\scalebox{.4}{\includegraphics{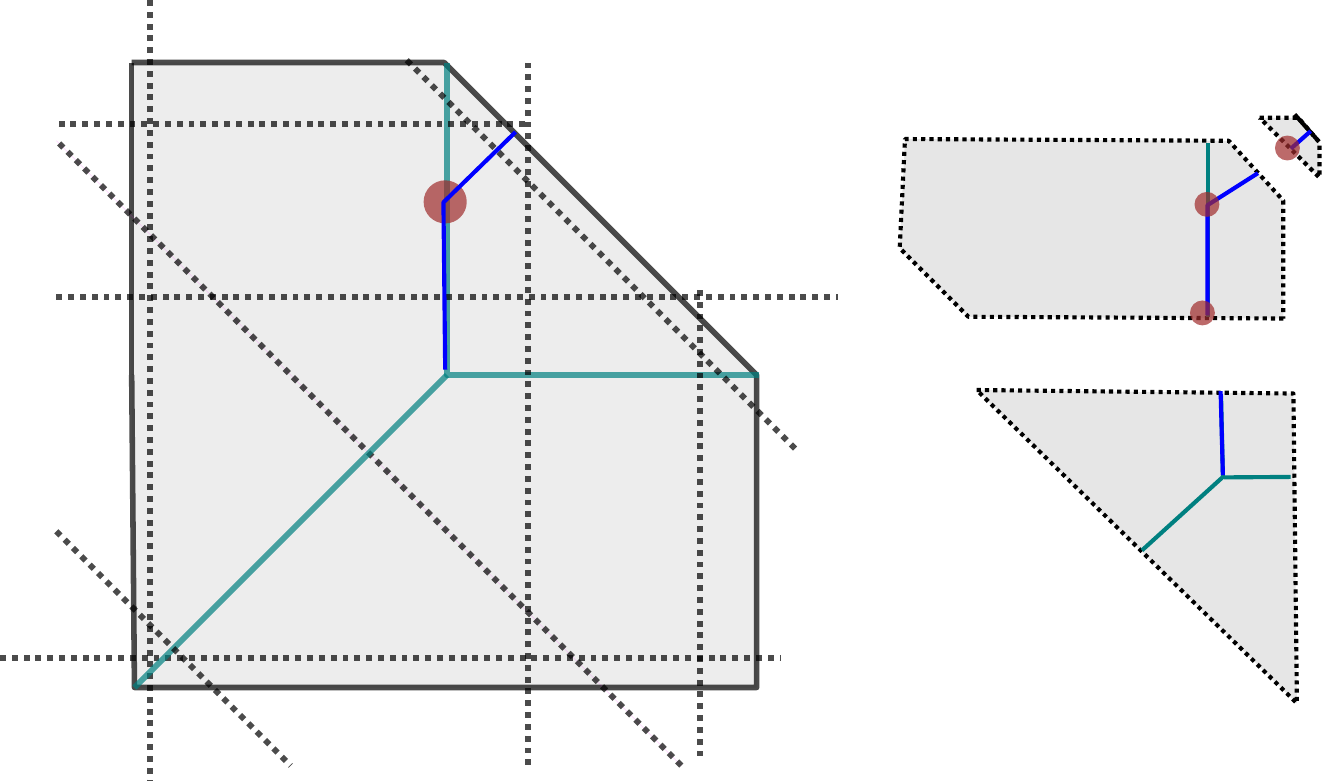}}
\end{center} 
\caption{The moduli space as a product of moduli spaces for its pieces} 
\label{fig:distribconstraints}
\end{figure}

\subsection{The definition of the disk counts}

We now give the weighted counts of disks used in, for example, the definition of the disk potential. 
Only in certain cases will these counts be invariant of the choice of almost complex structure and perturbations. 

\begin{definition}   \label{def:ml}   Let $u = (u_v)_{v \in \Ver(\Gamma)}$ be a broken map with tropical graph $\Gamma$.    Denote the number of interior leaves
    \[ d_\black(\Gamma) \in \Z_{\ge 0 } \]
    of the graph $\Gamma$.  Let
    \[ \gamma(u) \in \{\pm 1\} \]  
    denote the orientation sign defined using the relative spin structure.    The unbroken holomorphic disk count with constraints $\ul{Y}$ is given as the sum    over rigid  maps $u$
\begin{equation} \label{eq:wtwu}  m(L,\ul{Y}) = \sum_{\bGamma} \sum_{u \in \M_{\bGamma}(L,\ul{Y})} 
(d_\black(\Gamma)!)^{-1} 
\gamma(u)
    \in \Q
  \end{equation}
Given a polyhedral decomposition $\cP$, the definition of the broken count $m(\LL,\ul{\YY})$ is similar, and given by a sum over elements of  
 $\M_{\bGamma}(\LL,\ul{\YY})$.
\end{definition}

We emphasize that the numbers $m(v)$ are independent of the choice of almost complex structure only in the case that  $\bGamma(v)$ is prime, as in Lemma \ref{lem:elem}. Theorem \ref{thm:split} immediately implies:

\begin{corollary}  \label{cor:unmarked} 
Let $X$ be an almost toric surface and $L \subset X$ a monotone almost toric moment fiber.  The potential $W_L$ is given by a sum over graphs
$\bGamma$ resp. $\bGamma_0$ with resp. without interior markings 
\begin{eqnarray*} W_L &=&  
\sum_{\bGamma} \frac{1}{d_\black(\bGamma)!}
\prod_{v \in \Ver({\bGamma}) }  d_\black(\bGam(v))! m(v) \\ 
&=& \sum_{\bGamma_0} \frac{1}{\# \Aut(\bGamma_0)}
\prod_{v \in \Ver({\bGamma}_0) } m(v) \end{eqnarray*}
where the first resp. second sum is a sum over isomorphism classes of types of broken disks with resp. without semi-infinite edges of interior type.
\end{corollary}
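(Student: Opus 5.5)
The plan is to combine the cobordism of Theorem \ref{thm:cob} with the product decomposition of Theorem \ref{thm:split}, and then to remove the interior markings combinatorially. Because $L$ is a monotone moment fiber, the count $W_L$ of rigid Maslov-index-two disks through a generic point $p \in L$ does not depend on the almost complex structure or the perturbation data. Theorem \ref{thm:cob} then identifies $W_L$ with the weighted broken count $m(\LL,\ul{\YY})$ of Definition \ref{def:ml}, where $\ul{\YY} = (\{p\})$. That count is a sum over rigid types $\bGamma$, each weighted by $(d_\black(\bGamma)!)^{-1}$, of the signed number of elements of $\M_\bGamma(\LL,\ul{\YY})$. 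To see that the cobordism preserves signs, I would orient both ends compatibly using the relative spin structure, as in Remark \ref{rem:rel}.

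The next step is to apply Theorem \ref{thm:split} to each rigid type $\bGamma$. It gives an oriented bijection from $\M_\bGamma(\LL,\ul{\YY})$ to a product over the vertices of the constrained moduli spaces $\M_{\bGamma(v)}(\LL_{P(v)},\ul{\YY}_v)$. The signed count of this product is the product of the vertex counts. Each vertex count counts maps with the $d_\black(\bGamma(v))$ interior leaves labelled, so it equals $d_\black(\bGamma(v))!\, m(v)$, where $m(v)$ is the normalized unlabelled count. Primeness is what makes $m(v)$ well defined via Lemma \ref{lem:elem}. Substituting these vertex counts into the sum gives the first formula.

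For the second equality, I would group the types $\bGamma$ according to the type $\bGamma_0$ obtained by forgetting the interior leaves. For each vertex $v$ the number $d_\black(\bGamma(v))$ is fixed by the Donaldson degree times the area of $u_v$. Summing over the ways to distribute the global labels $1,\dots,d_\black(\bGamma)$ among the vertices gives the multinomial coefficient
\[ \frac{d_\black(\bGamma)!}{\prod_{v} d_\black(\bGamma(v))!} . \]
Dividing by the symmetries of $\bGamma_0$ that permute vertices gives a factor $1/\#\Aut(\bGamma_0)$. The factorials then cancel, and what remains is $\#\Aut(\bGamma_0)^{-1}\prod_v m(v)$.

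I expect the main obstacle to be this last bookkeeping step, specifically checking that the automorphisms are counted neither once too often nor once too few. An automorphism of $\bGamma_0$ acts on the labelled lifts, and I would use an orbit–stabilizer argument to check that the labelled broken maps over a fixed $\bGamma_0$ number exactly $d_\black!\,\#\Aut(\bGamma_0)^{-1}\prod_v m(v)$. A secondary check is that the sign from the product orientation in Theorem \ref{thm:split} agrees with the sign $\gamma(u)$ of the glued disk. I would handle this by the standard gluing compatibility of orientations for the fibre products at the edges.
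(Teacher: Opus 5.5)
Your proposal is correct and follows the paper's route. The first equality comes from the cobordism combined with the oriented product decomposition of Theorem \ref{thm:split}. The second comes from counting the fiber of the forgetful map $\bGamma \mapsto \bGamma_0$, and your factor $d_\black(\bGamma)!/\bigl(\prod_v d_\black(\bGamma(v))!\,\#\Aut(\bGamma_0)\bigr)$ is the precise form of the paper's terse fiber count, with your orbit--stabilizer and orientation checks spelling out what the paper leaves implicit.
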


\begin{proof}  The first equality in the statement of the Corollary is an immediate consequence of the cobordism in 
Theorem \ref{thm:split}.  The second equality is obtained by noting that for each type  $\bGamma$,
the numbers $m(v)$ are independent of the choice of ordering of interior edges associated to the Donaldson hypersurface, and the map $\bGamma \mapsto \bGamma_0$ which forgets the interior edges 
has $d_\black(\bGamma)/\Aut(\bGamma_0)$ points in its fiber. 
\end{proof}

\section{Holomorphic curves in the elementary pieces}
\label{sec:elem}

By an elementary piece, we mean a cylindrical-end almost toric four-manifold
equipped with a connected cylindrical-end tropical Lagrangian containing at most one 
focus-focus singularity or  vertex of the tropical graph of a tropical Lagrangian.  By the degeneration theory in the previous chapter, to count holomorphic disks bounding tropical 
Lagrangians it suffices to count the disks in these elementary pieces.  The general strategy is to apply some degeneration technique 
to the model problems considered in Sections \ref{sec:model1}, \ref{sec:model2} below.   Knowledge of all but one  of the open Gromov-Witten invariants involved gives the value the last invariant from the computation of the model problem.   The  multiplicities \eqref{mv:spheres}, \eqref{mv:pants}, \eqref{mv:bound}, \eqref{mv:multcov} of 
Definition \ref{def:mv} were described in Venugopalan-Woodward \cite{vw:at} in the current foundational scheme, although these formulas already appeared elsewhere in the literature.

\subsection{Disks in the product of projective lines}
\label{sec:model1}

We make two explicit computations of numbers of holomorphic disks in the product of projective lines. 

\begin{example} \label{ex:antidiag}   In the first example, 
we count disks with boundary in the  anti-diagonal two-sphere in the product of projective lines $X = \P^1 \times \P^1 $ with equal symplectic forms on the factors. That is, let 
  \[ L = \{ (z,\ol{z}) \} \cong \P^1  \subset X .\]
The Lagrangian $L$ is the fixed point set $X^\tau$ of the anti-symplectic involution 
\[  \tau: X \to X, \quad (z_1,z_2) \mapsto (\ol{z}_2, \ol{z}_1) .\]

\begin{lemma} \label{lem:anti-diag} \label{lem:p1p1} Let $L \subset X$ be the antidiagonal as above.  The number  $m(L,\ul{Y})$ of holomorphic disks $u: S \to X$ of Maslov index $4$ with boundary in $L$  and for some given generic $p_1,p_2,p \in \P^1$,
 \[ u(z_\black) = (p_1,p_2), \quad u(z_\white) = (p,\ol{p}) \] 
 for an interior point $z_\black \in S$ and a boundary point $z_\white$,  is equal to $-1$.
\end{lemma}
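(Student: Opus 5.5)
We use the doubling trick for the anti-symplectic involution $\tau$. A holomorphic disk $u: (S,\partial S) \to (X,L)$ with $S$ the unit disk extends by Schwarz reflection to a holomorphic sphere
\[ \hat u: \P^1 \to X, \quad \hat u(z) = \tau(u(1/\ol{z})) \ \text{for } |z| \ge 1 .\]
In this way disks with boundary in $L = X^\tau$ correspond to $\tau$-equivariant spheres, i.e. real rational curves, with the boundary circle mapping to the real locus.

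The first step is to identify the relevant class. Since $L$ is a sphere, the Maslov index of $u$ equals $c_1(\hat u) = 2\, c_1(u)$ computed via the doubling, where $[\hat u] = [u] - \tau_*[u]$ in the appropriate sense. A Maslov index four disk therefore doubles to a sphere with $c_1 = 4$, that is, of bidegree $(a,b)$ with $2a + 2b = 4$. Because $\tau$ swaps the factors, equivariance forces $a = b$, so $\hat u$ has bidegree $(1,1)$. Equivalently, writing $u = (f,g)$ with boundary condition $g = \ol{f}$ on $\partial S$, we get $g(z) = \ol{f(1/\ol{z})}$. Thus $u$ is determined by a degree-one map $f: \P^1 \to \P^1$, a M\"obius transformation, together with a choice of the half $S \subset \P^1$ bounded by the circle $f^{-1}$-real locus, which here is the unit circle.

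The second step is to impose the constraints. The interior constraint $u(z_\black) = (p_1,p_2)$ becomes $f(z_\black) = p_1$ and $\ol{f(1/\ol{z}_\black)} = p_2$. The boundary constraint becomes $f(z_\white) = p$. Modulo $\Aut(S) = PSL(2,\R)$ we may fix $z_\black = 0$ and $z_\white = 1$. This leaves the conditions $f(0) = p_1$, $f(\infty) = \ol{p_2}$, and $f(1) = p$ on the three-complex-dimensional group $PSL(2,\C)$. Three points determine a unique M\"obius map, and the real dimensions check: $6 - 1$ (residual symmetry, after using the interior and boundary markings) matches the constraint count. Hence there is exactly one disk for generic $p_1, p_2, p$ (with $p_1 \neq \ol{p_2}$). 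The other half of the doubled sphere gives the disk with the roles of the hemispheres swapped; it satisfies the interior constraint at $(p_2', \ldots)$ only if $p_1$ and $\ol p_2$ are exchanged, so for generic data it does not contribute. Regularity of this disk follows from regularity of the doubled curve, since $\P^1 \times \P^1$ is homogeneous and the relevant normal bundles are positive.

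The main obstacle is the sign. The count is $\pm 1$, and determining $-1$ requires computing the orientation with respect to the chosen (unique) relative spin structure on $L \cong S^2$, as in Remark \ref{rem:rel}. Our approach is to compare with the Fukaya-Oh-Ohta-Ono result on how $\tau$ acts on orientations of moduli spaces for fixed-point Lagrangians. Alternatively, and as a consistency check, we can use a degeneration: apply Theorem \ref{thm:split} to the toric decomposition in which $L$ has tropical graph the antidiagonal segment. Moving $p_1, p_2$ toward the toric boundary, the disk breaks into pieces of types \eqref{mv:bound} and \eqref{mv:cylhit}, whose product contributes $(-1)^{\ell}$ with $\ell = 1$. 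This also ties the answer to the normalization used in Example \ref{ex:3p2}. We will try the degeneration route first, since the multiplicities in it are already fixed, and use the direct orientation computation to confirm.
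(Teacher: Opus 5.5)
\textbf{Verdict: genuine gap in the sign.} Your reduction to a count of M\"obius maps is correct and essentially matches the paper. The paper removes the diagonal seam as in Wehrheim--Woodward \cite{orient}, turning disks bounding $L$ into degree-one maps $\P^1 \to \P^1$ with three point constraints, and then uses that such a map is unique. Your regularity and uniqueness remarks are fine. What you have not established is the sign, and that is the actual content of the lemma: as written, the proposal only proves $|m(L,\ul{Y})| = 1$.

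\textbf{Why your routes to the sign do not work.} The degeneration route you plan to try first is circular. In the paper, the multiplicity $-1$ of Definition \ref{def:mv} \eqref{mv:cylhit} is \emph{deduced from} this lemma in Lemma \ref{lem:mvhitsproof}. There the interior constraint is placed at the torus-fixed point $(0,\infty)$, the broken disk splits into a sphere piece of multiplicity $+1$ and a disk piece, and the disk piece is then forced to count $-1$. Example \ref{ex:3p2} in turn uses that multiplicity. Moreover, these local multiplicities are only defined after choosing relative spin structures on the extended boundary problems (Remark \ref{rem:rel}). So they cannot fix an absolute sign independently of a global computation like this one. The appeal to Fukaya--Oh--Ohta--Ono on the action of $\tau$ is stated only as an intention. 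That result relates moduli spaces via $u \mapsto \tau \circ u \circ c$, and you do not explain how it would determine the sign of a single rigid disk against point constraints.

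\textbf{The missing step.} It is already visible in your own formulas.
\begin{itemize}
\item Use that the doubling $u \mapsto F$ is an orientation-preserving identification of the disk moduli space with the moduli space of holomorphic spheres. The disk side carries the relative-spin orientation, with $L$ oriented via the first projection. This is \cite[Proposition 5.4.6]{orient}, which the paper invokes.
\item After fixing $z_\black = 0$ and $z_\white = 1$, the evaluation at the markings is $F \mapsto \bigl((F(0), c(F(\infty))), F(1)\bigr)$, where $c$ is complex conjugation.
\item The map $F \mapsto (F(0), F(\infty), F(1))$ is a biholomorphism from $PSL(2,\C)$ onto the complement of the diagonals in $(\P^1)^3$. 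Hence its transverse intersection with $(p_1, \ol{p_2}, p)$ counts $+1$.
\item The conjugation $c$ on the second factor reverses the orientation of $\P^1$. It therefore pulls back the Poincar\'e dual of $p_2$ to minus that of $\ol{p_2}$, so the unique solution counts $-1$.
\end{itemize}
This is exactly the paper's argument: ``one of the constraints has the opposite of the standard orientation.''
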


A tropical picture of the desired curve is shown in Figure \ref{fig:anti-diagc}.

\begin{figure}[ht]\begin{center} \scalebox{.5}{
\begingroup%
  \makeatletter%
  \providecommand\color[2][]{%
    \errmessage{(Inkscape) Color is used for the text in Inkscape, but the package 'color.sty' is not loaded}%
    \renewcommand\color[2][]{}%
  }%
  \providecommand\transparent[1]{%
    \errmessage{(Inkscape) Transparency is used (non-zero) for the text in Inkscape, but the package 'transparent.sty' is not loaded}%
    \renewcommand\transparent[1]{}%
  }%
  \providecommand\rotatebox[2]{#2}%
  \newcommand*\fsize{\dimexpr\f@size pt\relax}%
  \newcommand*\lineheight[1]{\fontsize{\fsize}{#1\fsize}\selectfont}%
  \ifx\svgwidth\undefined%
    \setlength{\unitlength}{160.35269309bp}%
    \ifx\svgscale\undefined%
      \relax%
    \else%
      \setlength{\unitlength}{\unitlength * \real{\svgscale}}%
    \fi%
  \else%
    \setlength{\unitlength}{\svgwidth}%
  \fi%
  \global\let\svgwidth\undefined%
  \global\let\svgscale\undefined%
  \makeatother%
  \begin{picture}(1,1.00874889)%
    \lineheight{1}%
    \setlength\tabcolsep{0pt}%
    \put(0,0){\includegraphics[width=\unitlength,page=1]{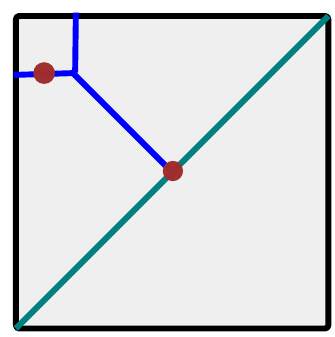}}%
    \put(0.5353573,0.42343767){\color[rgb]{0,0,0}\transparent{0.53157902}\makebox(0,0)[lt]{\lineheight{1.25}\smash{\begin{tabular}[t]{l}$(p,\ol{p})$\end{tabular}}}}%
    \put(0.08671281,0.69205876){\color[rgb]{0,0,0}\transparent{0.53157902}\makebox(0,0)[lt]{\lineheight{1.25}\smash{\begin{tabular}[t]{l}$(p_1,p_2)$\end{tabular}}}}%
  \end{picture}%
\endgroup%
}
\end{center} \caption{Curves with boundary in the  anti-diagonal in $\P^1 \times \P^1$}  \label{fig:anti-diagc}
\end{figure}

\begin{proof}   We apply the operation of ``removing a diagonal seam'' discussed in 
Wehrheim-Woodward \cite[Section 5.4]{orient} to reduce the count to one of holomorphic spheres.
Holomorphic disks in $X$ with boundary in $L$  correspond to holomorphic maps from $\P^1$ to $\P^1$ in the following sense:  Given a holomorphic disk $u: C \to X$ with components $(u_-,u_+)$ one obtains a holomorphic sphere 
\[ C' = \ol{C} \cup_{\partial C} C \] 
by gluing together the maps $u_-(\ol{z}), u_+(z), z \in C$ along the boundary.  This correspondence is an orientation-preserving diffeomorphism between the moduli space of disks and spheres by \cite[Proposition 5.4.6]{orient}.   The constraints on the disk translate to constraints on the sphere 
 \[ v(\ol{z}_\black) = p_1, \quad v(z_\black) = p_2, \quad v(z_\white) = p \] 
Complex conjugation on $\P^1$ is orientation reversing, and so 
maps the dual class $[p_1]$ to $p_1$ to minus the dual class  $- [\ol{p_1}]$ of $\ol{p_1}$ in $H^2(\P^1)$.

We claim that the count of holomorphic spheres $v$ with these three point constraints is equal to $-1$.  Indeed, each such map corresponds to an automorphism $\varphi: \P^1 \to \P^1$ with three given point constraints, and any such automorphism is uniquely specified by three such point constraints.  
Since one of the constraints has the opposite of the standard orientation, the orientation sign is negative.  
 \end{proof}
\end{example} 

\begin{example} \label{ex:lines} Consider the problem of counting affine linear maps from the affine line to itself. 

\begin{lemma}  \label{lem:lines} 
The number  $m(L,\ul{Y})$ of holomorphic maps from $S = \C$ to $\XX = \C$ passing through two fixed generic points in $\XX$ is equal to one.  Similarly, the number of holomorphic maps from the half-space $S = \H$ to $\XX = \C$ with boundary in $\R$ passing through
 points $p_1 \in \LL$ and asymptotic to a fixed Reeb chord at infinity is equal to $- 1$. 
\end{lemma}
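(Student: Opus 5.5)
For the first claim I would classify the maps directly. A holomorphic map $u: \C \to \C$ of finite energy in the cylindrical-end sense (degree one at the puncture at infinity, i.e.\ direction of the leaf at infinity primitive) is a polynomial of degree one, $u(z) = az + b$ with $a \neq 0$. The interesting count is the one with the parametrization fixed, so I would quotient by the automorphisms of $S = \C$ fixing the puncture at infinity, or equivalently fix two marked points $z_0 = 0$, $z_1 = 1$ in the domain and impose $u(z_0) = q_0$, $u(z_1) = q_1$ for the given generic points $q_0 \ne q_1$. This yields the unique solution $a = q_1 - q_0$, $b = q_0$. For the sign I would note that the evaluation map
\[ \ev: \{ az+b \} \times (\text{positions of } z_0,z_1 \bmod \Aut) \to \XX \times \XX \]
is a complex-linear (hence orientation-preserving, with complex orientations) isomorphism near the solution, and that the linearized Cauchy--Riemann operator is a complex operator. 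The orientation is therefore the complex one and the count is $+1$. There are no other contributions, since $u$ is affine, so bubbling is impossible and transversality is automatic: the map $(a,b) \mapsto (b, a+b)$ has nonzero determinant.

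For the second claim I would again classify solutions explicitly. A holomorphic map $u: \H \to \C$ with $u(\R) \subset \R$ and a single strip-like end at infinity asymptotic to a Reeb chord of minimal direction is, by the Schwarz reflection principle, the restriction of a real entire map of degree one, i.e.\ $u(z) = az + b$ with $a, b \in \R$ and $a \neq 0$. Fixing the Reeb chord at infinity fixes the asymptotic data up to the real translation and rescaling. Using the framing at the end (the partial framing of the relative map) removes the scaling ambiguity, which should force the sign of $a$ (the chord's orientation); the point constraint $u(z_\white) = p_1$ then fixes $b$. This produces exactly one solution.

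The main obstacle is the sign $-1$, which depends on the orientation conventions of Remark~\ref{rem:rel}. Here I would imitate the doubling argument in the proof of Lemma~\ref{lem:anti-diag}. Doubling $\H$ along $\R$ identifies the moduli space with real-part data of the map in the first claim, via \cite[Proposition 5.4.6]{orient}. Under this identification the boundary constraint in $\LL = \R$ and the chord constraint correspond to a constraint together with its conjugate, and conjugation reverses the orientation of the point class exactly as in Lemma~\ref{lem:anti-diag}. I would then check, using the chosen relative spin structure on the extension over the end, that the induced orientation of the zero-dimensional moduli space is opposite to the complex one, giving $-1$. If the doubling comparison is awkward at the strip-like end, I would instead compute the determinant line directly. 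The operator $\bar\partial$ on $\H$ with real boundary condition and the weighted end has kernel spanned by $(1, z)$ over $\R$, and comparing its orientation with the ordered evaluation at $z_\white$ and at the chord should yield one orientation reversal.
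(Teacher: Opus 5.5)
Your count of solutions is fine. For the first claim it is exactly the paper's argument: a unique affine map through two points, with the complex orientation, hence $+1$. For the second claim you read the statement literally (target $\C$, boundary on $\R$) and use Schwarz reflection to reduce to real affine maps $u(z)=az+b$. The chord together with the framing fixes $a\in\R^\times$, and the boundary constraint fixes $b$, so there is exactly one solution. The paper's proof instead treats maps of $\H$ into $\C\times\C$ with boundary on the diagonal (the setting later used in Lemma \ref{lem:pantone}) and unfolds them to affine maps $\C\to\C$, reducing to the first claim. Either route gives a unique solution.

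The gap is the step meant to produce the sign $-1$. \cite[Proposition 5.4.6]{orient} is the statement about removing a diagonal seam: disks in $M^-\times M$ with boundary on the diagonal correspond to maps of the doubled surface into $M$. That is the situation of Lemma \ref{lem:anti-diag} and of the paper's folded reading, but not the one you set up. For maps $\H\to\C$ with boundary on $\R$, Schwarz reflection identifies your moduli space with the fixed locus of the involution $\tilde u(z)\mapsto\overline{\tilde u(\bar z)}$ on the space of complex lines. This is a half-dimensional real slice whose orientation is not induced from the complex one. Moreover, your constraints (the point $p_1\in\R$ at a boundary marked point, and the chord at infinity) are their own conjugates. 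So there is no ``constraint plus conjugate constraint'' pair. The mechanism that produced $-1$ in Lemma \ref{lem:anti-diag} (an interior constraint $(p_1,p_2)$ splitting into $p_2$ and $\bar p_1$ on the doubled sphere, one with reversed orientation) does not occur at all.

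Your fallback does not close the gap either. If you orient the kernel $\on{span}_\R(1,z)$ and evaluate at $z_\white$ and at the chord, you get $\beta$ and $\alpha$ respectively. The resulting sign is $+1$ or $-1$ depending on several choices: the order of the two constraints relative to the basis, the orientations of $\Aut(\H,\infty)$ and of the framing, and the Lagrangian path used to extend the boundary problem over the strip-like end (Definition \ref{def:index}). Saying it ``should yield one orientation reversal'' is not a computation, and no convention-free computation exists. The paper in effect treats this sign as a normalization of the relative spin structure on the extension (Remarks \ref{rem:rel} and \ref{rem:choose}). To prove the lemma as stated, you must fix those conventions explicitly and verify that they give $-1$, rather than try to derive the sign from a doubling argument.
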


\begin{proof}  The first statement of the Lemma is equivalent to the existence of a unique line between any two generic points.  A holomorphic map $u: \C \to \XX$ of the form 
\[ u(z) = az + b, a,b \in \C, a \neq 0  \] 
has finite Hofer energy.  Given $p_1 \neq p_2 \in \XX$ and $z_1 \neq z_2 \in \C$, there are unique values of $a,b$ so that $u(z_k) = p_k, k = 1,2$. The relative Gromov-Witten invariant in this case is one for the standard complex structure, and the answer is independent of the choice of almost complex structure by Lemma \ref{lem:elem}.  

In the real case, maps to $\C \times \C$ with boundary in the  diagonal
glue to holomorphic degree one maps $\C \to \C$
of the form $\ti{u}(z) = az + b$, by taking the first resp. second component to be the restriction of $\ti{u}$ to $\Im(z) \ge 0$ resp. its restriction to the complement.  
 \end{proof}
  Note, for later use, that the first resp. second component of $u$ has a zero away from the the boundary $\R \subset \C$ if $\Im(-b/a)$ is positive resp. negative.
\end{example} 

\begin{remark} \label{rem:choose} 
We choose the relative spin structure for the moduli space of relative broken maps so that the count has a positive sign.
 \end{remark}

\subsection{Disks in the blow-up of the product of projective lines}
\label{sec:model2}

We make three explicit computations of numbers of holomorphic disks in the blow-up of the product of projective lines.

\begin{example} \label{ex:blowup} We count disks in a blow-up. 
Let $X = \Bl(\P^1 \times \P^1)$ be the blow-up of the product
of projective lines and $L \cong S^2$ the inverse image of the anti-diagonal in Lemma 
\ref{lem:p1p1} under the blow-down map. Let $E \cong \P^1 \subset X$ denote the exceptional locus.

\begin{lemma}\label{lem:blowup} Let $L \subset X$ be as above.  The number  $m(L,\ul{Y})$ of holomorphic disks in $X$ with boundary in $L$ , intersecting $E$ with multiplicity one and passing through a generic point in $X$ and in $L$ is equal to minus one. 
\end{lemma}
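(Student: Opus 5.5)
We reduce the count in the blow-up to the count in Lemma \ref{lem:p1p1} by a degeneration that pushes the blow-up point to infinity along a neck. We realize $X$ as the symplectic sum (or, equivalently, the cut along a polytope decomposition of the pentagon moment polytope) of $\P^1 \times \P^1$ with a neighborhood of the exceptional curve. The blow-up point is taken away from the anti-diagonal $L$, so $L$ lies entirely in the $\P^1\times\P^1$ piece. The class of disks in question has $[u]\cdot E = 1$. Its Maslov index is then $4 - 2 = 2$ in terms of $c_1$: the proper transform of a Maslov-four disk through the blow-up point has $c_1$ dropping by one. Counting index, the point constraint in $X$ together with the boundary point constraint cuts this down to a rigid problem.

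The key step is a correspondence between these disks and the disks of Lemma \ref{lem:p1p1}. We place the generic interior point constraint at the blown-up point itself, or better, we degenerate the interior point constraint into $E$. Then a disk $u$ in $X$ meeting $E$ once is the proper transform of a Maslov-index-four disk $\bar u$ in $\P^1\times\P^1$ with boundary in $L$ passing through the blow-up center $(p_1,p_2)$. The extra interior point constraint in $X$ plays no role once the center is fixed, or is absorbed by the choice of the position on $E$.

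More carefully, we would use the following approach. The blow-down map $\pi: X \to \P^1\times\P^1$ is holomorphic for an almost complex structure integrable near $E$. It induces a bijection between disks $u$ with $[u]\cdot E = 1$ meeting a chosen point $e \in E$ and disks $\bar u$ through $(p_1,p_2)$ with tangent direction at that point prescribed by $e$. Equivalently, via the doubling of Lemma \ref{lem:p1p1}, these disks correspond to degree-one maps $v:\P^1\to\P^1$ with $v(\bar z_\black)=p_1$, $v(z_\black)=p_2$, and $v(z_\white)=p$. The constraint at $e$ fixes the tangent line, which in the doubled picture is automatically determined; so we instead keep a generic interior point $q \in X$ and check that its constraint replaces the constraint $u(z_\black)=(p_1,p_2)$. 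Concretely, we take $q$ near $E$ and use Gromov compactness: the limiting configuration is the Lemma \ref{lem:p1p1} disk plus a (constant) contribution on $E$. Regularity of the doubled automorphism problem gives transversality, so the count is unchanged under this degeneration, and the answer is the $-1$ of Lemma \ref{lem:p1p1}.

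The main obstacle is the orientation bookkeeping. We must check that the relative spin structure on $L\subset X$ pulled back from $\P^1\times\P^1$ induces the same sign as before. We must also check that the identification of the interior constraint in $X$ with the pair $(p_1,p_2)$ via $\pi$ is orientation preserving, since $\pi$ is holomorphic away from $E$. The conjugation in the doubling again reverses one constraint, giving the sign $-1$. We would verify the sign by the same argument as in Lemma \ref{lem:p1p1}, using \cite[Proposition 5.4.6]{orient}. Invariance under the choice of almost complex structure follows from Lemma \ref{lem:elem}, since the type is prime: there is minimal area and a single intersection with $E$.
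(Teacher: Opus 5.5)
\textbf{The core argument is right.} Your second paragraph contains exactly the paper's proof. Under the blow-down $\pi$, a disk in $X$ bounding $L$ and meeting $E$ once transversally is identified, by removal of singularities, with a Maslov-index-four disk in $\P^1\times\P^1$ bounding the anti-diagonal and passing through the center $\pi(E)$. Lemma~\ref{lem:p1p1} with $(p_1,p_2)=\pi(E)$ then gives $-1$. In that argument the blow-up center \emph{is} the interior point constraint: the condition $[u]\cdot E=1$ encodes it, and nothing else is imposed.

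\textbf{The ``more careful'' version fails.} There you keep an additional generic interior point $q\in X$, and the dimension count rules this out. Since $\mu(u)=\mu(\pi\circ u)-2[u]\cdot E=2$, the space of such disks with one boundary marked point has dimension $\dim L+\mu(u)-3+1=2$. The boundary constraint at $p\in L$ alone cuts this to zero. An interior marked point constrained to $q$ costs $4-2=2$ more, so the expected dimension is $-2$. For generic $q$ there are no such disks, and the count is $0$, not $-1$. So your claim in the first paragraph that the interior and boundary point constraints together give a rigid problem is false, and the extra constraint does not ``play no role.'' You notice the same phenomenon for a prescribed point $e\in E$: the unique disk through $\pi(E)$ and $p$ already has a determined tangent line, so a generic $e$ is never hit.

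\textbf{The degeneration $q\to E$ does not repair this.} There is nothing to degenerate, since the moduli space is empty for generic $q$. The proposed limit, ``Lemma~\ref{lem:p1p1} disk plus a constant contribution on $E$,'' can pass through a limit point $e\in E$ only if $e$ is exactly where the proper transform meets $E$. For any other $e$ one would need a bubble covering the exceptional sphere. That configuration is a limit of Maslov-index-four disks with $[u]\cdot E=0$, not of the disks you are counting, and it would need its own gluing analysis. The neck-stretching setup in your first paragraph is also never used.

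\textbf{The fix} is to drop $q$ altogether. Choose $J$ so that $\pi$ is holomorphic near $E$, use the proper-transform bijection with $(p_1,p_2)=\pi(E)$, and take both the count and the sign from Lemma~\ref{lem:p1p1}.
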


\begin{proof} We reduce to the case of disks in the product of projective lines
as follows. Let $\bGamma$ be the type of holomorphic disk in $X = \Bl(\P^1 \times \P^1 )$ with boundary in $L$  meeting the 
exceptional curve $E$ once transversally.  Under the blow-down map $\pi: X \to \P^1 \times \P^1$, these are identified
with disks $\ol{u}$ in $\P^1 \times \P^1$ meeting the anti-diagonal and passing through the blow-up point $\pi(E)$, by removal of singularities for pseudoholomorphic disks; see \cite[Theorem 6.1]{vwx}. By Lemma
\ref{lem:anti-diag} these have count $-1$.
\end{proof}
\end{example}

\begin{example}  \label{ex:triv} 
Consider disks in the blow-up of the product of projective lines with boundary in the  tropical Lagrangian whose graph has a single trivalent vertex.  
Let $X$ be the blow-up of $\P^1 \times \P^1$ at a point and $L$ the embedded
Lagrangian sphere in $X$ considered in Examples \ref{ex:hind}, \ref{ex:hind2}.

\begin{lemma} \label{lem:hind3} 
The count of Maslov-index-two disks with boundary in the  tropical Lagrangian $L \subset X = \Bl(\P^1 \times \P^1)$ whose graph $\Gamma$ has a single trivalent vertex
$v \in \Ver(\Gamma)$ and three edges $e_1,e_2,e_3 \in \Edge(\Gamma)$ in the blow-up $X$ of $\P^1 \times \P^1$ at a torus-fixed-point is equal to $-1$.
\end{lemma}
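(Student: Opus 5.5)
The plan is to reduce the count for the trivalent tropical Lagrangian to the count for the inverse image of the antidiagonal in Lemma \ref{lem:blowup}. Proposition \ref{hindprop} supplies the link: the Lagrangian $L$ with trivalent graph is Hamiltonian isotopic to the inverse image $L_0$ of the (anti)diagonal under the blow-down $X \to \P^1 \times \P^1$. Both are monotone Lagrangian spheres in the monotone surface $X$. The count of Maslov-index-two disks through a generic point is the disk potential $W$, which is a Hamiltonian isotopy invariant (monotonicity rules out disk bubbling of negative index). The relative spin structure is unique because $H^1(S^2;\Z/2) = 0$, so the orientations are transported consistently along the isotopy. It therefore suffices to compute $W_{L_0} = -1$.

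To compute $W_{L_0}$, I would first classify the relative classes of Maslov index two. By monotonicity, $\pi_2(X,L_0)$ is spanned by $H_2(X)$ together with the classes of disks whose doubles are spheres. Using the doubling of Lemma \ref{lem:anti-diag}, a disk $u$ with boundary on the antidiagonal in $\P^1 \times \P^1$ doubles to a map $\P^1 \to \P^1$ of degree $d$, with Maslov index $4d$. Maslov index two in the blow-up then forces the disk to be the proper transform of a degree-one disk, that is a Maslov-index-four disk in $\P^1 \times \P^1$, passing through the blown-up point $\pi(E)$. Blowing up lowers the Maslov index by $2$ per intersection with $E$, so exactly one intersection with $E$ is required. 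This is precisely the count in Lemma \ref{lem:blowup}: disks meeting $E$ once with a boundary point constraint, where the interior constraint is replaced by the condition of passing through $\pi(E)$. That count is $-1$.

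The remaining check is that no other Maslov-two classes contribute, for example disks in a class $\beta$ with $\beta \cdot E = 0$ plus some multiple of $E$. I would rule these out by positivity of intersections with $E$ and with the proper transforms of the fibers through the blown-up point, combined with the doubling argument, which bounds the degree. This gives $W_L = W_{L_0} = -1$.

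I expect the main obstacle to be justifying the invariance step. The isotopy of Proposition \ref{hindprop} passes through a non-tropical Lagrangian at one time, but it is still a Hamiltonian isotopy through embedded monotone spheres, so the standard cobordism argument of Lemma \ref{lem:prim} applies. The sign convention must also be checked for consistency with Remark \ref{rem:rel}. As a consistency check, the tropical computation in Example \ref{ex:invim} also yields $-1/2 - 1/2 = -1$.
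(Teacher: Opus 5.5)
Your proposal is correct and is essentially the paper's proof: use the Hamiltonian isotopy of Proposition \ref{hindprop} (constructed in Example \ref{ex:hind2}) and the isotopy invariance of the monotone disk count to reduce to the inverse image of the antidiagonal, then apply Lemma \ref{lem:blowup}. Your Maslov bookkeeping fills in a step the paper leaves implicit: made precise, it is $4d-2k=2$ together with $k\le d$ (from counting interior zeros of the first component of the doubled disk), which forces $d=k=1$. Two small caveats: the invariance you need is the standard Hamiltonian-isotopy invariance of the monotone potential rather than Lemma \ref{lem:prim}, and the check against Example \ref{ex:invim} is not independent, since the vertex multiplicity used there comes from Lemma \ref{lem:singleend}, which is itself derived from this lemma.
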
 

\begin{proof}   The previous Lemma \ref{lem:blowup} covered the case of the inverse image of the diagonal.
Example \ref{ex:hind2} described an  the isotopy of the given tropical Lagrangian to the inverse image of the
diagonal. The statement of the Lemma now follows by invariance of the disk counts under Hamiltonian isotopy.
Compare with Hicks \cite[Figure 2]{hicks:realizability}. 
\end{proof}

\begin{lemma} \label{lem:maslovfour} The count $m(L,\ul{Y})$ of Maslov-index-four disks bounding
the Lagrangian $L$ in Examples \ref{ex:hind}, \ref{ex:hind2} and passing through three generic points $p_1,p_2,p_3 \in L$ is equal to $1$.
 \end{lemma}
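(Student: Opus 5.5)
The plan follows the pattern of Lemmas \ref{lem:blowup} and \ref{lem:hind3}: first replace $L$ by the inverse image of the anti-diagonal, then reduce to a sphere count in $\P^1 \times \P^1$ by removing the diagonal seam.

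\emph{Step 1 (isotopy).} By Proposition \ref{hindprop} and Example \ref{ex:hind2}, the trivalent tropical Lagrangian $L$ is Hamiltonian isotopic to $L_0 = \pi^{-1}(\bar\Delta)$. Here $\pi: X \to \P^1 \times \P^1$ is the blow-down and $\bar\Delta = \{(z,\ol z)\}$ is the anti-diagonal. $L$ is a monotone sphere, and Maslov-index-four disks through three generic boundary points form an expected-dimension-zero problem. I would argue that this count is invariant under Hamiltonian isotopy by the usual cobordism, as in Lemma \ref{lem:elem}. The boundary strata would be disk bubbling off a Maslov-two disk through fewer points, and sphere bubbling. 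Boundary bubbling should cancel in the monotone setting, or be excluded by codimension since the point constraints are generic. Sphere bubbles have Chern number at least one; on a sphere of Chern number one, a disk of Maslov index two would carry the three boundary constraints, which is overdetermined. So I expect the count to be unchanged, and it suffices to compute it for $L_0$.

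\emph{Step 2 (classes).} Maslov-index-four disks with boundary in $L_0$ have Maslov index $2c_1$ after doubling. Under the blow-down they come in two kinds. The first kind misses $E$ and doubles to a map $\P^1 \to \P^1$ of degree $1$, giving Maslov index $4$ with $E$-intersection zero. The second kind meets $E$ and has higher degree. A disk meeting $E$ once, with $[E]\cdot$ contributing $-2$ to the Maslov index after doubling, would have to have doubled degree two. This mixed case needs care. In the first case, three generic boundary points $(p_i,\ol{p_i})$ fix the degree-one map: the doubled map $v: \P^1 \to \P^1$ has $v(z_i) = p_i$ at three boundary points, so it is a Möbius map sending $\R P^1$ to the circle through $p_1,p_2,p_3$. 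But $L_0$ requires the seam to lie on the real locus of the doubling, so I would use the seam-removal correspondence of \cite[Proposition 5.4.6]{orient} exactly as in Lemma \ref{lem:anti-diag}. Generic $p_i$ then give exactly one disk, modulo the cyclic ordering of the boundary markings. In the blow-up, one must also require that the disk avoid the blow-up point, which holds generically.

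\emph{Step 3 (sign and exclusion).} Unlike Lemma \ref{lem:anti-diag}, all three constraints here are boundary constraints, so no conjugated interior class appears. I therefore expect the orientation sign to be $+1$, with the relative spin structure fixed as in Remark \ref{rem:choose}. The main obstacle is showing that disks meeting $E$ contribute nothing, or contribute cancelling amounts. My first attempt is a dimension argument: such disks blow down to disks through the fixed point $\pi(E)$. With three boundary constraints plus a point constraint, this would be overdetermined in the degree-one class, and higher degree raises the Maslov index beyond four once the $E$-correction is taken into account. If this fails, the fallback is to compute the count tropically. That means using the splitting of Theorem \ref{thm:split} with the multiplicities of Definition \ref{def:mv}, items \eqref{mv:pantseam}, \eqref{mv:twoend} and \eqref{mv:threeend}, and checking that the total is $1$.
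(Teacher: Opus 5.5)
Your overall route matches the paper: isotope to $\pi^{-1}$ of the anti-diagonal, double, and note that three generic boundary points determine a unique degree-one M\"obius map, which generically avoids $\pi(E)$. The gap is the exclusion of disks meeting $E$, which you flag as the main obstacle; neither of your two arguments closes it.

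First, the bookkeeping. If $\pi\circ u$ doubles to a sphere of bidegree $(e,e)$, then $\mu(\pi\circ u)=4e$ and $\mu(u)=4e-2d$ with $d=[u]\cdot[E]$. Also $d\le 2$, since $[u]\cdot[Y]=\mu(u)/2=2$. So $\mu(u)=4$ forces $(e,d)\in\{(1,0),(2,2)\}$. Your ``meeting $E$ once'' case is empty by parity, and the case that matters is $e=d=2$.

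In that class the Maslov index in $X$ is exactly $4$. Disks through three boundary points therefore form an expected-dimension-zero problem, just like the main class. Downstairs this is Maslov-eight disks with two interior markings sent to $\pi(E)$ and three boundary point constraints, giving $14-14=0$. So your claim that higher degree pushes the Maslov index above four once the $E$-correction is included is false, and no dimension count removes these disks. Your tropical fallback is circular: the multiplicities \eqref{mv:twoend} and \eqref{mv:threeend} are derived in Lemma \ref{lem:stripsinpants} from the present lemma.

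The missing idea, which is what the paper uses, is positivity of intersections with the anticanonical divisor. Since $[u]\cdot[Y]=2=[u]\cdot[E]$ and $E\subset Y$, the disk $u$ misses every other component of $Y$. Hence the doubled bidegree-$(2,2)$ sphere meets the toric boundary of $\P^1\times\P^1$ only at $\pi(E)$ and its image under the involution. This forces it to be a double cover of a $(1,1)$-curve.

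Concretely, take $L=\{(z,\ol z)\}$ and $\pi(E)=(0,\infty)$. The zeros of the doubled map $v$ must be the conjugates of its poles, so $v(z)=c\,\frac{(z-a)(z-b)}{(z-\ol a)(z-\ol b)}$, and the double is $z\mapsto (v(z),|c|^2/v(z))$. Thus $u$ is a branched double cover of a Maslov-index-two disk whose boundary is the single circle $|z|=|c|$ in $L$. That circle cannot contain three generic points, so this class contributes zero, also after small perturbation.

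Separately, boundary disk bubbling is not excluded by codimension as you suggest. In a one-parameter family, a Maslov-two disk through $p_2$ whose boundary hits $p_3$ occurs at isolated times, and it can glue to a Maslov-two disk through $p_1$. The paper, however, also simply asserts that the count is invariant.
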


A tropical picture of the disks in question is shown on the right in Figure \ref{fig:b2p2tri} below.
The left shows that tropical picture for the lift of the diagonal.   Note that the intersection number of the disks with the anticanonical divisor is three, for the disks on the right; this reflects
the fact that the Maslov index of a broken disk $u = (u_v)$ with boundary in $L$  is {\em not} in general equal to the number $\# \Gamma \cap \partial B^\dual$ of intersections of the tropical graph $\Gamma$ with the boundary $\partial B^\dual$, but rather has a correction from trivalent vertices
$v \in \Ver(\Gamma), |v| = 3$ explained further in Remark \ref{rem:arearem}.

\begin{figure}[ht]\begin{center} \scalebox{.3}{\includegraphics{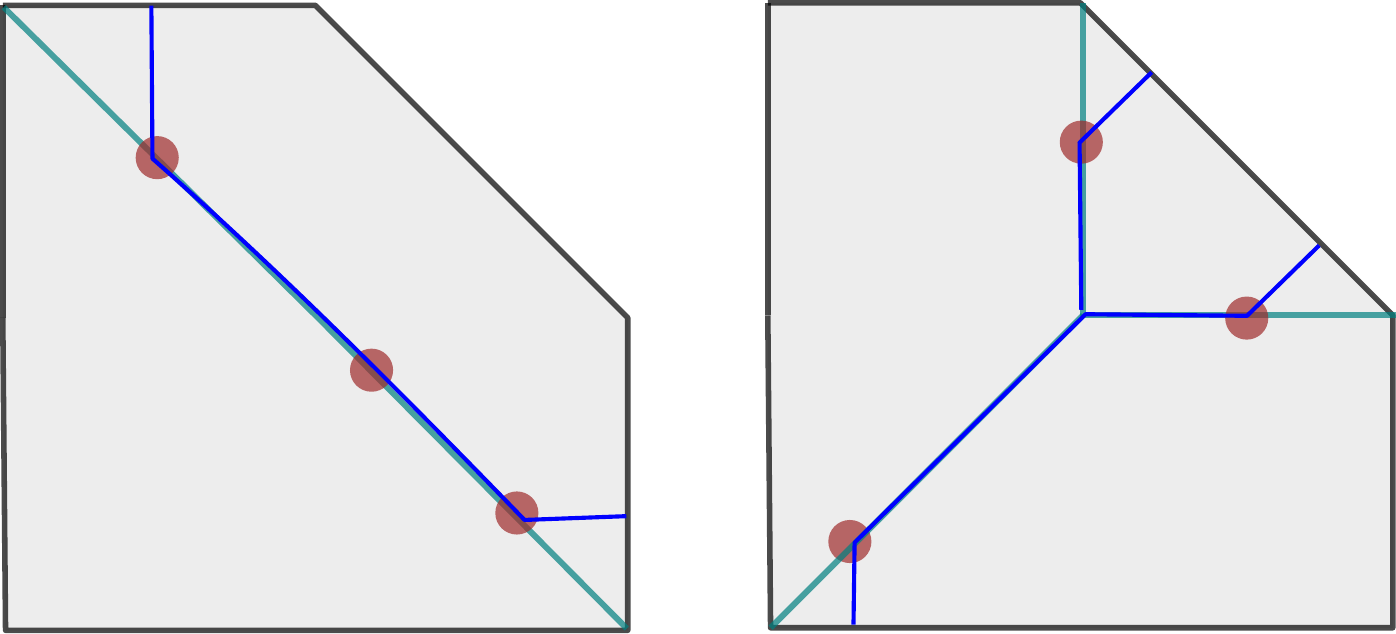}}
\end{center} \caption{Cartoon diagrams of Maslov-index-four disks bounding tropical Lagrangians}  \label{fig:b2p2tri}
\end{figure}

 \begin{proof}
 An isotopy of the given tropical Lagrangian to the inverse image of the diagonal was described in Example \ref{ex:hind2}.
 We first consider the possibilities for the intersection number with the exceptional divisor.  Let  $\ul{Y} = ( p_1,p_2,p_3)$ with $p_1,p_2,p_3 \in L$ generic. The moduli space $\M(L,\ul{Y})$ consists of rigi Maslov-index-four disks $u$  with boundary in $L$  with constrains $\ul{Y}$, and breaks up into a disjoint union   of subsets $\M(L,\ul{Y})_d$ consisting of maps $u$ whose intersection multiplicity $[u] . [E]$ with the 
 exceptional divisor $E \subset X$ is equal to $d \in \Z_{\ge 0}$. 
 The manifold $X$ is monotone and we may assume that the monotonicity constant is one, 
 so that the symplectic class $[\omega]$ is represented by the boundary divisor $Y = \Phinv(\partial \Phi(X))$.
 The immersed submanifold $Y $ has 
 the exceptional curve $E$ as an irreducible component. 
  Since the area  $A(u)$ of $u$ is two, the intersection number $d$
 must equal $0,1$ or $2$.  Since non-transverse intersections with $E$ are codimension two, we may assume after choosing a generic domain-dependent almost complex structure that the $d$ intersection points are disjoint; see \cite[Lemma 5.27]{vwx}. 
 Composition with the projection $\pi: \Bl(\P^1 \times \P^1) \to \P^1 \times \P^1$ gives a holomorphic disk $u' = \pi \circ u$ in $\P^1 \times \P^1$ passing through the blow-up point $\pi(E)$ $d$ times,   with Maslov index $4$, $6$, or $8$.

The projected Lagrangian is the fixed point set of an anti-symplectic involution and the usual correspondence between disks and spheres in this case may be used to count the disks.    Since the number of such disk $u$ is independent of the choice of almost complex structure, we may assume that the almost complex structure is the standard one.  In particular, the almost complex structure is anti-invariant under the standard anti-symplectic involution $(z_1,z_2) \mapsto (\ol{z}_2 , \ol{z}_1)$.  By the doubling construction,  any such disk $u$ corresponds to a holomorphic sphere $\ti{u}$ in 
 $\P^1 \times \P^1$ passing through the original points $p_1,p_2,p_3$.   In addition, 
 the spheres $\ti{u}$ obtained by doubling pass through the  blow-up point $\pi(E)$  $2d$ times.     The Chern number of such a sphere $\ti{u}$ is $2,3$ or $4$, and the bidegree of $\ti{u}$ is 
  $(e,e)$ with either $ e = 1$ or $e = 2$. However, in the $e=2$ case, the 
   disk $u$ meets the anticanonical divisor in $X$ {\em only} at two points in $E$.  Its projection $u'$ meets the anti-canonical divisor in $\pi(Y) \subset \P^1 \times \P^1$
  only at $(0,0)$ and $(\infty,\infty)$.   Such a curve $u'$ is necessarily a double cover
  of a curve of degree $(1,1)$, and the corresponding disk $u$ in $X$ is therefore a double cover of a disk of Maslov-index-two.  Such curves $u$ are disjoint from a choice of three generic points on the Lagrangian; the same holds after perturbation.  It follows that the corresponding count vanishes. 
 \end{proof}
\end{example}

\subsection{Disks ending on cylinders}

We now prove the various cases of Theorem \ref{thm:mult} in Definition \ref{def:mv}.
The general strategy is to deduce these relative counts from the absolute counts in the previous two sections.   The first count is that of holomorphic disks whose tropical graphs meet the graph of the Lagrangian perpendicularly:  

\begin{lemma}  \label{lem:mvhitsproof} Let $\XX_{P(v)}$ be a cylindrical end manifold  obtained from some compact toric orbifold 
 $\ol{\XX}_{P(v)}$ be removing the inverse image of the boundary of the moment polytope.    Let $\LL_{P(v)}  \subset \XX_{P(v)}$ be a Lagrangian realization of a tropical graph $\Lambda$ with a single edge 
$\Edge(\Lambda) = \{ \eps \}$ in $P(v)$.   The count of punctured disks $u_v: C_v \to \XX_{P(v)}$ bounding $\LL_{P(v)}$ 
with a single cylindrical end and no strip-like ends with a single point constraint $Y_e = (p_e \in \LL_{P(v)}) , \Edge_\white(\Gamma) = \{e \}$
  is equal to $m(v)  = -1$, as in Definition \ref{def:mv}  \eqref{mv:cylhit}. 
  \end{lemma}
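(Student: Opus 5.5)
The plan is to reduce the local count to the absolute computation in Example \ref{ex:antidiag} by a neck-stretching (cutting) argument applied to $\P^1\times\P^1$ with the antidiagonal $L$. First, after an $SL(2,\Z)$ change of coordinates, I would put the edge $\eps$ of $\Lambda$ in the antidiagonal direction. Then $\LL_{P(v)}$ is, up to isotopy through cylindrical end Lagrangians, the realization of the antidiagonal segment in the interior of $[0,1]^2$, with the perpendicular direction $(1,1)$. Because the type (univalent open vertex, one closed end, one boundary point constraint) is prime, Lemma \ref{lem:elem} makes the count independent of the almost complex structure and of the position of $p_e$. So we may compute it in this model, with $\XX_{P(v)}$ the complement of a neighborhood of the toric boundary. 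The lattice length $\ell(e)$ is then forced by the Maslov index and area: for the rigid problem only $\ell(e)=1$ occurs, while higher $\ell(e)$ arise as compositions with the multiple cover $z\mapsto z^{\ell}$ along the cylinder. I would treat the sign $(-1)^{\ell(e)}$ separately at the end.

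Second, I would choose a polyhedral decomposition of $[0,1]^2$ with a central polygon $P_0$ containing $\Lambda\cap$ (interior) and $\Phi(p)$, and outer trapezoids and parallelograms as in Definition \ref{def:elem}. I would then apply Theorem \ref{thm:cob} to the Maslov-index-four problem of Lemma \ref{lem:anti-diag}: one interior point $(p_1,p_2)$ and one boundary point $(p,\bar p)$, with total count $-1$. Placing $(p_1,p_2)$ in a region far along the $(1,1)$ or $(-1,-1)$ direction from $\Lambda$ forces the tropical graph $\Gamma$ to consist of exactly three pieces, by Theorem \ref{thm:split}:
\begin{itemize}
\item the open univalent vertex at $\Lambda$ with edge of direction $\pm(1,1)$;
\item a closed bivalent or trivalent vertex at the interior constraint, which is a holomorphic cylinder of multiplicity $1$ by Definition \ref{def:mv}\eqref{mv:spheres} or a pair of pants of Mikhalkin multiplicity $1$;
\item univalent boundary vertices at the two facets, of multiplicity $1$ by \eqref{mv:bound}.
\end{itemize}
Using the area and Maslov constraints (the disk meets the anticanonical divisor twice), I would enumerate the rigid graphs. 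I expect exactly one, in which the line through $(p_1,p_2)$ in direction $(1,1)$ hits $\Lambda$ perpendicularly and exits through the corner regions. All other vertex multiplicities are $1$ by the results of \cite{vw:at} quoted above, so the product formula of Corollary \ref{cor:unmarked} gives $m(v)\cdot 1=-1$, hence $m(v)=-1$.

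An alternative consistency check comes from Example \ref{ex:lines}. Locally the problem is a map from $\H$ to $\C\times\C$ with boundary on the diagonal after hyper-K\"ahler rotation, and by doubling it is a degree-one map $\C\to\C$, which has the sign $-1$ recorded in Lemma \ref{lem:lines}. The convention in Remark \ref{rem:choose} then fixes the relative spin structures on the extensions so that the closed vertices count positively.

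The main obstacle I expect is the enumeration step: ruling out extra rigid broken configurations in the degeneration of the index-four problem, for example graphs with the open edge running along $\Lambda$ or with pants vertices on $\Lambda$, and keeping the orientation conventions of Remark \ref{rem:rel} consistent between the absolute and broken counts. I would first try placing the constraints so that balancing and the dual-cone condition of Example \ref{ex:prime} eliminate every graph except the perpendicular one. For $\ell(e)>1$, I would argue via the parity of the multiple-cover orientation of the strip doubling, which should contribute $(-1)^{\ell(e)-1}$ relative to $\ell=1$.
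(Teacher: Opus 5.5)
\textbf{The gap: enumerating the rigid broken configurations.} Your outline matches the paper's: degenerate the Maslov-index-four count $-1$ of Lemma~\ref{lem:anti-diag}, and read off the open univalent vertex as the only unknown factor. The step you defer is the classification of rigid broken configurations. That step is essentially the whole proof, and placing the interior point ``far along the perpendicular direction from $\Lambda$'' does not settle it.

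After doubling under $\tau$, the disk is half of the unique $\tau$-real curve $Az_1z_2+Bz_1+\ol{B}z_2+D=0$ (with $A,D\in\R$) through the constraints. Its tropical shape depends on where $\Phi(p_1,p_2)$ sits relative to $\Phi(p)$, not on its distance from $\Lambda$. Work in log coordinates where $\Lambda$ is the diagonal, with $\Phi(p)=(a,a)$ and $\Phi(p_1,p_2)=(x,y)$.
\begin{itemize}
\item For $x>a>y$ you do get the perpendicular graph. It consists of the open univalent vertex, a closed trivalent vertex of multiplicity one, a cylinder carrying the constraint, and two boundary vertices. That is five vertices, not three pieces.
\item For $x>y>a$ one finds $2\log|B|>\log|A|+\log|D|$, and the limit runs along $\Lambda$. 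It has two holomorphic pant vertices, one at $\Phi(p)$, joined by an open edge of direction $(\hh,\hh)$.
\end{itemize}
The second configuration is exactly the non-prime type whose count depends on $J$ (see the remark following Lemma~\ref{lem:elem}). In that regime the product formula cannot isolate $m(v)$, and balancing or dual-cone arguments do not exclude this configuration. If instead you place $(p_1,p_2)$ exactly on the perpendicular through $\Phi(p)$, as your ``line through $(p_1,p_2)$'' suggests, the constraint sits at the trivalent vertex. That vertex is non-generic and is not covered by Definition~\ref{def:mv}.

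\textbf{How the paper avoids this.} The paper takes the interior constraint to be the torus-fixed point $(0,\infty)\notin L$. The doubled curve has anticanonical degree $4$. This is already used up by the corners $(0,\infty)$ and $\tau(0,\infty)=(\infty,0)$, each contributing $2$. So the curve is $z_1z_2=c$, its tropical graph is the single segment joining the two corners perpendicular to $\Lambda$, and there are no trivalent vertices. The broken disk therefore has only two vertices: the open univalent one, and a regular sphere through the fixed point in the corner piece, which counts $+1$. This gives $m(v)=-1$. General toric targets then follow by a further decomposition with cylinder vertices (Definition~\ref{def:mv}\eqref{mv:spheres}).

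Your version can be repaired by restricting to the quadrant $x>a>y$ and reading the configuration off the explicit conic. As written, however, the enumeration is missing.

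Two smaller points. Lemma~\ref{lem:lines} concerns strip-like ends, so it does not check this cylindrical-end count. The $\ell(e)>1$ sign is not part of this lemma.
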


  \begin{proof}  We first consider the case that the target is a quotient of the product of projective lines $\ol{\XX}_{P(v)} \cong (\P^1 \times \P^1)/\Z_2$ and consider the arbitrary toric case afterwards.   We decompose the moduli space of Maslov index four disks with boundary in the  anti-diagonal with a point constraint as follows.  
  Let 
  \[ L = \{ (z,\ol{z}) \} \cong \P^1  \subset X:= (\P^1)^2 \]
  denote the Lagrangian anti-diagonal. Let 
  \[ \pt_X  = (0,\infty) \in \P^1 \times \P^1. \] 
  The point $\pt_X$ is a torus-fixed-point    not on the Lagrangian $L$.  The count of holomorphic Maslov-index-four disks with boundary in $L$   passing through $\pt_X$ equal to $-1$ by Lemma \ref{lem:anti-diag}.  
Consider a decomposition $\cP = \{ P \} $ of the moment polytope of $\ol{\XX}_{P(v)}$,  consider a broken map  $u = (u_0,\ldots, u_k)$ with pieces mapping to $\XX_{P_0}, \ldots, \XX_{P_k}$ of the following form:  The piece $\ol{\XX}_{P_0}$ contains the constraint $Y_{e}$ has polytope a non-regular hexagon, and is connected by a sequence of pieces 
whose polytopes are rectangles to a piece $\ol{\XX}_{P_k}$ containing the Lagrangian $\LL_{P_k} \cong \R \times S^1$.    Each piece  not containing the constraint or Lagrangian is a $\Z_2$-quotient
of $\P^1 \times \P^1$
\[ \ol{\XX}_{P_i} \cong ( \P^1 \times \P^1)/\Z_2, \quad i = 1,\ldots, k-1 . \] 
Let $P_1$ denote the polytope containing the image of $\pt_X$.

Broken maps in the degeneration are classified as follows.
The only rigid broken maps $u = (u_v)$ are those meeting 
only $\XX_{P_0}$ and $\XX_{P_k}$ consisting of two levels given by disks $u_{v_0}$ in $\XX_{P_0}$
and spheres $u_{v_k}$ in $\XX_{P_1}$. Indeed,  $L$ is the fixed point set of an anti-symplectic involution, and the decomposition into pieces $\XX_P$ is compatible with the involution.   We may view $u$ as half of a broken map $\ti{u}$ of Chern number four, with tropical graph $\ti{\bGamma}$ obtained by ``doubling'' $\bGamma$.     All other univalent vertices must lie in the intersections of $\ti{\bGamma}$
with the boundary of the moment polytope $\partial \Phi(X)$.  It follows that these vertices correspond to intersections of $\ti{u}$ with the anti-canonical divisor.  Since the Chern number of $\ti{u}$ is $2$,
the only intersections with the boundary are $(0,\infty) , (\infty,0) \in \P^1 \times \P^1$.   Hence
the only possible univalent vertex of $\bGamma$ is at the image of $(0,\infty)$.
Thus there are no trivalent vertices $v \in \Ver(\bGamma)$ in the type $\bGamma$ of any rigid map.
There must be two univalent vertices $v_0, v_k \in \Ver(\bGamma)$ mapping to $P_1^\dual$ and the dual $P_k^\dual$.    The contribution $m(v_k)$ of the sphere $u_{v_k}$ in $\XX_{P_1}$ is equal to one, since $u_{v_1}$ is transversally cut out and orientation signs for regular holomorphic spheres are always positive.  It follows that the number of holomorphic disks in $\XX_{P_0}$ with boundary in $L$  with a point constraint on the boundary and with a single
cylindrical end is equal to 
\[ m(v_0) = -1 . \] 
Now that we have proved the formula for the multiplicity for the case that the target is 
$(\P^1 \times \P^1)/\Z_2$, the formula for arbitrary toric targets follows from Theorem \ref{thm:mult} case Definition \ref{def:mv}  \eqref{mv:spheres} and the
decomposition formula in Theorem \ref{thm:mult}, by taking a decomposition in which 
the piece $\XX_{P_0}$ containing the intersection with the Lagrangian has polytope $P_0$ such a rectangle.
\end{proof}

\begin{figure}[ht]\begin{center} \scalebox{.5}{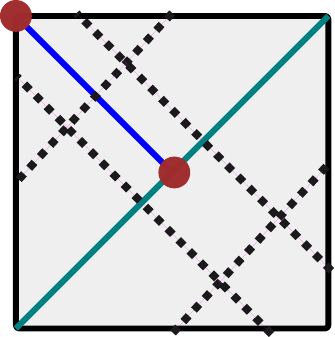}
\end{center} \caption{Curves with boundary in the  anti-diagonal in $\P^1 \times \P^1$}  \label{fig:anti-diag}
\end{figure}

\begin{proof}[Proof of  Theorem \ref{thm:mult} case \eqref{mv:cylhit}]
The case of primitive directions was proved in Lemma \ref{lem:mvhitsproof}.   To prove the claim for arbitrary directions, we consider the case of holomorphic disks $u$ in $X = \Bl(\P^1 \times \P^1)$ with a tangency at order $d \in \Z_{\ge 0}$ at a point $z \in C$ mapping to the exceptional divisor $E$.  These are naturally in bijection with disks $u'$ with tangency of order $1$  as follows: 
Given $u'$, a map $u$ is obtained composing with the $d$-fold covering $z \mapsto z^d$.  The proof that this gives a bijection is similar to the arguments in the proof of Lemma  \ref{lem:mvhitsproof}.  Namely, any such disk $u$ corresponds to a sphere $\ti{u}$ in  $\P^1 \times \P^1$ passing through $(0,0)$ and $(\infty,\infty)$, and vanishing to order $d$ at those points.  Such a sphere $\ti{u}$ is locally, hence globally, a $d$-fold ramified cover of its image.   The relative spin structure for such a cover $\ti{u}$ differs from the trivial relative spin structure by the $d$-th power of the corresponding map for the 
corresponding embedding $u$.  The relative spin structure on 
$(\partial \ti{u})^* TL$ is a $d$-fold twist of the 
relative spin structure for $(\partial u)^* TL$. 
By \cite[Proposition 8.1.16]{fooo:part2}  the orientation of the moduli space at $\ti{u}$ is $(-1)^d$ times the standard orientation of a point.  
\end{proof}

\begin{example}  \label{ex:b7p2} 
We give an example where non-primitive directions occur in Theorem \ref{thm:mult} case Definition \ref{def:mv}  \eqref{mv:cylhit}, that is, the case $d > 1$ occurs.  Figure \ref{fig:b7p2hit} shows an almost toric diagram for the del Pezzo of degree two, as in Vianna \cite{vianna:dp}.

\begin{figure}[ht]\begin{center} \scalebox{.5}{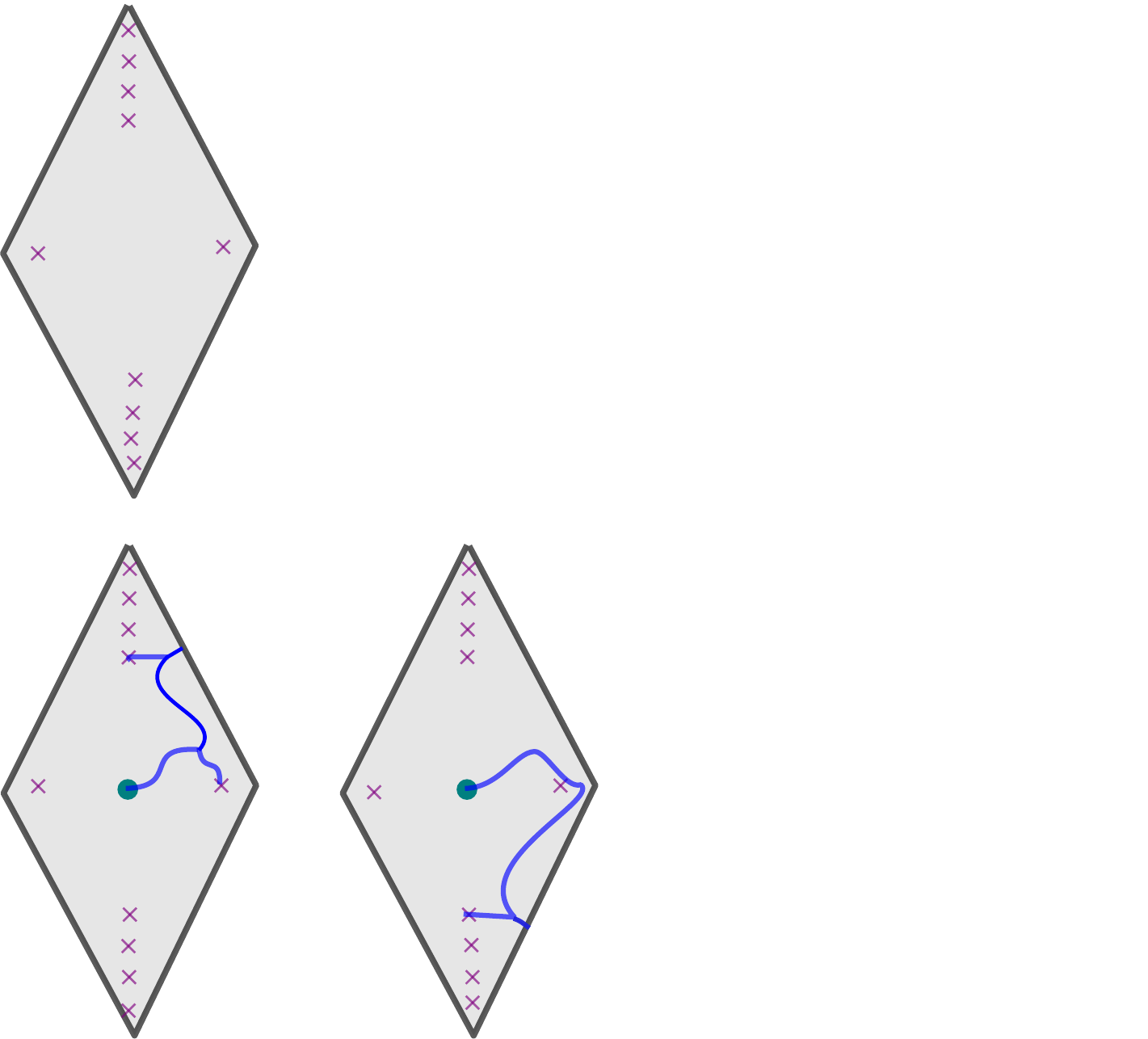}
\end{center} \caption{Cartoon diagrams for broken disks in the del Pezzo of degree two}  \label{fig:b7p2hit}
\end{figure}

In our previous paper 
\cite{vw:at} we computed the potential of the monotone torus $L \subset X$ for this diagram and found that the potential $W_L: \C^{\times,2} \to \C$ had critical values 
\[ \Crit(W_L) = \{ -12, 52 . \} \]
Consider the two Lagrangian spheres $L_1,L_2$ whose possible tropical graphs $\Lambda_1,\Lambda_2$ consist of a long vertical or horizontal segment $\eps_1, \eps_2$.
\begin{itemize}
\item   The tropical graphs $\Gamma_1$ corresponding to Maslov-index-two disks bounding $L_1$ have initial direction $(0, \pm 1)$ and interact with $2$ out of the $4$ possible focus-focus singularities, for a total of 
\[ W_{L_1} = 2(-1)6 = -12 . \]
\item
For the tropical Lagrangians $L_2$ with vertical graph $\Lambda_2$ the possible initial directions
of the tropical graphs $\Gamma_2', \Gamma_2''$ corresponding to disks bounding $L_2$ are $(\pm 1,0)$
and $(\pm 2,0)$.  The tropical graphs of holomorphic disks of the latter type $\Gamma_2''$ have a single vertex with multiplicity $2$, for a total contribution of $2(2) = 4$.  The first type of tropical graphs $\Gamma_2'$ have two vertices $v_1,v_2$ of multiplicity $m(v_1) = m(v_2) = 1$, and interact with one of the eight focus-focus singularities
along the vertical axis, for a total contribution of $2(-8) = -16$.  Thus the potential of these tropical Lagrangians is 
\[ W_{L_2} = 4 - 16 = -12 . \]
\end{itemize}
Thus these Lagrangians $L_1,L_2$ have the same potential, that is,
lie in the same Fukaya eigencategory $\Fuk_{-12}(X)$ of $X$.   The same fact can be seen from the simpler observation that the Lagrangians $L_1,L_2$ have a single intersection point (the unique focus-focus point in the intersection of their preimages).  
It follows that the Floer boundary operator on $CF(L_1,L_2)$ has vanishing square, since the operator has odd degree and $CF(L_1,L_2)$ has rank one.  On the other hand, 
Oh's results in \cite{oh:floer} imply that the square of the Floer differential is the difference in potentials $W_{L_2} - W_{L_1}$.
\end{example}

\subsection{The holomorphic pant}

Next we investigate holomorphic curves with one cylindrical and one strip-like end, 
that is, the holomorphic pant.  

\begin{theorem}  Suppose $\XX_{P(v)}$ is a component of $\XX$ not containing a focus-focus singularity $x \in X^{\foc}$, so that the compactification $\ol{\XX}_{P(v)}$ is a compact toric orbifold with polytope the convex hull of $\{ (\pm 1,0), (0, \pm 1) \}$.   Let $\LL_{P(v)}  \subset \XX_{P(v)}$ be a Lagrangian realization of a tropical graph $\Lambda$ with a singe edge with direction $(1,1)$.    For a particular almost complex structure, The count of punctured holomorphic disks in $\XX_{P(v)}$ bounding 
  $\LL_{P(v)}$ with  a single strip-like end $e_\white \in \Edge_\white(\bGamma)$ asymptotic to a fixed Reeb chord 
 $\gamma_\white$ with direction $\cT(e_\white)$ and single cylindrical-end $e_\black \in \Edge_\black(\bGamma)$ at the interior puncture 
asymptotic to a fixed Reeb orbit $\gamma_\black$ at the puncture with direction $\cT(e_\black)$
  is equal to 
\[ m(v)  = 
(-1)^{ | \det( \cT(e_\black) \cT(e_\white)) | } | \det( \cT(e_\black) \cT(e_\white)) |  \] 
as in
  Theorem \ref{thm:mult} case Definition \ref{def:mv} \eqref{mv:pant}.  
   \label{thm:pantone}
\end{theorem}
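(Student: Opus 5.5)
The count will be reduced to a closed count by doubling, in the same way Lemma \ref{lem:lines} treats disks with boundary in the diagonal. The strategy is to realize $\LL_{P(v)}$, for a particular choice of almost complex structure, as the fixed locus of an anti-holomorphic involution $\tau$ of $\XX_{P(v)}$. Concretely, in coordinates $(z_1,z_2) \in (\C^\times)^2$ we take $\LL_{P(v)} = \{ z_2 = \ol{z}_1 \}$ and $\tau(z_1,z_2) = (\ol{z}_2,\ol{z}_1)$. This $\tau$ reflects the lattice $\t_{P(v),\Z} = \Z^2$ by $r(x,y) = (y,x)$, which is exactly the reflection across the span of $\eps$ appearing in Definition \ref{def:mv} \eqref{mv:pant}. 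By Lemma \ref{lem:elem} the count does not depend on choices within the prime range, and the remark following that lemma fixes the convention for the nonprime case. It therefore suffices to compute with the standard integrable structure, using a point constraint placed in one half of $\LL_{P(v)} \setminus \Delta_\R$.

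\textbf{Doubling.} A punctured disk $u$ with one strip-like end of direction $\cT(e_\white)$ and one cylindrical end of direction $\cT(e_\black) = (x,y)$ doubles, via $\ti{u}(z) = u(z)$ for $\Im z \ge 0$ and $\tau u(\ol z)$ otherwise (\cite[Proposition 5.4.6]{orient}), to a $\tau$-equivariant holomorphic sphere $\ti u : \P^1 \to \XX_{P(v)} \cong (\C^\times)^2$. This sphere has three punctures. Two of them, at $z_\black$ and $\ol z_\black$, have directions $(x,y)$ and $r(x,y) = (y,x)$. The strip-like end closes up to a cylindrical end on the real axis with direction $2\cT(e_\white)$. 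The balancing condition $2\cT(e_\white) + \cT(e_\black) + r(\cT(e_\black)) = 0$ is then exactly Mikhalkin's balancing for the pair of pants $\ti u$.

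A holomorphic pair of pants in $(\C^\times)^2$ with prescribed directions and prescribed asymptotic Reeb orbits is a monomial map, up to multiplication by constants. Explicitly,
\[ \ti u(z) = c\, (z - z_\black)^{(x,y)} (z - \ol{z}_\black)^{(y,x)} , \]
with the third end at $\infty$. The asymptotic constraints at the two closed ends, together with the point constraint on $\LL_{P(v)}$, determine $c$ and $z_\black$ up to a finite ambiguity. Equivalently, the count of such pants with fixed asymptotics is Mikhalkin's multiplicity $|\det(\cT(e_\black)\, r\cT(e_\black))|$. This is the number of solutions of the torus equation cut out by the asymptotic constraints. The next step is to impose $\tau$-equivariance, i.e. $\tau \ti u(\ol z) = \ti u(z)$. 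This condition selects the real solutions. Since the Reeb chord constraint fixes only half of the closed asymptotic data, the equivariant solutions are counted with weight $\hh |\det(\cT(e_\black)\, r\cT(e_\black))|$. I would verify this by counting directly the solutions of the character equation $t^{(x,y)} \ol{t}^{(y,x)} = $ given, for $t \in T_\C$. This equation has $|x^2 - y^2| = |\det|$ solutions, and the factor $\hh$ comes from passing to the quotient by the involution that swaps $z_\black$ and $\ol z_\black$.

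\textbf{Sign.} I expect the main obstacle to be the orientation sign $(-1)^{|\det|-\hh}$. The approach is to compare, as in the proof of case \eqref{mv:cylhit}, the relative spin structure on the extension bundle over the strip-like end with the standard one. The strip-like end of direction $\cT(e_\white)$ winds $2|\det|$ half-twists relative to the trivialization of Definition \ref{def:index}. Applying \cite[Proposition 8.1.16]{fooo:part2} as before, each full twist contributes a factor $-1$. The conjugation on the doubled sphere contributes a further sign, exactly as in Lemma \ref{lem:anti-diag}. Combining these, and normalizing with Remark \ref{rem:choose} and the ``maximally symmetric'' choice $-\hh$ in the primitive case $(x,y) = (-1,0)$, should give the stated formula. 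As consistency checks, the primitive case should reproduce $-\hh$ as in Example \ref{ex:invim}, and the case of determinant $3$ should reproduce the value $3/2$ used in Example \ref{ex:trive6}.
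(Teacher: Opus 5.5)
Your proposal takes a different route from the paper, and the central step meant to produce the factor $\hh|\det|$ does not work as written.

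\emph{The determinant factor.} The equation $t^{(x,y)}\ol{t}^{(y,x)}=w$ on $T_\C$ is one complex equation in two complex unknowns, so it has infinitely many solutions, not $|x^2-y^2|$. The number $|x^2-y^2|$ is instead the degree of the map $t\mapsto(t^{(x,y)},t^{(y,x)})$.

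Inside your own monomial family, equivariance forces $c=(c_1,\ol{c}_1)$. Once $z_\black=i$ and the real puncture is at $\infty$, the only parameter left is $c_1\in\C^\times$, plus a boundary marked point. The condition at $\ol{z}_\black$ is the $\tau$-image of the condition at $z_\black$, so imposing both together with the point constraint is redundant and overdetermines the problem. The rigid problems that remain reduce to conditions on $c_1$ (and the marked point) whose solution counts involve $|x+y|$ or $|x-y|$ separately, not their product. For instance, the framed asymptotic condition at $z_\black$ alone gives $|x+y|$ solutions when $x\neq y$. So ``keep the real solutions among $|\det|$ complex ones and halve'' is not an actual computation.

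\emph{The factor $\hh$.} It cannot come from the involution swapping $z_\black$ and $\ol{z}_\black$. When $\det\neq0$ one has $(x,y)\neq(y,x)$. Doubling is then a bijection between disks with closed end $(x,y)$ and $\tau$-equivariant spheres whose $(x,y)$-end lies in the upper half-plane. The swap fixes each such sphere, so there is nothing to divide by. Indeed, with the point in one half of $\LL_{P(v)}\setminus\Delta_\R$, as you propose, the primitive count is $\pm1$ for one of $(1,0),(0,1)$ and $0$ for the other. The value $-\hh$ is the symmetric convention of the remark after Lemma \ref{lem:prim}, not a quotient.

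\emph{What is missing} is the paper's way of relating arbitrary directions to the primitive one: a real version of Mikhalkin's covering trick.
\begin{enumerate}
\item The map $\tau_{n_-,n_+}(w_-,w_+)=(w_-^{n_-}w_+^{n_+},w_-^{n_+}w_+^{n_-})$ commutes with the involution ($\sigma$ in the paper, your $\tau$) and preserves the diagonal.
\item It sends end direction $(1,0)$ to $(n_-,n_+)$ and $(0,1)$ to $(n_+,n_-)$ (Lemma \ref{lem:coverdir}), so the symmetric convention is carried along.
\item By Lemma \ref{lem:ramified}, every disk with end $(n_-,n_+)$ factors through this cover, and regularity is preserved via the identification of linearized operators.
\item The determinant of $T_{n_-,n_+}$ in \eqref{eq:matrixT} then enters as the number of lifts over the primitive count $-\hh$ of Lemma \ref{lem:pantone}.
\end{enumerate}

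\emph{The sign.} The relevant quantity is the number of half-turns at the open end. This is $n_++n_-=x+y$ for the covered disk, whose doubled end has direction $-(x+y)(1,1)$, versus $1$ for the primitive disk, giving $(-1)^{n_++n_--1}$. Your count of ``$2|\det|$ half-twists'' is not correct. It does have the same parity as $x+y$, so this part can be repaired by deriving the exponent from $x+y$ as the paper does.
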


We first show the following special case: 

\begin{lemma}  \label{lem:pantone}
Let $ \LL_{P(v)} \subset \XX_{P(v)}:= (\C^\times)^2$ denote the antidiagonal Lagrangian.  The count of punctured holomorphic disks in $\XX_{P(v)}$ bounding 
  $\LL_{P(v)}$ with  strip-like end with direction $(-1,-1)$
  and one cylindrical-end with direction either $(1,0)$ or $(0,1)$, and asymptotic to a fixed Reeb orbit
  \[ Y_{e_1} = \{ \gamma_{e_1} \}, \{  e_1 \}  =\Edge_\white(\Gamma) \] 
  at the interior puncture 
and point
\[ Y_{e_2}  \in \LL_{P(v)} \] 
is equal to 
\[ m(v)  = - 1 .\] 
  \end{lemma}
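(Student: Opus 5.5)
The plan is to deduce this local count from the absolute count in Lemma \ref{lem:p1p1} (Maslov-index-four disks on the anti-diagonal in $\P^1 \times \P^1$, count $-1$), by the same degeneration argument used in Lemma \ref{lem:mvhitsproof}.

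First I set up the degeneration. Compactify $\XX_{P(v)} = (\C^\times)^2$ to $X = \P^1 \times \P^1$, so that the antidiagonal $\LL_{P(v)}$ becomes $L = \{(z,\ol{z})\}$. Fix the interior constraint at a point near the torus-fixed point $(0,\infty)$ and the boundary constraint $p \in L$ near the central fiber. Choose an elementary decomposition $\cP$ compatible with $\tau(z_1,z_2) = (\ol{z}_2,\ol{z}_1)$. It should have a central piece $P_0$ containing the moment image of $p$ and a segment of the edge of $\Lambda$ of direction $(1,1)$. It should also have boundary pieces, one of which contains the image of the interior constraint. By Theorem \ref{thm:cob} the count $-1$ of Lemma \ref{lem:p1p1} equals the count of rigid broken disks.

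Next I classify the rigid broken types, using doubling exactly as in Lemma \ref{lem:mvhitsproof}. The doubled broken sphere $\ti u$ has Chern number four and passes through the interior constraint and its $\tau$-image. Its univalent boundary vertices therefore come in $\tau$-conjugate pairs, and they are intersections with the toric boundary. For the disk itself, this forces a single closed univalent vertex at the facet. Its adjacent edge has direction $(1,0)$ or $(0,1)$, depending on which facet the interior constraint meets. That vertex has multiplicity $1$ by Definition \ref{def:mv} \eqref{mv:bound} together with the cylinder multiplicities \eqref{mv:spheres}. The disk vertex $v$ sits on $P_0$ and carries the boundary point constraint.

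I then have to rule out an alternative. A cylindrical end hitting the Lagrangian perpendicularly, as in \eqref{mv:cylhit}, would give direction $(1,-1)$. Balancing excludes this configuration together with the boundary intersection, because of the Chern number and the fixed facet. So the remaining open-end configuration is the holomorphic pant: an open strip-like edge along $\Lambda$ with direction $(-1,-1)$ appropriately normalized, and one closed edge of direction $(1,0)$ or $(0,1)$. The open edge closes up through the univalent vertex at the corner, where $\Lambda$ meets the boundary in a bisectrice direction. That vertex is treated like the diagonal local model, with count $1$ by Lemma \ref{lem:lines} and the sign convention of Remark \ref{rem:choose}.

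By Theorem \ref{thm:split}, the broken count is the product of the vertex counts, and all factors except $m(v)$ are $+1$. Hence $m(v) = -1$.

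I expect the classification step to be the main obstacle. It is delicate to show that no other rigid types contribute, such as multiple pant vertices or extra trivalent closed vertices. It is also delicate to pin the sign of the auxiliary corner vertex to $+1$, since the relative spin choices are non-canonical. My first attempt would be to bound the area and Chern number of $\ti u$ via $\tau$-symmetry, as in the proof of Lemma \ref{lem:maslovfour}. The residual ambiguity about which closed direction appears, $(1,0)$ or $(0,1)$, is resolved by the half of $L \setminus \Delta_\R$ containing $p$, as in the remark following Lemma \ref{lem:elem}.
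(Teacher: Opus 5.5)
There is a genuine gap in your classification step: with the constraints placed where you put them, the degeneration contains no pant vertex at all. In log-moment coordinates, put the interior constraint at $q=(q_1,q_2)$ with $q_1\ll 0\ll q_2$ (near $(0,\infty)$). Put the boundary constraint over $(s,s)$ with $s\approx 0$ (the central fiber). Maslov-index-four disks on the anti-diagonal are halves of the $\tau$-invariant $(1,1)$-curves $\{a z_1 z_2 + b z_1 + \ol{b} z_2 + d = 0\}$ with $a,d\in\R$. Solving the two constraint equations gives $|b|^2\ll|ad|$. So the doubled tropical curve is the one whose bounded edge has direction $(1,-1)$, perpendicular to $\Lambda$, with midpoint $(s,s)$.

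The rigid broken disk is therefore exactly the configuration of Lemma \ref{lem:mvhitsproof}. It has a closed trivalent vertex with legs to the two facets meeting at $(0,\infty)$, joined by a perpendicular edge to an open univalent vertex of type \eqref{mv:cylhit}. Your reason for excluding this does not hold. The doubled sphere has Chern number four and meets the toric boundary four times, i.e.\ twice on each half. Nothing forces a single facet vertex; that happens only when the open part of the disk runs into an end of $\Lambda$. Edges along $\Lambda$ appear only when $s\notin[q_1,q_2]$ (the regime $|b|^2\gg|ad|$). Even then the doubled tropical curve has two vertices on the diagonal, e.g.\ at $(q_2,q_2)$ and $(s,s)$, so the disk has two pant vertices, not one pant plus a bare corner vertex.

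The constraint bookkeeping in your configuration is also inconsistent with Theorem \ref{thm:split}. The pants are halves of the lines $\{\alpha z_1+\ol{\alpha}z_2=\gamma\}$, a two-dimensional family. Loading $v$ with both the orbit constraint coming from the interior point and the boundary point therefore overdetermines it. Meanwhile your corner vertex, a half-line in $\C^2$ with only its chord fixed, moves in a one-parameter family. Lemma \ref{lem:lines} counts half-lines \emph{through a boundary point}, with sign $-1$. So it does not supply the factor $+1$ you use, and your final sign is really set by the convention of Remark \ref{rem:choose} rather than derived.

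The paper avoids this by not going through $\P^1\times\P^1$. It starts directly from the model of Lemma \ref{lem:lines}: half-lines in $\C^2$ bounding the diagonal, through a boundary point, with a fixed chord at infinity. It cuts along one line perpendicular to $\Lambda$ and two lines parallel to it. The only rigid tropical graph is then a single bivalent pant vertex, since any extra vertex would have deformable position. The pant count is read off from the line count, with the sign coming from the reversed orientation of one constraint.

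To keep your $\P^1\times\P^1$ route, you would have to move $p$ outside $[q_1,q_2]$. You would also have to choose the decomposition so that the second pant falls into the corner piece. That corner piece is then precisely the paper's model.
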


\begin{proof}
The model problem of counting real lines was considered in Lemma \ref{lem:lines}.  The count of such lines was shown to be equal to $-1/2$ for a fixed Reeb orbit at infinity and a point constraint on the boundary.   From the count of lines we obtain a count of relative maps as follows. Consider the symplectic cut of $X = \C^2$ along a line perpendicular to the direction of the moment image $\Lambda$ of the Lagrangian and two lines parallel to $\Lambda$  as in Figure \ref{fig:antidiag2}. 
The only rigid tropical graph $\Gamma$ is the one shown in Figure \ref{fig:antidiag2}, with a single bivalent vertex $v \in \Ver(\Gamma)$.  Indeed, any additional vertex  vertex $v' \in\Ver(\Gamma)$ would have deformable position, and so the graph $\Gamma$ and broken map $u$ would not be rigid in the sense of Definition \ref{def:rigid}, a contradiction.  Since the count of holomorphic spheres in $\XX_Q$ is equal to one, there are two disks for each sphere.  The orientation at one of the constraints is reversed, so the count of curves in $\XX_P$ must be $m(v) =- 1/2 $, as in Theorem \ref{thm:mult} case Definition \ref{def:mv}  \eqref{mv:pant}.
The answer is independent of the choice of almost complex structure by Lemma \ref{lem:elem}.
\end{proof}

\begin{figure}[ht]\begin{center} \scalebox{.9}{\includegraphics{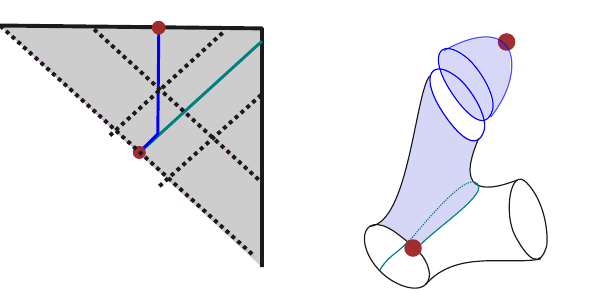}}
\end{center} \caption{The holomorphic pant and a capping disk}  \label{fig:antidiag2}
\end{figure}

\begin{remark}  \label{rem:explicit}
A holomorphic disk with boundary in the  Lagrangian pair of pants
as in Theorem \ref{thm:mult} case Definition \ref{def:mv}  \eqref{mv:pant} can be described explicitly as follows.   Consider a holomorphic pair of pants 
$u: C = \P^1 \to \P^1 \times \P^1 $.  Define  
\[ u(z) = (z, 1-z) \] 
of so that $u$ has bidegree $(1,1)$.   
 Let $u^{-1}(\Delta)$ denote the inverse image of the anti-diagonal 
 \[ L = \{ z_2 =\ol{z}_1 \} .\]
We have 
\[ u^{-1}(L) = \{ z = 1  - \ol{z} \} .\] 
This inverse image is a curve separating $\P^1 - \{ 0,1 , \infty \}$ into 
two disks that are the images of maps $u_-,u_+$, as in Figure \ref{fig:antidiag2}.
\end{remark}

To compute the multiplicities of graphs with arbitrary directions at a trivalent vertex, we consider a finite cover operation as in Mikhalkin \cite[Lemma 8.21]{mikhalkin}.  Let $X = (\C^\times)^2$ with the reversed almost complex structure on the first factor, viewed as the interior of the toric orbifold whose moment polytope $\Phi(X)$ has
vertices $(\pm 1,0), (0,\pm 1)$.  Let 
\[ L = \{ (z,\ol{z}), z \in \C^\times \} \subset X \] 
be the diagonal Lagrangian.  Note that the diagonal is the fixed point set of the antisymplectic involution 
\[ \sigma: (z_1,z_2) \mapsto (\ol{z}_2,\ol{z}_1) . \] 
It has two cylindrical ends corresponding to the punctures at $z = 0$ and $z = \infty$.  For integers $n_-,n_+$ consider the 
holomorphic map 
\[ \tau_{n_-,n_+}: X \to X, \quad (w_-,w_+) \mapsto ( w_-^{n_-} w_+^{n_+}, 
w_-^{n_+} w_+^{n_-} )  \] 
is $\sigma$-equivariant, and preserves the fixed point set $L$.   
Define 
\begin{equation} \label{eq:matrixT} T_{n_-,n_+} = \left[ \begin{array}{cc} n_- & n_+ \\ 
n_+ & n_- \end{array}  \right] .\end{equation}
The following is immediate from the definition:

\begin{lemma} \label{lem:coverdir}
Let $u : C \to X$ be a holomorphic disk with boundary in the  diagonal $L$ intersecting
with a single cylindrical end $e$
with direction $\cT(e) \in \Z^2 .$
Then $\ti{u}$ has a cylindrical end $\ti{e}$ with direction  $\cT(\ti{e}) =  T_{n_-,n_+} \cT(e) $.
\end{lemma}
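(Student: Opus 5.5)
The plan is to compute the end of $\ti{u} := \tau_{n_-,n_+} \circ u$ directly from the asymptotic form of $u$. Recall that the direction of a cylindrical end is read off from the leading monomial behaviour of the components near the puncture. Suppose $e$ corresponds to an interior puncture $w_e$, and $z$ is a local coordinate centered at $w_e$ compatible with the framing. Since $u$ has finite Hofer energy, exponential convergence (as in the proof of Theorem \ref{thm:cob}) gives
\[ u(z) = (c_1 z^{-\mu_1}(1 + o(1)), \ c_2 z^{-\mu_2}(1+o(1))), \quad \cT(e) = (\mu_1,\mu_2), \]
with $c_1, c_2 \in \C^\times$. The sign convention is whichever matches \eqref{eq:evalu}; equivalently, $\lim_{z \to 0} z^{\cT(e)} u(z)$ exists in $(\C^\times)^2$. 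The first component uses the reversed complex structure, so one conjugates $z$ there. This only affects the formula through a consistent sign convention, and it is the same for $u$ and $\ti{u}$.

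Next I substitute into $\tau_{n_-,n_+}(w_-,w_+) = (w_-^{n_-}w_+^{n_+}, w_-^{n_+}w_+^{n_-})$. The leading term of the first component is $c_1^{n_-}c_2^{n_+} z^{-(n_-\mu_1 + n_+\mu_2)}$ and that of the second is $c_1^{n_+}c_2^{n_-} z^{-(n_+\mu_1+n_-\mu_2)}$. Hence $\lim z^{T_{n_-,n_+}\cT(e)}\,\ti{u}(z)$ exists and is nonzero. The exponent vector is exactly $T_{n_-,n_+}\cT(e)$ with $T_{n_-,n_+}$ as in \eqref{eq:matrixT}, because the matrix is symmetric and the map is a monomial map with that exponent matrix. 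The tangent action of $\tau$ on $\t = \Lie((S^1)^2)$ is $T_{n_-,n_+}$ itself, so this is consistent with the intrinsic description $\ti\gamma(t) = \tau(\gamma(t)) = t^{T\mu}\tau(x)$ of the limiting Reeb orbit $\gamma(t) = t^\mu x$.

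Two remaining points need checking. First, $\ti{u}$ is again a disk with boundary in $L$: the map $\tau_{n_-,n_+}$ is holomorphic for the product structure (the reversal on the first factor is compatible since both components involve the same mixture), and it is $\sigma$-equivariant, so $\tau(L) \subset L$. Second, no new punctures arise: $\tau$ maps $(\C^\times)^2$ to itself, so $\ti{u}$ has punctures only where $u$ does. The only real care point is matching the orientation and sign conventions of $\cT$ with the conjugated first factor; this is routine bookkeeping rather than a genuine obstacle.
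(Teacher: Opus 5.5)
Your argument is correct and matches the paper, which gives no proof beyond calling the lemma immediate from the definition. Your leading-monomial computation is the natural way to unpack that: composing with the monomial map $\tau_{n_-,n_+}$ multiplies the exponent vector at the puncture, equivalently the direction of the asymptotic Reeb orbit, by the exponent matrix $T_{n_-,n_+}$.

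One correction to your side remarks. If the first factor literally carries the reversed complex structure, the mixed monomials in $\tau_{n_-,n_+}$ are not holomorphic, so your claim that ``the reversal on the first factor is compatible'' is false as stated. The consistent reading, which your displayed asymptotics already use, is the standard structure on both factors with $L=\{(z,\ol{z})\}$ the fixed locus of the antiholomorphic involution $\sigma$. In that reading $\tau_{n_-,n_+}$ is honestly holomorphic and no conjugation of $z$ is needed anywhere.
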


We claim that all curves with such an end arise this way:

\begin{lemma} \label{lem:ramified} 
\begin{enumerate}
    \item
Any punctured disk $\ti{u}: S \to X $ with a unique cylindrical end
with direction $(n_-,n_+) $ and with boundary in the  diagonal is the composition of 
$\tau_{n_-,n_+}$ with some disk $u:S \to X$.  
\item
The number of such lifts $u$ for a given map $\ti{u}$ is  equal to 
$n_+ + n_-$.
\item The map $u$ is regular if and only if $\ti{u}$ is regular.  
\end{enumerate}
\end{lemma}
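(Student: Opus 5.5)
Write $w = (w_-,w_+)$ for coordinates on the source copy of $X$, so that $\tau_{n_-,n_+}$ is the monomial map with exponent matrix $T_{n_-,n_+}$ of \eqref{eq:matrixT}. The plan is to use the group-theoretic description of this map. On $X=(\C^\times)^2$ it is the quotient by its kernel
\[ K = \{ (w_-,w_+) : w_-^{n_-}w_+^{n_+} = w_-^{n_+}w_+^{n_-} = 1 \}, \]
a finite subgroup of order $|\det T_{n_-,n_+}| = |n_-^2-n_+^2|$. So $\tau_{n_-,n_+}$ is an unramified covering of $(\C^\times)^2$.

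For (1), let $\ti u: S\to X$ be a punctured disk with one cylindrical end and boundary in $L$. The domain $S$ minus the puncture is an annulus, with fundamental group generated by the loop around the puncture. The obstruction to lifting $\ti u$ through $\tau_{n_-,n_+}$ is the class of $\ti u_*[\text{loop}] = (n_-,n_+)$ in $\Z^2/T_{n_-,n_+}\Z^2 \cong K$. This class vanishes because $(n_-,n_+) = T_{n_-,n_+}(1,0)^t$. Hence $\ti u$ lifts, and any lift $u$ is holomorphic since $\tau_{n_-,n_+}$ is a local biholomorphism.

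The boundary condition needs separate treatment. The preimage $\tau_{n_-,n_+}^{-1}(L)$ is the union of the translates $kL$, $k\in K$. Because $\tau_{n_-,n_+}$ is $\sigma$-equivariant, each translate is the fixed locus of the twisted involution $k\sigma$. The boundary $\partial S$ is connected, so $u(\partial S)$ lies in a single translate $kL$. Composing $u$ with $k^{-1}$ moves it to a lift with boundary in $L$. The stabilizer of $L$ in $K$ is $K^\sigma$, the set of $k$ with $kL = L$. So lifts with boundary in $L$ form a torsor for $K^\sigma$.

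For (2), I would compute $K^\sigma$ directly. An element $k=(a,b)$ preserves $L = \{w_+=\ol w_-\}$ exactly when $b=\ol a^{-1}$ with $|a|=1$, that is, when $b = a$ is a unit complex number. Then $k\in K$ reduces to the condition $a^{n_-+n_+}=1$, which gives $n_-+n_+$ elements. I would double-check this against the pant computation in Lemma \ref{lem:pantone}. I would also settle the sign convention in the statement, reading $n_+ + n_-$ as $|n_++n_-|$.

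For (3), the linearized operators of $u$ and $\ti u$ are identified by the bundle isomorphism $D\tau_{n_-,n_+}$. This isomorphism is a complex-linear isomorphism $TX\to \tau^*TX$, and it maps $TL$ to $TL$. So the two operators have the same kernel and cokernel, and are simultaneously surjective. On the weighted Sobolev spaces along the end, the asymptotic operators correspond as well, since $\tau$ is a group homomorphism intertwining the translation actions through $T_{n_-,n_+}$. This identification also gives the correspondence of Reeb orbits and chords at the ends.

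The main obstacle is the boundary component of step (1). One has to check that the lifted boundary lands in a translate of $L$ rather than in some other branch of $\tau^{-1}(L)$, and then count the stabilizer correctly. If $K^\sigma$ is larger than the computation above suggests (for instance when $\gcd$ issues arise for $n_-=n_+$), I would restrict to $n_-\neq\pm n_+$, where $\det T_{n_-,n_+}\neq 0$ and $\tau$ is a genuine finite covering.
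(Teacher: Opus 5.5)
Your architecture matches the paper's: $\tau_{n_-,n_+}$ is a finite abelian covering, lifts of $\ti u$ with boundary on $L$ form a torsor over the deck transformations that preserve $L$, and $D\tau_{n_-,n_+}$ identifies the linearized operators. Your lifting-criterion argument for (1) is fine and more explicit than the paper: the end class $(n_-,n_+)=T_{n_-,n_+}(1,0)^t$ lies in $T_{n_-,n_+}\Z^2$, and a connected boundary lands in a single translate $kL$.

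\textbf{The gap is the computation that carries (2).} For $L=\{w_+=\ol w_-\}$ and $k=(a,b)$ one has $k\cdot(w,\ol w)=(aw,b\ol w)$, which lies in $L$ iff $b=\ol a$. For $k\in K$ (so $|a|=1$) this means $b=a^{-1}$, not $b=a$. Imposing $k\in K$ then gives $a^{n_--n_+}=1$, so the stabilizer of $L$ in $K$ has order $|n_--n_+|$, not $n_-+n_+$. For example, take $(n_-,n_+)=(3,1)$:
\begin{itemize}
\item $(i,i)$ lies in $K$, but it sends $(z,\ol z)$ to $(iz,i\ol z)\notin L$;
\item the stabilizer is $\{\pm(1,1)\}$, of order $2$, whereas $n_-+n_+=4$ and $|K|=8$.
\end{itemize}
Your condition $b=\ol a^{-1}$ is the stabilizer condition for a different Lagrangian, $\{w_+=\ol w_-^{-1}\}$. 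That one lies over the direction $(1,-1)$ rather than $(1,1)$, and for it your count $n_-+n_+$ is correct.

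You cannot close this by appealing to the paper, because its proof makes the same identification. It takes the $L$-preserving deck transformations to be $\gamma(z_-,z_+)=(\gamma z_-,\ol\gamma z_+)$ with $\gamma^{n_-+n_+}=1$. These do preserve $L$, but $\tau_{n_-,n_+}(\gamma,\ol\gamma)=(\gamma^{n_--n_+},\gamma^{n_+-n_-})$, so they are deck transformations only when $\gamma^{n_--n_+}=1$.

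Changing coordinates does not rescue the count either. In the coordinates $(\ol z_1,z_2)$, $L$ becomes the honest diagonal and your $b=a$ would be right. But there the complex structure is reversed on the first factor, and the monomial map is not holomorphic for it unless $n_+=0$, so (1) would fail instead.

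\textbf{What your argument actually proves.} For $L=\{(z,\ol z)\}$ and $\tau_{n_-,n_+}$ as written, your torsor argument shows that each $\ti u$ has exactly $|n_--n_+|$ lifts with boundary in $L$. So (2) holds only with that number, or else with the Lagrangian $\{(z,\ol z^{-1})\}$ in place of $L$.

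This does not damage the later use of the lemma. The stabilizer order equals the degree of the covering $\tau_{n_-,n_+}|_L : L\to L$: its fibers are stabilizer orbits, because distinct translates $kL$ are disjoint. So when counting disks with a boundary point constraint, as in Theorem \ref{thm:pantone}, this factor cancels against the number of lifts of that constraint. Parts (1) and (3) of your proposal go through as written.
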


\begin{proof}     The two  claims follow from standard lifting properties of holomorphic covering maps, with $u$ being a lift of $\ti{u}$ under $\tau_{n_-,n_+}$.  Namely, the group $\Z_{n_- + n_+} $ of $n_- + n_+$-roots of unity acts naturally on $X$ by 
\[ \gamma(z_-,z_+) = (\gamma z_-, \ol{\gamma} z_+) \]
and preserves the fixed point set $L$.  Any two lifts 
differ by an action of some $\gamma \in \Z_{n_- + n_+}$ acting diagonally, since the other elements 
of the group of deck transformations of $\tau_{n_-,n_+}$
do not preserve the boundary condition.   For any pair of points 
in $L$, there are $(n_- + n_+)^2$ lifts under $\tau_{n_-,n_+}$,
and two lifts have the same projection if they differ by an element $\gamma \in \Z_{n_- + n_+}$.  Since there is a unique lift for each pair of given constraints, the number of 
curves $\ti{u}$ with the given contraints is $ n_- + n_+$. (This is the "real version" of the proof of  Mikhalkin's \cite[Lemma 8.21]{mikhalkin}.)

The claim on regularity follows from an identification of the linearized operators $D_u$ and $D_{\ti{u}}$ (see \cite[(6.39)]{vw:trop}) give as follows:
The identification $\ti{u}^* TX \to u^* TX$ given by the differential of $\tau_{n_-,n_+}$
induces an isomorphism of real boundary conditions and Cauchy-Riemann operators.
\end{proof}

\begin{figure}[ht]\begin{center} \scalebox{.5}{\includegraphics{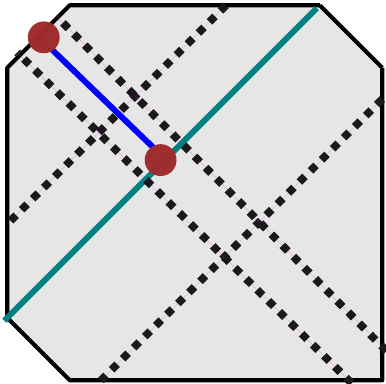}}
\end{center} \caption{Decomposing into rectangles}  \label{fig:antidiagsplit}
\end{figure}

\begin{proof}[Proof of Theorem \ref{thm:pantone}]
We use Lemma \ref{lem:ramified} to reduce to the case of directions 
$(1,0), (1,0)$ covered in  Theorem \ref{thm:pantone}.  The number of lifts $\ti{u}$ of 
a given disk $u$ is equal to the number of points in each fiber of $\tau_{n_-,n_+}$,
which is the determinant of the matrix in \eqref{eq:matrixT}.   On the other hand, the count of disks in Theorem \ref{lem:pantone} is, for an appropriate choice of relative spin structure as in Remark \ref{rem:rel}
\[ m(v_0) = -1/2 .\]
The boundary conditions $\ti{u}^* TL$ and $u^* TL$ are naturally identified, 
but the relative spin structures are defined by  capping procedure, 
detailed in \cite{orient}.   The two relative spin structures on the extensions of 
$\ti{u}^* TL$ and $u^* TL$  differ in relative spin structure by a sign 
\[ (-1)^{n_+ + n_- - 1} = (-1)^{n_+ + n_- -1}
= (-1)^{n_+^2 - n_-^2 - 1} = (-1)^{\left| \det (T_{n_-,n_+}) \right|  - 1}
.\] 
(The number of half turns of $\ti{u}$ at the open puncture is $n_+ + n_-$, while the number of half-turns in $u$ is $1$.)   Since the number of such disks, $1$, is half the determinant $2$, the statement of the Theorem now follows from the identities
\begin{eqnarray*} m(v) &=& m(v_0) \# \{ \ti{u} \mapsto u \} \\
&=& -\hh (-1)^{ 
| n_+ + n_- - 1 |} \left| \det (T_{n_-,n_+}) \right|   \\
&=& -\hh 
(-1)^{ |\det(\mu_\black r(\mu_\black))|  }
|\det(\mu_\black r(\mu_\black))|  . \end{eqnarray*}
\end{proof}

\subsection{Curves with boundary in the  pair of pants}

Next, we consider curves with boundary in the  Lagrangian pair of pants
from  Example \ref{ex:locmod}  \eqref{item:lpants}.   We use the isotopy 
of the previous section to compute the number of holomorphic disks $u: C \to X$ with boundary in the  pair of pants $L$  of various topological types.  

\begin{lemma} \label{lem:singleend} Let $X = (\C^\times)^2$.  For a judicious choice of relative spin structure as in Definition \ref{def:index}, the number  $m(L,\ul{Y})$
of holomorphic disks $u: C \to X$ with boundary in  Lagrangian pair of pants 
$L \cong \R^2 - \{ 0, 1 \} $ from  Example \ref{ex:locmod}  \eqref{item:lpants} so that $C$ has a single strip-like end and no cylindrical ends is equal to $1$, as in Definition \ref{def:mv} \eqref{mv:pantseam}
\end{lemma}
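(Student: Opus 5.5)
We determine this local multiplicity by degeneration, in the same way Lemma \ref{lem:mvhitsproof} was derived from Lemma \ref{lem:anti-diag}. We embed the Lagrangian pair of pants as the neighbourhood of the trivalent vertex of the tropical Lagrangian $L \subset X = \Bl(\P^1 \times \P^1)$ from Examples \ref{ex:hind}, \ref{ex:hind2}, and compute the corresponding global count in two ways. By Lemma \ref{lem:hind3}, the global count of Maslov-index-two disks through a generic point of $L$ is $-1$; it is invariant because $L$ is isotopic to the inverse image of the diagonal (Proposition \ref{hindprop}).

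We choose an elementary polyhedral decomposition $\cP$ with the following pieces: a piece $P_0$ containing the trivalent vertex $\nu_0$ of $\Lambda$; pieces along each leg $\eps_i$ of $\Lambda$ with rectangular polytopes, as in Lemma \ref{lem:mvhitsproof}; and boundary pieces. We place the point constraint $p$ on one leg, say $\eps_3$, outside $P_0$. Theorem \ref{thm:cob} and Theorem \ref{thm:split} then write $-1$ as a sum over rigid tropical graphs $\Gamma$ of products of vertex multiplicities. Enumerating these graphs is the same computation as in Example \ref{ex:invim}. By the Maslov-index bound of Remark \ref{rem:arearem}, a rigid Maslov-two graph cannot have a closed trivalent vertex in $P_0$, nor a closed bivalent vertex there. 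The surviving graphs are therefore those with a univalent open vertex at $\nu_0$, joined by an open edge along $\eps_3$ to the vertex carrying $p$, from which a closed edge runs to the toric boundary. Each graph also contains holomorphic cylinders, which have multiplicity $1$ by \eqref{mv:spheres}, and a boundary vertex, which has multiplicity $1$ by \eqref{mv:bound}.

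Next we compare with the graph where $p$ lies near the boundary end of the leg. Here the disk through $p$ decomposes as a univalent vertex at $\nu_0$, with unknown multiplicity $m$, and a holomorphic pant vertex on the leg, whose multiplicity $-\hh$ comes from Theorem \ref{thm:pantone} and Lemma \ref{lem:pantone}. There are two such graphs, related by the reflection $r$, giving $2 \cdot (-\hh) m = -1$ and hence $m = 1$. Alternatively we match with Example \ref{ex:invim}, where the trivalent-vertex factor $m(v_1) = 1$ was used.

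The main obstacle is making this consistent. We must verify that the rigid-graph list is complete: there must be no other strata, such as disks with two strip-like ends at $\nu_0$ whose second end is capped by a disk hitting another leg. Each such configuration needs an additional constraint or raises the Maslov index, and we would check this via index additivity in Definition \ref{def:index}. We must also verify that the univalent type at $\nu_0$ is prime, so that Lemma \ref{lem:elem} makes $m$ well defined. Signs are fixed by the judicious relative spin structure: we choose it on the capping extension of the strip-like end so that, together with Remark \ref{rem:choose}, the product of signs reproduces $-1$, and then $m = +1$.
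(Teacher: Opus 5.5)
Your argument is essentially the paper's: degenerate the global count $-1$ from Lemma \ref{lem:hind3} so that each rigid broken disk consists of the unknown univalent open vertex at the trivalent vertex of $\Lambda$, glued to already-computed pieces along a single leg (two pant graphs of weight $-\hh$ each, total $-1$), which forces $m=1$. Two small points. First, the paper settles your completeness worry with $[u]\cdot[D]=1$ rather than index additivity, since every branch leaving the piece at the vertex of $\Lambda$ must end on $D$; index additivity with the values $I(u_v)=-2$ of Remark \ref{rem:arearem} would not by itself exclude extra branches. Second, your ``alternative'' via Example \ref{ex:invim} is circular, because that example already uses the multiplicity being proved.
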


\begin{proof} The number of disks with boundary in the  compactification of the Lagrangian pair of pants
was computed in Lemma \ref{lem:hind3}:  Let $X_0 = \Bl(\P^1 \times \P^1)$ be the blow-up at  point  $  \P^1 \times \P^1$.   Consider the Lagrangian $L_0$ whose image $\Phi(L_0)$ under the moment map $\Phi$ is a trivalent graph $\Lambda$ shown in Figure \ref{fig:pants1} centered at $0$ in the moment polytope $\Phi(X)$, so that $\Lambda$ intersects the image of the exceptional divisor $E$ twice.  The number of disks $u: C \to X_0$  bounding $L_0$ of Maslov index $I(u) = 2$
is equal  to $-1$, by Lemma \ref{lem:hind3}.

We deduce from the previous paragraph the number of holomorphic disks with boundary in the  Lagrangian pair of pants as in  the Lemma.   Consider the polyhedral decomposition  $\cP = \{ P \}$ shown in Figure \ref{fig:pants11}.  Let $u = (u_v)$ be a broken Maslov-index-two disk in the resulting broken symplectic manifold $\XX$
passing through the given point $p \in L$ as shown, with tropical graph $\Gamma$.

\vskip .1in \noindent {\em Claim:  There is no vertex of $v$
mapping to the polytope $P(v)$ containing the vertex of $\Lambda$; that is, the tropical graph $\Gamma$
maps to a single leg of $\Lambda$.} 
Indeed, since the intersection number of $[u]$ with the class of the divisor $[D]$ is $1$, 
$u_{v_i}:C_{v_i} \to \XX_{P(v_i)} $ in the pieces $\XX_{P(v_i)}$ corresponding to the other legs of the trivalent graph $\Lambda$ of $L$ in $u$.  Indeed, removing any such vertex $v_0$ would 
break $\Gamma$ into pieces $\Gamma_i$
the univalent vertices of $\Gamma_i$ other than $v_0$ represent other intersection point with the divisor $D$.  Thus each piece $\Gamma_i$ contributes at least one to the intersection number
with $[D]$, and the graph $\Gamma$ of the broken map $u$ must be as shown.  

\vskip .1in
By the already proved parts of Theorem \ref{thm:mult} (namely the two multiplicities appearing in Figure \ref{fig:antidiag2}) the count of holomorphic spheres $u_v : C_v \to \XX_{P(v)}$ in the lower left in Figure \ref{fig:pants1} is equal to $-1$. So the number of holomorphic disks 
 $u_{v'} : C_{v'}\to \XX_{P(v')}$ 
 with boundary in the  Lagrangian $\LL_{P(v')}$ with the given point constraint
is equal to $1$.  This justifies Theorem \ref{thm:mult} case Definition \ref{def:mv}  \eqref{mv:pant}.
\end{proof}

\begin{figure}[ht] \begin{center} \scalebox{.3}{\includegraphics{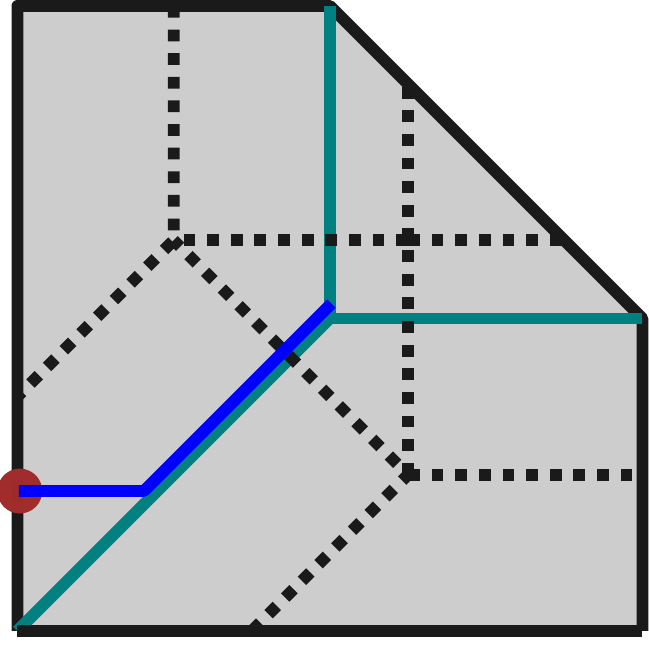}}
\end{center} \caption{Polyhedral decomposition computing the disks bounding a Lagrangian pair of pants} \label{fig:pants11} \end{figure}

\begin{remark}  For the standard complex structure, the
Lagrangian pair of pants in Lemma \ref{lem:singleend} above can be described explicitly as follows.   Consider the holomorphic pair of pants $H = \{ 1 + z_1 + z_2 = 0 \}$ in $\C^2$, and let $L \subset \R^4$ be its Lagrangian hyper-K\"ahler rotation.  The diagonal $\Delta = \{ (z,z), z \in \C \}$ is divided into two pieces by its intersection with $L$, 
as in Hicks \cite[Figure 2]{hicks:realizability}.  Each half of the holomorphic curve $\Delta$ is the image of a holomorphic disk, whose tropical graph $\Gamma$ is shown on the left in Figure 
\ref{fig:pants1}, in blue. Note that  the canonical trivialization of $u^* TL$ at the strip-like end, together with the obvious trivialization of $u^ TL$ along the boundary into tangential and normal parts, do not combine to a trivialization of the extended boundary value problem described in Definition \ref{def:index}.  Therefore, the definition of relative spin structure on the extension.    \end{remark}

\begin{figure}[ht] \begin{center} \scalebox{.3}{\includegraphics{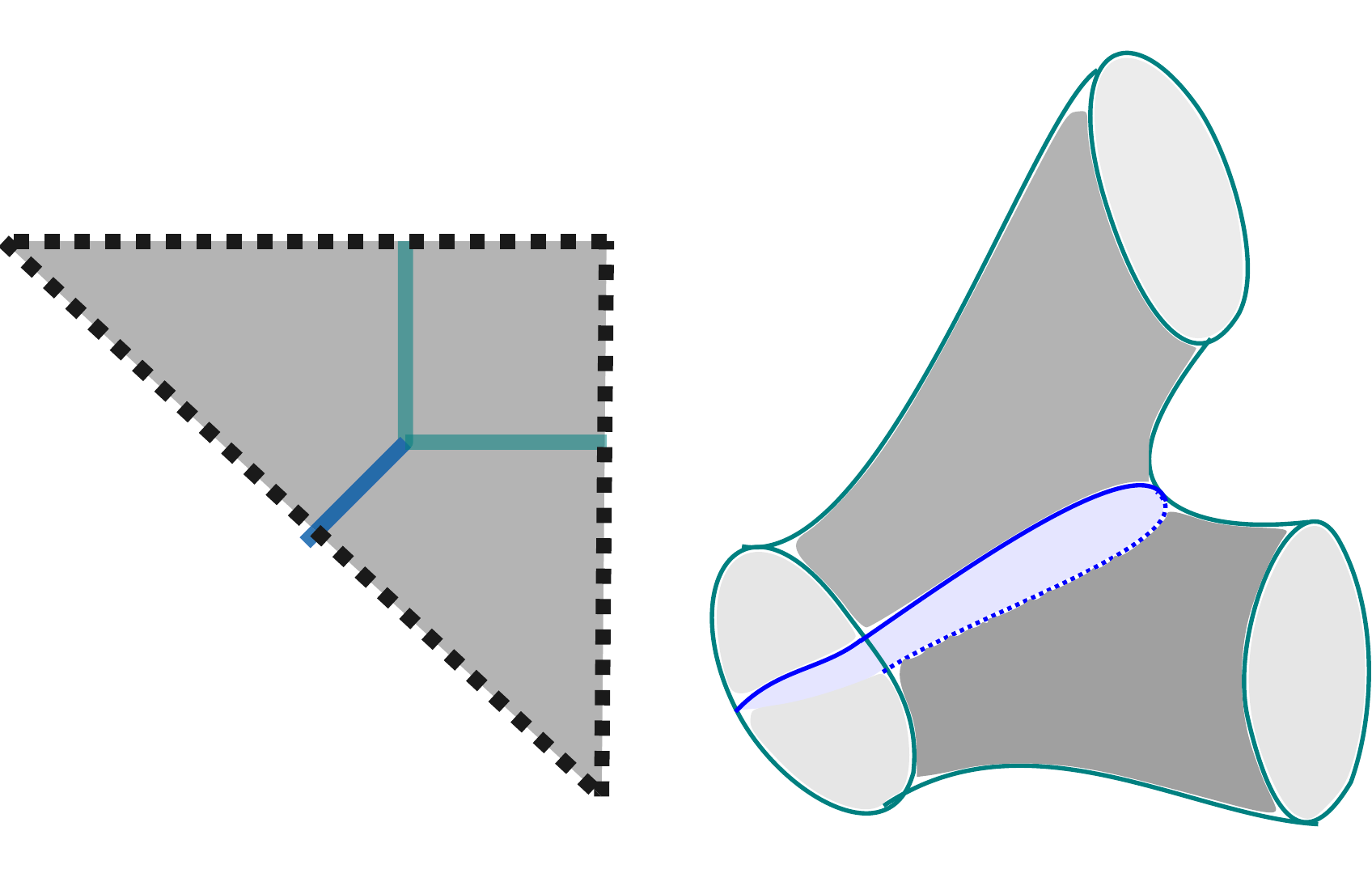}}
\end{center} \caption{A disk bounding a Lagrangian pair of pants} \label{fig:pants1} \end{figure}

Next, we compute the number of holomorphic disks with boundary in the  Lagrangian pair of pants with two
or three strip-like ends.

\begin{lemma} \label{lem:stripsinpants}
The number $m(v)$ of holomorphic disks in $\XX_{P(v)} = (\C^\times)^2 \subset
\Bl^3(\P^2)$ with boundary in the Lagrangian pair of pants $\LL_{P(v)} \cong \R^2 - \{ 0 , 1 \}$ with two or
three strip-like ends and a point constraint $p \in \LL_{P(v)}$,
is equal to $1$.
\end{lemma}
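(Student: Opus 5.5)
The plan is to follow the strategy of Lemma \ref{lem:singleend}: compute a global disk count for a compact Lagrangian containing the Lagrangian pair of pants as one piece. Then degenerate along a polyhedral decomposition so that the pants piece $\XX_{P(v)}$ is isolated, and solve for the unknown vertex multiplicity from the splitting formula of Theorem \ref{thm:split} and Corollary \ref{cor:unmarked}. All other vertices would have multiplicities already computed: cases \eqref{mv:spheres}, \eqref{mv:pants}, \eqref{mv:bound}, \eqref{mv:cylhit}, \eqref{mv:pant} and \eqref{mv:pantseam} of Definition \ref{def:mv}.

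\emph{Two strip-like ends.} I would take the Lagrangian sphere $L_0 \subset \Bl(\P^1\times\P^1)$ of Example \ref{ex:hind} with trivalent graph $\Lambda$, and count Maslov-index-two disks through a point $p$ chosen on the leg $\eps_1$ near the vertex. Using the decomposition of Figure \ref{fig:pants11}, I would use the index/intersection argument from the proof of Lemma \ref{lem:singleend} to classify rigid broken disks. Each branch of $\Gamma$ leaving the vertex piece must end at a univalent vertex meeting $\partial B^\dual$, which costs intersection with the anticanonical divisor. So a rigid Maslov-two graph contains the vertex piece either not at all or as a bivalent vertex $v$, with open edges in directions $\cT(\eps_1), \cT(\eps_2)$. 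The strip along $\eps_2$ must then be capped off by a univalent vertex of type \eqref{mv:pantseam} or a chain ending at the boundary or at the Lagrangian end.

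Comparing the total $-1$ from Lemma \ref{lem:hind3} with the contributions of the graphs that avoid $v$ should force $m(v)=1$. An alternative check is Lemma \ref{lem:maslovfour}: its Maslov-four count of $1$ through three points $p_1,p_2,p_3$ should isolate the trivalent case.

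\emph{Three strip-like ends.} I would use Lemma \ref{lem:maslovfour}, placing $p_1,p_2,p_3$ on the three legs. A rigid broken disk must then pass through the vertex piece with three open edges along $\eps_1,\eps_2,\eps_3$. After distributing constraints by Theorem \ref{thm:split}, the outer pieces are the already-known multiplicities (each leg capped by the $-1$ from \eqref{mv:cylhit}, or by $1$ from \eqref{mv:bound}). The product of those outer factors is known, so $m(v)=1$ follows after fixing the relative spin structure on the extension (Remark \ref{rem:rel}, Remark \ref{rem:choose}) to make the sign positive. Lemma \ref{lem:elem} would give independence of almost complex structure and of the position of the Reeb chord constraint, since these types are prime by Example \ref{ex:prime}.

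The main obstacle is the classification step: showing that no other rigid graphs contribute to the global count. In particular I must exclude graphs with several vertices in the pants piece, and higher-valence vertices, which cannot be removed by perturbation here. I would attempt this with the area/Maslov bound of Remark \ref{rem:arearem} (each extra branch forces an extra divisor intersection), together with the double-cover exclusion used in Lemma \ref{lem:maslovfour}. Sign bookkeeping across the distributed constraints is secondary, since it can be absorbed into the choice of relative spin structure.
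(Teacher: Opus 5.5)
\textbf{Two-ended case: this argument fails.} Suppose the vertex piece carries a bivalent open vertex with open edges along $\eps_1$ and $\eps_2$. Then the boundary of the disk runs out along both legs. Each of these strips must terminate further out, in one of two ways: in a half-plane at the bisectrice end of the leg, or in a holomorphic pant whose closed edge then runs to $\partial B^\dual$. Each such termination costs Maslov index two, while the bivalent vertex itself contributes zero (Remark \ref{rem:arearem}). So the broken disk has Maslov index at least four; the paper states this explicitly in Example \ref{ex:invim}.

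Consequently the two-ended vertex never occurs in a rigid Maslov-two broken disk. The total $-1$ of Lemma \ref{lem:hind3} therefore contains no term involving its multiplicity, and the comparison you propose gives no equation for $m(v)$. The Maslov-two disks that do enter the vertex piece do so through a univalent vertex of type \eqref{mv:pantseam} of Definition \ref{def:mv}, followed by a pant on the leg containing $p$ (the graphs of Example \ref{ex:invim}). That is what Lemma \ref{lem:singleend} extracts, and your classification omits it. Also, a strip running out along $\eps_2$ cannot be capped by a vertex of type \eqref{mv:pantseam}, since such vertices sit over the trivalent vertex of $\Lambda$.

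The fix is the paper's: use the Maslov-four count of Lemma \ref{lem:maslovfour} for this case too.
\begin{itemize}
\item Put $p_1,p_2$ in the pieces containing the ends of $\eps_1,\eps_2$, and move $p_3$ into the vertex piece.
\item A strip along $\eps_3$ would cost another two units of Maslov index, so the only rigid type is two caps together with a bivalent vertex through $p_3$.
\item By Theorem \ref{thm:split} the two caps output fixed Reeb chords at $e_1,e_2$, and the value $1$ of Lemma \ref{lem:maslovfour} then gives the two-ended multiplicity.
\end{itemize}

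\textbf{Three-ended case: same route as the paper, but the outer factors are misidentified.} Vertices of types \eqref{mv:cylhit} and \eqref{mv:bound} have no adjacent open edge, so neither can cap a strip running along a leg of $\Lambda$. The cap is the half-plane ``line'' of Lemma \ref{lem:lines} at the bisectrice end of the leg. Its count is $-1$, renormalized to $+1$ by the choice in Remark \ref{rem:choose}.

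You should also place each $p_i$ in the piece containing that end, not just somewhere on the leg. This forces the strip along $\eps_i$ to reach that piece, so no pant can terminate it earlier. The configuration consisting of the central trivalent vertex and three lines is then the only rigid type, and $m(v)=1$ follows. If the points sit near the vertex instead, pant-plus-boundary configurations like those in Example \ref{ex:invim} may also enter the Maslov-four count, and your equation would acquire additional unknown terms.
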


\begin{proof}   We recall the model problem in Lemma \ref{lem:maslovfour}:
For the trivalent graph shown in Figure \ref{pants2} the number of disks $u: C \to X$ with boundary in $L$ 
with three point constraints 
\[ u(z_1) = p_1, u(z_2) = p_2, u(z_3) = p_3, \quad z_1,z_2,z_3 \in \partial C \] 
is equal to $1$, independent of the choice of almost complex structure.   
We take a polyhedral decomposition as in Figure \ref{pants2}, into a central piece $P_0$ and pieces $P_i$ near the vertices.  The number $m(v_i)$ of lines in the pieces near the vertices
are equal to $1$, for a suitable extension of relative spin structures. For this choice, we must also have that 
\[ m(v_0) = 1 \] 
where $m(v_0)$  is the number of disks $u: C \to X^\circ$ with boundary in the  Lagrangian pair of pants $L$ in the complement $X^\circ$ of the toric boundary
in $\P^2$ with either three fixed Reeb chords $\gamma_{e_1},\gamma_{e_2}, \gamma_{e_3}$ at infinity, or two Reeb chords $\gamma_{e_1}, \gamma_{e_2}$ and one boundary constraint $p_3 \in L_0$, or with three boundary 
constraints $p_1,p_2,p_3 \in L_0$.  In the last case, there are  two possible orderings of the boundary constraints; we may assume that the ordering is compatible with the orientation around the boundary.  This justifies 
Theorem \ref{thm:mult} case Definition \ref{def:mv}  \eqref{mv:twoend} and \eqref{mv:threeend}.
\end{proof}

\begin{figure}[ht] \begin{center} \scalebox{.3}{\includegraphics{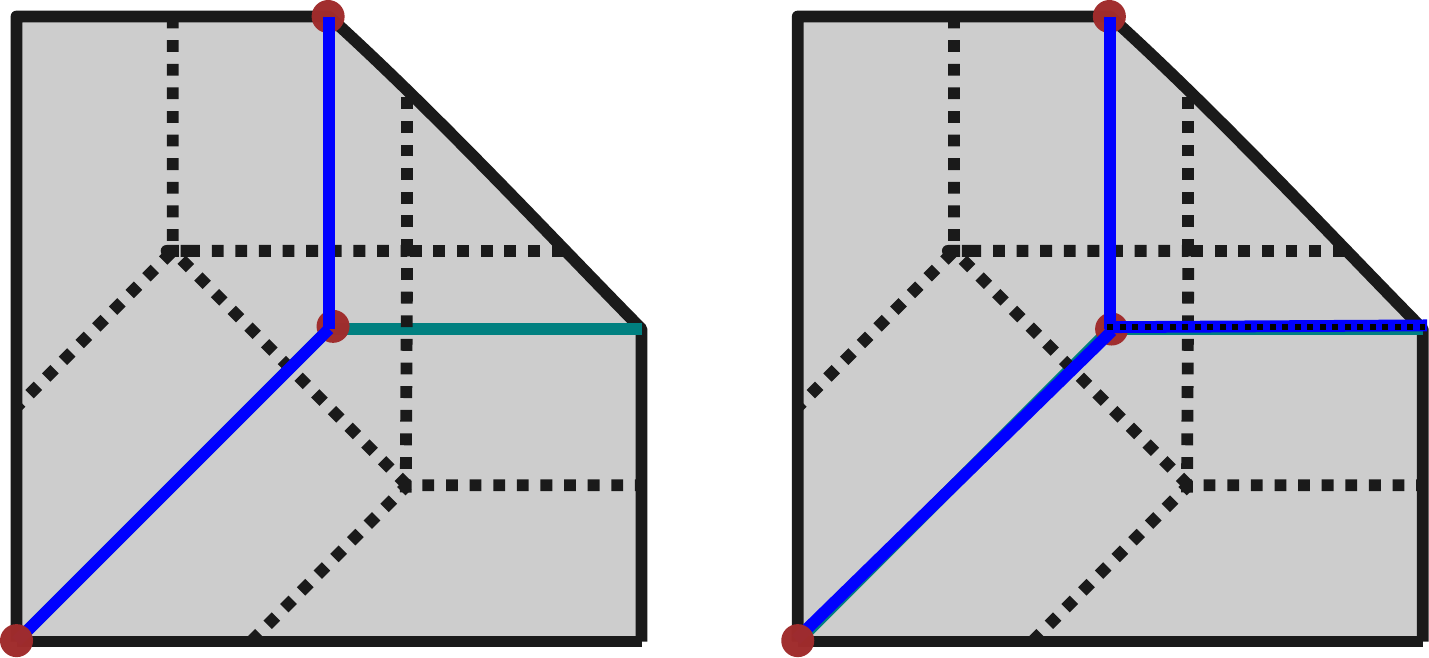}}
\end{center} \caption{Cartoon diagrams for disks bounding a Lagrangian sphere in $\Bl^2 \P^2$} \label{pants2} \end{figure}

\begin{remark}  \label{rem:arearem} \label{rem:index} We make some comments on the combinatorial formulas for area and index. The area 
\[ A(u) = \sum_{v \in \Ver(\bGamma)} A(u_v) \] 
of a broken map $u = (u_v)$ may be computed as a sum of the areas of its pieces, attached
to the vertices of $\Gamma$; this quantity is not related to the lattice length of the graph.
For example, the areas 
$ A(u_1),A(u_2) \in \R_{\ge 0}$
of the curves $u_1,u_2$ corresponding to the tropical graphs $\Gamma_1,\Gamma_2$ shown in the left and right 
of Figure \ref{pants2} are equal, since they both correspond to Maslov index four 
disks with boundary in the  anti-diagonal.  

To further explain this point, note that the realizability theorem
for tropical Lagrangians in Theorem \ref{thm:realize} does not produce Lagrangians $L$ that are translationally invariant over the edges $\eps \in \Edge(\Lambda)$, but rather Lagrangians $L$ that are asymptotic to codimension one tori $ \{a \} \times T^{n-1}$ at the ends of the edges $\eps \in \Edge(\Lambda)$.  The inverse images
of the interiors of  edges in the Lagrangian are not translationally invariant, and the area $A(u)$ of the holomorphic strips $u: C \to X$ over the edges $\eps \in \Edge(\Lambda)$  is not proportional to the length of $\eps \in \Edge(\Lambda)$.  

On the other hand, 
the Maslov index may be read off from the 
pieces via the formula 
\[ I(u) = \sum_{v \in \Ver(\bGamma)} I(u_v) .\] 
Since the dimension of the relative moduli space of disks resp. spheres containing $u_v$ is $2+ I(u_v) - \dim(\aut(C_v))$, 
resp. $4 + I(u_v) - \dim(\aut(C_v))$ we see that 
\[ I(u_v) = \begin{cases} 2  & \text{for the closed univalent vertices}  \\
 0 & \text{for bivalent vertices and open univalent vertices and}  \\
 -2 & \text{for the trivalent vertices} \end{cases} \] 
 For example,  closed univalent vertices have automorphism group of dimension two and a dimension two moduli space, 
 and so must be Maslov index two.   Similarly, for the disks bounding the pairs of pants, 
 the moduli space computed in Lemma \ref{lem:stripsinpants} is dimension zero, so by Riemann-Roch $\dim(L) + I(u_v)  =0 $
 so $I(u_v) = -2$.   In particular, the Maslov number of a broken disk for an elementary decomposition is 
{\em not} twice the number of times the graph intersects the boundary of the dual complex. 
\end{remark}

\subsection{Collisions on the boundary}

We want to show that for a good choice of cutting data, all of the graphs 
of Maslov-index-two broken disks with valence at least two have vertices in the interior of the dual complex; that is, the collisions occur in the interior in the
sense of Definition \ref{def:ci}.  We suppose that the polyhedral decomposition $\cP$ of $B$ is the intersection of polyhedral decompositions $\cP_0$
and $\cP_1$, where $\cP_0$ is the collection of polytopes obtained by a sequence of cuts parallel to the facets of $\Phi(X)$ as in Figure \ref{fig:2p}.

\begin{figure}[ht]
  \begin{center}
    \scalebox{.8}{
\begingroup%
  \makeatletter%
  \providecommand\color[2][]{%
    \errmessage{(Inkscape) Color is used for the text in Inkscape, but the package 'color.sty' is not loaded}%
    \renewcommand\color[2][]{}%
  }%
  \providecommand\transparent[1]{%
    \errmessage{(Inkscape) Transparency is used (non-zero) for the text in Inkscape, but the package 'transparent.sty' is not loaded}%
    \renewcommand\transparent[1]{}%
  }%
  \providecommand\rotatebox[2]{#2}%
  \newcommand*\fsize{\dimexpr\f@size pt\relax}%
  \newcommand*\lineheight[1]{\fontsize{\fsize}{#1\fsize}\selectfont}%
  \ifx\svgwidth\undefined%
    \setlength{\unitlength}{515.22336464bp}%
    \ifx\svgscale\undefined%
      \relax%
    \else%
      \setlength{\unitlength}{\unitlength * \real{\svgscale}}%
    \fi%
  \else%
    \setlength{\unitlength}{\svgwidth}%
  \fi%
  \global\let\svgwidth\undefined%
  \global\let\svgscale\undefined%
  \makeatother%
  \begin{picture}(1,0.26360103)%
    \lineheight{1}%
    \setlength\tabcolsep{0pt}%
    \put(0,0){\includegraphics[width=\unitlength,page=1]{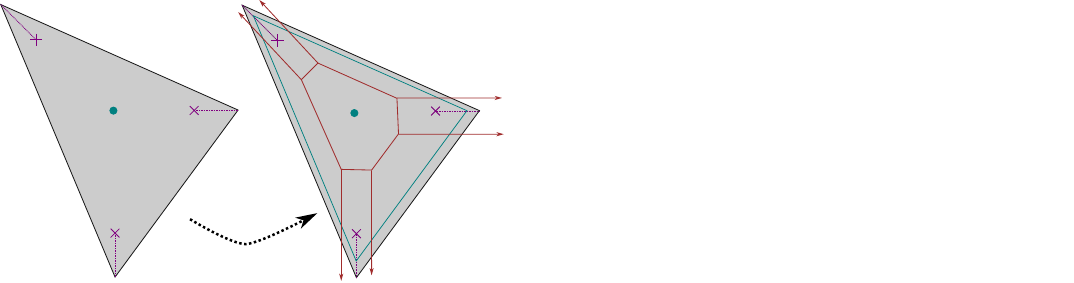}}%
    \put(0.21222367,0.01212206){\color[rgb]{0,0,0}\makebox(0,0)[lt]{\lineheight{1.25}\smash{\begin{tabular}[t]{l}Cut\end{tabular}}}}%
    \put(0,0){\includegraphics[width=\unitlength,page=2]{2p.pdf}}%
    \put(0.65995227,0.03892623){\color[rgb]{0.78431373,0.21568627,0.21568627}\makebox(0,0)[lt]{\lineheight{1.25}\smash{\begin{tabular}[t]{l}$\PP_{\on{in}}$\end{tabular}}}}%
    \put(0.39115157,0.0499392){\color[rgb]{0.10196078,0.10196078,0.10196078}\makebox(0,0)[lt]{\lineheight{1.25}\smash{\begin{tabular}[t]{l}$\PP$\end{tabular}}}}%
    \put(0,0){\includegraphics[width=\unitlength,page=3]{2p.pdf}}%
    \put(0.87411168,0.04203492){\color[rgb]{0,0.50196078,0.50196078}\makebox(0,0)[lt]{\lineheight{1.25}\smash{\begin{tabular}[t]{l}$\PP_{\on{ann}}$\end{tabular}}}}%
  \end{picture}%
\endgroup%
}
  \end{center}
  \caption{A polyhedral decomposition $\PP$ on $\Bl_8\P^2$ defined as 
     the intersection of $\PP_{\on{ann}}$ and $\PP_{\on{in}}$.}
  \label{fig:2p}
\end{figure}

Given a family of cutting datum for $\cP_0$ and $\cP_1$ we obtained a family of cutting data for $\cP$ as explained in \cite[Lemma 5.5]{vw:at}.  Multiplying the cutting data for $\cP_0$
by a parameter $\rho$ gives a family of dual complexes $B^\dual_\rho$ depending on the 
parameter $\rho$. 

\begin{lemma} \label{lem:univbound}  For any $\rho_0 > 0$, there exists a real number $T > 0 $ so that the number of tropical graphs  $\Gamma$ for gluing data $\rho > \rho_0$ of broken Maslov-index-two disks $u = (u_v)$ is at most $T$. 
\end{lemma}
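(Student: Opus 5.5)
The plan is a compactness argument: a uniform energy bound forces a uniform bound on the combinatorics of the tropical graphs, and finitely many combinatorial types give finitely many tropical graphs. First I fix the combinatorial data. Every broken Maslov-index-two disk $u=(u_v)$ bounding $\LL$ through the point constraint has area $A(u)$ equal to a fixed constant, since $L$ is monotone; by the remark after Definition \ref{def:index}, this is the area of the pre-glued disk $\ti u$. Hence $A(u)=\sum_v A(u_v)$ is bounded independently of $\rho$. The pieces $\XX_{P}$, $P\in\cP$, and the thickened Lagrangians $\LL_P$ do not depend on $\rho$, because the parameter rescales only the dual complex $B^\dual_\rho$. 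So monotonicity (Cieliebak--Ekholm--Latschev style, as in the sketch of Theorem \ref{thm:cob}) gives a uniform positive lower bound $\hbar$ on the area of any non-constant, non-trivial-cylinder piece $u_v$. Therefore the number of vertices carrying non-trivial curves is at most $A(u)/\hbar$. Stability of the underlying treed disk then bounds the total number of vertices and edges of $\Gamma$.

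Second, I bound the edge directions. The area of $u_v$ in the projection to $X_{P(v)}$ controls the homology class of $\pi_{P(v)}\circ u_v$, and therefore its intersection numbers with the divisors $X_Q\subset \ol{X}_{P(v)}$ for faces $Q$. These intersection numbers are the pairings of $\cT(e)$ with the primitive normals of the faces. So each closed direction $\cT(e)\in\t_{P(e),\Z}$ has uniformly bounded norm. For open edges, Lemma \ref{lem:discrete} says $\t_{P,\white}$ is discrete, and a bounded subset of it is finite. Thus only finitely many combinatorial types $(\Gamma, P(\cdot), \cT(\cdot), [u_v])$ occur, with a bound $T$ depending only on $\cP$, $L$ and the energy. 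This bound is independent of $\rho>\rho_0$, since none of these inputs involve $\rho$.

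Third, I show that each combinatorial type is realized by at most a bounded number of tropical graphs in $B^\dual_\rho$. A tropical graph of a rigid type is determined by the positions of its vertices, which solve a linear system. The dual polytopes $P^\dual$ and the incidence conditions are linear in $\rho$ after the rescaling, and rigidity (Definition \ref{def:rigid}) says this system has isolated, hence unique, solutions once the constraint at the point $p$ is imposed. So each type contributes at most one graph, and the count is at most the number of types.

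The main obstacle is the first step. One has to check that the monotonicity constant, and hence the lower energy bound, is uniform in $\rho$, and that the area bound really controls $\cT(e)$ for pieces meeting the Lagrangian or focus-focus pieces, where the compactification of $\LL_P$ is only cleanly self-intersecting. I would handle these by projecting to $X_{P(v)}$ as in item (1) of the sketch of Theorem \ref{thm:cob}. The $\rho_0>0$ restriction is needed so that the rescaled cutting data stay non-degenerate; as $\rho\to 0$, pieces near the boundary could collapse and the linear system could degenerate.
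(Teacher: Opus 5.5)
Your proof has a genuine gap in its first step. You claim that every piece $u_v$ which is neither constant nor a trivial cylinder has area at least some $\hbar>0$, and this is false in this setting. The area $A(u_v)$ is the area of the projection $\pi_{P(v)}\circ u_v$ to $X_{P(v)}$. When $P(v)$ is a vertex of $\cP$ away from $\partial\Phi(X)$ and $B^{\foc}$, one has $\XX_{P(v)}\cong(\C^\times)^2$ and $X_{P(v)}$ is a point. So the holomorphic pairs of pants of Definition \ref{def:mv}\eqref{mv:pants}, which sit in the interiors of $2$-cells of $B^\dual$, are non-trivial components of area zero. The same holds for the holomorphic pants of Lemma \ref{lem:pantone}, which live in $(\C^\times)^2$. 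There is no energy quantization for pieces of this kind. Consequently $A(u)/\hbar$ does not bound the number of vertices, and the tropical trivalent vertices are exactly the ones that escape your count. The same defect affects your second step: an edge joining two zero-area vertices has its direction controlled by the area of neither endpoint.

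The missing idea is the one the paper uses: apply the energy bound only where area is genuinely positive, and let the tree structure do the rest.
\begin{itemize}
\item Vertices with $P(v)$ top-dimensional (vertices of $B^\dual$) have non-constant components with positive area, so the energy bound controls how many there are.
\item In particular this bounds the number of focus-focus values and boundary pieces that $\Gamma$ meets, and hence the number of univalent vertices. The interior leaves are bounded separately by the intersection number with the Donaldson hypersurface.
\item Since $\Gamma$ is a tree, balancing and stability then bound the number of zero-area vertices and edges in terms of the number of leaves. For example, a balanced bivalent closed vertex in a $2$-cell is only a trivial cylinder.
\item The direction of an edge between zero-area vertices is determined by balancing from the directions of edges incident to positive-area vertices, and those are the directions your area argument does bound.
\end{itemize}
With this replacement the rest of your outline goes through. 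Your third step, bounding the number of tropical realizations of each rigid type, is more than the paper proves or needs. The paper only bounds combinatorial types, and that is how the lemma is used in Theorem \ref{thm:trunc}.
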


\begin{proof}   By monotonicity, the set of broken Maslov-index-two map has energy, hence area, bounded by some number $E_0$.  On the other hand, suppose that the broken map $u$ is non-constant.  If $P \in \PP$ is top dimensional polytope, then any component $u_v$ with $P(v) = P $ has positive area, since it is non-constant.  Hence the number of vertices of $\Gamma$ mapping to 
vertices of $B^\dual$ that $\Gamma$ is universally bounded.  It follows in particular that there exists a number $T_0$
so that the number of focus-focus values $b^\dual \in B^\dual, b \in B^{\foc}$ that $\Gamma$ intersects is less than $T_0$, for any graph $\Gamma$.  In particular, the number of univalent vertices 
$v \in \Ver(\Gamma) = 1$ is universally bounded for tropical graphs $\Gamma$ of Maslov-index-two broken disks.  The bound on the number of combinatorial types follows, since each type has underlying graph that is a tree. 
\end{proof}

We show that any rigid tropical graph consists of a collection of rigid graphs in the interior of the dual complex a graph in the boundary of the dual complex:  Any tropical graph $\Gamma$ in $B^\dual$  admits a decomposition 
\[ \Gamma = \on{int}(\Gamma) \cup \partial \Gamma \] 
where $\on{int} \Gamma $ resp. $\partial \Gamma $ maps to $\on{int}(B^\dual)$ resp. $\partial B^\dual$.

\begin{theorem} \label{thm:trunc} Let $X \to B$ be a compact almost toric manifold without elliptic singularities and $\cP  = \{ P \}$
an elementary polyhedral decomposition.  There exists $\rho_0 > 0 $ so that for any $\rho> \rho_0$, the tropical graph $\Gamma$ of any broken Maslov-index-two disk with boundary in $L$  consists of a connected piece $\on{int}(\Gamma)$ mapping to the interior $\on{int}(B^\dual)$
and a connected piece $\partial \Gamma$ mapping to the boundary $\partial B^\dual$, joined by a single edge $e$ whose direction $\cT(e)$ is normal to one of the facets of $\Phi(X)$.
\end{theorem}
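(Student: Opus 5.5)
The plan is to combine the finiteness statement of Lemma \ref{lem:univbound} with a scaling argument in the family of dual complexes $B^\dual_\rho$. As $\rho$ grows, the part of $B^\dual_\rho$ dual to $\cP_0$ (the annular collar of cuts parallel to the facets of $\Phi(X)$) is stretched by the factor $\rho$, while the interior part dual to $\cP_1$ is unchanged. By Lemma \ref{lem:univbound} there are, for $\rho > \rho_0$, only finitely many combinatorial types $\Gamma$ of broken Maslov-index-two disks. We may therefore argue type by type and take $\rho_0$ to be the maximum of the thresholds obtained for each type.

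\textbf{Step 1: reduce the boundary portion to a few normal edges.} Fix a type and consider the subgraph $\partial\Gamma$ lying in the collar $\partial B^\dual$. Each vertex $v$ of $\partial\Gamma$ lies over a polytope of $\cP_0$. Such a polytope is a trapezoid or parallelogram adjacent to a facet $Q$ (Definition \ref{def:elem}), and it contains no focus-focus value, since there are no elliptic singularities to worry about and focus-focus values sit in the interior pieces.

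\textbf{Step 2: rigidity forces straight rays.} Rigidity (Definition \ref{def:rigid}) together with the balancing condition at vertices of $\partial\Gamma$ means the following. Any vertex of valence at least two in the collar, together with its edges, can be translated in the collar direction unless it is pinned by a constraint. But the constraint $\ul{\YY}$ lies on $\LL$, which is contained in $\Lambda^\dual$ and lies in the interior for large $\rho$. Hence every closed vertex of $\partial\Gamma$ must be univalent at the boundary, or bivalent with parallel edges (Definition \ref{def:mv} \eqref{mv:spheres}). Consequently $\partial\Gamma$ is a union of straight rays ending on $\partial B^\dual$.

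\textbf{Step 3: the rays are primitive normals.} Use the index count of Remark \ref{rem:index}. Univalent closed vertices contribute $I(u_v)=2$, bivalent and open univalent vertices contribute $0$, and trivalent vertices contribute $-2$. Combine this with the area bound: every leg reaching the facet $Q$ has intersection number with $Y=\Phinv(\partial\Phi(X))$ equal to $\lan \cT(e),\lambda_Q\ran\ge 1$, and this pairing adds to the area. The area of a Maslov-two disk equals the monotonicity constant, so the total pairing with the boundary divisor is one. Hence exactly one edge $e$ crosses into the collar, and $\cT(e)$ pairs to $1$ with $\lambda_Q$.

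\textbf{Step 4: the direction is exactly normal.} For the tangential component of $\cT(e)$, argue as follows. A nonzero tangential component would make the ray travel a distance proportional to $\rho$ along the collar before meeting $\partial B^\dual$. For $\rho$ large this forces it to reach a corner piece containing a vertex of $\Phi(X)$. There it would meet two facets, giving intersection number at least two with $Y$ and contradicting Maslov index two. Hence $\cT(e)$ is the primitive normal to $Q$. Connectedness of $\on{int}(\Gamma)$ then follows because $\Gamma$ is a tree with exactly one edge leaving the interior.

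The main obstacle is Step 4 together with the index bookkeeping in Step 3. As Remark \ref{rem:index} warns, the Maslov index is \emph{not} simply twice the number of boundary intersections once trivalent vertices occur, so the count of boundary edges must come from the intersection number with $Y$ and the area/monotonicity relation, not from $I$ alone. My first attempt would be to compute $[u]\cdot[Y]$ for the pre-glued map as the sum over boundary legs of $\lan\cT(e),\lambda_Q\ran$, and to check carefully that open edges inside $\Lambda^\dual$ never reach the collar, which holds because $\Lambda$ stays away from $\partial\Phi(X)$ except at bisectrice corners.
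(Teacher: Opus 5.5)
Your Step 3 does not hold for the Lagrangians this theorem concerns, and the rest of the argument depends on it. You get ``exactly one leg, with $\lan\cT(e),\lambda_Q\ran=1$'' from the claim that the boundary pairings add up to the area, that is, $A(u)=\lambda\,[u]\cdot[Y]$ for the pre-glued disk. That identity, equivalently $I(u)=2[u]\cdot[Y]$, needs $L\subset X-Y$ and $L$ exact in $X-Y$. As you note yourself, allowable graphs have univalent bisectrice vertices at corners of $\Phi(X)$. There $L$ passes through torus-fixed points of $Y$, so $[u]\cdot[Y]$ is not even a homotopy invariant of the disk. The paper flags exactly this: the Maslov-four disks of Lemma \ref{lem:maslovfour} meet the anticanonical divisor three times, and Remark \ref{rem:arearem} says the area is unrelated to the lattice data of $\Gamma$. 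The index table you quote does not rescue the count. For $I(u)=2$ it only gives $\#\{\text{closed univalent}\}=1+\#\{\text{trivalent}\}$, which allows several closed legs leaving an interior trivalent vertex, as for $\Gamma_3,\dots,\Gamma_8$ in Example \ref{ex:trive6}. Step 4's contradiction ``two facets, hence Maslov index at least four'' uses the same identity, so it has the same gap.

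Step 2 is also too quick. Vertex positions are constrained by the dual cells $P(v)^\dual$, and whether a vertex of a tree is pinned is a global question. So ``translate unless pinned by a constraint'' does not force straight rays. In particular it would rule out the bivalent bend of Lemma \ref{lem:edge}, where an edge hits a boundary edge piece and continues along $\partial B^\dual$ in the projected direction. That vertex is fixed by where the incoming edge meets the collar, so translation cannot remove it.

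The paper avoids area and index bookkeeping altogether. It uses the Riemann--Roch count of Lemma \ref{lem:boundver} on the boundary pieces: a vertex piece carries a single edge, and an edge piece forces the complementary subgraphs to be rigid. It adds the projection rule of Lemma \ref{lem:edge}, and then a separation argument in the stretched collar. For $\rho>\rho_0$, edges with positive and with negative projection onto $T\partial B^\dual$ reach $\partial B^\dual$ in disjoint regions. Collisions among edges of one sign keep the sign of the tangential component, so no connected boundary subgraph involves both. Lemma \ref{lem:univbound} then makes $\rho_0$ uniform. To repair your proof, replace the area/index deduction in Steps 3--4 with this dimension-count-plus-separation argument.
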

   
We begin with some remarks on holomorphic spheres in Hirzebruch surfaces.  By a Hirzebruch surface we mean the projectivization 
\[ X = \P(\mO(d) \oplus \mO(0)), d \in \Z \] 
of a line bundle over $\P^1$.    The space $X$ has a natural torus action, arising from the action of scalar multiplication on first factor, and a lift of the circle action on the base, for which the moment polytope can be taken to be the convex hull of the points $(0,0), (0,1), (1,1), (1,1+d).$
By an {\em orbifold Hirzebruch surface}, we mean any 
projective toric surface $X$ whose moment polytope $\Phi(X)$ is a trapezoid.   We may view such a surface as the projectivization of an orbifold bundle over the orbifold projective line with two orbifold points. 

\begin{lemma} \label{lem:boundver} Suppose that $P \in \cP$ is such that either
\begin{enumerate}
\item  {\rm (Boundary edge piece)} $P \cap \Phi(X)$ is a trapezoid that 
intersects $\partial \Phi(X)$ in a single face of $P \cap \Phi(X)$,  and $X_P $ is the corresponding (possibly orbifold) Hirzebruch surface and $Y$  is the union of fibers over $0,\infty \in \P^1$ and the section at infinity, isomorphic to $\P^1$; or
\item  {\rm (Boundary vertex piece)}  $P \cap \Phi(X)$ is a  rectangle intersecting $\partial \Phi(X)$ in two faces
of $P \cap \Phi(X)$, in which case, $X_P = \P^1 \times \P^1$ and $Y = (\P^1 \times \{ \infty \})  \cup (\{ \infty \} \times \P^1)$ is the divisor correponding to two adjacent faces of  $P \cap \Phi(X)$.
\end{enumerate}
Let $\bGamma$ be a rigid type with a vertex $v$ whose adjacent edges 
$e_1,\ldots, e_k$ have the property that at least one of which, say $e_1$, has $P(e_1)$ equal to one of the faces 
not meeting $\partial \Phi(X)$.   Let $\bGamma(v)$ denote the graph consisting of the vertex $v$ and adjacent edges, and $\M_{\bGamma(v)}(\XX_{P(v)})$ the moduli space  of relative 
maps to $\XX_{P(v)}$ without constraints.  Let $\bGamma_1,\ldots,\bGamma_k$
denote the graphs obtained from $\bGamma$ by breaking the edges $e_1,\ldots, e_k$, in addition to $\Gamma(v)$, and $\M_{\bGamma_i}(\XX)$ the corresponding moduli spaces. 
Then $\M_{\bGamma(v)}(\XX_{P(v)})$ is dimension at least $2k$, and 
the moduli spaces $\M_{\bGamma_i}(\XX)$ are dimension zero. 
Furthermore, in the case of a (Boundary vertex piece), the vertex $v$
is adjacent to a single edge $e \in \Edge(\Gamma)$.
\end{lemma}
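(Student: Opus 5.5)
We compute the dimension of $\M_{\bGamma(v)}(\XX_{P(v)})$ by Riemann--Roch and then use rigidity of $\bGamma$ to bound the pieces $\bGamma_i$. The local model is explicit: $\XX_{P(v)}$ is the complement of the divisor corresponding to the faces of $P$ not on $\partial\Phi(X)$ inside the compactification $\ol{\XX}_{P(v)}$, which is the (orbifold) Hirzebruch surface $X_P$, resp. $\P^1\times\P^1$. Relative maps of type $\bGamma(v)$ are therefore genus zero curves in $X_P$ with prescribed tangency along the interior faces, and intersections with $Y$ determined by the directions $\cT(e_i)$.

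\textbf{Step 1: dimension count.} For a closed vertex the expected dimension of the unconstrained relative moduli space is
\[ \dim \M_{\bGamma(v)}(\XX_{P(v)}) = 2 + 2 c_1^{\log}([u_v]) + 2(k-3) + 2k - \sum_i 2(\text{contact orders}), \]
computed with respect to the log canonical class of $(X_P, \text{interior faces})$. We rewrite this as
\[ 2(k-1) + 2\,\#(u_v\cap Y) + 2 . \]
Here each cylindrical end contributes two real parameters (position and rotation of the Reeb orbit), modulo the $\t_P$-translation. We need $\#(u_v\cap Y)\ge 0$, and $\#(u_v\cap Y)\ge 1$ unless the curve is a trivial cylinder. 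This is where the trapezoid shape enters: the section at infinity has nonnegative self-intersection relative to the cut, so $-K_{X_P}$ restricted to the curve counts the boundary intersections positively. Combining these gives $\dim\ge 2k$. The case of an open vertex does not arise, since $P$ meets $\partial\Phi(X)$ and, by elementarity, not $\Lambda$.

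\textbf{Step 2: the pieces $\bGamma_i$.} Rigidity of $\bGamma$ means the total index is zero. Gluing $\bGamma(v)$ to $\bGamma_1,\dots,\bGamma_k$ along the $k$ edges imposes at most $2k$ matching conditions, since each closed edge matching is codimension two in $\XX_{P(e)}$ modulo translation. Hence
\[ 0 = \dim\M_{\bGamma(v)} + \sum_i \dim\M_{\bGamma_i} - 2k \;\ge\; \sum_i \dim\M_{\bGamma_i} . \]
Each $\dim\M_{\bGamma_i}\ge 0$ by regularity, which we take from the inductive setup of Theorem~\ref{thm:split}. So every $\dim\M_{\bGamma_i}=0$, and simultaneously $\dim\M_{\bGamma(v)}=2k$ exactly.

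\textbf{Step 3: boundary vertex piece.} Here $X_P=\P^1\times\P^1$ and only two faces are interior. Equality in Step 1 forces $u_v$ to meet $Y$ minimally. Edges must point into the two interior faces, so by balancing the directions lie in the dual cone of the interior vertex. If $k\ge 2$, at least two edges lie in that cone. As in Example~\ref{ex:prime}, the vertex position can then be moved along the cone (in the dual complex it sits at a corner region of $B^\dual$), contradicting rigidity of $\Gamma$. Alternatively, a Maslov/area count shows $u_v$ would meet $Y$ at least twice, pushing the dimension above $2k$ and contradicting the equality from Step 2. Hence $k=1$.

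The main obstacle is Step 1: getting the precise index formula for relative maps in an orbifold Hirzebruch surface with tangency orders at two or three boundary divisors, and checking positivity of the $Y$-intersection in the trapezoid case. I would first verify it for smooth $\P^1\times\P^1$ by an explicit bidegree computation, then pass to orbifold trapezoids by the finite cover argument of Lemma~\ref{lem:ramified}.
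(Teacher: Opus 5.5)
Your architecture matches the paper's: Riemann--Roch at $v$, then the fiber-product count forcing each $\M_{\bGamma_i}(\XX)$ to be rigid, then a bidegree count for the vertex piece. Step 2 is fine. The gap is in Step~1, which carries the entire content of the lemma: your dimension formula has a spurious $+2$. For a genus-zero relative map $\ol{u}_v : S_v \to X_P$ with $k$ punctures on the relative divisor $Y$, the log first Chern class is $c_1(X_P) - [Y]^\dual = [Z]^\dual$. Here $Z$ is the face lying over $\partial\Phi(X)$, i.e.\ the zero section. Hence
\[ \dim \M_{\bGamma(v)}(\XX_{P(v)}) = 2\bigl( [\ol{u}_v]\cdot[Z] + k - 1 \bigr) = 2k - 2 + 2\,[\ol{u}_v]\cdot[Z] . \]
With your version, $\#(u_v\cap Y)\ge 0$ gives $2k$ for free, and that is false. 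Take $d=0$ and the sections $\P^1\times\{c\}$, $c\neq 0,\infty$, which meet only the two fibers in $Y$. They have $k=2$, miss $Z$, and form a family of real dimension $2$, whereas your formula gives at least $4$. Correspondingly, your Step~1 never uses the hypothesis that $P(e_1)$ is a face not meeting $\partial\Phi(X)$, and that hypothesis is exactly what excludes this example. Your self-intersection remark also has the sign reversed: the section at infinity, which lies in $Y$, has self-intersection $-d\le 0$; it is $Z$ that is nonnegative.

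What actually has to be proved is $[\ol{u}_v]\cdot[Z]\ge 1$, and this is the paper's key step. Since $P(e_1)$ is the section at infinity $T$ or one of its endpoints, $\ol{u}_v$ has positive contact with $T$. Since $[Z] = [T] + d[F]$ with $d\ge 0$ and $F$ a fiber, $[\ol{u}_v]\cdot[Z] = [\ol{u}_v]\cdot[T] + d\,[\ol{u}_v]\cdot[F] \ge 1$. The paper phrases this as the balancing condition forcing an intersection with $Z$, and handles orbifold trapezoids by a toric resolution.

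In Step~3, your first argument does not work. For top-dimensional $P(v)$ the dual $P(v)^\dual$ is a point, so the vertex cannot be moved, and Example \ref{ex:prime} concerns degenerations of types rather than deformations of the tropical graph. Your alternative does work once the formula is corrected. In $\P^1\times\P^1$ one has $[\ol{u}_v]\cdot[Z] = [\ol{u}_v]\cdot[Y] = d_1+d_2$, so the equality $\dim = 2k$ from Step~2 forces bidegree $(1,0)$ or $(0,1)$. That gives a single intersection with $Y$ and $k=1$, which is the paper's bidegree count.
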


\begin{proof}  Consider the first case of a (Boundary edge piece).   Suppose first that $d$ is integral, so that $X_P$ is a smooth Hirzebruch surface.   Each edge corresponds to an intersection of $u_v$ with the boundary divisor $Y$, and the union of $Y$ with the zero section $Z$ is anti-canonical divisor of $X$:
\begin{equation} c_1(X) = [Y]^\dual + [Z]^\dual . \end{equation}
Riemann-Roch implies that the dimension of the moduli space of 
relative maps to $\XX_{P(v)}$ of type $\bGamma(v)$ is
\begin{equation} \label{eq:rr}
\dim(\M_{\bGamma(v)}(\XX_{P(v)})) = \dim(X) - 6 + 2k +  c_1(u) - \sum_{i=1}^k \codim P(e_i) . \end{equation}
The first Chern number
$c_1(u)$ is the sum of the intersection multiplicities with $Y$ and $Z$.  
After removing the intersection points with $Z$, we view
$u$ as a map to the interior of the toric variety $\ol{\XX}_{P(v)}$ with polytope $\Delta := P(v) \cap \Phi(X)$.
Each puncture maps to the boundary $Y$, and represents an intersection of multiplicity at least two if it maps to a corner.  Hence 
\begin{equation} \label{eq:Ydual} [Y]^\dual . [u_v] - \sum_{i=1}^k \codim P(e_i)  \ge 0 . \end{equation}
Let 
\[ \ti{\mu}_i \in \R^2 , i = 1,\ldots, \ti{k} \] 
denote the directions at the punctures.   Thus in particular we may assume $\ti{\mu}_i = \cT(e_1)$ has 
non-trivial second component.   The balancing condition \eqref{eq:balance} reads
\[ \sum_{i=1}^{\ti{k}} \ti{\mu_i} = 0 . \] 
This implies that at least one of the other vectors $\ti{\mu}_i, i > 1$ has positive second component.  We take the normal directions corresponding to the boundary facets of $\Delta$  to be $(0,1),(0,-1),(1,0),(-1,-d)$. Any edge direction $\ti{\mu}_i$ is a combination of the normal vectors to adjacent facets to the polytope $P(e_i)$.    Since the only facet of $P(v)$
whose normal direction has positive first component is bottom facet, the image of the zero section $Z$, the intersection number of $u$ with $Z$ is positive.  It follows that $2[u].[Z]$ is at least two.  Hence 
\begin{equation} \label{eq:Zpos}
\dim(X) - 6 + 2[u].[Z] \ge 0 . \end{equation}
Combining  \eqref{eq:rr} , \eqref{eq:Zpos} and \eqref{eq:Ydual} gives
\[ \dim  \M_{\bGamma(v)}(\XX_{P(v)}) \ge 2k . \] 
Rigidity of the moduli space
of maps of type $\Gamma$ implies that the 
moduli spaces for the graphs on the other side of the edges $e_1,\ldots, e_k$ are rigid
\[ \dim \M_{\bGamma_i}(\XX) = 0  \]
since the fiber product of  $\M_{\bGamma(v)}(\XX_{P(v)})$
with the spaces $\M_{\bGamma_i}(\XX)$ has expected dimension zero.

The general case follows from resolution of singularities.  Let 
$\ti{\XX}_{P(v)}$ be a smooth toric resolution of singularities of $\ol{\XX}_{P(v)}$ obtained by repeated blow-ups at the inverse images of the vertices of the moment polytope, corresponding to a polytope $Q$.  For suitable choices of blow-ups, the directions $\mu_i$ of each edge  are normal to some facet.  As before, there must be at least one 
edge either meeting the bottom facet, or mapping to one of the facets that is collapsed to the bottom vertices
under the blow-down map. The resulting moduli space is contained in the codimension two locus of 
relative maps whose evaluation at $e_i$ is the inverse image of vertex of $\Delta$, and the same argument holds.  

The case of a (Boundary vertex piece) is similar.  The dimension of the moduli space of genus zero 
maps to $\P^1 \times \P^1$ of bidegree $(d_1,d_2) \in \Z_{\ge 0}^2$ is 
\[ \dim \M(\P^1 \times \P^1, (d_1,d_2))  = 4 + 4(d_1 + d_2) - 6 = 4(d_1 + d_2) - 2 . \] 
Such maps cannot be made rigid by adding $2(d_1 + d_2)$ constraints (such as fixing the evaluation map, or requiring additional tangency conditions), unless $(d_1,d_2) \in \{ (1,0), (0,1) \}$.
 \end{proof}

It remains to identify the kind of graphs that can appear in the boundary of the dual complex.   We already identified one type of such graph, namely a cylinder that hits the boundary in a perpendicular direction to one on the facets of the moment polytope.

\begin{lemma}
    \label{lem:edge}  
 Suppose $P \cap \Phi(X)$ is a trapezoid 
intersects $\partial \Phi(X)$ in a single face of $P \cap \Phi(X)$,  and $X_P = \P(\mO(d) \oplus \mO(0))$ is a Hirzebruch surface with $d \ge 0$ and  $Y = (\P^1 \times \{ \infty \}) \cup (\{ 0 \} \times \P^1 ) \cup (\{ \infty \} \times \P^1)$.  Let $\Gamma$ be a graph with a single vertex $v$, and exactly two adjacent edges  $e_1,e_2$ with $e_1$ mapping to the corner disjoint from $\partial \Phi(X)$ so that $\t_{P(e_1)} \cong \R^2$.   Then the second edge $e_2$ maps to a face
$P(e_2)$ of $P$ with $\t_{P(e_2)} \cong \R$, and the direction ${\cT}(e_2)$
is the projection of ${\cT}(e_1) \in \R^2$ onto the first factor via the canonical map $\t_{P(e_1)} / \R \nu\cong \t_{P(e_2)} $, where $\nu = (0,1)$ is the normal to the facet of the moment polytope that is not in $Y$, i.e. not a relative divisor.  
\end{lemma}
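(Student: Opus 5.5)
Since the holomorphic map $u_v$ to $\XX_{P(v)}$ has its projection to the torus quotient determined by its ends, I would argue via the balancing condition and the compactification to the Hirzebruch surface $X_P$, reusing the dimension count in the proof of Lemma \ref{lem:boundver}. First, lift $u_v$ to a map into the compactification $\ol{\XX}_{P(v)}$; its two punctures correspond to $e_1$ and $e_2$. The balancing condition \eqref{eq:balance}, applied to the directions at all punctures and intersections with toric divisors that are \emph{not} relative (here only the zero section $Z$, with normal $\nu=(0,1)$, since $Y$ consists of the relative divisors), reads
\[ \cT(e_1) + \iota\,\cT(e_2) + m\,\nu = 0 , \]
where $m = [u_v].[Z] \ge 0$ and $\iota$ is the inclusion of $\t_{P(e_2)}$. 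I will phrase the directions outgoing from $v$; the claimed statement then follows after fixing signs.

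Next, show that $P(e_2)$ is a face of codimension one, i.e. $\t_{P(e_2)} \cong \R$. If $P(e_2)$ were also a corner, the estimates \eqref{eq:rr}, \eqref{eq:Ydual} would give a moduli space of dimension at least $2k + 2 = 6$, which is too large to be cut down by the matching conditions at two edges in a rigid type. More directly, $e_2$ cannot map to the interior of $P$ since there is only one vertex and the edge has nonzero direction, and the corners of $P$ meeting $\partial\Phi(X)$ are excluded because $\partial \Phi(X)$ is not a relative divisor. Hence $P(e_2)$ is one of the facets of $P$ contained in $Y$, namely one of the fibers $\{0\}\times\P^1$, $\{\infty\}\times \P^1$ or the section at infinity. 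Since $e_1$ maps to the corner disjoint from $\partial\Phi(X)$, $\cT(e_1)$ lies in the cone spanned by the normals of the two facets through that corner.

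Then take the quotient of the balancing equation by $\R\nu$. Using the canonical isomorphism $\t_{P(e_1)}/\R\nu \cong \t_{P(e_2)}$ (valid precisely when $P(e_2)$ is a facet whose normal is not parallel to $\nu$, which rules out the section at infinity because its normal is $-\nu$), the term $m\nu$ vanishes and we obtain that $\cT(e_2)$ equals the image of $\cT(e_1)$, up to the orientation convention. The case where $P(e_2)$ is the section at infinity has to be excluded separately. There the balancing would force $\cT(e_1)$ to be parallel to $\nu$, contradicting the hypothesis that $e_1$ maps to a genuine corner with a direction in the interior of the dual cone. Alternatively, the intersection number with $Z$ would then be negative by the area argument.

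The main obstacle is making the balancing condition with the non-relative divisor $Z$ rigorous, including the orbifold case when $d$ is not integral. I would handle that as in Lemma \ref{lem:boundver}, by passing to a toric resolution and repeating the argument there.
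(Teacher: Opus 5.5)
Your argument is correct in substance, but it follows a different route from the paper's.

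\textbf{The paper's route.} The paper composes with a toric birational map to $\P^2$ and asserts that $u$ meets the toric boundary only at the two punctures. From this it concludes that $u$ is the closure of a translated one-parameter subgroup $z\mapsto(az^{\mu_1},bz^{\mu_2})$. It then reads off the contact orders at the second end, which lands at the corner $(0,0)$ on both $\{0\}\times\P^1$ and the non-relative section $Z$. Dropping the $Z$-multiplicity gives the projection.

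\textbf{Your route, and what each buys.} You use the global balancing relation over all four toric divisors and then quotient by $\R\nu$. This is easy to make rigorous. For each character $\lambda$, the pullback of $\chi^\lambda$ to the compactified domain has a divisor of degree zero, and its order at each boundary point is $\langle\lambda,\cdot\rangle$ applied to the contact vector there. The orbifold case is identical with rational contact data. So the ``main obstacle'' you flag is mild.

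Your route does not need the paper's two-intersection assertion, and that assertion fails for the generic member of the relevant family. For $d=0$, the map $x\mapsto(\lambda x,\,c(x-\alpha)/x)$ with $\alpha\neq0$ meets $Z$ away from the $e_2$-puncture, yet the conclusion still follows from balancing. Conversely, when the paper's assumption does hold, it identifies the face of $e_2$ for free: $-\cT(e_1)$ lies in the interior of the cone of a corner on $Z$, so the second end can be neither on the section at infinity nor at the other interior corner.

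\textbf{The face of $e_2$ in your route.} You must make this identification by hand. Your exclusion of the section at infinity by balancing is right.

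The second corner disjoint from $\partial\Phi(X)$, however, is \emph{not} excluded by balancing. Nor is it excluded by your ``more directly'' sentence, which only rules out the interior of $P$ and the corners on $\partial\Phi(X)$. Take $\cT(e_1)=a(0,-1)+b(1,0)$ and $\cT(e_2)=a'(0,-1)+b'(-1,-d)$. Balancing against $m(0,1)$ gives $b=b'$ and $m=a+a'+bd$. For $d=0$ the curve $x\mapsto(x,\,x+1/x)$ realizes this case, with both relative punctures at interior corners.

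So the statement as literally written needs a hypothesis beyond those listed. Your appeal to rigidity of the ambient type, as in Lemma \ref{lem:boundver}, is genuinely needed; it plays the role that the unproved two-intersection claim plays in the paper.

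To make that step complete, state explicitly that balancing forces $m=[u]\cdot[Z]\ge 2$ in the corner--corner case. The estimates \eqref{eq:rr}, \eqref{eq:Ydual}, \eqref{eq:Zpos} alone only give $m\ge1$ and dimension $\ge 2k$. The bound $m\ge2$ is what pushes $\dim\M_{\bGamma(v)}(\XX_{P(v)})$ to at least $6>2k=4$. That is incompatible with rigidity, since each of the two matching conditions cuts down the dimension by at most two.
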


A picture of the situation is shown in Figure \ref{fig:hitbound}.

\begin{figure}[ht] \begin{center} \scalebox{.6}{
\begingroup%
  \makeatletter%
  \providecommand\color[2][]{%
    \errmessage{(Inkscape) Color is used for the text in Inkscape, but the package 'color.sty' is not loaded}%
    \renewcommand\color[2][]{}%
  }%
  \providecommand\transparent[1]{%
    \errmessage{(Inkscape) Transparency is used (non-zero) for the text in Inkscape, but the package 'transparent.sty' is not loaded}%
    \renewcommand\transparent[1]{}%
  }%
  \providecommand\rotatebox[2]{#2}%
  \newcommand*\fsize{\dimexpr\f@size pt\relax}%
  \newcommand*\lineheight[1]{\fontsize{\fsize}{#1\fsize}\selectfont}%
  \ifx\svgwidth\undefined%
    \setlength{\unitlength}{293.92465426bp}%
    \ifx\svgscale\undefined%
      \relax%
    \else%
      \setlength{\unitlength}{\unitlength * \real{\svgscale}}%
    \fi%
  \else%
    \setlength{\unitlength}{\svgwidth}%
  \fi%
  \global\let\svgwidth\undefined%
  \global\let\svgscale\undefined%
  \makeatother%
  \begin{picture}(1,0.4563334)%
    \lineheight{1}%
    \setlength\tabcolsep{0pt}%
    \put(0,0){\includegraphics[width=\unitlength,page=1]{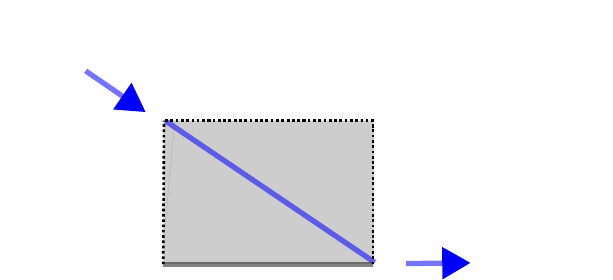}}%
    \put(-0.00447043,0.41087921){\color[rgb]{0,0,1}\transparent{0.53157902}\makebox(0,0)[lt]{\lineheight{1.25}\smash{\begin{tabular}[t]{l}$\cT(e_1)$ \end{tabular}}}}%
    \put(0.66869788,0.08653662){\color[rgb]{0,0,1}\transparent{0.53157902}\makebox(0,0)[lt]{\lineheight{1.25}\smash{\begin{tabular}[t]{l}$\cT(e_2)$ \end{tabular}}}}%
  \end{picture}%
\endgroup%
}
\end{center} \caption{Curves hitting the boundary and continuing in a projected direction} \label{fig:hitbound} \end{figure}

\begin{proof}   The proof is a computation with intersection multiplicities.  
    Let $u:C \to X$ be a map with such a graph $\Gamma$.   Let $X' = \P^2$ equipped with a toric 
    birational equivalence with $X$, and $u': C \to X'$ the map given by composing with the birational equivalence.  The map  $u'$ intersects the boundary of $X'$ exactly twice.  The image is therefore a line intersecting a torus fixed-point.  Such a curve given by a one-parameter subgroup to the interior $(\C^\times)^2$. 
    Suppose the direction of the first edge is   ${\cT}(e_1) = (\mu_1,\mu_2) \in \Z^2$, in the standard coordinates for which the moment polytope is the trapezoid with vertices
    at $(0,0), (0,1), (1,1), (0, 1+d).$  Without loss of generality $\mu_1 < 0 $ and the intersection numbers of $u$ with $\{ \infty \} \times \P^1$ resp. $  \P^1 \times \{ \infty\}$ are $-\mu_1 $ resp. $-\mu_2$.
    The other intersection of $u$ with the toric boundary is at $(0,0)$,
    with intersection multiplicities $-\mu_1,-\mu_2 + d\mu_1$.  Now $(0,0)$
    is in the pre-image of the hyperplane $P(e_2)$ with normal vector $(1,0)$, and the intersection multiplicity of $u$ with $\{ 0 \} \times \P^1$ is the projection $-\mu_1$ of ${\cT}(e_2)$ onto the first factor $\R$, as claimed.
    \end{proof}

   \begin{proof}[Proof of Theorem \ref{thm:trunc}]
Suppose that $\Gamma$ is a tropical graph  in $B^\dual$ containing edges 
$e_1,\ldots, e_k$ that meet in the boundary $\partial B^\dual$, as shown in Figure  \ref{fig:allinterior}.  We may choose coordinates so that 
\[ \partial \ti{B}^{\dual} = \ti{B}^{\dual} \cap ( \R \times \{ -\rho \} ) .\]
In particular the tangent space to the boundary has direction $(1,0)$.
Suppose without loss of generality that the edges $e_1,\ldots, e_\ell$
have directions $\cT(e_1),\ldots, \cT(e_\ell)$ with positive projection onto $T\partial \ti{B}^{\dual} $,
and $e_{\ell+1},\ldots, e_k$ have directions with negative projection.
Choose $\rho_0$ so that for $\rho > \rho_0$, the intersections of 
$\R \times \{ -\rho \} $ with $e_1,\dots, e_\ell$ are to the right of the
intersections of $\R \times \{ - \rho\}$ with $e_{\ell+1},\ldots, e_{k}$, as in Figure \ref{fig:allinterior}.
\begin{figure}[ht]\begin{center} 
\scalebox{.8}{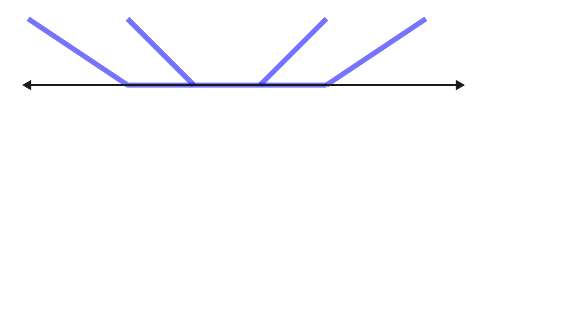}
\end{center} 
\caption{Removing collisions on the interior by increasing $\rho$} 
\label{fig:allinterior}
\end{figure}
Any vertices involve collisions between the edges $e_1,\ldots, e_\ell$ or between 
$e_{\ell+1}, \ldots, e_k$.    Any edge direction resulting from such a collision has positive resp. negative 
projection onto the first component.   Since the two types of edges never intersect, there is no connected graph which involves combinations of these edges, which is a contradiction.   Repeating the process for each graph $\Gamma$ (without tropical structure) produces a constant $\rho_0$ so that for $\rho > \rho_0$, all collisions are in the interior. 
\end{proof}

%

\begin{example} Continuing Example \ref{ex:invim}, we give an example where
bivalent vertices map to the boundary of the dual complex.  
Let $p_1$ be a constraint with moment image $\Phi(p_1) = (-\eps, -\eps), \eps > 0$. 
In order to compute the multiplicities in this case, we need to orient the Lagrangian so that the 
direction of the edge containing this point is $(-1,1)$.

There are four tropical graphs $\Gamma_1,\Gamma_2,\Gamma_3,\Gamma_4$ (shown in the lower part of Figure \ref{fig:b2p2wdual}) contributing to the count: There are two tropical graphs $\Gamma_1,\Gamma_2$ with directions $(1,1)$ and $(-1,-1)$, respectively.
Each  of $\Gamma_1,\Gamma_2$ has two vertices, with multiplicities $m(v_1) = -1, m(v_2) = 1$.
The total contribution of each graph is therefore 
\[ m(\Gamma_k) = m(v_1)m(v_2) = -1 \] 
to the count.  On the other hand, there is a graph $\Gamma_3$ with
a bivalent vertex at $\Phi(p_1)$ and edges $e_1,e_2$ with directions
$1$ (from the Lagrangian) and $(0,1)$.  The determinant of the matrix formed by these directions is one and so 
\[ m(\Gamma_3 ) = m(v_1)m(v_2)m(v_3) = 1 (-1) 1 /2 = -1/2 . \]
The final graph $\Gamma_4$ has a bivalent vertex at $\Phi(p_1)$ and edges $e_1,e_2$ with directions
$1$ (from the Lagrangian) and $(2,1)$.   The contribution from the graph is 
\[ m(\Gamma_4) =  m(v_1)m(v_2)m(v_3) = 1 (3) 1 /2 = 3/2 \]

The total disk count is 
therefore 
\[ W_L = (-1) + (-1) + (1/2)  + (-3/2)  = -1 . \]
\end{example}

\begin{figure}[ht]
    \centering
    \scalebox{.5}{
\begingroup%
  \makeatletter%
  \providecommand\color[2][]{%
    \errmessage{(Inkscape) Color is used for the text in Inkscape, but the package 'color.sty' is not loaded}%
    \renewcommand\color[2][]{}%
  }%
  \providecommand\transparent[1]{%
    \errmessage{(Inkscape) Transparency is used (non-zero) for the text in Inkscape, but the package 'transparent.sty' is not loaded}%
    \renewcommand\transparent[1]{}%
  }%
  \providecommand\rotatebox[2]{#2}%
  \newcommand*\fsize{\dimexpr\f@size pt\relax}%
  \newcommand*\lineheight[1]{\fontsize{\fsize}{#1\fsize}\selectfont}%
  \ifx\svgwidth\undefined%
    \setlength{\unitlength}{834.30867833bp}%
    \ifx\svgscale\undefined%
      \relax%
    \else%
      \setlength{\unitlength}{\unitlength * \real{\svgscale}}%
    \fi%
  \else%
    \setlength{\unitlength}{\svgwidth}%
  \fi%
  \global\let\svgwidth\undefined%
  \global\let\svgscale\undefined%
  \makeatother%
  \begin{picture}(1,0.21082365)%
    \lineheight{1}%
    \setlength\tabcolsep{0pt}%
    \put(0,0){\includegraphics[width=\unitlength,page=1]{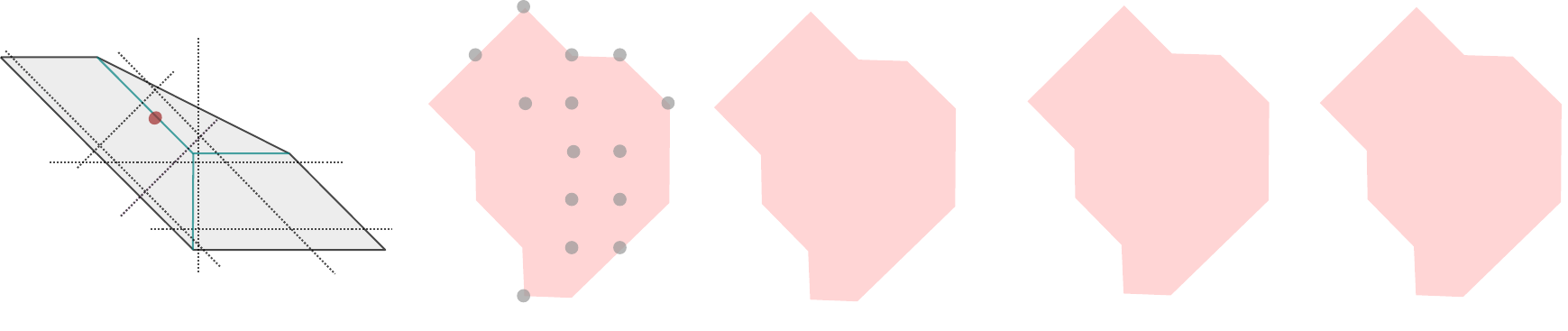}}%
    \put(0.07242946,0.12069424){\color[rgb]{0.87058824,0.52941176,0.52941176}\makebox(0,0)[lt]{\lineheight{1.25}\smash{\begin{tabular}[t]{l}$p_3$\end{tabular}}}}%
    \put(0.34979755,0.00164766){\color[rgb]{0,0,0}\makebox(0,0)[lt]{\lineheight{1.25}\smash{\begin{tabular}[t]{l}$w = (-1) + (-1) + (-1/2) + (3/2) = -1 $\end{tabular}}}}%
    \put(0,0){\includegraphics[width=\unitlength,page=2]{b2p2wdual2.pdf}}%
  \end{picture}%
\endgroup%
}
    \caption{Cartoon diagrams of Maslov-index-two disks in the degree seven del Pezzo}
    \label{fig:b2p2wdual}
\end{figure}

\begin{remark}  \label{rem:htrid}  In Definition \ref{def:mv} there is one additional vertex which is generic.  This vertex has two adjacent open edges and one adjacent closed edge.  Consider trivalent vertices $v \in \Ver_\white(\Gamma)$ mapping to an  edge $\eps$ of $L$ with open edges  $e_\white , e_\white' \subset \eps$ and closed edge $ e_\black$ with directions satisfying the balancing condition 
\[ \cT(e_\white') + \cT(e_\white) + \cT(e_\black)  + \cT(r(e_\black)) = 0 . \]

\begin{figure}[ht]
\begin{center} 
\scalebox{1.1}{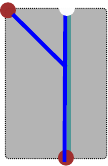}
\end{center}
    \caption{The half-trident graph}
    \label{fig:htrid}
\end{figure}

The tropical graph $\Gamma$  is  half of a 
tropical graph $\ti{\Gamma}$ with a four-valent vertex $\ti{v} \in \Ver(\ti{\Gamma)}$ that corresponds to a holomorphic sphere.   As explained in \cite{vw:trop}, for holomorphic sphere counts, four-valent vertices $v \in \Ver(\Gamma), |v| = 4$ can be be avoided by choosing
the constraints generically, so after perturbation two three-valent vertices $v_1,v_2 \in \Ver(\Gamma)$ appear rather than a
single four-valent vertex.  However, in the case of open-closed graphs $\Gamma$, the position of the open edges $e \in \Edge_\white(\Gamma)$ cannot be chosen generically because  the Lagrangian boundary condition cannot in general be perturbed.     As a result, the diagram in Figure 
\ref{fig:htrid} occurs even generically.   However, this vertex does not include any of our examples, as a little thought shows that the contribution of such a vertex to the Maslov index is zero, and since there are three punctures, the Maslov index of the corresponding holomorphic disk must be at least four.  
\end{remark}

\section{Potentials of Lagrangian spheres in del Pezzo surfaces}

In this section we complete the proof of Theorem \ref{thm:dp}, in the sense that we show
that every integer eigenvalue in Table \ref{tab:eigen} with non-maximal modulus is the potential of a Lagrangian sphere.   We assume the completeness of Table \ref{tab:eigen}, which we plan to show in the sequel to this paper on the surjectivity of the corresponding open-closed maps.   The reader will note that, in each case, there is at most one such eigenvalue, except for the del
Pezzo of degree six whose associated Manin system
is semisimple but not simple, in which case there are two. 
 
\begin{proof}[Proof of Theorem \ref{thm:dp}]
Let $X$ be a del Pezzo surface equipped with its monotone symplectic form. We deal with the various cases in descending degree.  The case that $X$ is the product of projective lines is trivial:  The minimal Chern number of $X$ is two, and the minimal Maslov number is four.  It follows that the potential of the anti-diagonal $L$ is
$W_L  = 0$, as there are no Maslov-index-two disks.     The other spectral values 
\[ \pm 4 \in \Spec(c_1 \star)  \] 
have maximal modulus $| \pm 4 | = 4$ among all spectral values.
%
The case of the del Pezzo of degree eight, that is, the blow-up of the projective plane, is also trivial, as there are no integer spectral values.
The case of the del Pezzo of degree seven, that is, the blow-up of $\P^2$ at two points, was treated in Example \ref{ex:invim}.  
%
The degree six  case was partly treated in 
Example \ref{ex:3p2}.    Figure \ref{fig:3p2pot} shows that three Lagrangians (with trivial local systems) giving rise to the spectral values 
\[ 6,-2,-3 \in \Spec(c_1 \star) \] 
respectively.  The monotone torus $L_1$
with its trivial local system gives rise to the critical value of the potential $W_{L_1}(1) = 6$, with the 
six tropical graphs corresponding to the Cho-Oh disks shown on the left in Figure \ref{fig:3p2pot}.
A Lagrangian sphere $L_2$ with $W_{L_2} = -2$ is shown in the middle picture in 
Figure \ref{fig:3p2pot}.  The tropical graph $\Lambda$ is a single segment with two univalent vertices; this Lagrangian $L_2$ together with the Lagrangians $L_2',L_2''$ for the rotations of the graph $\Lambda$ by $2\pi/3$
and $4\pi/3$ form the $A_2$ part of a Manin system of Lagrangians. 
\begin{figure}[ht]
    \centering
    \scalebox{.3}{
\begingroup%
  \makeatletter%
  \providecommand\color[2][]{%
    \errmessage{(Inkscape) Color is used for the text in Inkscape, but the package 'color.sty' is not loaded}%
    \renewcommand\color[2][]{}%
  }%
  \providecommand\transparent[1]{%
    \errmessage{(Inkscape) Transparency is used (non-zero) for the text in Inkscape, but the package 'transparent.sty' is not loaded}%
    \renewcommand\transparent[1]{}%
  }%
  \providecommand\rotatebox[2]{#2}%
  \newcommand*\fsize{\dimexpr\f@size pt\relax}%
  \newcommand*\lineheight[1]{\fontsize{\fsize}{#1\fsize}\selectfont}%
  \ifx\svgwidth\undefined%
    \setlength{\unitlength}{1220.28433228bp}%
    \ifx\svgscale\undefined%
      \relax%
    \else%
      \setlength{\unitlength}{\unitlength * \real{\svgscale}}%
    \fi%
  \else%
    \setlength{\unitlength}{\svgwidth}%
  \fi%
  \global\let\svgwidth\undefined%
  \global\let\svgscale\undefined%
  \makeatother%
  \begin{picture}(1,0.46234698)%
    \lineheight{1}%
    \setlength\tabcolsep{0pt}%
    \put(0,0){\includegraphics[width=\unitlength,page=1]{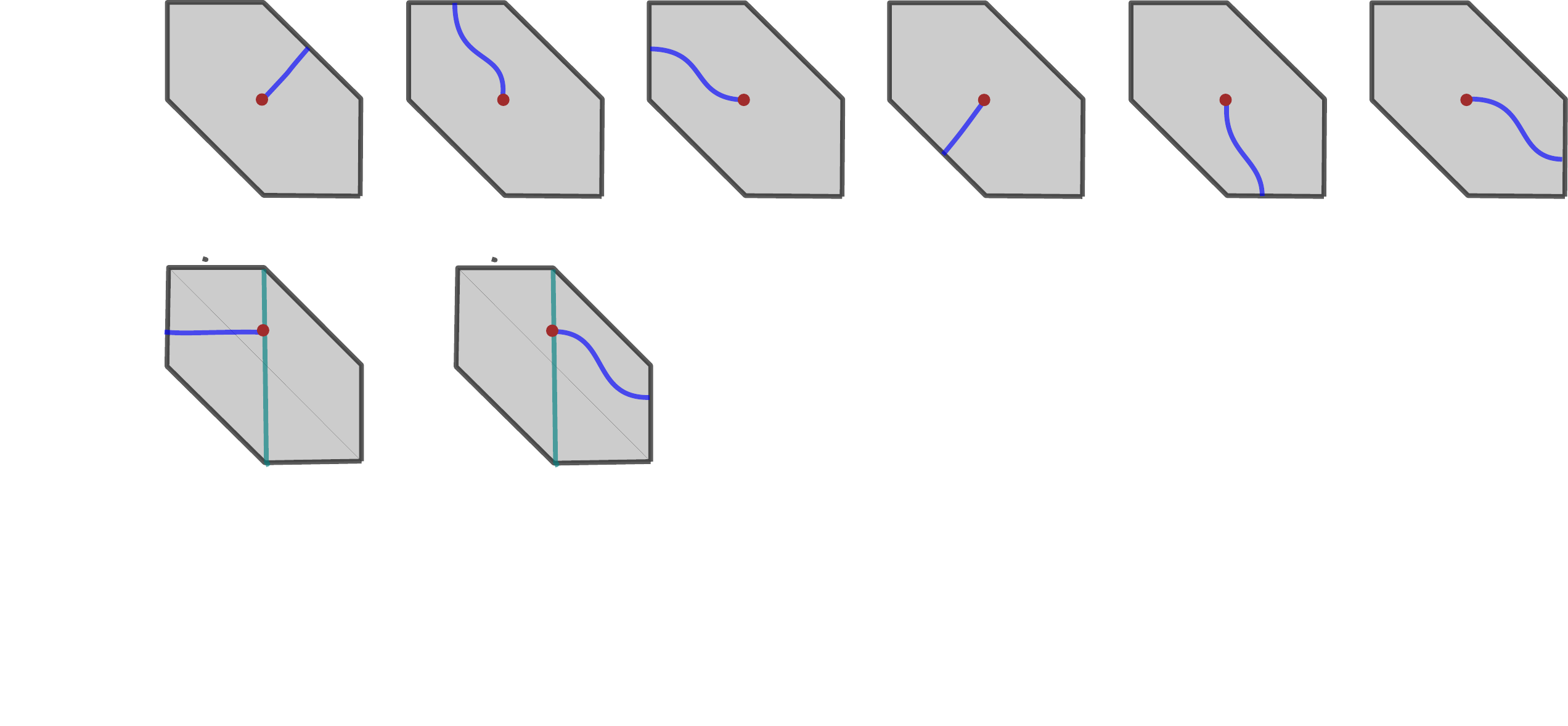}}%
    \put(0.01474814,0.41456461){\makebox(0,0)[lt]{\lineheight{1.25}\smash{\begin{tabular}[t]{l}$W_L(1) = 6$\end{tabular}}}}%
    \put(-0.00074521,0.2611531){\makebox(0,0)[lt]{\lineheight{1.25}\smash{\begin{tabular}[t]{l}$W_L = -2$\end{tabular}}}}%
    \put(0.01462006,0.09213511){\makebox(0,0)[lt]{\lineheight{1.25}\smash{\begin{tabular}[t]{l}$W_L = -3$\end{tabular}}}}%
    \put(0,0){\includegraphics[width=\unitlength,page=2]{b3p2_pot.pdf}}%
    \put(0.71659678,0.0386591){\color[rgb]{0.10196078,0.10196078,0.10196078}\transparent{0.00980392}\makebox(0,0)[lt]{\lineheight{1.25}\smash{\begin{tabular}[t]{l}w\\w\end{tabular}}}}%
    \put(0,0){\includegraphics[width=\unitlength,page=3]{b3p2_pot.pdf}}%
  \end{picture}%
\endgroup%
}
      \caption{Cartoon diagrams of Maslov-index-two disks in the degree six del Pezzo bounding (top) the monotone Lagrangian torus (middle) a Lagrangian sphere meeting two fixed points (bottom) the Lagrangian sphere meeting three fixed points}
    \label{fig:3p2pot}
\end{figure}
%
The del Pezzo of degree four was treated in Example \ref{ex:b5p2}.
The del Pezzo of degree two was treated in Example \ref{ex:b7p2}.
%
We show the computation for the del Pezzo of degree one in Figure \ref{fig:b8p2}. The figure shows 
four different types of tropical graphs of broken holomorphic disks of Maslov-index-two bounding a tropical Lagrangian sphere whose total count is $W_L = -60$.  Note that the graph of the Lagrangian lies along one of the cut loci, 
so the affine structures on either side of the Lagrangian are related by a shear.  However, 
the only graphs contributing are those with intial direction perpendicular to the Lagrangian, which are invariant under the shear, and so this complication may be overlooked. 
\begin{itemize}
    \item The first three types $\Gamma_1,\Gamma_2,\Gamma_3$ corresponding to disks $u$ of Maslov index $I(u) = 2$ have
initial direction $\cT(e_1) = (-1,-1)$, and interact with two of the focus-focus values as shown.
\item 
The last type $\Gamma_4$ has initial direction $\cT(e_1) = (1,1)$, and interacts with two of the focus-focus values near the bottom vertex of the moment polytope $\Phi(X)$ shown in the Figure.   
\end{itemize}
There are also two other types $\Gamma_5, \Gamma_6$ of tropical graphs of Maslov-index-two disks, each intersecting a single 
focus-focus value near the bottom vertex of $\Phi(X)$, whose contributions $m(\Gamma_5) = - m(\Gamma_6)$ cancel as explained in 
Proposition 6.6 of \cite{vw:at}.  In particular, the proof of that proposition explains
that graphs $\bGamma_5$ with two edges ending at the same focus-focus value with directions $(1,0)$ cancel with those  types $\bGamma_6$ with a single edge ending at the same focus-focus value with direction $(2,0)$.
%
We leave the cases of del Pezzos of degree five and three to the reader.
%
\end{proof}

\begin{figure}[ht] \begin{center} \scalebox{.4}{\includegraphics{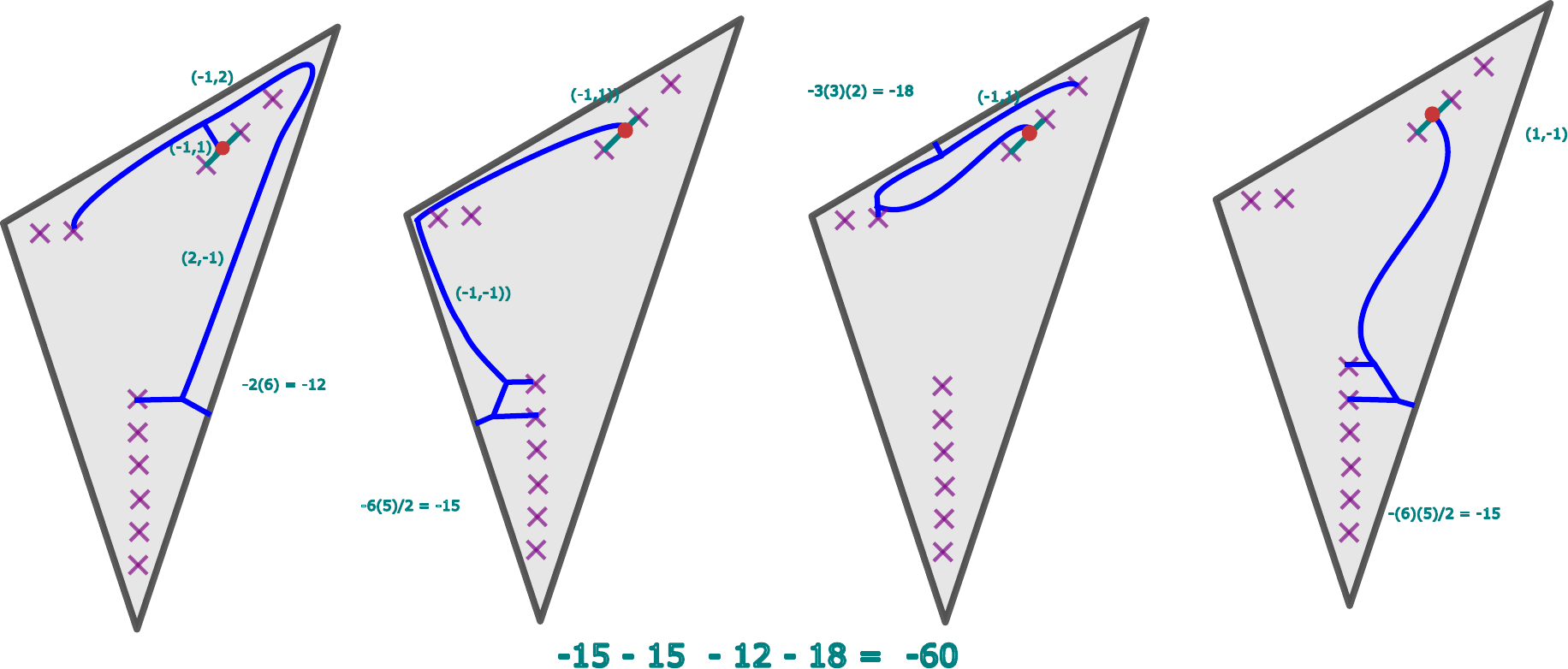}}
\end{center} \caption{Cartoon diagrams for disks bounding a Lagrangian sphere in $\Bl^8 \P^2$} \label{fig:b8p2} \end{figure}

\bibliography{tlag}{}
\bibliographystyle{plain}
\end{document}